\title{2D internal waves in an ergodic setting}
\author{Zhenhao Li}
\email{zhenhao@mit.edu}
\address{Department of Mathematics, Massachusetts Institute of Technology, Cambridge, MA 02139}
\begin{document}

\maketitle

\begin{abstract}
    We study a model of internal waves under periodic forcing in an effectively 2-dimensional aquarium. When the underlying classical dynamics has sufficiently irrational rotation number, we prove that the solution to the internal waves equation remains bounded in energy space in time. This result is in contrast with the works by
    Colin de Verdi\`ere--Saint-Raymond and
    Dyatlov--Wang--Zworski, which gave a description of singular profiles when the underlying dynamics has periodic attractors. Our result is proved by showing the spectral measure near the forcing frequency is very small. 
\end{abstract}

\section{Introduction}
Below the wind-mixed surface layer of the ocean and an interfacial layer of seasonal pycnocline where the density increases rapidly, the density field of the ocean stabilizes and increases gradually with depth. To first order in this region, the density is considered to be stable-stratified, i.e. the density depends only on the vertical direction and increases with depth. Internal waves describe linear perturbations of such stable-stratified fluids, and are therefore a central topic in oceanography. These perturbations occur naturally and can arise mechanically or thermodynamically. For a more complete introduction to the physics behind internal waves, see Mass \cite{Maas_survey} and Sibgatullin--Ermanyuk \cite{SE_survey}.

We consider a model of a stratified fluid in an effectively 2D aquarium. The formation of internal waves is guided by the classical dynamics underlying the relevant wave equation. Recently, it was proven by Colin de Verdi\`ere--Saint-Raymond \cite{Verdiere_Raymond_20} and Dyatlov--Wang--Zworski \cite{DWZ} that when the underlying dynamics has hyperbolic attractors, there can be high concentration of the fluid velocity near these attractors. This phenomenon was predicted in the physics literature by Maas--Lam \cite{maas_lam_95} in 1995, and has since been experimentally observed by Maas et al. \cite{maas_observation_97}, Hazewinkel et al. \cite{Hazewinkel_2010}, and Brouzet \cite{brouzet_16}.

The behavior in the absence of attractors is expected to be dramatically different. In this paper we consider what happens in an ergodic case. We find that when the underlying classical dynamics has a sufficiently irrational rotation number, the internal waves have uniformly bounded energy for all times, see Theorem \ref{thm:evolution}. 

The domain of our 2D aquarium is denoted by $\Omega \subset \R^2 = \{x = (x_1, x_2) \mid x_j \in \R\}$. Under time-periodic forcing with profile $f \in \CIc(\Omega;\mathbb R)$, the stream function $u$ can be modeled by solutions of the following Poincar\'e problem:
\begin{equation}\label{eq:internal_waves}
    (\partial_t^2 \Delta + \partial_{x_2}^2) u = f(x) \cos \lambda_0 t, \quad u|_{t = 0} = \partial_t u|_{t = 0} = 0, \quad u|_{\partial \Omega } = 0,
\end{equation}
where $\lambda_0 \in (0, 1)$ and $\Delta := \partial_{x_1}^2 + \partial_{x_2}^2$. This equation is derived under the assumption that we are taking small linear perturbations of a uniformly stratified, invicid, hydrostatic, and nonrotating two-dimensional Boussinesq fluid. Here, we use the convention that the fluid velocity is given by $\mathbf v = (\partial_{x_2} u, -\partial_{x_1} u)$. See for instance Dauxois et al. \cite{Dauxois_18} for recent investigation of the equation in the physics literature.


It is helpful to interpret~\eqref{eq:internal_waves} as an evolution problem. For that, let $\Delta_\Omega$ denote the Dirichlet Laplacian, which has inverse $\Delta_\Omega^{-1}: H^{-1}(\Omega) \to H^1_0(\Omega)$, and define
\begin{equation}\label{eq:P_def}
    P := \partial_{x_2}^2 \Delta_\Omega^{-1}: H^{-1}(\Omega) \to H^{-1} (\Omega), \quad \langle u, w \rangle_{H^{-1}(\Omega)} := \langle \nabla \Delta_\Omega^{-1} u, \nabla \Delta_\Omega^{-1} w \rangle_{L^2(\Omega)}.
\end{equation}
The operator~$P$ is non-negative, bounded, and self-adjoint, and $\mathrm{Spec}(P) = [0, 1]$, see Ralston \cite{Ralston_73} and~\cite[\S7.1]{DWZ}. Then (\ref{eq:internal_waves}) becomes 
\begin{equation}\label{eq:evolution_problem}
    (\partial_t^2 + P) w = f \cos \lambda_0 t, \quad w|_{t = 0} = \partial_t w|_{t = 0} = 0, \quad f \in \CIc(\Omega; \R), \quad u = \Delta_\Omega^{-1} w. 
\end{equation}
Using Duhamel's formula, the solution $w(t)$ is then given by
\begin{equation}\label{eq:functional_solution}
    \begin{gathered}
        w(t) = \Re\big(e^{i \lambda_0 t} \mathbf W_{t, \lambda_0}(P) f \big) \qquad \text{where} \\
        \mathbf W_{t, \lambda_0}(z) = \int_0^t \frac{\sin(s \sqrt{z})}{\sqrt{z}} e^{-i\lambda_0 s} \, ds = \sum_{\pm} \frac{1 - e^{-it(\lambda_0 \pm \sqrt{z})}}{2 \sqrt{z} (\sqrt{z} \pm \lambda_0)}
    \end{gathered}
\end{equation}
For each $t$, $\Re\big( e^{i \lambda_0 t} \mathbf W_{t, \lambda_0}(z) \big)$ is a bounded smooth function on $[0, 1]$, so $w(t) \in H^{-1}(\Omega)$ is well-defined by the functional calculus of $P$ for each $t$. However, for $\sqrt{z} - \lambda_0 \lesssim t^{-1}$, we have $|\mathbf W_{t, \lambda_0}(z)| \sim t$, and for $\lambda_0 \in (0, 1)$, $\mathbf W_{t, \lambda_0} \to (z - \lambda_0^2 + i0)^{-1}$ as $t \to \infty$ in the sense of distributions. Consequently the behavior of $w(t)$ as $t \to \infty$
is related to spectral properties of $P$ near $\lambda_0^2$. 

\subsection{Assumptions on \texorpdfstring{$\Omega$}{} and \texorpdfstring{$\lambda_0$}{}}
Throughout the paper, we assume that $\Omega \subset \R^2$ is open, bounded, simply connected, and has $C^\infty$ boundary $\partial \Omega$. It turns out that the relevant underlying classical dynamics here is given by the reflected bicharacteristic flow for $(1 - \lambda_0^2) \xi_2^2 - \lambda_0^2 \xi_1^2$. Note that this is the Hamiltonian for the $1 + 1$ wave operator $P(\lambda_0)$ where
\[P(\lambda):= (1 - \lambda^2) \partial_{x_2}^2 - \lambda^2 \partial_{x_1}^2 = (P - \lambda^2) \Delta_\Omega, \quad \lambda \in (0, 1).\]
Since we want spectral information about $P$ near $\lambda_0^2$, we can already see that we need to study $P(\lambda)$ for $\lambda$ near $\lambda_0$. 

We can reduce the Hamiltonian flow of $P(\lambda)$ to the boundary by simply keeping track of where the trajectory hits the boundary. More formally, we have the following factorization of the quadratic form dual to $(1 - \lambda^2) \xi_2^2 - \lambda^2 \xi_1^2$:
\begin{equation}\label{eq:dual_factor}
    -\frac{x_1^2}{\lambda^2} + \frac{x_2^2}{1 - \lambda^2} = \ell^+(x, \lambda) \ell^-(x, \lambda), \quad \ell^\pm(x, \lambda) := \pm \frac{x_1}{\lambda} + \frac{x_2}{\sqrt{1 - \lambda^2}}.
\end{equation}
We will often suppress the dependence on $\lambda$ in the notation and simply write $\ell^\pm(x)$ when it is unambiguous. The level lines of $\ell^\pm$ are precisely the bicharacteristic lines of the Hamiltonian $(1 - \lambda^2) \xi_2^2 - \lambda^2 \xi_1^2$. We will make the following assumption on $\Omega$ to avoid issues with glancing and regularity in the reduction to boundary:
\begin{definition}\label{lambda_simple_def}
    Let $\lambda \in (0, 1)$. We say that $\Omega$ is \textup{$\lambda$-simple} if each of the functions $\ell^\pm(\bullet, \lambda): \partial \Omega \to \R$ has exactly two distinct non-degenerate critical points. These critical points will be denoted
    \begin{equation}\label{eq:characteristic_set}
        \begin{gathered}
            x_{\mathrm{max}}^\pm = \mathrm{arg\, max}_{x \in \partial \Omega} \, \ell^\pm(x, \lambda) \\
            x_{\mathrm{min}}^\pm = \mathrm{arg\, min}_{x \in \partial \Omega} \, \ell^\pm(x, \lambda).
        \end{gathered}
    \end{equation}
    See Figure \ref{fig:defs}.
\end{definition}
\begin{figure}
    \centering
    \includegraphics[scale = .25]{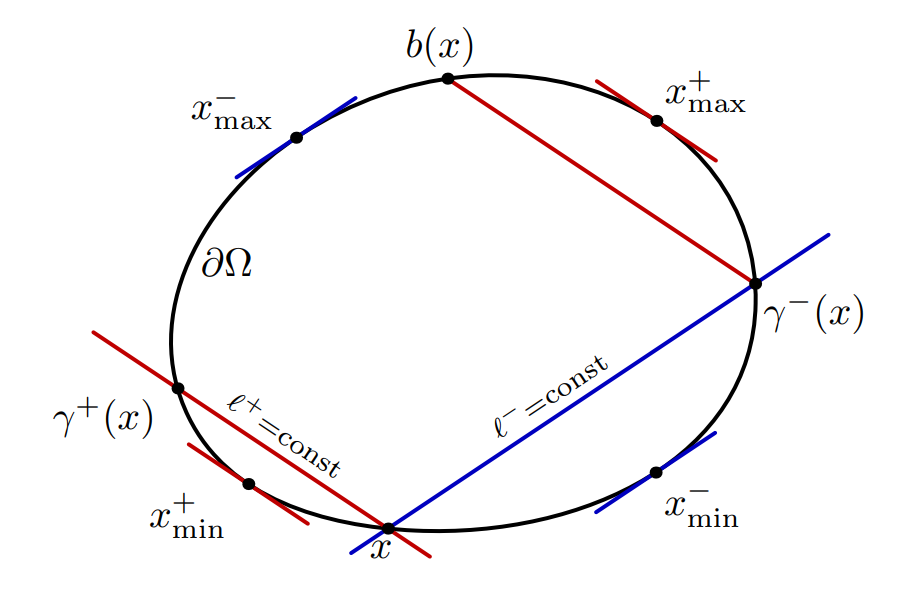}
    \caption{Definition of the involutions $\gamma^\pm$ as well as the chess billiard map $b(x)$. Also illustrates the definition of $\lambda$-simplicity. The diagram is from \cite{DWZ}, which considered the same dynamical system.}
    \label{fig:defs}
\end{figure}
Assuming that $\Omega$ is $\lambda$-simple, there exist unique smooth orientation reversing involutions $\gamma^\pm(\bullet, \lambda): \partial \Omega \to \partial \Omega$ that satisfy
\begin{equation}\label{eq:gamma_def}
    \ell^\pm(x) = \ell^\pm(\gamma^\pm(x)).
\end{equation}
In other words, under the $\lambda$-simplicity condition, each level line of $\ell^\pm$ intersects the boundary exactly twice unless the level line passes through $x^\pm_{\mathrm{min}}$ or $x^\pm_{\mathrm{max}}$. Then $\gamma^\pm$ exchanges the two points of intersection and fixes $x^\pm_{\mathrm{min}}$ and $x^\pm_{\mathrm{max}}$. Now we can define the \textit{chess billiard map} $b(\bullet, \lambda): \partial \Omega \to \partial \Omega$ by the composition
\begin{equation}\label{eq:b_def}
    b:= \gamma^+ \circ \gamma^-.
\end{equation}
$b$ is a smooth orientation preserving diffeomorphism. See Figure \ref{fig:defs}.

Finally, before stating our results, we need a dynamical assumption on $b$. Fix a positively oriented parameterization on $\partial \Omega$ given by $\mathbf x: \mathbb S^1 = \R/\Z \to \partial \Omega$. Let $\tilde \pi: \R \to S^1$ be the covering map given by $\tilde \pi(x) = x \mod 1$. Thus we have a covering map $\pi: \R \to \partial \Omega$ by $\pi = \mathbf x \circ \tilde \pi$. Fix a lift $\mathbf b(\bullet, \lambda) : \R \to \R$ of $b(\bullet, \lambda)$, i.e. $\mathbf b(\bullet, \lambda)$ satisfies
\[\pi(\mathbf b(\theta, \lambda)) = b(\pi(\theta, \lambda))\]
for all $\theta \in \R$. Fix an initial point $\theta_0 \in \R$. The \textit{rotation number} of $b( \bullet, \lambda)$ is then defined as
\begin{equation}\label{eq:rot_num_def}
    \mathbf r(\lambda) := \lim_{k \to \infty} \frac{\mathbf b^k(\theta_0, \lambda) - \theta_0}{k} \mod \Z \in \R/\Z.
\end{equation}
See \S\ref{sec:dynamical_prelim} for more details on the chess billiard map. 

We wish to study the behavior of internal waves in the scenario that the underlying chess billiard map is sufficiently chaotic. This is formulated using a ``sufficiently irrational" condition on the rotation number of $b$.
\begin{definition}\label{def:diophantine}
    A number $r \in \R$ is called \textup{Diophantine} if there exists consistants $c, \, \beta > 0$ such that 
    \begin{equation*}
        \left|r - \frac{p}{q} \right| \ge \frac{c}{q^{2 + \beta}}
    \end{equation*}
    for all $p \in \Z$ and $q \in \N$. 
\end{definition}
Roughly speaking, $r$ is Diophantine if it is sufficiently far away from any rational numbers. It is easy to see that Diophantine numbers exist and form a full measure set. Indeed, take some $\beta > 0$ and a small $c > 0$. Then the set 
\[E = \left \{r \in (0, 1) : \left|r - \frac{p}{q} \right| \ge \frac{c}{q^{2 + \beta}} \, \text{ for all $p \in \Z$ and $q \in \N$} \right\}\]
has measure bounded by 
\[|E| \ge 1 - \sum_{q = 1}^\infty 2q \cdot \frac{c}{q^{2 + \beta}} > 0\]
for all sufficiently small $c$.

\subsection{Statements of results}
Let $\mu_{f,f}$ be the spectral measure of $P$
with respect to $f \in \CIc(\Omega;\mathbb R)$,
\[\langle \chi(P)f, f \rangle_{H^{-1}(\Omega)} = \int_{\R} \chi \, d\mu_{f, f},\]
where $\chi$ is any bounded Borel measurable function on~$\mathbb R$. Our first result is

\begin{theorem}\label{thm:spectral}
    Let $\lambda_0$ be such that $\Omega$ is $\lambda_0$-simple (see Definition \ref{lambda_simple_def}) and $\mathbf r(\lambda_0)$ is Diophantine. Then for all $N > 0$ and $f \in \CIc(\Omega)$, there exists a constant $C = C(\Omega, \lambda_0, N, f)$ such that the spectral measure of the operator $P$ defined in (\ref{eq:P_def}) satisfies  
    \begin{equation}
        \mu_{f, f} \big((\lambda_0^2 - \epsilon, \lambda_0^2 + \epsilon) \big) \le C \epsilon^{N}.
    \end{equation}
\end{theorem}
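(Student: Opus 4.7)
My plan is to show, for every $N \ge 0$, that $f$ lies in the range of $(P-\lambda_0^2)^N$. The theorem then follows at once from the spectral theorem: if $f=(P-\lambda_0^2)^N g_N$ with $g_N \in H^{-1}(\Omega)$, then $P$ commutes with $\chi(P)$ for all Borel $\chi$, so $d\mu_{f,f}(s)=(s-\lambda_0^2)^{2N}\,d\mu_{g_N,g_N}(s)$, and
\[
\mu_{f,f}\!\bigl((\lambda_0^2-\epsilon,\lambda_0^2+\epsilon)\bigr) \le \epsilon^{2N}\,\mu_{g_N,g_N}(\R) = \epsilon^{2N}\|g_N\|_{H^{-1}}^2.
\]
Taking $N$ arbitrary yields the stated bound, with the constant $C$ controlled by $\|g_N\|_{H^{-1}}^2$.

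To build $g_N$, I would construct a right inverse $S_{\lambda_0}$ for $P-\lambda_0^2$ on a space of smooth data closed under the iteration, and set $g_N := S_{\lambda_0}^N f$. Writing $u=\Delta_\Omega^{-1}g \in H^1_0$, the equation $(P-\lambda_0^2)g=F$ is equivalent to the Poincaré BVP
\[
P(\lambda_0)\,u = F, \qquad u|_{\partial\Omega}=0,
\]
with $g=\Delta_\Omega u$. Using the null factorization of \eqref{eq:dual_factor}, $P(\lambda_0)$ is a constant-coefficient hyperbolic operator whose characteristics are the level sets of $\ell^\pm(\bullet,\lambda_0)$, so $P(\lambda_0)u=F$ reduces to two successive transport equations along these foliations. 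The Dirichlet condition $u|_{\partial\Omega}=0$, combined with how transport values are related across $\partial\Omega$ by the involutions $\gamma^\pm$, then forces a functional equation on $\partial\Omega$ of the form
\[
(I - b(\lambda_0)^{*})\,\psi = h,
\]
where $h \in C^\infty(\partial\Omega)$ is a smooth datum built from $F$ and $\psi$ encodes the boundary trace of a first integral.

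This cohomological equation over the chess billiard is where the Diophantine hypothesis enters. By Herman's theorem (refined by Yoccoz), since $\mathbf r(\lambda_0)$ is Diophantine the orientation-preserving diffeomorphism $b(\lambda_0)$ is $C^\infty$ conjugate to the rigid rotation $R_{\mathbf r(\lambda_0)}$ on $\mathbb{S}^1$. For a Diophantine rotation, standard small-divisor analysis solves $(I-R_\alpha^{*})\tilde\psi=\tilde h$ in $C^\infty$ for any smooth mean-zero $\tilde h$, with tame estimates losing a fixed number of derivatives depending on the Diophantine exponent $\beta$. Conjugating back yields $\psi \in C^\infty(\partial\Omega)$ and hence $u \in C^\infty(\overline\Omega) \cap H^1_0(\Omega)$, giving a right inverse $S_{\lambda_0}$ which one can then iterate to produce $g_N \in H^{-1}(\Omega)$.

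I expect two genuine obstacles. First, the mean-zero compatibility condition on $h$ needed to invert $I-b(\lambda_0)^{*}$ must be arranged and then preserved under iteration; one must identify the unique $b(\lambda_0)$-invariant measure on $\partial\Omega$, verify that data arising from $C^\infty_c(\Omega)$ lies in the appropriate codimension-one subspace, and show the iterates $g_k = S_{\lambda_0}^k f$ remain there, possibly after subtracting off a harmless correction at each step. Second, the transport reduction and the resulting smoothness of $u$ up to $\partial\Omega$ must be handled carefully near the critical points $x_{\min/\max}^\pm$ of $\ell^\pm$, where characteristics become tangent to the boundary. The $\lambda_0$-simplicity hypothesis ensures non-degenerate second-order glancing, and combined with the $C^\infty$ conjugacy furnished by Herman's theorem one expects full boundary regularity, but making this rigorous through a local microlocal normal form at the tangency points is really the technical heart of the argument.
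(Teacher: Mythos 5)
Your opening reduction is correct and is a genuinely different route from the paper's: if $f=(P-\lambda_0^2)^N g_N$ with $g_N\in H^{-1}(\Omega)$, then indeed $\mu_{f,f}\big((\lambda_0^2-\epsilon,\lambda_0^2+\epsilon)\big)\le \epsilon^{2N}\|g_N\|_{H^{-1}}^2$, so everything rides on producing the iterates $g_N$. The paper never constructs a right inverse at the real frequency. It works at complex $\omega$, where \eqref{eq:EBVP} is elliptic and uniquely solvable, proves uniform estimates for the boundary-reduced problem in the polynomial regions $\Lambda^\pm_{\lambda_0,d}$ (this is where the Diophantine small-divisor analysis actually happens, but microlocally, via the approximate conjugation of $b_{\lambda_0+\epsilon}$ to a rotation and a Moser-averaging weight), upgrades these to a limiting absorption principle with all $\omega$-derivatives (Proposition~\ref{prop:u_LAP}), and then extracts the measure bound from the Helffer--Sj\"ostrand formula on a contour hugging $\Lambda^\pm_{\lambda_0,d}$, using that the boundary values of $\langle(z-P)^{-1}f,f\rangle$ from above and below agree to all orders. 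Your range argument would buy a cleaner endgame (no contour estimates), and its first step is already a consequence of the paper: Proposition~\ref{prop:u_LAP} with $k=0$ gives $u_{\lambda_0}\in C^\infty(\overline\Omega)\cap H^1_0(\Omega)$, hence $f=(P-\lambda_0^2)\Delta_\Omega u_{\lambda_0}$.

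The genuine gaps are in the construction and, above all, the iteration of $S_{\lambda_0}$. First, closure of the data class: $g_1=\Delta_\Omega u_{\lambda_0}$ lies in $C^\infty(\overline\Omega)$ but is not compactly supported in $\Omega$, so the second step requires solving $P(\lambda_0)u=F$, $u|_{\partial\Omega}=0$ for $F$ smooth only up to the boundary; neither the paper's layer-potential machinery (which uses $R_\omega f=E_\omega*f\in C^\infty(\R^2)$, false for such $F$) nor your transport reduction as stated covers this, and you do not flag it. Second, the compatibility condition for $(I-b_{\lambda_0}^*)\psi=h$ cannot be arranged by ``subtracting off a harmless correction'': any correction changes the identity $f=(P-\lambda_0^2)g_1$ and destroys the argument, so it must hold exactly. (It does, but for a reason you should supply: $\gamma^\pm$ conjugates $b$ to $b^{-1}$, so by unique ergodicity the invariant measure is $\gamma^\pm$-invariant, and data of the form $u_p\circ\gamma^+-u_p$ automatically has mean zero.) Third, to descend from $\partial\Omega$ to functions of $\ell^\pm$ you need the solution $\psi$ of the cohomological equation to be $\gamma^\mp$-invariant before it defines a function of $\ell^\mp$, and then smoothness of that function at $\ell^\pm_{\min/\max}$, where $\ell^\pm|_{\partial\Omega}$ has its nondegenerate critical points. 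You correctly identify this last point as the technical heart, but as it stands the proposal is a program rather than a proof: the strategy is viable and attractive, while the steps that the paper spends Sections~\ref{sec:reduction_to_boundary}--\ref{sec:LAP} on remain to be done in your framework.
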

In particular, this implies that $\lambda_0^2$ cannot be an eigenvalue of~$P$. Theorem~\ref{thm:spectral} has the following striking consequence for the Poincar\'e evolution problem:
\begin{theorem}\label{thm:evolution}
    Let $\lambda_0 \in (0, 1)$ be such that $\Omega$ is $\lambda_0$-simple and $\mathbf r(\lambda_0)$ is Diophantine. Assume that $f \in \CIc(\Omega; \R)$. Then the solution to \eqref{eq:internal_waves} remains bounded in energy space for all times, i.e. there exists a constant $C > 0$ such that
    \begin{equation}
        \|u(t)\|_{H^1(\Omega)} \le C
    \end{equation}
    for all $t \in \R$. 
\end{theorem}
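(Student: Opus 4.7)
The first step is to extract a clean spectral representation for $w(t)$. Starting from \eqref{eq:functional_solution} and taking the real part of $e^{i\lambda_0 t} \mathbf{W}_{t, \lambda_0}(z)$: using $e^{i\lambda_0 t}(1 - e^{-it(\lambda_0 \pm \sqrt z)}) = e^{i\lambda_0 t} - e^{\mp it\sqrt z}$ and the partial fraction identity $\sum_\pm \frac{1}{2\sqrt z(\sqrt z \pm \lambda_0)} = \frac{1}{z - \lambda_0^2}$, one finds the simplification
\[
\Re\!\left(e^{i\lambda_0 t} \mathbf{W}_{t, \lambda_0}(z)\right) = \frac{\cos(\lambda_0 t) - \cos(t\sqrt z)}{z - \lambda_0^2} =: R_t(z),
\]
so that $w(t) = R_t(P) f$ by the functional calculus for $P$. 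Since $u = \Delta_\Omega^{-1} w$ gives $\|u(t)\|_{H^1(\Omega)} = \|w(t)\|_{H^{-1}(\Omega)}$, the goal reduces to showing that $\int_0^1 |R_t(z)|^2 \, d\mu_{f,f}(z)$ is uniformly bounded in $t$.

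The multiplier $R_t$ satisfies two pointwise estimates: $|R_t(z)| \le 2/|z - \lambda_0^2|$ from $|\cos| \le 1$, and $|R_t(z)| \le t/\lambda_0$ from $|\cos(\lambda_0 t) - \cos(t\sqrt z)| \le t|\lambda_0 - \sqrt z|$ combined with $|z - \lambda_0^2| \ge \lambda_0 |\sqrt z - \lambda_0|$. Crucially, the first bound shows that $R_t$ is uniformly bounded in $t$ on any compact set avoiding $\lambda_0^2$, including near $z = 0$, where individual summands of $\mathbf{W}_{t,\lambda_0}$ grow like $t$. Thus the only obstruction to boundedness of the spectral integral comes from potential concentration of $\mu_{f,f}$ near the resonance $\lambda_0^2$, which is exactly what Theorem~\ref{thm:spectral} rules out.

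Fix a small $\delta_0 > 0$. The contribution from $\{|z - \lambda_0^2| \ge \delta_0\}$ is at most $(4/\delta_0^2)\|f\|_{H^{-1}}^2$. For the near-resonance region, decompose dyadically: let $B_k := \{2^{-k-1}\delta_0 < |z - \lambda_0^2| \le 2^{-k}\delta_0\}$ for $0 \le k \le K$ with $2^{-K}\delta_0 \sim \lambda_0/t$, plus the innermost core $B_\infty := \{|z - \lambda_0^2| < 2^{-K}\delta_0\}$. Use $|R_t|^2 \le 16 \cdot 4^k/\delta_0^2$ on $B_k$ and $|R_t|^2 \le t^2/\lambda_0^2$ on $B_\infty$. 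By Theorem~\ref{thm:spectral}, for any $N > 0$ we have $\mu_{f,f}(B_k) \le C_N (2^{-k}\delta_0)^N$ and $\mu_{f,f}(B_\infty) \le C_N(\lambda_0/t)^N$. Choosing $N > 2$, the shell contributions are bounded by $16 C_N \delta_0^{N-2} 2^{k(2-N)}$, which sums to a convergent geometric series, while the core contributes $C_N \lambda_0^{N-2} t^{2 - N} = O(1)$. Summing the three pieces gives the uniform bound on $\|w(t)\|_{H^{-1}(\Omega)}$.

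The main conceptual step is the algebraic simplification producing $R_t$, which sidesteps the apparent $t$-growth of the individual summands of $\mathbf{W}_{t, \lambda_0}$ at both $z = 0$ and $z = \lambda_0^2$. Once this reduction is in place, the super-polynomial spectral decay from Theorem~\ref{thm:spectral} combined with the dyadic decomposition delivers the uniform energy bound almost mechanically; no further spectral regularity away from $\lambda_0^2$ is required.
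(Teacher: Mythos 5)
Your proposal is correct and follows essentially the same route as the paper: reduce to $\|w(t)\|_{H^{-1}}$, use the functional calculus representation with the simplified multiplier $\bigl(\cos(\lambda_0 t)-\cos(t\sqrt z)\bigr)/(z-\lambda_0^2)$, and beat the $1/|z-\lambda_0^2|$ singularity with the super-polynomial spectral measure bound of Theorem~\ref{thm:spectral} via a shell decomposition around $\lambda_0^2$. The only difference is bookkeeping: the paper uses $\delta^k$-shells all the way in with $t$-uniform bounds on each shell, whereas you use dyadic shells down to a $t$-dependent core where the linear-in-$t$ bound takes over; both work for the same reason.
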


The requirement that $\mathbf r(\lambda_0)$ is Diophantine is not too restrictive. There are many examples that satisfy this condition. As mentioned after Definition~\ref{def:diophantine}, Diophantine numbers form a full measure set. If $\Omega$ is a disk with a circular boundary, the rotation number function $\mathbf r(\lambda)$ is a smooth function of $\lambda$ with a strictly positive derivative, which means that the set of $\lambda_0$ that satisfies the hypothesis of Theorems~\ref{thm:spectral} and~\ref{thm:evolution} is a full measure subset of $(0, 1)$, see Figure~\ref{fig:rot_num}. However, for other domains, for instance the tilted square, the set of $\lambda_0$ for which $\mathbf r(\lambda_0)$ is rational appears to form a full measure set. Nevertheless, we still have uncountably many values of $\lambda_0$ that satisfy the hypothesis of our theorems since the rotation number is always nondecreasing. Although a square domain has corners and therefore does not satisfy the hypothesis of our theorems, it has the advantage of being solvable in Fourier series and supports the conclusions of our theorems. The details are worked out in Section~\ref{subsec:examples}.

\begin{figure}
    \centering
    \includegraphics[scale = 0.35]{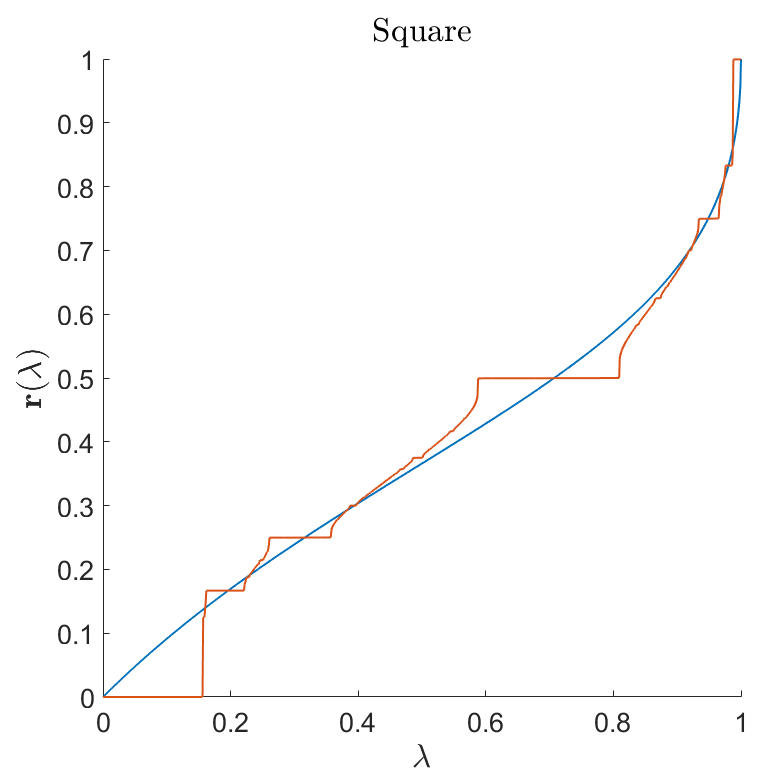}
    \includegraphics[scale = 0.35]{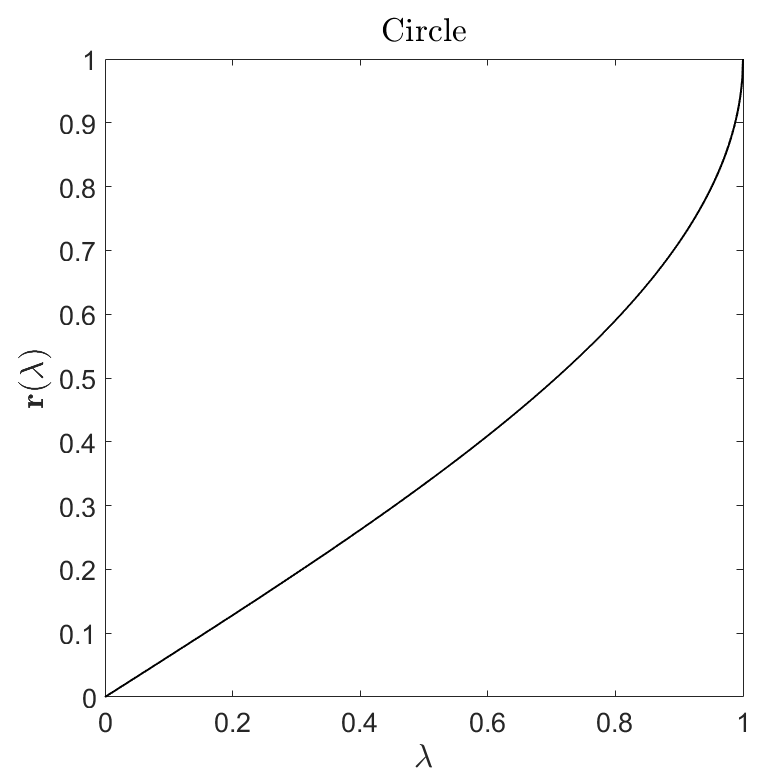}
    \caption{Graphs of the rotation number function $\mathbf r(\lambda)$ (defined in~\eqref{eq:rot_num_def}) for various domains. \textbf{Left:} The blue curve is for the untilted square $[0, 1]^2$. The orange curve is the square tilted by $\eta = \pi/20$ (see end of \S \ref{subsubsec:toy_square}) Note that it has rational plateaus. \textbf{Right:} The domain is a disk, so the boundary is a round circle.}
    \label{fig:rot_num}
\end{figure}

\subsection{Related mathematical work}
Analysis of (\ref{eq:internal_waves}) goes back to Sobolev \cite{Sobolev_54}. The spectral properties of the operator $P$ defined in (\ref{eq:P_def}) were studied by Aleksandryan \cite{Aleksandryan_60} and Ralston \cite{Ralston_73}. The study of internal waves has motivated the mathematical analysis of 0-th order self-adjoint pseudodifferential operators. 
Such operators on closed surfaces in the presence of attractors were studied by Colin de Verdi\`ere--Saint-Raymond \cite{Verdiere_Raymond_20,Colin_de_Verdiere_20} and then Dyatlov--Zworski \cite{Dyatlov_Zworski_19}.
The viscosity limits of these operators were recently studied by Galkowski--Zworski \cite{Galkowski_Zworksi_22} and Wang \cite{Wang_22}. Spectral properties of 0-th order pseudodifferential operators on the circle were studied by Zhongkai Tao~\cite{Tao_19} who produced examples of embedded eigenvalues. 
The 2D boundary case was considered in~\cite{DWZ} and our work is in the same setting but under different dynamical assumptions and with different conclusions.

The family of circle diffeomorphisms given by the chess billiard map $b(\bullet, \lambda)$ defined in (\ref{eq:b_def}) was also recently investigated by Lenci et al. in trapezoidal domains \cite{Lenci_22}. We mention that it is called the chess billiard map since it was originally used to study a generalization of the $n$-Queens problem by Hanusa--Mahankali \cite{Hanusa_Mahankali_19}, and has been further studied by Nogueira--Troubetzkoy \cite{Nogueira_Troubetzkoy_20}.

\subsection{Organization of the paper}

\noindent
\textbf{\S\ref{sec:dynamical_prelim}}: We give the smooth conjugation result from one-dimensional dynamics that is needed. A toy model that explicitly demonstrates the results of Theorems \ref{thm:spectral} and \ref{thm:evolution} is presented at the end of the section.

\noindent 
\textbf{\S\ref{sec:microlocal}}: This is dedicated to giving the necessary microlocal preliminaries. We also introduce a variation of the semiclassical calculus that allows us to much more easily keep track of the dependence on the spectral parameter.

\noindent
\textbf{\S\ref{sec:reduction_to_boundary}}: We effectively reduce the analysis of the resolvent of $P$ to the boundary using layer potentials. What we end up with are operators on the circle in the modified semiclassical calculus. 

\noindent
\textbf{\S\ref{sec:high_freq_est}}: We prove high frequency estimates for the boundary reduced problem.  

\noindent
\textbf{\S\ref{sec:LAP}}: A limiting absorption principle for the boundary reduced problem follows from the high frequency estimate together with uniqueness results proved in the beginning of this section. This is then used to give a limiting absorption principle for the resolvent of $P$ using the analysis from \S \ref{sec:reduction_to_boundary}.

\noindent
\textbf{\S\ref{sec:evolution}}: The limiting absorption principle is then used deduce the spectral result in Theorem \ref{thm:spectral}. Theorem \ref{thm:evolution} then follows by using the functional solution (\ref{eq:functional_solution}).

\section{Dynamical preliminaries}\label{sec:dynamical_prelim}
In this section, we discuss some basic properties of the underlying dynamics and a smooth conjugation result from one-dimensional dynamics that is crucial to our analysis. We also consider a toy example that can be solved explicitly to see the relation between the underlying dynamics and the behavior of the solution.  

First, we quickly explain the $\lambda$-simple assumption from Definition (\ref{lambda_simple_def}). This condition ensures that the involutions $\gamma^\pm$ defined in (\ref{eq:gamma_def}) are smooth (see \cite[\S2.1]{DWZ} for details). This immediately gives the smoothness of the chess billiard map $b(x, \lambda_0)$ defined in (\ref{eq:b_def}), which is crucial for Theorems \ref{thm:spectral} and \ref{thm:evolution}. 

It is also easy to see that $\lambda$-simplicity is an open condition (i.e. $\Omega$ is $\lambda'$-simple for all $\lambda'$ a neighborhood of $\lambda$) since the critical points of $\ell^\pm$ are assumed to be nondegenerate. Therefore the chess billiard map $b(x, \lambda)$ is well-defined in near $\lambda$, and is smooth in both $x$ and $\lambda$. Note that the smooth dependence on $\lambda$ follows immediately from smoothness of the boundary.



\subsection{One-dimensional dynamics}
From the above, we see that the chess billiard map $b(\bullet, \lambda): \partial \Omega \to \partial \Omega$ can be viewed as an orientation preserving circle diffeomorphism. We want to understand under what conditions can $b$ be smoothly conjugated to a rotation. Throughout the paper, we will denote the rotation by $\alpha \in (0, 1)$ as 
\begin{equation}\label{eq:rotation_def}
    \rho_\alpha: \mathbb S^1 \to \mathbb S^1, \qquad \rho_\alpha(x) = x + \alpha \mod 1.
\end{equation}
Here, we discuss some fundamental results from one-dimensional dynamics that will give us a better description of the chess billiard map. We cite these results without proof and refer the reader to de Melo--van Strien \cite{Melo_Strien} for a thorough introduction. 
\begin{lemma}
    The rotation number $\mathbf r(\lambda)$ as defined by the limit (\ref{eq:rot_num_def}) exists. Furthermore, it is independent of the choice of initial point $\theta_0$, the parametrization $\mathbf x$, and the lift $\mathbf b$. 
\end{lemma}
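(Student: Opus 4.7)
The plan is to prove the classical Poincar\'e rotation number theorem in the case at hand. Write $F := \mathbf b(\bullet, \lambda): \R \to \R$, which is an orientation-preserving homeomorphism (in fact a diffeomorphism) satisfying the commutation relation $F(\theta + 1) = F(\theta) + 1$, because $F$ is the lift of a circle diffeomorphism $b$. Set
\[
\phi_n(\theta) := F^n(\theta) - \theta, \qquad n \ge 1.
\]
The commutation relation gives $\phi_n(\theta + 1) = \phi_n(\theta)$, so each $\phi_n$ descends to a continuous function on $\mathbb S^1$. The goal is to show $\phi_n(\theta_0)/n$ converges as $n \to \infty$, with limit independent of the various choices.

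The heart of the argument is a Fekete-style subadditivity bound. First, I would observe that since $F$ is monotone and commutes with integer translation, for any $\theta, \theta' \in \R$ with $|\theta' - \theta| < 1$ we have $|F^n(\theta') - F^n(\theta)| < 1$, hence
\[
\sup_{\theta \in \R} \phi_n(\theta) - \inf_{\theta \in \R} \phi_n(\theta) \le 1.
\]
Next, the cocycle identity
\[
\phi_{n + m}(\theta) = \phi_n(F^m(\theta)) + \phi_m(\theta)
\]
implies the sequences $M_n := \sup \phi_n$ and $m_n := \inf \phi_n$ are subadditive and superadditive respectively. By Fekete's lemma, both $M_n/n$ and $m_n/n$ converge, and by the uniform bound $M_n - m_n \le 1$ they converge to a common limit $\mathbf r \in \R$. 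Since $m_n \le \phi_n(\theta_0) \le M_n$, we conclude $\phi_n(\theta_0)/n \to \mathbf r$ for every $\theta_0$, which gives existence and independence of the initial point $\theta_0$.

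It remains to show independence of the lift $\mathbf b$ and the parametrization $\mathbf x$, both modulo $\Z$. Any two lifts of $b$ differ by an integer constant $k \in \Z$; an induction on $n$ then shows that the corresponding values of $F^n(\theta_0) - \theta_0$ differ by $nk$, so the resulting rotation numbers differ by $k \in \Z$ and agree mod $\Z$. For the parametrization, changing $\mathbf x$ replaces $b$ by $h \circ b \circ h^{-1}$ for some orientation preserving diffeomorphism $h$ of $\partial \Omega$; lifting $h$ to $H: \R \to \R$ with $H(\theta + 1) = H(\theta) + 1$, the iterates satisfy $(H \circ F \circ H^{-1})^n = H \circ F^n \circ H^{-1}$, and since $H(\theta) - \theta$ is bounded, $(H \circ F^n \circ H^{-1}(\theta_0) - \theta_0)/n$ has the same limit as $\phi_n(\theta_0)/n$. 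The main (but still routine) obstacle is verifying the uniform bound $M_n - m_n \le 1$ and setting up the Fekete argument cleanly; everything else is bookkeeping.
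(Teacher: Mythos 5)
Your argument is correct. Note that the paper does not actually prove this lemma: it is stated as a classical fact (Poincar\'e's rotation number theorem) and cited to de Melo--van Strien, so there is no in-paper proof to compare against. What you give is a complete, self-contained version of the standard subadditivity proof, and all the key steps check out: the periodicity of $\phi_n(\theta)=F^n(\theta)-\theta$, the bound $\sup\phi_n-\inf\phi_n\le 1$ (which follows from strict monotonicity together with $F^n(\theta+1)=F^n(\theta)+1$, giving $F^n(\theta)\le F^n(\theta')<F^n(\theta)+1$ for $\theta\le\theta'<\theta+1$), the cocycle identity making $M_n$ subadditive and $m_n$ superadditive, and Fekete's lemma plus $M_n-m_n\le 1$ forcing a common finite limit that sandwiches $\phi_n(\theta_0)/n$ for every $\theta_0$. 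The bookkeeping for the lift (two lifts differ by $k\in\Z$, so $F^n$ shifts by $nk$) and for the parametrization (conjugation by a lift $H$ with $H-\mathrm{id}$ bounded does not change the limit of $\phi_n(\theta_0)/n$, given the already-established independence of the base point) is also right. The only cosmetic gap is that Fekete's lemma a priori allows the limit of $M_n/n$ to be $-\infty$; this is excluded since $M_n\ge m_n\ge n\,m_1$, which is worth one line if you want the write-up airtight.
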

In fact, the rotation number of a circle homeomorphism $f: \mathbb S^1 \to \mathbb S^1$ (dropping the smoothness requirement) is determined entirely by the combinatorial structure, i.e. the circular ordering of the orbit $O_f(x) = \{f^n(x): n \in \Z\}$. The following theorem due to Poincar\'e characterizes circle homeomorphisms depending on if the rotation number is rational or irrational:
\begin{lemma}
    Let $f: \mathbb S^1 \to \mathbb S^1$ be a circle homeomorphism. 
    
    \noindent 
    1. $f$ has a periodic orbit if and only if the rotation number of $f$ is rational, in which case every orbit is asymptotic to a periodic orbit. 

    \noindent
    2. If $f$ has irrational rotation number $\alpha$, then orbits of $f$ have the same ordering as the orbits of the irrational rotation $\rho_{\alpha}$ map defined in (\ref{eq:rotation_def}), i.e. for any $x, y \in \mathbb S^1$, the map $\psi(f^n(x)) = \rho_\alpha^n(y)$, $n \in \Z$, is monotone. 
\end{lemma}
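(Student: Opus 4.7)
My plan is to work throughout with the lift $\mathbf f: \mathbb R \to \mathbb R$ of $f$ and exploit continuity together with the compactness of $\mathbb S^1$. For Part~1, the ``if'' direction is immediate: if $f^q(x_0) = x_0$, then $\mathbf f^q(\theta_0) = \theta_0 + p$ for some $p \in \mathbb Z$, and iterating this identity gives rotation number $p/q$. For the converse, I would assume $\mathbf r(f) = p/q$ in lowest terms and study $G(\theta) := \mathbf f^q(\theta) - p - \theta$, which is continuous and $1$-periodic. If $G$ were sign-definite on $\mathbb R$, compactness of $\mathbb S^1$ would give $|G| \geq \delta > 0$, forcing $\mathbf f^{nq}(\theta) - \theta - np$ to grow linearly in $n$ and contradicting the fact that $\mathbf f^q - p$ has rotation number zero. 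Hence $G$ vanishes somewhere, producing a fixed point of $f^q$. For the asymptotic statement, the complement of the closed set $\mathrm{Fix}(f^q)$ is a disjoint union of open intervals on each of which $f^q$ is a fixed-point-free orientation-preserving map between its endpoints, so iterates are monotone and every orbit converges to one of the endpoints.

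For Part~2, the crux is the following sign-stability claim: when $\alpha := \mathbf r(f)$ is irrational, the continuous periodic function $H(\theta) := \mathbf f^m(\theta) - \theta - k$ is sign-definite on $\mathbb R$ for every $m, k \in \mathbb Z$, with the sign agreeing with that of $m\alpha - k$. Indeed, if $H$ changed sign, the intermediate value theorem would produce $\theta_\ast$ with $\mathbf f^m(\theta_\ast) = \theta_\ast + k$; projecting gives a periodic orbit of $f$, which by Part~1 would force $\alpha$ to be rational, a contradiction. Once $H$ is sign-definite, its sign is pinned down by the asymptotics $(\mathbf f^{nm}(\theta) - \theta)/n \to m\alpha$ together with a uniform bound $|H| \geq \delta > 0$ from compactness. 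From this claim, for any $m_1, m_2, m_3 \in \mathbb Z$ the cyclic ordering of $f^{m_1}(x), f^{m_2}(x), f^{m_3}(x)$ on $\mathbb S^1$ coincides with that of $\rho_\alpha^{m_1}(y), \rho_\alpha^{m_2}(y), \rho_\alpha^{m_3}(y)$, so the correspondence $\psi: f^n(x) \mapsto \rho_\alpha^n(y)$ is well-defined (using injectivity of $n \mapsto f^n(x)$, also from Part~1) and order-preserving on the orbit.

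The main obstacle I expect is the sign-stability claim itself: it is the step that upgrades the purely asymptotic hypothesis ``$\mathbf r$ is irrational'' into the rigid geometric statement that each iterate $\mathbf f^m - \mathrm{id} - k$ has a definite sign uniformly in $\theta$, which is what pushes through to the order-preserving property of $\psi$. The remaining bookkeeping --- chasing cyclic orders from the sign data and verifying the well-definedness of $\psi$ --- is routine once this step is in hand.
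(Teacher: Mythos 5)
The paper states this lemma (Poincar\'e's classification of circle homeomorphisms) without proof, citing de Melo--van Strien, so there is no in-paper argument to compare against; your sketch is the standard textbook proof and it is correct --- the sign-stability claim you isolate (that $\mathbf f^m - \mathrm{id} - k$ cannot change sign when the rotation number is irrational, since a zero would produce a periodic point) is exactly the engine of Poincar\'e's argument, and the remaining steps (compactness to get $|G|\ge\delta$, linear growth contradicting the rotation number, monotone convergence on complementary intervals of $\mathrm{Fix}(f^q)$) are all sound. The one hypothesis you use implicitly --- that the lift satisfies $\mathbf f(\theta+1)=\mathbf f(\theta)+1$, so that $G$ and $H$ are $1$-periodic and compactness of $\mathbb S^1$ applies --- requires $f$ to be orientation preserving, which is the setting in which the rotation number is defined and in which the paper invokes the lemma (the chess billiard map $b=\gamma^+\circ\gamma^-$ is orientation preserving).
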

The orbits of an irrational rotation are dense, so completing the map $\psi$ from the above lemma by density gives a semiconjugacy between the circle homeomorphism $f$ and the irrational rotation $\rho_\alpha$, i.e. we have a continuous map $\psi: \mathbb S^1 \to \mathbb S^1$ such that 
\[\psi \circ f = \rho_\alpha \circ \psi.\]
Ultimately, we wish to find smooth coordinates on $\partial \Omega$ so that the chess billiard map $b$ is a pure rotation. In other words, we need reasonable conditions so that $\psi$ is a circle diffeomorphism. The first result in this direction was due to Denjoy \cite{Denjoy_32}, who proved that if the homeomorphism $f$ is $C^1$ and the derivative $f'$ has bounded variation and irrational rotation number $\alpha$, then $f$ is topologically conjugate to rotation by $\alpha$. However, Arnol'd found analytic circle diffeomorphisms with irrational rotation numbers that are not $C^1$ conjugate to a rotation, but proved in the same article that if the rotation number is Diophantine, analytic circle diffeomorphisms close to rotations are analytically conjugate to rotations \cite{Arnold_small_denominators_61} (and generalized to the smooth category by Moser \cite{Moser_iteration_66}). We need a stronger global result since we only know that the underlying chess billiard map is smooth and has Diophantine rotation number. The result we require is due to Herman \cite{Herman_79} and Yoccoz \cite{Yaccoz_84}. 

\begin{proposition}\label{prop:denjoy_moser}
    Let $b: \mathbb S^1 \to \mathbb S^1$ be a smooth circle diffeomorphism. Assume that the rotation number $r$ of $b$ is Diophantine. Then there exists a smooth diffeomorphism $\psi: \mathbb S^1 \to \mathbb S^1$ such that 
    $\psi \circ b \circ \psi^{-1} = \rho_r$
    where $\rho_r$ is rotation by $r$. 
\end{proposition}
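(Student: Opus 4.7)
The plan is to bootstrap from Denjoy's topological conjugacy theorem to smooth conjugacy by exploiting Diophantine estimates for the linearized (cohomological) equation. Since $b$ is a smooth circle diffeomorphism with irrational rotation number $r$, Denjoy's theorem already supplies a continuous homeomorphism $\psi_0: \mathbb S^1 \to \mathbb S^1$ with $\psi_0 \circ b \circ \psi_0^{-1} = \rho_r$, so the entire task reduces to upgrading the regularity of $\psi_0$ from $C^0$ to $C^\infty$ under the Diophantine hypothesis on $r$.

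The key analytic input is the solvability of the cohomological equation $u - u \circ \rho_r = g$ on $\mathbb S^1$. Expanding in Fourier series one finds $\widehat{u}(n) = \widehat{g}(n) / (1 - e^{2\pi i n r})$, and the Diophantine condition $|r - p/q| \ge c\, q^{-2-\beta}$ forces $|1 - e^{2\pi i n r}| \ge c'|n|^{-1-\beta}$ for all $n \neq 0$. Hence when $g \in C^\infty(\mathbb S^1)$ has zero mean, the equation has a unique mean-zero smooth solution with only a finite (order $1 + \beta$) loss of derivatives. This is the standard small-divisor estimate underlying all KAM-type arguments.

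With this in hand, I would write $b = \rho_r + \delta$ with $\delta$ smooth, seek the conjugacy in the form $\psi = \mathrm{id} + h$, and linearize the conjugacy equation $\psi \circ b = \rho_r \circ \psi$ at $h = 0$; this produces exactly a cohomological equation of the type above. A quadratically convergent Newton (Nash--Moser / KAM) iteration then constructs a smooth $\psi$, provided one starts from an initial guess close enough to the identity in a sufficiently strong $C^k$ norm. This local step is essentially Moser's theorem for $b$ near $\rho_r$.

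The main obstacle, and the actual content of Herman--Yoccoz, is running this iteration \emph{globally}, with no smallness assumption on $b - \rho_r$. The strategy is to first obtain a priori $C^k$ control on iterates of $b$ along the continued-fraction denominators $q_k$ of $r$, using cross-ratio distortion estimates that refine Denjoy's $C^1$ argument; these show that some high iterate $b^{q_k}$ is already close to a rotation in a strong enough topology to feed into the local KAM scheme, from which the smoothness of the global conjugating diffeomorphism $\psi$ is then recovered. Matching the loss of derivatives in the cohomological equation against the growth of the a priori norms of $b^{q_k}$ is the delicate point where the Diophantine exponent $\beta$ enters essentially, and is the step I expect to be hardest.
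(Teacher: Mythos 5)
The paper does not prove this proposition: it is stated as a citation of the global linearization theorem of Herman \cite{Herman_79} and Yoccoz \cite{Yaccoz_84}, with de Melo--van Strien \cite{Melo_Strien} given as a reference for the surrounding one-dimensional dynamics. So there is no in-paper argument to compare yours against; the only fair comparison is with the actual Herman--Yoccoz proof.

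Judged on its own terms, your proposal is an accurate roadmap but not a proof. You correctly identify the three ingredients --- Denjoy's topological conjugacy, the small-divisor estimate for the cohomological equation $u - u\circ\rho_r = g$ (which is exactly Lemma \ref{lem:cohom} of the paper), and a Newton/KAM iteration giving the \emph{local} theorem for $b$ close to $\rho_r$ --- and you correctly locate the real difficulty in the globalization step. But that step is the entire content of the theorem, and your paragraph on it ("cross-ratio distortion estimates \dots show that some high iterate $b^{q_k}$ is already close to a rotation in a strong enough topology") is a description of what must be proved, not an argument. Making it precise requires the full Denjoy--Yoccoz machinery: a priori bounds on $\|D b^{q_n}\|$ and on higher derivatives of the renormalized iterates, control of the Schwarzian/cross-ratio distortion uniformly in $n$, and a careful bookkeeping that matches the loss of $1+\beta$ derivatives per cohomological solve against the growth of the continued-fraction denominators $q_{n+1}\le C q_n^{1+\beta}$. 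This occupies a substantial monograph-length argument in the literature and cannot be reconstructed from the sketch. For the purposes of this paper the correct move is the one the paper makes: cite the theorem. If you want to present a proof sketch, you should at minimum state the local theorem you are invoking precisely (with its norms and smallness thresholds) and state the renormalization convergence result $\mathcal R^n b \to \rho$ that feeds into it, rather than gesturing at both.
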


An immediate corollary of smooth conjugation is ergodicity:
\begin{corollary}\label{cor:ergodic}
    Let $b: \mathbb S^1 \to \mathbb S^1$ be a smooth circle diffeomorphism with rotation number $\alpha$. Assume that $\alpha$ is Diophantine. Then $b$ is ergodic with respect to the Lebesgue measure, i.e. every measurable function $f \in L^2(\mathbb S^1)$ that satisfies $b^* f = f$ must be constant almost everywhere. 
\end{corollary}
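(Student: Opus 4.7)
The plan is to leverage Proposition \ref{prop:denjoy_moser} to transport the ergodicity question from $b$ to a pure rotation, and then invoke the classical fact that irrational rotations are ergodic. Since Diophantine numbers are irrational, the hypothesis of Proposition \ref{prop:denjoy_moser} gives a smooth diffeomorphism $\psi: \mathbb S^1 \to \mathbb S^1$ with $\psi \circ b \circ \psi^{-1} = \rho_\alpha$.

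First I would show that pullback by $\psi$ sets up a bijection between $b$-invariant $L^2$ functions and $\rho_\alpha$-invariant $L^2$ functions. Given $f \in L^2(\mathbb S^1)$ with $f \circ b = f$, set $g := f \circ \psi^{-1}$. Since $\psi$ is a smooth diffeomorphism of the compact manifold $\mathbb S^1$, its Jacobian is bounded above and below by positive constants, so $g \in L^2(\mathbb S^1)$ with equivalent norm. The conjugation identity yields
\[
g \circ \rho_\alpha = f \circ \psi^{-1} \circ \rho_\alpha = f \circ b \circ \psi^{-1} = f \circ \psi^{-1} = g,
\]
so $g$ is invariant under the rotation $\rho_\alpha$.

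Next I would verify that $\rho_\alpha$ is ergodic for irrational $\alpha$ by the standard Fourier series argument. Expanding $g = \sum_{n \in \Z} c_n e^{2 \pi i n x}$ in $L^2(\mathbb S^1)$, the invariance $g \circ \rho_\alpha = g$ becomes $c_n e^{2 \pi i n \alpha} = c_n$ for each $n \in \Z$. Since $\alpha$ is irrational, $e^{2 \pi i n \alpha} \ne 1$ for $n \ne 0$, forcing $c_n = 0$ for all $n \ne 0$. Therefore $g$ is constant almost everywhere, and hence $f = g \circ \psi$ is constant almost everywhere as well.

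There is no real obstacle here: the deep content sits entirely inside Proposition \ref{prop:denjoy_moser}, and the remainder is a soft translation of invariance through a measure-class-preserving conjugacy together with the elementary Fourier argument for rotations. If one wanted a proof avoiding smooth conjugacy, one could alternatively use Denjoy's theorem to obtain a continuous conjugacy and then argue using the unique ergodicity of irrational rotations, but the smooth conjugacy from Proposition \ref{prop:denjoy_moser} makes the argument above the most direct route.
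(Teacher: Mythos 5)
Your proof is correct and follows essentially the same route as the paper: apply Proposition \ref{prop:denjoy_moser} to conjugate $b$ to the rotation $\rho_\alpha$, transport the invariant function, and conclude by the Fourier series argument for irrational rotations. The only difference is that you spell out the measure-class-preserving step for the $L^2$ transport, which the paper leaves implicit.
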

\begin{proof}
    Let $f \in L^2( \mathbb S^1)$ be such that $b^* f = f$. By Proposition \ref{prop:denjoy_moser}, there exists a smooth diffeomorphism $\psi: \mathbb S^1 \to \mathbb S^1$ so that $\psi \circ b \circ \psi^{-1} = \rho_\alpha$. Then $g = \psi^{-1} \circ f$ satisfies $\rho^*_\alpha g = g$. It suffices to show that $g$ is constant almost everywhere. The Fourier series of $g$ must then satisfy
    \[e^{2 \pi i \alpha k} \hat g(k) = \hat g(k), \quad \hat g(k) = \int_0^1 e^{- 2 \pi i xk} g(x) \, dx.\]
    Since $\alpha$ is irrational, we see that $\hat g(k) = 0$ for $k \neq 0$, so $g$ must be constant almost everywhere. 
\end{proof}
\Remark The conditions on $b$ can be greatly relaxed to still have ergodicity. In fact, one can show that $b$ is ergodic if it is a $C^1$ diffeomorphism with bounded variation on the first derivative, and has no periodic points. See \cite{Melo_Strien}. We only need ergodicity in the Diophantine setting. 

Eventually, we will need to solve cohomological equations of the form
\begin{equation}\label{eq:basic_cohomological}
    (I - \rho^*_\alpha)v = g, \qquad \int_{\mathbb S^1} v = 0
\end{equation}
given $g \in \CIc(\mathbb S^1)$. Recall that $\rho_\alpha$ is a rotation defined in (\ref{eq:rotation_def}), so the equation can be written in a perhaps more familiar form as
\[v(x)-v(x+\alpha)=g(x).\]
If $\alpha$ satisfies the Diophantine condition, then we can have explicit high frequency control of $v$. 

\begin{lemma}\label{lem:cohom}
    Let $g \in \CIc(\mathbb S^1)$ with $\int_{\mathbb S^1} g = 0$, and assume that $\alpha$ is Diophantine with constants $c, \beta > 0$ as in Definition \ref{def:diophantine}. Then there exists unique $v \in C^\infty(\mathbb S^1)$ that solves (\ref{eq:basic_cohomological}) such that for all $s$
    \begin{equation*}
        \|v\|_{H^s} \le C\|g\|_{H^{s + \beta + 1}}
    \end{equation*} 
    where $C$ depends only on $c,s$. 
\end{lemma}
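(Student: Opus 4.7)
The plan is to solve the cohomological equation by Fourier series, using the Diophantine condition to control the small denominators that appear. Expand $v(x) = \sum_{k \in \Z} \hat v(k) e^{2\pi i k x}$ and $g(x) = \sum_{k \in \Z} \hat g(k) e^{2 \pi i k x}$, and observe that the operator $I - \rho_\alpha^*$ acts diagonally in this basis: the Fourier coefficient of $v(x) - v(x + \alpha)$ at frequency $k$ is $(1 - e^{2\pi i k \alpha}) \hat v(k)$. Thus the equation becomes $(1 - e^{2 \pi i k \alpha}) \hat v(k) = \hat g(k)$ for each $k \in \Z$.

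At $k = 0$ both sides must vanish, which is consistent with the assumption $\int g = 0$ and with the normalization $\int v = 0$; this also gives uniqueness, since any solution of the homogeneous equation has $(1 - e^{2\pi i k\alpha}) \hat v(k) = 0$, and irrationality of $\alpha$ (which follows from Diophantine) forces $\hat v(k) = 0$ for $k \neq 0$, while $\hat v(0) = 0$ by mean-zero. For $k \neq 0$, I would set
\[
\hat v(k) := \frac{\hat g(k)}{1 - e^{2\pi i k \alpha}}.
\]
The main quantitative step is to estimate this denominator using the Diophantine condition. For $k \neq 0$, let $p \in \Z$ be the nearest integer to $k \alpha$. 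The Diophantine inequality applied with $q = |k|$ gives $|k \alpha - p| \ge c / |k|^{1 + \beta}$. Combined with the elementary bound $|1 - e^{2\pi i \theta}| = 2 |\sin(\pi \theta)| \ge 4 |\theta - p|$ for the nearest integer $p$, this yields
\[
|1 - e^{2 \pi i k \alpha}|^{-1} \le \tfrac{1}{4c} |k|^{1 + \beta}, \qquad k \neq 0.
\]

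With this in hand, the Sobolev estimate is a direct computation: using $\|u\|_{H^s}^2 \asymp \sum_k (1 + |k|^2)^s |\hat u(k)|^2$,
\[
\|v\|_{H^s}^2 \;\lesssim\; \sum_{k \neq 0} (1 + |k|^2)^s |k|^{2 + 2 \beta} |\hat g(k)|^2 \;\lesssim\; \sum_k (1 + |k|^2)^{s + \beta + 1} |\hat g(k)|^2 \;\asymp\; \|g\|_{H^{s + \beta + 1}}^2,
\]
with a constant depending only on $c$ and $s$. Smoothness of $v$ follows because $\hat g(k)$ decays faster than any polynomial in $|k|$ (as $g \in C^\infty$), and we only lose a polynomial factor $|k|^{1 + \beta}$ in passing to $\hat v$, so $\hat v$ also has rapid decay and $v \in C^\infty(\mathbb S^1)$.

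There is no real obstacle here beyond the standard small-denominator issue, which is precisely what the Diophantine hypothesis is designed to handle; the lemma is essentially the one-dimensional prototype of KAM-type estimates. The loss of $\beta + 1$ derivatives in the final bound is sharp and reflects the $|k|^{1+\beta}$ blow-up of the inverse denominators.
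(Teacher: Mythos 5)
Your proof is correct and takes essentially the same route as the paper: expand in Fourier series, invert the multiplier $1 - e^{2\pi i k\alpha}$, and bound the small denominators by $|1-e^{2\pi i k\alpha}|^{-1} \lesssim c^{-1}|k|^{1+\beta}$ via the Diophantine condition. You simply supply more detail (the nearest-integer reduction, the $|\sin|$ lower bound, and the uniqueness argument) than the paper's terse version.
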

\begin{proof}
    Taking the Fourier series of both sides of (\ref{eq:basic_cohomological}),
    \[\hat f(k) = (1 - e^{2 \pi i k \alpha}) \hat v(x).\]
    For all $k \neq 0$,  
    \[\left|\frac{1}{1 - e^{2 \pi i k \alpha}} \right| \le c^{-1} |k|^{1 + \beta}.\]
    Since $\hat g(0) = 0$, $v$ can be solved modulo constant functions, and choosing $\hat v(0) = 0$, we have the estimate
    \[\|v\|_{H^s} \le C\|g\|_{H^{s + \beta + 1}}\]
    as desired. 
\end{proof}

\subsection{Examples}\label{subsec:examples}
We discuss some examples on explicit domains. In the square domain toy model, we will see how the underlying dynamics influence the regularity of the solution explicitly. 
\subsubsection{Square domain}\label{subsubsec:toy_square}
A useful example to keep in mind is the square domain case where $\Omega = [0, 1]^2$. Our arguments for general domains will depend crucially on a $C^\infty$ boundary. This toy example has the advantage that (\ref{eq:internal_waves}) can be solved directly in Fourier series. Furthermore, the chess billiard flow on the square is the same as the standard billiard flow, and the rotation number function $\mathbf r(\lambda)$ defined in (\ref{eq:rot_num_def}) is smooth and can be written down explicitly. It is given by
\begin{equation*}
    \mathbf r(\lambda) = \frac{\lambda}{\sqrt{1 - \lambda^2} + \lambda}.
\end{equation*}
See \cite{RSI} for the full derivation. We can formally write
\[u(t, x) = \sum_{\mathbf k \in \N^2} \hat u(t, \mathbf k) \sin(\pi k_1 x_1) \sin (\pi k_2 x_2) \]
where $\mathbf k = (k_1, k_2) \in \N^2$. Only in this subsection, we use the hat to denote Fourier transform with respect to the Dirichlet sine basis. If $u(t, x)$ is a solution to (\ref{eq:internal_waves}), the coefficients must satisfy the periodically driven harmonic oscillator equation
\begin{equation}\label{eq:f_coeffs}
    \begin{gathered}
        -\pi^2 (k_1^2 + k_2^2) \partial_t^2 \hat u(t, \mathbf k) + \pi^2 k_2^2 \hat u(t, \mathbf k) = \hat f(\mathbf k) \cos \lambda_0 t, \\
    \text{where} \quad \hat f(\mathbf k) = \int_{[0, 1]^2} f(x) \sin (\pi k_1 x_1) \sin(\pi k_2 x_2)\, dx_1 dx_2
    \end{gathered}
\end{equation}
This has solution 
\begin{equation}\label{eq:square_solution}
    \hat u (t, \mathbf k) = \frac{\hat f (\mathbf k)}{\lambda_0^2 k_1^2 - (1 - \lambda_0^2) k_2^2} \left[\cos(\lambda_0 t) - \cos \left( \frac{k_2}{|\mathbf k|} t \right) \right]
\end{equation}
If $\mathbf r(\lambda_0)$ is Diophantine, then the result of Theorem \ref{thm:evolution} holds. In fact, in this case, we get that $u(t) \in C^\infty(\Omega)$ uniformly in all the seminorms for all $t \in \R$. Indeed, if $\mathbf r(\lambda_0)$ is Diophantine, then there exist constants $c,\, \beta > 0$ such that
\[|q \cdot \mathbf r(\lambda_0) - p| \ge \frac{c}{q^{1 + \beta}}\]
for any $p \in \Z$ and $q \in \N$. Rewriting this condition in terms of $\lambda_0$, we find that 
\[|(q - p) \lambda_0 - p\sqrt{1 - \lambda_0^2}| \ge \frac{c}{q^{1 + \beta}}\]
where $c$ is a possibly different constant. Put $q = k_1 + k_2$ and $p = k_2$ to find that 
\begin{equation}\label{eq:sq_diophantine_est}
    |k_1 \lambda_0 - k_2 \sqrt{1 - \lambda_0^2}| \ge \frac{c}{q^{1 + \beta}} \gtrsim \frac{1}{|\mathbf k|^{1 + \beta}}.
\end{equation}
where the hidden constant is independent of $\mathbf k$. We clearly have 
\begin{equation}\label{eq:sq_est_2}
    |k_1 \lambda_0 + k_2 \sqrt{1 - \lambda_0^2}| \ge 1,
\end{equation}
so combining (\ref{eq:sq_diophantine_est}) and (\ref{eq:sq_est_2}) yields
\begin{equation}\label{eq:combined_sq_est}
    |\lambda_0^2 k_1^2 - (1 - \lambda_0^2)k_2^2| \ge \frac{c}{|\mathbf{k}|^{1 + \beta}}.
\end{equation}
for a possibly different constant $c > 0$. Since $f \in \CIc(\Omega; \R)$, $\hat f(\mathbf k)$ is rapidly decreasing in $\mathbf k$, which tempers the denominator in (\ref{eq:square_solution}) to give uniform smoothness of $u(t)$ in time.  

The above analysis also exhibits the spectral result of Theorem \ref{thm:spectral}. Note that $\kappa(\mathbf k)\sin(\pi k_1 x_1) \sin(\pi k_2 x_2)$, $\mathbf k \in \N^2$ form a complete orthonormal basis for $H^{-1}(\Omega)$, where $\kappa(\mathbf k) = \mathcal O(|k|)$ are simply normalizing constants. This basis consists of eigenfunctions with eigenvalues $\frac{k_1}{k_1 + k_2}$ for the operator $P$ defined in (\ref{eq:P_def}). In particular, $\sin(\pi k_1 x_1) \sin(\pi k_2 x_2)$ has eigenvalue $\frac{k_2^2}{k_1^2 + k_2^2}$. If $\mathbf r(\lambda_0)$ is Diophantine, then (\ref{eq:combined_sq_est}) gives a characterization of the eigenvalues near $\lambda_0^2$:
\begin{equation*}
    \left|\frac{k_2^2}{k_1^2 + k_2^2} - \lambda_0^2\right| \le \epsilon \implies \frac{1}{|\mathbf k|^{3 + \beta}} \gtrsim \epsilon^{-1}
\end{equation*}
Therefore the spectral measure $\mu_{f, f}$ satisfies the results of Theorem \ref{thm:spectral} near $\lambda_0^2$. Indeed, using the fact that the coefficients $\hat f(k)$ of $f \in \CIc(\Omega)$ defined in (\ref{eq:f_coeffs}) are rapidly decreasing, we have
\[\mu_{f, f}((\lambda_0^2 - \epsilon, \lambda_0^2 + \epsilon)) \lesssim \sum_{\mathbf k : \big|\frac{k_2}{k_1^2 + k_2^2} - \lambda_0^2 \big| \le \epsilon^{-1}} \hat f(\mathbf k)^2 \kappa(\mathbf k)^{-2} < C_d \epsilon^d \]
for any $d \in \N$. 

Finally, we mention that by tilting the square by $\eta$, the set of $\lambda$ for which $\mathbf \lambda$ is Diophantine is no longer a full measure set. More specifically, we consider the square domain specified by the vertices 
\[\big\{(0, 0), \, (\cos \eta, \sin \eta), \, (-\sin \eta, \cos \eta), \, \sqrt{2}(\cos(\eta + \tfrac{\pi}{4}), \sin(\eta + \tfrac{\pi}{4})) \big \}.\] 
Then graph of $\mathbf r(\lambda)$ is constant near values of $\lambda$ for which $\mathbf r(\lambda)$ is rational. See Figure~\ref{fig:rot_num} for an illustration and \cite[\S2.5]{DWZ} for details. 

\subsubsection{Circle}
An example of a domain that satisfies the hypothesis of our theorem generically is the disk $\mathbb D = \{|x| \le (2 \pi)^{-1}\}$. We identify the boundary with the circle $\mathbb S^1$. The boundary is smooth, and the rotation number can be computed explicitly. Indeed, the angle inscribed in the circle by the chords $(x, \gamma^-(x))$ and $(\gamma^-(x), b(x))$ is independent of $x$. The arc substended by such an inscribed angle is then independent of $x$, so we can compute that 
\begin{equation*}
    b(x) = x - \frac{2}{\pi} \arctan \left( \frac{\sqrt{1 - \lambda^2}}{\lambda} \right) \mod 1 \implies \mathbf r(\lambda) = 1 - \frac{2}{\pi} \arctan \left( \frac{\sqrt{1 - \lambda^2}}{\lambda} \right).
\end{equation*}
Clearly $\mathbf r(\lambda)$ is smooth with $\mathbf r'(\lambda) > 0$, and so the set of $\lambda \in (0, 1)$ for which $\mathbf r(\lambda)$ Diophantine has Lebesgue measure $1$. See Figure \ref{fig:rot_num}.

\section{Microlocal analysis} \label{sec:microlocal}
In this section, we review some standard results from microlocal analysis. Furthermore, we develop a modification of the semiclassical calculus, which we refer to as the positive (or negative) semiclassical calculus, that will be useful in understanding the boundary reduced problem. 

\subsection{Microlocal calculus on \texorpdfstring{$\partial \Omega$}{}}
We first discuss classical pseudodifferential operators on $\mathbb S^1$, and refer the reader to \cite[Chapter 18.1]{H3} for details. 
The boundary $\partial \Omega$ can be identified with $\mathbb S^1 = \R/\Z$. Define the space of 1-periodic Kohn--Nirenberg symbols $S^m(T^* \mathbb S^1)$ as the set of smooth functions $a \in C^\infty(\R_x \times \R_\xi)$ that satisfies
\begin{equation}\label{eq:KN_seminorm}
    a(x + 1, \xi) = a(x, \xi), \qquad |\partial_x^\alpha \partial_\xi^\beta a(x, \xi)| \le C_{\alpha, \beta} \langle \xi \rangle^{m - \beta},
\end{equation}
where $\langle \xi \rangle := \sqrt{1 + |\xi|^2}$. We will only be working with symbols on $T^* \mathbb S^1$, so simply denote $S^m = S^m(T^* \mathbb S^1)$. Pseudodifferential operators are quantizations of these symbols. Generally speaking, the quantization procedure is not canonical. However, it will be important for us to relate pseudodifferential operators to Fourier multipliers. So we fix the following standard quantization procedure. The quantization of $a \in S^m$ is the operator $\Op(a): C^\infty(\mathbb S^1) \to C^\infty(\mathbb S^1)$ given by 
\begin{equation}\label{eq:std_quant}
    \Op(a) u(x) = \frac{1}{2 \pi} \int_{\R^2} e^{i(x - y) \xi} a(x, \xi) u(y)\, dy d \xi.
\end{equation}
Here, $u \in C^\infty (\mathbb S^1)$ is understood as a 1-periodic function, and the integral is understood in the sense of oscillatory integrals (see, for instance, \cite[Chapter 1]{GS_94}). $\Op(a)$ also extends to an operator from $\mathcal D'(\mathbb S^1) \to \mathcal D'(\mathbb S^1)$. In terms of Fourier series, this quantization procedure can be equivalently defined as 
\begin{equation}\label{eq:op_fourier_series}
    \begin{gathered}
        \Op(a) u(x) = \sum_{k, n \in \Z} e^{2 \pi i n x} a_{n - k} (k) \hat u(k), \\
        a_\ell(k) := \int_0^1 a(x, 2 \pi k) e^{-2 \pi i \ell x}\, dx, \quad \hat u(k) := \int_0^1 u(x) e^{2 \pi i kx} \, dx.
    \end{gathered}
\end{equation}
Observe that when $a \in S^m$ is only a function of $\xi$, then $\Op(a)$ is a Fourier multiplier:
\[\Op(a)u(x) = \sum_{k \in \Z} e^{2 \pi i nx} a(2 \pi k) \hat u(k).\]

Now we define some spaces of pseudodifferential operators. We first define the space of smoothing operators on $\mathbb S^1$:
\begin{equation}\label{eq:smoothing}
    \Psi^{-\infty}(\mathbb S^1) := \{R: C^\infty(\mathbb S^1) \to \mathcal D'(\mathbb S^1) \mid \text{the Schwartz kernel lies in $C^\infty(\mathbb S^1 \times \mathbb S^1)$}\}.
\end{equation}
Notably, $R \in \Psi^{-\infty}(\mathbb S^1)$ extends to an operator from $\mathcal D'(\mathbb S^1) \to C^\infty( \mathbb S^1)$. Now we can define the space of $m$-th order pseudodifferential operators on $\mathbb S^1$ as 
\begin{equation}\label{eq:KN_class}
    \Psi^m(\mathbb S^1) := \{\Op(a) + R \mid a \in S^m,\, R \in \Psi^{-\infty}(\mathbb S^1)\}. 
\end{equation}
We remark that 
\[\Psi^{-\infty}(\mathbb S^1) = \bigcap_{m \in \R} \Psi^{m}(\mathbb S^1).\]

We will encounter $\omega$-dependent families of operators $A = A_\omega \in \Psi^m(\mathbb S^1)$ whose full symbols satisfy (\ref{eq:KN_seminorm}) uniformly in a parameter $\omega$. Here, a full symbol of $A$ is a family $\sigma(A) = a = a_\omega \in S^m(T^* \mathbb S^1)$ such that $\Op(a) - A \in \Psi^{-\infty}$ uniformly in $\omega$. We will often suppress the $\omega$ dependence in the notation. Note that the full symbol of $A$ is not unique, both due to the Fourier series formula for quantization (\ref{eq:op_fourier_series}) and because the operators are understood modulo $\Psi^{-\infty}$. 

The symbol composition formula descends directly from the composition formula for pseudodifferential operators on $\R$: let $a \in S^m$ and $b \in S^\ell$ then
\begin{equation}\label{eq:classical_comp}
    \begin{gathered}
        \Op(a) \Op(b) \in \Psi^{m + \ell}(T^* \mathbb S^1), \quad \Op(a) \Op(b) = \Op(a \# b), \\
        \text{where} \quad a \# b = e^{i D_\xi D_y} a(x, \xi) b(y, \eta) \big|_{\substack{y = x \\ \eta = \xi}} \in S^{m + \ell}.
    \end{gathered}
\end{equation}
Here and also throughout the paper, we adopt the H\"ormander convention of $D_x := -i\partial_x$. This also gives the asymptotic summation formula 
\begin{equation}\label{eq:comp_expansion}
    a \# b \sim \sum_{k = 0}^\infty \frac{i^k}{k!} D_\xi^k a(x, \xi) D_x^k b(x, \xi),
\end{equation}
which is understood as
\[a\# b - \sum_{k = 0}^{N - 1} \frac{i^k}{k!} D_\xi^k a(x, \xi) D_x^k b(x, \xi) \in S^{m + \ell - N}.\]

We also have a change of variables formula for pseudodifferential operators. For a diffeomorphism $\psi: \mathbb S^1 \to \mathbb S^1$, we will contract some notation and use
\[\psi^{-*}:= (\psi^{-1})^*\]
to denote the pullback of the inverse. 

\begin{proposition}\label{prop:classical_change}
    Let $\psi: \mathbb S^1 \to \mathbb S^1$ be a diffeomorphism and let $a \in S^m$. Then there exists $\tilde a \in S^m$ such that 
    \begin{equation}
        \psi^* \Op(a) \psi^{-*} = \Op (\tilde a).
    \end{equation}
    Furthermore, $\tilde a$ has asymptotic expansion 
    \begin{equation}\label{eq:change_expansion}
        \tilde a \sim \sum_{k = 0}^\infty L_k(a \circ \tilde \psi),
    \end{equation}
    where $L_k$ are differential operators of order $2k$ on $T^* \mathbb S^1$ and map $S^m \to S^{m - k}$, and $\widetilde \psi$ is the lifted symplectomorphism of $\psi$, i.e. 
    \[\widetilde \psi: T^* \mathbb S^1 \to T^* \mathbb S^1, \quad \widetilde \psi(x, \xi) := (\psi(x), (d \psi(x))^{-1} \xi).\]
    Futhermore, $L_0 = 1$, and $L_k = L'_k D_\xi$ for some differential operator $L'_k$ for every $k \ge 1$. 
\end{proposition}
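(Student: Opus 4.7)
The plan is to read off $\tilde a$ from the Schwartz kernel of $\psi^*\Op(a)\psi^{-*}$ via a Hadamard-type substitution that converts $\psi(x)-\psi(y)$ into a product with $(x-y)$, a change of variable in the frequency, and the standard reduction of amplitudes to left symbols (Kuranishi's trick).

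First I would localize the problem: since the kernel of $\Op(a)$ is smooth off the diagonal and $\Psi^{-\infty}$ is preserved under conjugation by a diffeomorphism, after modifying by a smoothing operator I may assume the kernel of $\Op(a)$ is supported in a small neighborhood of the diagonal in $\partial\Omega\times\partial\Omega$ that lifts to a neighborhood of the diagonal in $\R\times\R$. There the Schwartz kernel of $\psi^*\Op(a)\psi^{-*}$ is
\begin{equation*}
\widetilde K(x,y)=|d\psi(y)|\cdot\frac{1}{2\pi}\int_{\R}e^{i(\psi(x)-\psi(y))\xi}\,a(\psi(x),\xi)\,d\xi.
\end{equation*}
Writing $\psi(x)-\psi(y)=\Psi(x,y)(x-y)$ with $\Psi(x,y):=\int_0^1\psi'(y+t(x-y))\,dt$ smooth and $\Psi(x,x)=\psi'(x)\neq 0$, the substitution $\eta=\Psi(x,y)\xi$ gives
\begin{equation*}
\widetilde K(x,y)=\frac{1}{2\pi}\int_{\R}e^{i(x-y)\eta}\,b(x,y,\eta)\,d\eta,\qquad b(x,y,\eta):=\frac{|d\psi(y)|}{|\Psi(x,y)|}\,a\!\left(\psi(x),\frac{\eta}{\Psi(x,y)}\right),
\end{equation*}
and the $\Psi(x,y)^{-1}$ factor is smooth on the localization neighborhood, so direct seminorm estimates place $b\in S^m$ jointly in $(x,y,\eta)$.

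Applying the standard amplitude-to-left-symbol reduction (Kuranishi's trick, see e.g.\ \cite[Thm.~18.1.17]{H3}) then produces $\tilde a\in S^m$ with $\Op(\tilde a)-\psi^*\Op(a)\psi^{-*}\in\Psi^{-\infty}$ and
\begin{equation*}
\tilde a(x,\xi)\sim\sum_{k=0}^\infty \frac{1}{k!}\,D_\xi^k D_y^k\,b(x,y,\xi)\big|_{y=x}.
\end{equation*}
This realizes \eqref{eq:change_expansion} with $L_k(a\circ\widetilde\psi):=\tfrac{1}{k!}D_\xi^k D_y^k\,b|_{y=x}$. The $k=0$ contribution is $b(x,x,\xi)=a(\psi(x),\xi/\psi'(x))=(a\circ\widetilde\psi)(x,\xi)$, giving $L_0=\mathrm{id}$; the order $2k$ claim and the bound $L_k:S^m\to S^{m-k}$ follow because $D_\xi$ lowers symbol order by one while $D_y$ preserves it, with $k$ applications of each.

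The remaining task is to extract the claimed $D_\xi$-factor for $k\ge 1$. The key structural identity, obtained by direct chain rule, is
\begin{equation*}
\partial_y\,a\!\left(\psi(x),\frac{\xi}{\Psi(x,y)}\right)=-\frac{\xi\,\partial_y\Psi(x,y)}{\Psi(x,y)}\cdot\partial_\xi\,a\!\left(\psi(x),\frac{\xi}{\Psi(x,y)}\right),
\end{equation*}
which converts every $D_y$ acting on the $a$-factor of $b$ into a smooth multiplier times $\partial_\xi$ of the same factor. Writing $b=J(x,y)\cdot a(\psi(x),\xi/\Psi(x,y))$ and Leibniz-expanding $D_y^k$, in every resulting term either some $D_y$ has already produced a $\partial_\xi$ on the $a$-factor, or else all $D_y$'s landed on the $\xi$-independent prefactor $J$, in which case the outer $D_\xi^k$ with $k\ge 1$ is forced to place a $\partial_\xi$ on $a\circ\widetilde\psi$. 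Either way $L_k$ factors as $L_k'D_\xi$. The main technical point is just this last bookkeeping: iterating the structural identity while tracking order counts through the Leibniz expansion. The rest is a direct manipulation of oscillatory integrals.
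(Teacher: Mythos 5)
Your argument is correct and is the standard Kuranishi-trick proof; the paper itself gives no proof of Proposition \ref{prop:classical_change} and simply cites \cite[Chapter 18.1]{H3}, where exactly this route (localize near the diagonal, write $\psi(x)-\psi(y)=\Psi(x,y)(x-y)$, change frequency variable, reduce the amplitude to a left symbol) is taken. The only slip is cosmetic: with the paper's convention \eqref{eq:left_reduction} the left-reduction coefficients are $\tfrac{i^{k}}{k!}D_\xi^kD_y^k=\tfrac{i^{-k}}{k!}\partial_\xi^k\partial_y^k$ rather than $\tfrac{1}{k!}D_\xi^kD_y^k$, and this missing factor of $i^k$ affects none of the structural claims ($L_0=1$, order $2k$, $S^m\to S^{m-k}$, $L_k=L_k'D_\xi$). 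One step worth writing out when polishing: after restricting to $y=x$ the surviving derivatives of $a$ are all in the fiber slot (since the first argument $\psi(x)$ depends on neither $y$ nor $\xi$), and the identity $(\partial_\xi^q a)\circ\tilde\psi=(\psi')^{q}\,\partial_\xi^q(a\circ\tilde\psi)$ is what turns your expressions into honest differential operators applied to $a\circ\tilde\psi$.
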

We immediately have the following corollary from the asymptotic expansions (\ref{eq:comp_expansion}) and (\ref{eq:change_expansion}).
\begin{corollary}\label{cor:Heaviside_identities}
    Let $\chi \in C^\infty(T^* \mathbb S^1)$ be such that $\supp \chi \subset \{\xi \ge 0\}$ and $\chi = 1$ near $\xi = + \infty$. Then for any $\omega$-dependent families of $A \in \Psi^{m}$ and orientation preserving diffeomorphism $\psi:\mathbb S^1 \to \mathbb S^1$, we have
    \[[\Op(\chi), A] \in \mathcal O(1)_{\Psi^{-\infty}} \quad \text{and} \quad \psi^* \Op(\chi) \psi^{-*} = \Op(\chi) + \Psi^{-\infty}.\] 
\end{corollary}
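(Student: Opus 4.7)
The plan is to derive both statements from a single observation about the symbol $\chi$: since $\chi \equiv 1$ for $\xi \geq R$ (some $R > 0$, uniformly in $x \in \mathbb S^1$) and $\chi \equiv 0$ for $\xi < 0$, every partial derivative $\partial_x^\alpha \partial_\xi^\beta \chi$ with $\alpha + \beta \geq 1$ is compactly supported in $\xi$ and hence lies in $S^{-\infty}$. This one fact drives both identities.

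For the commutator statement, I would apply the composition asymptotics~\eqref{eq:comp_expansion} to $\chi \# a - a \# \chi$ with $a = \sigma(A)$. The $k = 0$ contributions cancel, and for each $k \geq 1$ the summand
\[
\frac{i^k}{k!}\bigl(D_\xi^k \chi \cdot D_x^k a - D_\xi^k a \cdot D_x^k \chi\bigr)
\]
contains either $D_\xi^k \chi$ or $D_x^k \chi$, both of which are in $S^{-\infty}$ by the observation. Since $S^{-\infty}$ is an ideal, every individual term of the expansion is $S^{-\infty}$; truncating at order $N$ shows that $\chi \# a - a \# \chi$ lies in $S^{-\infty} + S^{m - N}$ for all $N$, hence in $\bigcap_N S^{m - N} = S^{-\infty}$. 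Uniformity in $\omega$ is inherited from the uniform symbol bounds on $a$, giving $[\Op(\chi), A] \in \mathcal O(1)_{\Psi^{-\infty}}$.

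For the change-of-variables statement, I would invoke Proposition~\ref{prop:classical_change} to write $\psi^* \Op(\chi) \psi^{-*} = \Op(\tilde \chi)$ with $\tilde \chi \sim \sum_{k \geq 0} L_k(\chi \circ \widetilde \psi)$, $L_0 = 1$, and $L_k = L_k' D_\xi$ for $k \geq 1$. For the leading term, since $\psi$ is orientation-preserving one has $d\psi > 0$ bounded above and below on $\mathbb S^1$, so $\chi \circ \widetilde \psi(x, \xi) = \chi(\psi(x), (d\psi(x))^{-1}\xi)$ still vanishes for $\xi < 0$ and equals $1$ for $\xi \geq R \cdot \max_x d\psi$; thus $\chi \circ \widetilde \psi - \chi$ has compact $\xi$-support, giving $L_0(\chi \circ \widetilde \psi) = \chi$ modulo $S^{-\infty}$. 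For $k \geq 1$, $L_k(\chi \circ \widetilde \psi)$ factors through $\partial_\xi(\chi \circ \widetilde \psi) = (d\psi(x))^{-1}(\partial_\xi \chi)(\psi(x), (d\psi(x))^{-1}\xi)$, which inherits the compact $\xi$-support of $\partial_\xi \chi$, so $L_k(\chi \circ \widetilde \psi) \in S^{-\infty}$. Summing asymptotically yields $\tilde \chi - \chi \in S^{-\infty}$, i.e.\ $\Op(\tilde \chi) - \Op(\chi) \in \Psi^{-\infty}$.

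I do not anticipate a serious obstacle: once the compact $\xi$-support observation is in hand, both identities reduce to routine symbol-calculus bookkeeping using formulas already stated in the excerpt. The only point demanding a bit of attention is the uniformity in $\omega$ in the second identity when $\psi$ itself depends on $\omega$, which is handled so long as $\psi$ and $(d\psi)^{\pm 1}$ are bounded uniformly in $\omega$.
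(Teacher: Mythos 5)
Your proof is correct and follows exactly the route the paper intends: the corollary is stated as an immediate consequence of the composition expansion~\eqref{eq:comp_expansion} and the change-of-variables expansion~\eqref{eq:change_expansion}, and your argument is precisely the fleshing-out of that remark, with the key observation that every derivative of $\chi$ (and the difference $\chi\circ\widetilde\psi-\chi$) is compactly supported in $\xi$ and hence lies in $S^{-\infty}$. No gaps.
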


For a (possibly $\omega$-dependent) operator $A \in \Psi^m(\mathbb S^1)$ with full symbol $a \in S^m$, we define the principal symbol as
\begin{equation}
    \sigma_m(A) = [a] \in S^m/S^{m - 1}.
\end{equation}
While the full symbol is not unique, the principal symbol is. In fact, we have a short exact sequence
\[0 \rightarrow \Psi^{m - 1} \rightarrow \Psi^{m} \xrightarrow{\sigma_m} S^m/S^{m - 1} \to 0. \]
The principal symbol is an element of the quotient will be implicitly understood throughout, and we will write $\sigma_m(A) = b$ for any $b$ that satisfies $a - b \in S^{m - 1}$ uniformly in $\omega$. We emphasize here that $\sigma_m$ with a subscript will denote the principal symbol, and $\sigma$ without a subscript will denote a choice of a full symbol. 

With the quantization procedure given by (\ref{eq:std_quant}), the symbol $a(x, \xi)$ is called a \textit{left symbol}. We can also have symbols of the form $a \in C^\infty(\R_x \times \R_y \times \R_\xi)$ that are $1$-periodic in both $x$ and $y$. We say that $a \in S^m(\R_x \times \R_y \times \R_\xi)$ if it satisfies the estimate
\[|\partial_{x, y}^\alpha \partial_\xi^\beta a(x, \xi)| \le C_{\alpha, \beta} \langle \xi \rangle^{m- \beta}.\]
We can quantize these symbols by 
\[\Op(a) u(x) = \frac{1}{2 \pi} \int_{\R^2} e^{i(x - y) \xi} a(x, y, \xi) u(y)\, dy d\xi.\]
This gives the same family of pseudodifferential operators on $\mathbb S^1$ as in (\ref{eq:KN_class}):
\[\Psi^m(\mathbb S^1) = \{\Op(a) + R \mid a \in S^m(\R_x \times \R_y \times \R_\xi),\, R \in \Psi^{-\infty}(\mathbb S^1) \}.\]
Moreover, any such symbol can be reduced to a left symbol by the following asymptotic formula. Let $a \in S^m(\R_x \times \R_y \times \R_\xi)$, and define the left symbol by the asymptotic expansion 
\begin{equation}\label{eq:left_reduction}
    b(x, \xi) \sim \sum_{k = 0}^\infty \frac{i^{-k}}{k!} \left(\partial_\xi^k \partial_y^k a(x, y, \xi) \right)|_{y = x},
\end{equation}
which exists by Borel's lemma (see \cite[Proposition 1.8]{GS_94}). Then $\Op(a) - \Op(b) \in \Psi^{-\infty}$. The symbol $b$ is called the \textit{left reduction} of the symbol $a$.

\subsection{Semiclassical calculus}
Now we define the semiclassical calculus over $\mathbb S^1$ as well as a slight variation that restricts to positive (or negative) frequencies. Semiclassical operators quantize symbols that depend on a semiclassical parameter $h$, such that when the symbol is differentiated, there is extra decay in $h$ as $h \to 0$.

\subsubsection{The standard semiclassical calculus}
Denote by $S^m_h(T^* \mathbb S^1)$ the set of $h$-dependent and $1$-periodic symbols that satisfy (\ref{eq:KN_seminorm}) uniformly in $h$. For $a \in S^m_h(T^* \mathbb S^1)$, we fix the semiclassical quantization procedure 
\begin{equation}
    \Op_h(a)u(x) := \frac{1}{2 \pi h} \int_{\R^2} e^{\frac{i}{h}(x - y) \xi} a(x, \xi) u(y) \, dy d\xi = \frac{1}{2 \pi} \int_{\R^2} e^{i(x - y) \xi} a(x, h\xi) u(y)\, dy d\xi.
\end{equation}
We see that this is related to the nonsemiclassical quantization procedure by 
\[\Op_h(a) = \Op(a_h) \quad \text{where} \quad a_h(x, \xi) = a(x, h \xi).\]
We define the space of $m$-th order semiclassical pseudodifferential operators on $\mathbb S^1$ as 
\begin{equation}\label{eq:semiclassical_calculus}
    \Psi^m_{h}(\mathbb S^1) := \{\Op_h(a) + R \mid a \in S_h^m, \, R \in h^\infty \Psi^{-\infty}(\mathbb S^1)\},
\end{equation}
where $h^{\infty} \Psi^{-\infty} := \mathcal O(h^\infty)_{\Psi^{-\infty}}$. We say that $a \in S_h^m(T^* \mathbb S^1)$  is a semiclassical full symbol of $A \in \Psi_h^m(\mathbb S^1)$ if $\Op_h(a) = A$ modulo $h^{\infty} \Psi^{-\infty}$. 
Note that 
\[\bigcap_{m \in \R} h^m \Psi_h^{-m}(\mathbb S^1) = h^\infty \Psi^{-\infty}.\]
Here, we intentionally put the residual class as $h^\infty \Psi^{-\infty}$ without the subscript $h$, where $\Psi^{-\infty}$ is the same nonsemiclassical residual class defined in~\eqref{eq:smoothing}, uniformly in $h$. We also define the space
\begin{equation}\label{eq:psi-inf}
    \Psi^{-\infty}_{h}(\mathbb S^1) := \{\Op_h(a) + R \mid a \in S_h^{-\infty}, \, R \in h^\infty \Psi^{-\infty}(\mathbb S^1)\}
\end{equation}
in complete analogy with~\eqref{eq:semiclassical_calculus}. It is important to note the following relationship between the semiclassical calculus with parameter $h$ and the nonsemiclassical calculus with an implicit parameter $h$:
\begin{equation}\label{eq:zero_infinity}
    h^\infty \Psi^{-\infty}_h(\mathbb S^1) \subset \Psi^{-\infty}(\mathbb S^1), \qquad \Psi^{-\infty}_h(\mathbb S^1) \subset \Psi^{0}(\mathbb S^1). 
\end{equation}
Most importantly, operators in $\Psi_h^{-\infty}(\mathbb S^1)$ are \textit{not} uniformly in $\Psi^{-\infty}(\mathbb S^1)$ with respect to $h$.

We also have the following composition and change of variables formulas. For $A \in \Psi^m_h(\mathbb S^1)$ and $B \in \Psi^\ell_h(\mathbb S^1)$ with full symbols $a$ and $b$ respectively, 
\begin{equation}
    \begin{gathered}
        AB \in \Psi_h^{m + \ell}(\mathbb S^1), \quad AB = \Op_h(a \# b) + R, \quad R \in h^\infty\Psi^{-\infty}, \\
        \text{where} \quad a \# b \sim \sum_{k = 0}^\infty \frac{(ih)^k}{k!} D_\xi^k a(x, \xi) D_x^k b(x, \xi) \in S_h^{m + \ell},
    \end{gathered}
\end{equation}
and for every diffeomorphism $\psi: \mathbb S^1 \to \mathbb S^1$, we have
\begin{equation}\label{eq:semiclassical_COV}
\begin{gathered}
    \psi^* \Op(a) \psi^{-*} = \Op (\tilde a), \\
    \text{where} \quad \tilde a \sim \sum_{k = 0}^\infty h^k L_k(a \circ \tilde \psi).
\end{gathered}
\end{equation}

There exists a canonical principal symbol map 
\begin{equation}
    \sigma_h(A) := [a] \in S_h^m / hS_h^{m - 1},
\end{equation}
where $a$ is a semiclassical full symbol of $A$. We have a short exact sequence 
\[0 \rightarrow h\Psi^{m - 1}_h \rightarrow \Psi^m_h \xrightarrow{\sigma_h} S_h^m/hS_h^{m - 1} \to 0.\]

We need a notation of where an operator is microlocally nontrivial. This is captured by the operator wavefront set. This is a closed subset $\WF_h(A) \subset T^* \mathbb S^1$. We say that $(x_0, \xi_0) \notin \WF_h(A)$ if there exists a neighborhood $U$ of $(x, \xi)$ such that a full symbol $a$ of $A$ satisfies
\[|\partial_x^\alpha \partial_\xi^\beta a(x, \xi)| \le C_{N, \alpha, \beta} h^{N} \langle \xi \rangle^{-N}.\]
One can easily check that $\WF_h(A)$ is independent of the choice of the full symbol used in the definition since $\WF_h(R) = \emptyset$ for all $R \in h^\infty \Psi^{-\infty}$. The converse of this does not exactly hold since in reality, the wavefront set should be defined on the fiber-radial compactification of $T^* \mathbb S^1$, which includes fiber infinity. However, this issue does not come up in the context of this paper. 

We will need the following result on the boundedness of pseudodifferential operators on Sobolev spaces. 
\begin{lemma}\label{lem:contraction}
    Fix $s \in \R$. Let $A \in \Psi^{0}_h(\mathbb S^1)$, and suppose the semiclassical principal symbol $\sigma_h(A)$ is supported away from the zero section of $T^* \mathbb S^1$. Furthermore, suppose $\sup_{(x, \xi) \in T^*\mathbb S^1} |\sigma_h(A)(x, \xi)| < M$ for all sufficiently small $h$. Then 
    \[\|A\|_{H^s \to H^s} < M \]
    for all sufficiently small $h$.
\end{lemma}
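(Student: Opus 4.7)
The plan is to first establish the case $s = 0$ via a sharp-Gårding-type positivity argument, and then reduce the general Sobolev estimate to $L^2$ by conjugation with the elliptic classical operator $\Lambda^s := \Op(\langle \xi \rangle^s) \in \Psi^s$.

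For the $L^2$ bound, choose $M' \in (\sup |\sigma_h(A)|, M)$. Then $M'^2 - |\sigma_h(A)|^2$ is uniformly bounded below by a positive constant, so the symbol
$$b := \sqrt{M'^2 - |\sigma_h(A)|^2}$$
lies in $S^0_h$ (all derivatives inherit decay in $\xi$ since $b$ is bounded away from $0$). Setting $B := \Op_h(b)$, the principal symbols of $A^*A$ and $B^*B$ sum to $M'^2$, so by the semiclassical composition formula
$$M'^2 I - A^*A - B^*B \in h \Psi^{-1}_h.$$
Uniform $L^2$ boundedness of $\Psi^0_h$ (the semiclassical Calderón--Vaillancourt theorem) gives $|\langle (M'^2 I - A^*A - B^*B)u, u \rangle| \le Ch \|u\|_{L^2}^2$. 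Combined with $\|Bu\|_{L^2}^2 \ge 0$, this rearranges to
$$\|A u\|_{L^2}^2 \le (M'^2 + Ch) \|u\|_{L^2}^2 < M^2 \|u\|_{L^2}^2$$
for all sufficiently small $h$, as desired.

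For general $s$, since $\|u\|_{H^s}$ is equivalent to $\|\Lambda^s u\|_{L^2}$, it suffices to bound $\|\Lambda^s A \Lambda^{-s}\|_{L^2 \to L^2}$. Write $\Lambda^s A \Lambda^{-s} = A + [\Lambda^s, A] \Lambda^{-s}$. The classical composition expansion gives $[\Lambda^s, A]$ as a classical operator of order $s-1$, with leading term proportional to $\partial_\xi \langle \xi \rangle^s \cdot \partial_x \sigma(A)$. Since $\sigma_h(A)$ is supported away from the zero section, modulo $h^\infty \Psi^{-\infty}$ (a smoothing error negligible on all Sobolev spaces) the classical symbol of $A$ may be taken supported in $|\xi| \gtrsim 1/h$, a region where $\langle \xi \rangle^{-1} \lesssim h$. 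Hence the symbol of $[\Lambda^s, A] \Lambda^{-s}$, of naive order $-1$, is actually $\mathcal O(h)$ in all $S^0$ seminorms uniformly in $h$, yielding $\|[\Lambda^s, A] \Lambda^{-s}\|_{L^2 \to L^2} = \mathcal O(h)$ by Calderón--Vaillancourt. Combining, $\|\Lambda^s A \Lambda^{-s}\|_{L^2 \to L^2} \le \sqrt{M'^2 + Ch} + C'h < M$ for $h$ small enough.

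The main obstacle is the bookkeeping in the conjugation step: $\Lambda^s$ is classical while $A$ is semiclassical, and the two calculi do not compose cleanly in a single class. The extra gain of $h$ in the commutator is not automatic from the usual symbolic calculus; it critically uses that the wavefront set of $A$ is confined to frequencies of order $1/h$ (a consequence of the support assumption on $\sigma_h(A)$). Everything else --- the square-root construction, the symbolic cancellation in $M'^2 I - A^*A - B^*B$, and semiclassical $L^2$ boundedness --- is standard.
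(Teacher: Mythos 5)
Your proof is correct and follows essentially the same route as the paper: conjugate by $\Op(\langle\xi\rangle^s)$, use the support of the symbol away from the zero section (hence in $|\xi|\gtrsim 1/h$ after rescaling) to make the commutator $\mathcal O(h)$ on $L^2$, and apply the sharp semiclassical $L^2$ bound to the main term. The only difference is that you prove that sharp $L^2$ bound inline via the standard square-root/G\aa rding trick, whereas the paper simply cites it (\cite[Proposition 13.13]{Zworski_semiclassical_analysis}).
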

\begin{proof}
Denote the principal symbol by $a := \sigma_h(A)$ and the rescaling $a_h(x, \xi) := a(x, h \xi)$. Recall that $a_h \in \mathcal O(1)_{\Psi^0}$. Since $a$ is supported away from the zero section, we see that $\langle \xi \rangle^{-k} \partial_x^k a_h = \mathcal O(h^k)_{\Psi^0}$. Then we see from the nonsemiclassical composition formula (\ref{eq:classical_comp}) that
\begin{equation}\label{eq:sobolev}
    \Op(\langle \xi \rangle^s) A \Op(\langle \xi \rangle^{-s}) = \Op_h(a) + \mathcal O(h)_{\Psi^0
    }
\end{equation}
Using \cite[Proposition 13.13]{Zworski_semiclassical_analysis} for the first term on the right hand sice of (\ref{eq:sobolev}) and \cite[Proposition 4.23]{Zworski_semiclassical_analysis} for the second term, we recover the desired bound. 
\end{proof}
\Remark The support condition in the above lemma is crucial. Consider an operator $\Op_h(a)$ where $a$ is independent of $\xi$. This is simply the multiplication operator $f(x) \mapsto a(x) f(x)$ for all $h$. Even if $a(x) < M$, it is possible that $\partial_x a(x) \gg M$, so the lemma clearly fails for $s = 1$. Without the support condition, Sobolev boundedness based on the principal symbol must be formulated using semiclassical Sobolev spaces (see \cite[Appendix E]{DZ_resonances}).

\subsubsection{Positive semiclassical calculus}\label{subsubsec:positive_semiclassical} The smoothness of the symbol near the zero section is very important in the semiclassical calculus. However, the operators that will appear in the reduction to boundary of the internal waves equation (see \S\ref{sec:reduction_to_boundary}) will look like semiclassical quantization of symbols that are supported on only the positive frequencies: 
\begin{equation*}
    T^*_+ \mathbb S^1 := \mathbb S^1 \times [0, \infty)_\xi.
\end{equation*}
We will see that the quantization of such symbols will come at the cost of an extra smoothing error that does not decay with $h$. 

Let us first define the symbol class $S_h^m(T^*_+ \mathbb S^1)$. We say that $a \in C^\infty(\R_x \times [0, \infty)_\xi)$ is in $S_h^m(T^*_+ \mathbb S^1)$ if it satisfies (\ref{eq:KN_seminorm}) uniformly in $h$ for all $\xi \in [0, \infty)$. 

Fix a cutoff 
\begin{equation}\label{eq:positive_semiclassical_cutoff}
    \chi_+ \in C^\infty([0, \infty)), \quad \supp \chi_+ \subset (1/2, \infty), \quad \chi_+ \equiv 1 \quad \text{near} \quad [1, \infty).
\end{equation}
Define the quantization procedure
\begin{equation}\label{eq:positive_semiclassical_quant}
    \Op_h^+(a) = \Op(a^+_h) \quad \text{where} \quad a^+_h(x, \xi) = \chi_+(\xi) a(x, h \xi),
\end{equation}
where $\Op$ is the standard quantization procedure defined in (\ref{eq:std_quant}). We stress that $T^*_+ \mathbb{S}^1$ is a manifold with boundary, and the smoothness of $a$ up to the boundary will ensure that the residual class is $\Psi^{-\infty}$ uniformly in $h$. Define the projector onto the positive frequencies
\begin{equation}\label{eq:new_quant}
    \Pi^+ = \Op(\chi_+), \quad \Pi^+ u = \sum_{k > 0} e^{2 \pi i kx} \hat u(k).
\end{equation}
Then observe that from the Fourier series expansion of the nonsemiclassical quantization procedure (\ref{eq:op_fourier_series}), we see that (\ref{eq:positive_semiclassical_quant}) can be rewritten as
\[\Op_h^+(a) = \Op_h(\breve a) \Pi^+,\]
where $\breve a \in C^\infty(T^* \mathbb S^1)$ is any smooth extension of $a$ still lying in a symbol class. From (\ref{eq:op_fourier_series}) and (\ref{eq:positive_semiclassical_quant}), it is clear that the quantization procedure is independent of the choice of extension $\breve a$ and independent of the choice of cutoff $\chi_+$ as long as it satisfies (\ref{eq:positive_semiclassical_cutoff}). In fact, as long as we have a cutoff that is identically $1$ near infinity, it differs from the quantization with $\chi_+$ only by a smoothing operator in $\Psi^{-\infty}$ uniformly in $h$. 

The advantage of this new formula for quantization is that the composition formula and the change of variables formula for orientation preserving diffeomorphisms follow from the well-known formulas in the semiclassical and the nonsemiclassical calculi.

We first derive a composition formula. 

\begin{lemma}\label{lem:positive_comp}
    Let $a \in S_h^m(T^*_+ \mathbb S^1)$ and $b \in S_h^\ell(T^*_+ \mathbb S^1)$. Then 
    \begin{equation}\label{eq:positive_comp}
        \Op_h^+(a) \Op_h^+(b) = \Op^+_h(ab + \mathcal O(h)_{S_h^{m + \ell - 1}(T^*_+ \mathbb S^1)}) + \Psi^{-\infty}.
    \end{equation}
    Here, $\Psi^{-\infty}$ denotes a remainder that lies in $\Psi^{-\infty}$ uniformly in $h$. 
\end{lemma}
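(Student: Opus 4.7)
The plan is to reduce the composition to the standard semiclassical composition formula on $T^*\mathbb S^1$ by exploiting the identity $\Op_h^+(a) = \Op_h(\breve a)\Pi^+$, where $\breve a$ denotes a fixed smooth extension of $a$ to all of $T^*\mathbb S^1$ lying in $S_h^m(T^*\mathbb S^1)$ uniformly in $h$ (and similarly for $\breve b$). I would begin by writing
\begin{equation*}
\Op_h^+(a)\Op_h^+(b) = \Op_h(\breve a)\,\Pi^+\,\Op_h(\breve b)\,\Pi^+.
\end{equation*}

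Next I would commute $\Pi^+$ past $\Op_h(\breve b)$. Since $\Op_h(\breve b) = \Op(\breve b(x,h\xi))$ lies in $\Psi^{\max(\ell,0)}$ uniformly in $h$, Corollary~\ref{cor:Heaviside_identities} applied with $\omega = h$ gives $[\Pi^+,\Op_h(\breve b)] \in \mathcal O(1)_{\Psi^{-\infty}}$, and composing on the left with $\Op_h(\breve a)$ and on the right with $\Pi^+$ preserves the uniformly smoothing class. Thus
\begin{equation*}
\Op_h^+(a)\Op_h^+(b) = \Op_h(\breve a)\Op_h(\breve b)\,\Pi^+\Pi^+ + \Psi^{-\infty}.
\end{equation*}
Because $\chi_+$ depends only on $\xi$, the exact Fubini-type identity $\Op(f(x,\xi))\Op(g(\xi)) = \Op(f \cdot g)$ gives $\Pi^+\Pi^+ = \Op(\chi_+^2)$, and $\chi_+^2 - \chi_+$ has compact support in $\xi$, hence lies in $S^{-\infty}$; so $\Pi^+\Pi^+ = \Pi^+$ modulo $\Psi^{-\infty}$.

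Then I would invoke the standard semiclassical composition formula on $T^*\mathbb S^1$,
\begin{equation*}
\Op_h(\breve a)\Op_h(\breve b) = \Op_h(\breve a \# \breve b) + h^\infty\Psi^{-\infty}, \qquad \breve a \# \breve b = \breve a\,\breve b + \mathcal O(h)_{S_h^{m+\ell-1}(T^*\mathbb S^1)}.
\end{equation*}
Post-composing with $\Pi^+$ and using the same Fubini identity in the form $\Op(\breve c(x,h\xi))\,\Op(\chi_+(\xi)) = \Op(\chi_+(\xi)\,\breve c(x,h\xi))$, I identify
\begin{equation*}
\Op_h(\breve a \# \breve b)\,\Pi^+ = \Op_h^+\big((\breve a \# \breve b)|_{\xi \ge 0}\big).
\end{equation*}
The restriction of $\breve a\,\breve b$ to $\xi \ge 0$ is $ab$, while the $\mathcal O(h)$ remainder restricts to a symbol in $S_h^{m+\ell-1}(T^*_+\mathbb S^1)$, yielding the claimed formula.

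The only subtlety worth flagging is the distinction between the uniformly smoothing class $\Psi^{-\infty}$ appearing in the conclusion and the semiclassically smoothing class $h^\infty\Psi^{-\infty}$ appearing in the standard semiclassical composition. The weaker error in the conclusion is entirely contributed by the commutator $[\Pi^+,\Op_h(\breve b)]$ and by $\Pi^+\Pi^+ - \Pi^+$; the symbolic expansions of both involve derivatives of $\chi_+$, which are supported in a compact window of the nonsemiclassical $\xi$-variable bounded away from fiber infinity. No gain in $h$ is available there, and this is precisely why the positive semiclassical calculus inherits an extra nonsemiclassical smoothing error that the full semiclassical calculus does not.
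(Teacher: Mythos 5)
Your proof is correct and follows essentially the same route as the paper: write $\Op_h^+(a)=\Op_h(\breve a)\Pi^+$, move $\Pi^+$ past $\Op_h(\breve b)$ via the uniformly smoothing commutator from Corollary~\ref{cor:Heaviside_identities}, apply the standard semiclassical composition formula, and absorb the commutator term into the $\mathcal O(1)_{\Psi^{-\infty}}$ remainder. The only cosmetic difference is that the paper uses the exact identity $(\Pi^+)^2=\Pi^+$ where you argue modulo $\Psi^{-\infty}$, which is equally fine.
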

\begin{proof}
    1. We begin with some general observations. Consider a smooth extension of $a$ that specifically satisfies 
    \begin{equation}\label{eq:nice_extension}
        \breve a(x, \xi) = a(x, \xi) \quad \text{for all} \quad \xi \ge 0, \quad \supp \breve a \subset \{\xi > -1\}
.    \end{equation}
    We note that $h \langle h \xi \rangle^{-1} \le \langle \xi \rangle^{-1}$. In particular, this implies that
    \begin{equation}\label{eq:uniform_symbol_class}
        \breve a_h \in S_h^{\max\{m, 0\}} (T^* \mathbb S^1)
    \end{equation}
    where $\breve a_h(x, \xi) := \breve a(x, h \xi)$. Therefore, $\Op_h(\breve a) = \mathcal O(1)_{\Psi^{\max\{m, 0\}}}$. From Corollary \ref{cor:Heaviside_identities}, we then see that 
    \begin{equation}\label{eq:smoothing_commutator}
        [\Pi^+, \Op_h(\breve a)] = \mathcal O(1)_{\Psi^{-\infty}}.
    \end{equation}
    Finally, we note that it follows from (\ref{eq:op_fourier_series}) that 
    \[(\Pi^+)^2 = \Pi^+.\]

    \noindent
    2. Let $\breve a$ and $\breve b$ be smooth extensions of $a \in S^m_h(T^*_+ \mathbb S^1)$ and $b \in S^\ell_h(T^*_+ \mathbb S^1)$ respectively satisfying (\ref{eq:nice_extension}). Then 
    \begin{align}
        \Op_h^+(a) \Op_h^+(b) &= \Op_h(\breve a) \Pi^+ \Op_h(\breve b) \Pi^+ \nonumber \\
        &= \Op_h(\breve a)\Op_h(\breve b) \Pi^+ + \Op_h(\breve a) [\Pi^+, \Op_h(\breve b)] \Pi^+. \label{eq:comp_commutator}
    \end{align}
    The first term on the right hand side can be understood using the semiclassical calculus:
    \begin{align*}
    \Op_h(\breve a)\Op_h(\breve b) \Pi^+ &= \Op_h \big(\breve a \breve b + \mathcal O(h)_{S_h^{m + \ell - 1}(T^* \mathbb S^1)} \big) \Pi^+ \\
    &= \Op_h^+\big(ab + \mathcal O(h)_{S_h^{m + \ell - 1}(T^*_+ \mathbb S^1)}\big).
    \end{align*}
    By (\ref{eq:uniform_symbol_class}) and (\ref{eq:smoothing_commutator}), it follows from Corollary \ref{cor:Heaviside_identities} that the second term on the right hand side of (\ref{eq:comp_commutator}) belongs in $\mathcal O(1)_{\Psi^{-\infty}}$, which completes the proof. 
\end{proof}

From the above lemma, we see that a key difference between the positive semiclassical calculus and the standard semiclassical calculus is that the former will always produce smoothing errors uniform in $h$, but do not necessarily decay in $h$. 



Now we consider the change of variables formula. We stress that the orientation preserving condition is essential since orientation reversing diffeomorphisms exchange positive and negative frequencies. 
\begin{lemma}\label{lem:positive_COV}
    Let $a \in S_h^m(T^*_+ \mathbb S^1)$ and let $\psi$ be an orientation preserving diffeomorphism on $\mathbb S^1$. Then
    \[\psi^* \Op_h^+(a) \psi^{-*} - \Op_h^+(a \circ \tilde \psi) \in h\Psi^{m - 1}_{h, +} + \Psi^{-\infty},\]
    where $\tilde \psi: T^*_+ \mathbb S^1 \to T^*_+ \mathbb S^1$ is the lifted symplectomorphism, which is well-defined since $\psi$ is orientation preserving. 
\end{lemma}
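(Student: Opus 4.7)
The plan is to parallel the proof of Lemma \ref{lem:positive_comp}: split $\Op_h^+(a) = \Op_h(\breve a)\Pi^+$ using a smooth extension, apply the standard semiclassical change-of-variables formula \eqref{eq:semiclassical_COV} to the first factor and the classical version (Corollary \ref{cor:Heaviside_identities}) to the Heaviside projector $\Pi^+$, and then carefully reassemble while tracking which residual classes the errors land in.

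More precisely, fix a smooth extension $\breve a \in S^m_h(T^*\mathbb S^1)$ of $a$ satisfying \eqref{eq:nice_extension}. Writing
\[\psi^* \Op_h^+(a) \psi^{-*} = \bigl(\psi^* \Op_h(\breve a) \psi^{-*}\bigr) \bigl(\psi^* \Pi^+ \psi^{-*}\bigr),\]
the first factor is handled by \eqref{eq:semiclassical_COV}, producing $\Op_h(\tilde a) + h^\infty\Psi^{-\infty}$ with $\tilde a \sim \sum_k h^k L_k(\breve a \circ \tilde\psi)$ and in particular $\tilde a = \breve a\circ\tilde\psi + \mathcal O(h)_{S^{m-1}_h(T^*\mathbb S^1)}$. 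Since $\psi$ is orientation preserving, $d\psi > 0$ so $\tilde\psi$ preserves the half-space $T^*_+\mathbb S^1$; consequently $\breve a\circ\tilde\psi|_{T^*_+\mathbb S^1} = a\circ\tilde\psi \in S^m_h(T^*_+\mathbb S^1)$, and the higher $L_k$ terms restricted to $T^*_+\mathbb S^1$ likewise depend only on $a$. For the second factor, $\Pi^+ = \Op(\chi_+)$ is a purely classical operator of order $0$; by Corollary \ref{cor:Heaviside_identities} (again using orientation preservation so that $\chi_+\circ\tilde\psi$ differs from $\chi_+$ by an $S^{-\infty}$ function), we get $\psi^*\Pi^+\psi^{-*} = \Pi^+ + R$ with $R \in \Psi^{-\infty}$ uniformly in $h$.

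Combining and expanding,
\[\psi^* \Op_h^+(a) \psi^{-*} = \Op_h(\tilde a)\Pi^+ + \Op_h(\tilde a) R + h^\infty\Psi^{-\infty}.\]
As in \eqref{eq:uniform_symbol_class}, $\Op_h(\tilde a) \in \mathcal O(1)_{\Psi^{\max\{m,0\}}}$ uniformly in $h$, so composition with the uniformly smoothing $R$ yields a uniform $\Psi^{-\infty}$ error. The main term $\Op_h(\tilde a)\Pi^+$ equals $\Op_h^+(\tilde a|_{T^*_+\mathbb S^1})$ modulo uniform $\Psi^{-\infty}$ (this is exactly the extension-independence noted after the definition \eqref{eq:new_quant}). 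Writing $\tilde a|_{T^*_+\mathbb S^1} = a\circ\tilde\psi + hr$ with $r \in S^{m-1}_h(T^*_+\mathbb S^1)$ gives
\[\psi^* \Op_h^+(a) \psi^{-*} = \Op_h^+(a\circ\tilde\psi) + h\Op_h^+(r) + \Psi^{-\infty},\]
which is the claim.

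The only real obstacle is the careful bookkeeping of residual classes: distinguishing the $h^\infty\Psi^{-\infty}$ remainder coming from the semiclassical change of variables, the uniform (but not $h$-decaying) $\Psi^{-\infty}$ remainder coming from $\psi^*\Pi^+\psi^{-*}$, and the genuine $h\Psi^{m-1}_{h,+}$ correction coming from the subprincipal terms of the asymptotic expansion. The orientation-preserving hypothesis is used in two essential places: it ensures $\tilde\psi$ preserves $T^*_+\mathbb S^1$, so that the expansion for $\tilde a$ restricts to a symbol on $T^*_+\mathbb S^1$, and it ensures $\chi_+\circ\tilde\psi = \chi_+$ modulo $S^{-\infty}$, so that $\Pi^+$ is microlocally invariant under conjugation by $\psi^*$.
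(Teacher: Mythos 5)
Your proposal is correct and follows essentially the same route as the paper's proof: write $\Op_h^+(a)=\Op_h(\breve a)\Pi^+$, apply the standard (semiclassical) change-of-variables formula to $\Op_h(\breve a)$ and Corollary \ref{cor:Heaviside_identities} to $\Pi^+$, and absorb the resulting errors into $h\Psi^{m-1}_{h,+}$ and the uniform $\Psi^{-\infty}$ class. The paper's version is just a terser rendering of the same argument; your extra bookkeeping of the residual classes and of the extension-independence of $\tilde a|_{T^*_+\mathbb S^1}$ is accurate.
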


\begin{proof}
    We simply use Proposition \ref{prop:classical_change} and Corollary \ref{cor:Heaviside_identities} to see that 
    \[\psi^* \Op_h(\breve a) \Pi^+ \psi^{-*} = \Op_h(\breve a \circ \tilde \psi) \Pi^+ + \Op_h(b) \Pi^+ + \Psi^{-\infty}\]
    for some $b \in S^{m - 1}_h (T^* \mathbb S^1)$. The change of variable formula for the positive semiclassical calculus then follows from (\ref{eq:new_quant}).
\end{proof}

Define the space
\begin{equation*}
    \Psi_{h, +}^m(\mathbb S^1) := \{\Op_h^+(a) + R \mid a \in S^m(T^* \mathbb S^1), \, R \in \Psi^{-\infty}(\mathbb S^1)\}.
\end{equation*}
Note that the error does not decay as $h \to 0$ unlike in the standard semiclassical calculus. We can define the principal symbol map 
\begin{equation*}
    \sigma_h^+(A) := [a] \in S_h^m(T^*_+ \mathbb S^1) / h S_h^{m - 1}(T^*_+ \mathbb S^1)
\end{equation*}
where 
\[A - \sigma_h^+(a) \in h \Psi_{h, +}^{m - 1}(\mathbb S^1) + \Psi^{-\infty}(\mathbb S^1)\]

We say that $A \in \Psi^{m}_{h, +}(T^*_+ \mathbb S^1)$ is elliptic in the positive semiclassical calculus if 
\begin{equation}\label{eq:elliptic_def}
    \langle \xi \rangle^{-m} |\sigma_h^+(A)(x, \xi)| \ge \delta > 0
\end{equation} 
for all $(x, \xi) \in T^*_+ \mathbb S^1$ uniformly for all small $h$. We can construct parametrices for elliptic operators in the positive semiclassical calculus. It is the same construction as the usual parametrix construction, the only difference being the errors lie in different classes.
\begin{lemma}\label{lem:parametrix}
    Let $A \in \Psi_{h, +}^{m}(\mathbb S^1)$ be elliptic. Then there exists $B \in \Psi_{h, +}^{- m}(\mathbb S^1)$ such that $AB = \Pi^+ + \Psi^{-\infty}$. 
\end{lemma}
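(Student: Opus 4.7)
The plan is to adapt the standard elliptic parametrix construction to the positive semiclassical calculus: invert the principal symbol, form a first approximation, then improve iteratively by powers of $h$ via a Neumann-type series that is Borel summed. The feature distinguishing this setting from the usual semiclassical calculus is that each composition in Lemma \ref{lem:positive_comp} produces an $h$-independent smoothing error in $\Psi^{-\infty}$ rather than an $h^\infty\Psi^{-\infty}$ one. I expect this to be the main place requiring care; it is ultimately benign because the conclusion $AB = \Pi^+ + \Psi^{-\infty}$ already permits such a remainder.

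Concretely, the ellipticity hypothesis \eqref{eq:elliptic_def} gives $|a(x,\xi)| \gtrsim \langle \xi \rangle^m$ on $T^*_+\mathbb S^1$ for $h$ sufficiently small, where $a \in S_h^m(T^*_+\mathbb S^1)$ is any representative of $\sigma_h^+(A)$. Hence $b_0 := 1/a \in S_h^{-m}(T^*_+\mathbb S^1)$, and I set $B_0 := \Op_h^+(b_0) \in \Psi_{h,+}^{-m}(\mathbb S^1)$. Using $\Op_h^+(1) = \Op(\chi_+) = \Pi^+$ together with Lemma \ref{lem:positive_comp}, this first approximation satisfies
\[ AB_0 = \Pi^+ + hE + R_0, \quad E \in \Psi_{h,+}^{-1}(\mathbb S^1), \quad R_0 \in \Psi^{-\infty} \text{ uniformly in } h. \]

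Iterated application of Lemma \ref{lem:positive_comp} gives $E^k \in \Psi_{h,+}^{-k}(\mathbb S^1)$, so the formal series $K \sim \Pi^+ + \sum_{k \ge 1}(-h)^k E^k$ has symbol order decreasing by one with each factor of $h$, and Borel summation at the symbol level produces a genuine $K \in \Psi_{h,+}^0(\mathbb S^1)$ realizing this expansion modulo $\Psi^{-\infty}$. I then set $B := B_0 K \in \Psi_{h,+}^{-m}(\mathbb S^1)$. Expanding $AB = (\Pi^+ + hE + R_0) K$ and using (i) $\Pi^+\Pi^+ = \Pi^+$, (ii) $\Pi^+ P = P + \Psi^{-\infty}$ for any $P \in \Psi_{h,+}^*(\mathbb S^1)$ (a consequence of Corollary \ref{cor:Heaviside_identities}), and (iii) that products of uniform $\Psi^{-\infty}$ remainders with Sobolev-bounded operators remain in $\Psi^{-\infty}$, the $h^k E^k$ cross-terms cancel telescopically and the accumulated smoothing errors stay in the residual class, yielding $AB = \Pi^+ + \Psi^{-\infty}$.
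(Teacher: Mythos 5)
Your proof is correct and follows essentially the same route as the paper: invert the principal symbol to get a first approximation, use the composition formula of Lemma \ref{lem:positive_comp} to see the error is $h\Psi_{h,+}^{-1}$ plus a uniform $\Psi^{-\infty}$ remainder (which the conclusion tolerates), and remove the $h$-error by Borel summation. Packaging the corrections as a Neumann series $B_0K$ rather than as successive symbol corrections $b\sim\sum b_k$ is only a cosmetic difference.
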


\begin{proof}
    Let $b_0 = 1/\sigma_h^+(A) \in S^{-m}(T^*_+ \mathbb S^1)$. Then
    \[\Pi^+ - \Op_h^+ (a) \Op_h^+(b_0) \in h\Psi_h^{-1} + \Psi^{-\infty}.\]
    For $k \ge 1$ put $b_k = 1/\sigma_h^+\left(\Pi^+ - \Op_h^+(a) \Op_h^+ \big(\sum_{j = 0}^{k - 1} b_k \big)\right)$. Note that $b_k \in h S^{-m - k}(T^*_+ \mathbb S^1)$. Put 
    \[b \sim \sum_{k = 0}^\infty b_k.\]
    Then $B = \Op_h^+(b)$ gives the desired operator. 
\end{proof}

\Remarks 1. We can define the negative semiclassical calculus completely analogously with symbols on $T^*_- \mathbb S^1 = \mathbb S^1 \times (-\infty, 0]$. The quantization procedures is given by 
\[\Op_h^-(a_h) = \Op(a^-_h), \qquad a^-_h(x, \xi) := \chi_+(-\xi) a(x, h\xi).\]
The definition of the classes of pseudodiffernetial operators $\Psi^m_{h, +}(\mathbb S^1)$, the composition formula, and the change of variable formula are all identical to the positive case upon replacing $+$'s with $-$'s. 

\noindent
2. Observe that the quantization procedure defined using projection in (\ref{eq:new_quant}) is also valid for the negative semiclassical calculus with $\Pi^- = \Op(\chi_+(-\xi))$ instead of $\Pi^+$. From this quantization procedure, it is clear that \textit{orientation reversing} diffeomorphisms conjugates positive semiclassical operators to negative semiclassical operators, and vice versa. It is also clear then that $AB \in \Psi^{-\infty}$ if $A \in \Psi^m_{h, +}(\mathbb S^1)$ and $B \in \Psi^\ell_{h, -}(\mathbb S^1)$.

\subsection{Multiplication of distributions}
In general, distributions cannot be multiplied. However, when their singularities do not overlap, multiplication is well-defined. This is covered in detail in \cite[Chapter 8.2]{H1} and \cite[Chapter 7]{GS_94}, and we give a brief outline of the important result here. We consider distributions $u \in \mathcal D'(U)$ where $U \subset \R$ or $\mathbb S^1$ is open. Denote the wavefront set of a distribution as $\WF(u) \subset T^* U \setminus \{0\}$, where $\{0\}$ is the zero section. We say that $(x_0, \xi_0) \notin \WF(u)$ if there exists $\varphi \in \CIc(U)$ with $\varphi(x_0)\neq 0$ such that for every $N > 0$, 
\[|\widehat{\varphi u}(\xi)| \le C_N\langle \xi \rangle^{-N}, \quad \text{$\xi \in \R$ such that $\sgn \xi = \sgn \xi_0$}.\]
Clearly $\WF(u)$ is a closed conic subset of $T^* U$. Now let $\Gamma \subset T^* U \setminus \{0\}$ be a closed conic subset. Define
\begin{equation}
    \mathcal D'_{\Gamma}(U) := \{ u \in \mathcal D'(U) \mid \WF(u) \subset \Gamma\}. 
\end{equation}
Equip $\mathcal D'_\Gamma(U)$ with the seminorms
\begin{gather*}
P_\varphi(u) = |\langle \varphi, u \rangle|, \quad \varphi \in \CIc(U), \\
P_{N, \varphi, \pm}(u) = \sup_{\pm \xi > 0} |\widehat{\varphi u} (\xi)| \langle \xi \rangle^N \\
\text{for} \quad N \ge 0, \quad  \varphi \in \CIc(U),\quad (\supp \varphi \times \{\pm \xi > 0\}) \cap \Gamma = \emptyset.
\end{gather*}

Note that $P_\varphi(u)$ are simply the usual seminorms on the space of distributions $\mathcal D'(U)$. The seminorms $P_{N, \varphi, \pm}$ control the parts of the distribution away from $\Gamma$, i.e. the parts that are supposed to be smooth. Now we can state the necessary proposition on multiplication of distributions. 
\begin{proposition}\label{prop:multiplication}
    Let $\Gamma_1, \Gamma_2 \subset T^* U \setminus \{0\}$ be closed conic subsets such that
    \begin{equation}\label{eq:wf_mult}
        \Gamma_1 \cap -\Gamma_2 = \emptyset.
    \end{equation}
    Then the map $(u, v) \mapsto uv$ is well defined and sequentially continuous from $\mathcal D'_{\Gamma_1}(U) \times \mathcal D'_{\Gamma_2}(U) \to \mathcal D'_{\Gamma}(U)$, where 
    \[\Gamma:= \{(x, \xi_1 + \xi_2) \mid (x, \xi_1) \in \Gamma_1, \, (x, \xi_2) \in \Gamma_2\}.\]
\end{proposition}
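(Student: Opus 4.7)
The plan is to follow the classical Hörmander approach cited in the paper, defining the product via its Fourier transform and exploiting the wavefront condition to make sense of the resulting convolution. Since multiplication is a local operation, I first reduce to a single base point: fix $x_0 \in U$ and a cutoff $\varphi \in \CIc(U)$ supported in a small neighborhood of $x_0$; it suffices to define $(\varphi u)(\varphi v) \in \mathcal{D}'(U)$ and then verify the wavefront inclusion near $x_0$. Using closedness of $\Gamma_1, \Gamma_2$ together with compactness of $\supp \varphi$, I shrink $\supp \varphi$ so that the fiber directions of $\Gamma_1$ and of $-\Gamma_2$ appearing over $\supp \varphi$ lie in closed conic subsets $V_1, -V_2 \subset \mathbb{R} \setminus \{0\}$ with $V_1 \cap (-V_2) = \emptyset$.

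Next, $u' := \varphi u$ and $v' := \varphi v$ lie in $\mathcal{E}'(U)$, so by Paley--Wiener their Fourier transforms $\hat{u'}, \hat{v'}$ are smooth functions of polynomial growth on $\mathbb{R}$, decaying rapidly outside $V_1$ and $V_2$ respectively, with the bounds quantified precisely by the seminorms $P_{N,\varphi,\pm}$. I then define the product through
\begin{equation*}
\widehat{u' v'}(\eta) := \frac{1}{2\pi}\int_{\mathbb{R}} \hat{u'}(\xi)\,\hat{v'}(\eta - \xi)\,d\xi.
\end{equation*}
To show convergence and polynomial boundedness in $\eta$, I split $\mathbb{R} = W_1 \cup (\mathbb{R} \setminus W_1)$, where $W_1$ is a small conic neighborhood of $V_1$. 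On $\mathbb{R} \setminus W_1$, the factor $\hat{u'}(\xi)$ decays rapidly in $\xi$. On $W_1$, for $|\xi| \gg |\eta|$ the argument $\eta - \xi$ lies in a conic neighborhood of $-V_1$, hence outside $V_2$ by the disjointness hypothesis, so $\hat{v'}(\eta - \xi)$ decays rapidly. Summing the two contributions gives absolute convergence and polynomial bounds, and inverting the Fourier transform yields the tempered distribution I take as the definition of $u' v'$.

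To establish $\WF(u' v') \subset \Gamma$, given $(x_0, \eta_0) \notin \Gamma$ I multiply by a further cutoff $\psi \in \CIc$ supported near $x_0$ and analyze $\widehat{\psi u' v'}(\eta)$ for $\eta$ in a conic neighborhood of $\eta_0$. The key combinatorial input, following from $\eta_0 \notin \Gamma_{x_0}$, is that there exist disjoint open conic neighborhoods such that any splitting $\eta = \xi + (\eta - \xi)$ with $\eta$ near $\eta_0$ forces $\xi \notin V_1$ or $\eta - \xi \notin V_2$; splitting the convolution integral according to this dichotomy yields rapid decay of $\widehat{\psi u' v'}(\eta)$ in the chosen direction. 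Sequential continuity then follows from the same estimates, since the norms appearing in the convolution splittings are exactly the seminorms $P_\varphi$ and $P_{N,\varphi,\pm}$, and all bounds depend continuously on them.

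The main obstacle is the bookkeeping in the wavefront step: one must choose nested conic cutoffs and combine polynomial-growth estimates for one factor with rapid-decay estimates for the other uniformly as $\eta_0$ approaches the forbidden set $\Gamma_{x_0}$. The convergence of the convolution itself is straightforward once the cone decomposition is set up, but extracting the precise inclusion $\WF(uv) \subset \Gamma$ with its explicit ``sum'' structure, and passing this through localizing cutoffs in a way compatible with sequential continuity, is where the argument becomes genuinely technical.
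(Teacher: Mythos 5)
Your proposal is correct, but it takes a genuinely different route from the paper. The paper defines $uv$ as the pullback of the tensor product $u\otimes v$ to the diagonal and gets both well-definedness and sequential continuity by quoting the corresponding continuity statements for the tensor product and the pullback from \cite{GS_94}; you instead construct the product directly as a convolution of localized Fourier transforms, which is the route of \cite[Chapter 8.2]{H1}. Your approach makes the role of the hypothesis $\Gamma_1\cap(-\Gamma_2)=\emptyset$ completely transparent --- it is precisely what makes the convolution integral absolutely convergent --- and it produces the quantitative bounds for sequential continuity directly in terms of the seminorms $P_\varphi$ and $P_{N,\varphi,\pm}$ that the paper has just introduced, whereas the paper's route is shorter on the page only because the same Fourier estimates are hidden inside the pullback theorem. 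Two remarks: first, in the one-dimensional setting actually used here ($U\subset\R$ or $\mathbb S^1$) the conic sets in each fiber are just subsets of $\{\xi>0\}\cup\{\xi<0\}$, so the nested conic cutoffs you worry about at the end collapse to a finite case check over signs; second, for sequential continuity you should say explicitly that a sequence converging in $\mathcal D'_{\Gamma_1}$ is bounded in the $P_\varphi$ seminorms, hence (by uniform boundedness) admits a \emph{uniform} polynomial bound on $\widehat{\varphi u_j}(\xi)$ in the non-decaying directions; without this uniformity the convolution estimates do not pass to the limit. With that standard addition the argument is complete.
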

Here $-\Gamma_2 := \{(x, -\xi) \mid (x, \xi) \in \Gamma_2\}$. The multiplication map is defined by pulling back the tensor $u \otimes v$ to the diagonal, which is done in \cite[Theorem 7.11]{GS_94}. The sequential continuity then follows from the sequential continuity of the tensor product and the pull-back, which are given in \cite[Proposition 7.7 and Corollary 7.9]{GS_94} respectively.

\section{Reduction to boundary}\label{sec:reduction_to_boundary}
In this section we reduce the analysis of $P - \omega^2$ in $\Omega$ to the boundary via single layer potentials. More precisely, we consider a related operator $P(\omega)$ defined in (\ref{eq:P(omega)}), which is constant-coefficient elliptic for $\Re \omega \neq 0$ and degenerates to a constant-coefficient hyperbolic operator as $\omega$ approaches the real line. The point here is that we have explicit fundamental solutions for the operator $P(\omega)$, so the boundary reduced operators can be well-understood.

Much of this section is a compilation of useful lemmas lifted directly from \cite{DWZ}, and they will be liberally referenced for the precise proofs and computations. The crucial additional pieces needed in this paper is a full symbol expansion for the restricted single layer operator so that we can split it into pieces belonging to the positive and negative semiclassical calculi (see Proposition \ref{prop:full_symbol}), and also smooth dependence of the single layer operator on the spectral parameter (see Proposition \ref{prop:S_converge}). 

\subsection{Fundamental solutions}
Define the differential operator
\begin{equation}\label{eq:P(omega)}
    P(\omega):= (1 - \omega^2) \partial_{x_2}^2 - \omega^2 \partial_{x_1}^2, \qquad \omega \in \C, \, 0 < \Re \omega < 1
\end{equation}
on $\R^2_{x_1, x_2}$. Recall that we wish to understand the spectrum of the operator $P$, and formally, $P(\omega)$ is related to $P$ by 
\begin{equation}\label{eq:resolvent_relation}
    P(\omega) = (P - \omega^2) \Delta_\Omega, \qquad P(\omega)^{-1} = \Delta_\Omega^{-1}(P - \omega^2)^{-1}.
\end{equation}
Therefore, studying the resolvent of $P$ is equivalent to studying $P(\omega)^{-1}$. Let us first make a few useful observations. We can factorize $P(\omega)$ as
\begin{equation}\label{eq:L_factor}
    P(\omega) = 4 L^+_\omega L^-_\omega, \quad L^\pm_\omega := \frac{1}{2} (\pm \omega \partial_{x_1} + \sqrt{1 - \omega^2} \partial_{x_2}).
\end{equation}
The square root here is taken on the branch $\C \setminus (-\infty, 0]$. When $\Im \omega \neq 0$, $L^\pm_\omega$ are Cauchy--Riemann type operators. On the other hand, if $\lambda \in (0, 1)$, then $L^\pm_\lambda$ are simply linearly independent vector fields. 

Recall the functions $\ell^\pm(x, \lambda)$ from (\ref{eq:dual_factor}), whose level curves formed the trajectories for the chess billiard flow. We extend the definition for $\lambda \in (0, 1)$ to include $\omega \in (0, 1) + i\R$ by
\begin{equation}\label{eq:ell}
    \ell^\pm(x, \omega):= \pm \frac{x_1}{\omega} + \frac{x_2}{\sqrt{1 - \omega^2}}.
\end{equation}
It is easy to check that 
\begin{equation}\label{eq:L_dual}
    L_\omega^\pm \ell^\pm(x, \omega) = 1, \quad L_\omega^\mp \ell^\pm(x, \omega) = 0.
\end{equation}
This property will be useful in establishing a uniqueness property in Lemma~\ref{lem:bd_uniqueness}.

The operator $P(\omega)$ has the advantage that it has explicit fundamental solutions. Recall that a fundamental solution of the operator $P(\omega)$ as defined in (\ref{eq:P(omega)}) is a distribution $E_\omega \in \mathcal D'(\R^2)$ that satisfies
\[P(\omega) E_\omega = \delta_0.\]

Note that $P(\omega)$ is elliptic when $\Im \omega \neq 0$. However, when $\omega$ is real, $P(\omega)$ degenerates to a hyperbolic operator. In the elliptic case, the fundamental solution is a rescaled version of the standard Newton potential. When $\omega$ approaches the real line, the fundamental solution converges in distribution to one of the Feynmann propagators, depending on if the limit is taken from above or below the real line. The precise formulas are given in the following lemma. 
\begin{lemma}\label{lem:fs}
For $\Im \omega \neq 0$, a fundamental solution of $P(\omega)$ is given by the locally integrable function
\begin{equation}
    E_\omega(x) = c_\omega \log A(x, \omega), \qquad x \in \R^2 \setminus \{0\}
\end{equation}
where
\begin{equation}
    A(x, \omega) := \ell^+(x, \omega) \ell^-(x, \omega) = - \frac{x_1^2}{\omega^2} + \frac{x_2^2}{1 - \omega^2}, \qquad c_\omega := \frac{i \sgn \Im w}{4 \pi \omega \sqrt{1 - \omega^2}}.
\end{equation}
    The distributional limits $E_{\lambda \pm i0} := \lim_{\epsilon \to 0} E_{\lambda \pm i \epsilon}$, given by
\begin{equation}
    E_{\lambda \pm i0}(x) = \pm c_\lambda \log(A(x, \lambda) \pm i0), \qquad c_\lambda := \frac{i}{4 \pi \lambda \sqrt{1 - \lambda^2}}.
\end{equation}
are then fundamental solutions for $P(\lambda)$ for $\lambda \in (0, 1)$. 
\end{lemma}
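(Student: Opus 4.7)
The claim has two parts: the explicit formula for $\Im \omega \neq 0$, and the existence of distributional boundary values onto the real line. My plan is to handle the elliptic case first, verifying three standard ingredients: that $E_\omega$ is locally integrable, that $P(\omega) E_\omega \equiv 0$ on $\R^2 \setminus \{0\}$, and that the concentrated mass at the origin is exactly $\delta_0$.

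Local integrability is immediate: the complex quadratic form $A(x,\omega) = -x_1^2/\omega^2 + x_2^2/(1-\omega^2)$ is nondegenerate for $\Im \omega \neq 0$, so $|A(x,\omega)| \asymp |x|^2$ and $\log|A|$ has only a logarithmic singularity at $x=0$, which is locally $L^1$ in two dimensions. For the pointwise identity off the origin, I would use the factorization $P(\omega) = 4 L^+_\omega L^-_\omega$ from \eqref{eq:L_factor} together with $A = \ell^+\ell^-$. The condition $\Im \omega \neq 0$ ensures $\ell^\pm(x,\omega) \neq 0$ for real $x \neq 0$, so a branch of $\log A = \log\ell^+ + \log \ell^-$ can be fixed on $\R^2 \setminus \{0\}$. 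The identities \eqref{eq:L_dual} then give $L^-_\omega \log A = 1/\ell^-$ and $L^+_\omega(1/\ell^-) = -L^+_\omega\ell^-/(\ell^-)^2 = 0$, hence $P(\omega) E_\omega \equiv 0$ pointwise away from the origin.

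The main step, and the main obstacle, is identifying the mass at the origin. I would use Green's identity: for $\varphi \in C^\infty_c(\R^2)$,
\[
\langle P(\omega) E_\omega, \varphi \rangle = \lim_{\epsilon \to 0} \int_{\R^2 \setminus B_\epsilon} E_\omega \, P(\omega) \varphi \, dx,
\]
and two integrations by parts combined with the pointwise vanishing kill the interior integral, leaving only a boundary term on $\partial B_\epsilon$. Since $\nabla E_\omega \sim |x|^{-1}$ while $|\partial B_\epsilon| \sim \epsilon$, only the $\varphi(0)$ contribution survives, producing $\varphi(0)$ times an explicit contour integral that fixes $c_\omega$. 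A cleaner-looking alternative is the complex linear substitution $\zeta_1 = x_1/\omega$, $\zeta_2 = ix_2/\sqrt{1-\omega^2}$, under which $P(\omega) = -\Delta_\zeta$ and $A = -(\zeta_1^2+\zeta_2^2)$, so the identity formally reduces to $\Delta \log|\zeta| = 2\pi \delta_0$; but because the substitution is complex-linear, justifying the Jacobian $i/(\omega\sqrt{1-\omega^2})$ distributionally is essentially the same boundary-integral bookkeeping as the direct argument. Either route forces $c_\omega = i\sgn(\Im\omega)/(4\pi\omega\sqrt{1-\omega^2})$, with the sign determined by which complex half-plane the substitution originates in.

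For the real-axis boundary values, I would check that $\Im A(x,\omega)$ has a definite sign (a short two-term calculation using $\omega = \lambda \pm i\epsilon$ together with the branch of $\sqrt{1-\omega^2}$), so that $\log A(x,\omega) \to \log(A(x,\lambda) \pm i0)$ in $\mathcal{D}'(\R^2)$ via the standard distributional regularization $\log(s\pm i0) = \lim_{\epsilon\downarrow 0}\log(s\pm i\epsilon)$. The prefactor $c_\omega$ extends continuously from each half-plane to $\pm c_\lambda$ on the real axis, the sign flip being exactly the $\sgn\Im\omega$. Because $P(\omega)$ is polynomial in $\omega$, the identity $P(\omega) E_\omega = \delta_0$ passes to the limit to give $P(\lambda) E_{\lambda\pm i0} = \delta_0$. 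The technically hardest step is reconciling the three interacting sign/branch choices — $\sqrt{1-\omega^2}$, the log cut, and $\sgn\Im\omega$ — that all enter $c_\omega$; I would handle this pragmatically by verifying the normalization directly at $\omega = i$ (where $A$ is positive-definite real and the Green's identity is clean) and then extending by analytic continuation in $\omega$ within each half-plane.
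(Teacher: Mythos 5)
Your proposal is correct and follows essentially the same route as the proof the paper relies on (the paper itself defers to \cite[\S4.3]{DWZ}, which uses exactly this combination of the factorization $P(\omega)=4L^+_\omega L^-_\omega$, a complex-linear reduction to the Laplacian to fix $c_\omega$, and the sign of $\Im A$ to identify the boundary values $\log(A(x,\lambda)\pm i0)$). The only cosmetic imprecision is that a single-valued branch of $\log\ell^\pm$ cannot be fixed on all of $\R^2\setminus\{0\}$ (each winds once around $0$), but the two windings cancel so $\log A=\log\ell^++\log\ell^-$ does admit a global branch, and the pointwise computation is local in any case.
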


In fact, we can do much better than just convergence in distribution. In order to understand the spectrum near the forcing frequency $\lambda$ since we will need derivatives in $\lambda$. It is clear that $E_\omega$ is holomorphic on the interior of the regions $(0, 1) \pm i [0, \infty)$. It turns out that $E_\omega$ is also $C^\infty$ up to the boundary in the following sense:
\begin{lemma}\label{lem:fs_holomorphic}
    For every $\varphi \in \CIc(\R^2)$, the map  given by the distributional pairing 
    \[\omega \mapsto (E_\omega, \varphi)\]
    lies in $C^\infty ((0, 1) \pm i[0, \infty))$ and is holomorphic on the interior $(0, 1) \pm i(0, \infty)$. 
\end{lemma}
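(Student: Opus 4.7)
First, I would peel off the prefactor: $c_\omega = i\,\sgn(\Im\omega)/(4\pi\omega\sqrt{1-\omega^2})$ is manifestly holomorphic on $(0,1)\pm i(0,\infty)$ with smooth extensions to the real boundary from each side, so the question reduces to showing that $\omega\mapsto(\log A(\cdot,\omega),\varphi)$ lies in $C^\infty((0,1)\pm i[0,\infty))$ and is holomorphic on the interior.

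For the interior holomorphy (focusing on the upper case by symmetry), I would verify by a direct expansion at $\omega = \lambda + i\epsilon$ that
\[\Im A(x,\omega) = 2\epsilon\bigl[x_1^2/\lambda^3 + \lambda\, x_2^2/(1-\lambda^2)^2\bigr] + O(\epsilon^2),\]
which is strictly positive on $\R^2\setminus\{0\}$ for $\epsilon > 0$ and $\lambda \in (0,1)$. Hence $\log A(\cdot,\omega)$ is defined via the principal branch (analytic on $\C\setminus(-\infty,0]$), lies in $L^1_{\mathrm{loc}}(\R^2)$, and depends holomorphically on $\omega$ with values in that space; pairing with $\varphi\in\CIc$ preserves holomorphy.

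For smoothness up to the real axis, I would fix $\lambda_0\in(0,1)$ and work in a small neighborhood $U\subset (0,1)+i[0,\infty)$ of $\lambda_0$. The strategy is to exploit the factorization $A=\ell^+\ell^-$ via a change of coordinates. Introducing the linear coordinates $y^\pm := \ell^\pm(x, \lambda_0)$ on $\R^2$, each $\ell^\pm(x(y),\omega)$ becomes a linear form $a^\pm(\omega)y^++b^\pm(\omega)y^-$ with coefficients holomorphic in $\omega$ (real for real $\omega$) and nonvanishing at $\omega=\lambda_0$, where they reduce to $y^\pm$. Using $\log A = \log\ell^+ + \log\ell^-$ modulo a locally constant branch correction, the problem reduces to showing that for such a linear form $L$, the map $\omega\mapsto\log L(\cdot,\omega)\in\mathcal D'(\R^2)$ is $C^\infty$ on $U$. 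Writing $L(y,\omega) = a(\omega)(y^+ - \mu(\omega)y^-)$ with $\mu$ smooth in $\omega$ and real on the real axis, and decomposing the pairing with $\varphi$ into contributions from the half-planes $\{y^->0\}$ and $\{y^-<0\}$ (on each of which $\Im L$ has constant sign, determined by $\Im\omega$), followed by an $\omega$-smooth affine change of variable turning $y^+-\mu(\omega)y^-$ into a single coordinate, reduces the problem to the classical one-dimensional fact that $\epsilon\mapsto\log(s+i\epsilon)\in\mathcal D'(\R)$ is $C^\infty$ on $[0,\infty)$.

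The main obstacle is ensuring smoothness of \emph{all} $\omega$-derivatives, not merely of the values. Formal differentiation of $\log A$ in $\omega$ produces rational expressions with $A^k$ in the denominator that degenerate on the zero set of $A(\cdot,\lambda)$ as $\omega$ approaches the real axis; after the coordinate reduction above, however, each such factor becomes a distribution of the form $(s\pm i0)^{-k}$, whose smooth dependence on auxiliary parameters is the standard Fourier-transform fact invoked for the zeroth-order case. Combining these reductions with the smoothness of $c_\omega$ yields the lemma.
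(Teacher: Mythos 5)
The paper itself does not prove this lemma; it defers entirely to \cite[\S4.3]{DWZ}, and your overall strategy --- exploit the factorization $A=\ell^+\ell^-$, pass to linear coordinates adapted to $\ell^\pm(\cdot,\lambda_0)$, split into half-planes where the imaginary part of the linear form has a fixed sign, and reduce all $\omega$-derivatives to the one-dimensional distributions $(s\pm i0)^{-k}$ --- is the same in spirit as that reference. Your interior argument is fine; in fact the positivity is exact, $\Im A(x,\lambda+i\epsilon)=2\lambda\epsilon\big(x_1^2|\omega|^{-4}+x_2^2|1-\omega^2|^{-2}\big)>0$ for $x\neq0$, so $\log A$ is single-valued, locally integrable, and the pairing is holomorphic by Morera and dominated convergence.

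The step I would not accept as written is ``$\log A=\log\ell^++\log\ell^-$ modulo a locally constant branch correction.'' For $\Im\omega>0$ each $\ell^\pm(\cdot,\omega):\R^2\to\C$ is a real-linear isomorphism (otherwise $A$ would vanish off the origin), so the principal-branch $\log\ell^\pm$ jumps by $2\pi i$ across the ray $\{\ell^\pm(\cdot,\omega)\in(-\infty,0)\}$. The correction $n(x,\omega)=(2\pi i)^{-1}(\log A-\log\ell^+-\log\ell^-)$ is therefore only \emph{piecewise} constant in $x$: it is constant on four sectors whose boundary rays move with $\omega$, and whose limits as $\Im\omega\to0^+$ are determined by the signs of $\partial_\lambda\ell^\pm(\cdot,\lambda)$ (since $\Im\ell^\pm(x,\lambda+i\epsilon)=\epsilon\,\partial_\lambda\ell^\pm(x,\lambda)+\mathcal O(\epsilon^2)$), not of $\ell^\pm(\cdot,\lambda)$. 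Its contribution to the pairing is a genuinely $\omega$-dependent sum of integrals of $\varphi$ over $\omega$-dependent sectors, and its smoothness up to $\Im\omega=0$ requires a separate (elementary but not free) argument that the ray directions extend smoothly to the boundary; note this term is not holomorphic, so its smoothness cannot be inherited from that of $(\log A,\varphi)$ alone. A cleaner fix is to invoke the factorization only after one differentiation: the logarithmic-derivative identity $\partial_\omega A/A=\partial_\omega\ell^+/\ell^++\partial_\omega\ell^-/\ell^-$ involves no branch choices, so only the zeroth-order statement (continuity of $(\log A,\varphi)$ up to the boundary) touches $\log A$ itself, and all higher derivatives feed directly into your half-plane reduction to $(s\pm i0)^{-k}$. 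In that reduction, also record that after fixing $y^-=t$ the regularization parameter is $t\,|\Im\mu(\omega)|$, so the one-dimensional bounds on $\int(s+i\delta)^{-k}\psi\,ds$ must be applied uniformly in $\delta\ge0$ and then integrated over $t$ in $\supp\varphi$, including $t\to0$; the standard uniform estimates do give this.
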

We refer the reader to \cite[\S4.3]{DWZ} for the proofs of Lemmas \ref{lem:fs} and \ref{lem:fs_holomorphic}.

\subsection{Boundary layer potentials}
Consider the elliptic boundary value problem 
\begin{equation}\label{eq:EBVP}
    P(\omega) u = f, \quad u|_{\partial \Omega} = 0, \quad \Re \omega \in (0, 1), \quad \Im \omega \neq 0.
\end{equation}
We wish to reduce this problem to a problem on the boundary $\partial \Omega$. For $\omega \in (0, 1) \pm i(0, \infty)$, define the convolution operator 
\begin{equation}
    R_\omega: \mathcal E'(\R^2) \to \mathcal D'(\R^2), \quad R_\omega g := E_\omega * g. 
\end{equation}
As $\omega$ approaches the real line from the upper or lower half plane, we can analogously define the limiting operators
\begin{equation}
    R_{\lambda \pm i0}: \mathcal E'(\R^2) \to \mathcal D'(\R^2), \quad R_{\lambda \pm i0} g := E_{\lambda \pm i0} * g, \quad \lambda \in (0, 1). 
\end{equation}
For $\omega \in (0, 1) + i\R$, define the `Neumann data' operator
\begin{equation}\label{eq:neumann_data}
    \mathcal N_\omega: C^\infty(\overline \Omega) \to C^\infty( \partial\Omega; T^* \partial \Omega), \quad \mathcal N_\omega u := -2 \omega \sqrt{1 - \omega^2} \mathbf j^* (L^+_\omega u d\ell^+(\bullet, \omega)).
\end{equation}
Here, $\mathbf j: \partial \Omega \to \overline \Omega$ is the canonical embedding and $\mathbf j^*$ is the pullback on 1-forms. While it may appear that the definition of $\mathcal N_\omega$ makes a preference for whether the derivative is taken in the `$+$' or `$-$' direction, we note that since $u|_{\partial \Omega} = 0$, 
\[0 = \mathbf j^* du = \mathbf j^* (L_\omega^+ u\, d\ell^+ + L_\omega^- u\, d \ell^-) \implies \mathcal N_\omega u = 2 \omega \sqrt{1 - \omega^2} \mathbf j^* (L^-_\omega u\, d\ell^-(\bullet, \omega)),\]
so either sign the same data modulo orientation.

Finally define the `tensor by delta function' operator
\begin{equation}
    \mathcal I: \mathcal D'(\partial \Omega; T^* \partial \Omega) \to \mathcal E'(\R^2), \quad \int_{\R^2} \mathcal I v(x) \varphi(x)\, dx := \int_{\partial \Omega} \varphi v. 
\end{equation}
\begin{lemma}\label{lem:fs_app}
For each $f \in \CIc(\Omega)$, the boundary value problem (\ref{eq:EBVP}) has a unique solution $u_\omega \in C^\infty(\overline \Omega)$ that depends holomorphically on $\omega \in (0, 1) \pm i(0, \infty)$. Furthermore, if $u_\omega \in C^\infty(\overline \Omega)$ is the solution to (\ref{eq:EBVP}) for some $f \in \CIc(\Omega)$, then 
\begin{align}
    P(\omega)(\indic_\Omega u_\omega) &= f - \mathcal I v_\omega \label{eq:fs_setup},\\
    \indic_\Omega u_\omega &= R_\omega f - R_\omega \mathcal I v_\omega. \label{eq:fs_conv}
\end{align}
where $v_\omega = \mathcal N_\omega u_\omega$ is the `Neumann data.'

\end{lemma}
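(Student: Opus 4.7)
The plan is to proceed in four steps. First I would establish existence, uniqueness, and smoothness of $u_\omega$. For $\omega \in (0,1) \pm i(0,\infty)$, the coefficients $-(1-\omega^2)$ and $-\omega^2$ of $P(\omega)$ are both non-real, so the characteristic form $\omega^2\xi_1^2 - (1-\omega^2)\xi_2^2$ vanishes only at $\xi = 0$; hence $P(\omega)$ is a constant-coefficient elliptic operator. Since $\partial\Omega$ is smooth and $f \in \CIc(\Omega)$, the Dirichlet problem \eqref{eq:EBVP} admits a unique solution $u_\omega \in C^\infty(\overline\Omega)$ by standard elliptic boundary regularity.

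Second, for holomorphy in $\omega$: the family $\omega \mapsto P(\omega): H^1_0(\Omega) \to H^{-1}(\Omega)$ is polynomial in $\omega$, hence holomorphic, and invertible for $\Im \omega \neq 0$, so $\omega \mapsto P(\omega)^{-1}$ is holomorphic in the operator-norm topology. Combining this with uniform elliptic estimates up to the boundary upgrades the holomorphy to the $C^\infty(\overline\Omega)$ topology.

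Third, to derive \eqref{eq:fs_setup}, I would test against $\varphi \in \CIc(\R^2)$ and compute $\langle P(\omega)(\indic_\Omega u_\omega),\varphi\rangle = \int_\Omega u_\omega P(\omega)\varphi\, dx$ by integration by parts. Using the factorization $P(\omega) = 4 L^+_\omega L^-_\omega$ from \eqref{eq:L_factor} together with Stokes, the first integration by parts yields a boundary term proportional to $u_\omega|_{\partial \Omega} = 0$, which vanishes; the second produces a nonzero boundary contribution involving $L^+_\omega u_\omega$ paired with the contraction $\iota_{L^-_\omega}(dx_1\wedge dx_2)$. A direct calculation shows
\[
\iota_{L^-_\omega}(dx_1\wedge dx_2) = -\tfrac12 \omega\sqrt{1-\omega^2}\, d\ell^+,
\]
so the boundary term matches $-\int_{\partial\Omega} \varphi \cdot \mathcal N_\omega u_\omega$ exactly by the definition \eqref{eq:neumann_data}, giving $P(\omega)(\indic_\Omega u_\omega) = f - \mathcal I v_\omega$. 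Finally, \eqref{eq:fs_conv} follows by convolving both sides of \eqref{eq:fs_setup} with $E_\omega$: since $\indic_\Omega u_\omega$ has compact support and $P(\omega) E_\omega = \delta_0$, we have $E_\omega * P(\omega)(\indic_\Omega u_\omega) = \indic_\Omega u_\omega$, yielding the desired formula.

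The main obstacle is the bookkeeping in step three: matching the 1-form that emerges from integration by parts with the specific normalization of $\mathcal N_\omega$, including the factor $-2\omega\sqrt{1-\omega^2}$ and the $\pm$ conventions between $L^\pm_\omega$ and $d\ell^\pm$. This is a computation rather than a conceptual difficulty, but signs and scalar factors require careful tracking. The holomorphy step is also slightly delicate if one wants $C^\infty(\overline\Omega)$-valued holomorphy directly, but a difference-quotient argument combined with uniform elliptic estimates handles it cleanly. All other pieces reduce to standard elliptic theory and distribution theory.
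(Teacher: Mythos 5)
Your proposal follows essentially the same route as the paper, which itself defers the two computations to \cite{DWZ}: ellipticity of $P(\omega)$ for $\Im\omega\neq 0$ plus boundary regularity for existence and smoothness, operator-valued holomorphy of $P(\omega)^{-1}$, the Stokes/integration-by-parts derivation of \eqref{eq:fs_setup} via the factorization \eqref{eq:L_factor} (your identity $\iota_{L^-_\omega}(dx_1\wedge dx_2)=-\tfrac12\omega\sqrt{1-\omega^2}\,d\ell^+$ checks out), and convolution with $E_\omega$ for \eqref{eq:fs_conv}. The one assertion that ``standard elliptic boundary regularity'' does not actually deliver is \emph{unique solvability}: ellipticity makes the Dirichlet problem Fredholm of index zero, but injectivity still needs an argument --- either the energy identity $\int_\Omega \overline{u}\,P(\omega)u\,dx$ whose imaginary part forces $\nabla u=0$ when $\Im\omega\neq 0$, or the factorization $P(\omega)=(P-\omega^2)\Delta_\Omega$ from \eqref{eq:resolvent_relation} together with $\Im(\omega^2)=2\,\Re\omega\,\Im\omega\neq 0$, so that $\omega^2\notin[0,1]=\mathrm{Spec}(P)$. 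Adding that one line closes the only gap; the rest is correct.
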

\begin{proof}
    Uniqueness and holomorphic dependence on $\omega$ are proved in \cite[Lemma~4.4]{DWZ}. (\ref{eq:fs_setup}) is a direct computation from (\ref{eq:neumann_data}), which is done in detail in \cite[Lemma~4.5]{DWZ}. Finally, (\ref{eq:fs_conv})  is obtained by convolving (\ref{eq:fs_setup}) by the fundamental solution and using the fact that $\indic_\Omega u$ is a compactly supported distribution. 
\end{proof}

\subsubsection{Single layer potentials}
We now drop the subscript $\omega$ from $u_\omega$ and $v_\omega$ in Lemma~\ref{lem:fs_app} and leave the dependence implicit. There will not be any ambiguity since we will not be looking at the dependence on $\omega$ in this section. Lemma~\ref{lem:fs_app} motivates the definition of the single layer operator $S_\omega: \mathcal D'(\partial \Omega; T^* \partial \Omega) \to \mathcal D'(\Omega)$ by 
\begin{equation}\label{eq:single_layer_op}
    S_\omega v := (R_\omega \mathcal I v)|_\Omega, \quad v \in \mathcal D'(\partial \Omega; T^* \partial \Omega), \quad \omega \in (0, 1) \pm i(0, \infty).
\end{equation}
Similarly, we also define the limiting operators $S_{\lambda \pm i0}: \mathcal D'(\partial \Omega; T^* \partial \Omega) \to \mathcal D'(\Omega)$ by 
\begin{equation}\label{eq:single_layer_op_lim}
    S_{\lambda \pm i0} v := (R_{\lambda \pm i0} \mathcal I v)|_\Omega, \quad v \in \mathcal D'(\partial \Omega; T^* \partial \Omega), \quad \lambda \in (0, 1).
\end{equation}
Rewriting (\ref{eq:fs_conv}) using the single layer operator and restricting to $\Omega$, we have 
\begin{equation}\label{eq:boundary_to_interior}
    u = (R_\omega f)|_\Omega - S_\omega v, 
\end{equation}
where $u \in C^\infty(\Omega)$ is the unique solution to (\ref{eq:EBVP}) extends to a function in $C^\infty(\overline \Omega)$ and $v = \mathcal N_\omega u$. In this case, we see that $S_\omega v \in C^\infty(\overline \Omega)$ since $R_\omega f \in C^\infty(\R^2)$. However, we will be using (\ref{eq:boundary_to_interior}) to recover the solution on the interior $u$ from the boundary data $v$. Therefore, we will need some additional mapping properties from $S_\omega$. 

When $S_\omega$ or $S_{\lambda + i0}$ acts on smooth 1-forms, we have the explicit formula 
\begin{equation}\label{eq:S_explicit}
    S_\omega(f\, d\theta)(x) = \int_{\partial \Omega} E_{\omega}(x - y) f(y) \, d \theta(y), \qquad f \in C^\infty(\partial \Omega), \quad x \in \Omega,
\end{equation}
where $\theta$ is a positively oriented choice of coordinates on $\partial \Omega$. A similar formula for the limiting operator $S_{\lambda \pm i 0}$ holds with $\omega$ replaced with $\lambda + i0$. We then have the following additional mapping property:
\begin{lemma}\label{lem:S_mapping_prop}
    Both $S_\omega$ and $S_{\lambda \pm i0}$ are continuous from $C^\infty(\partial \Omega; T^* \partial \Omega)$ to $C^\infty(\overline{\Omega})$, where $\omega \in (0, 1) \pm i(0, \infty)$ and $\lambda \in (0, 1)$. 
\end{lemma}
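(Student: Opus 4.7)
I would split the proof according to whether $\Im\omega \ne 0$ or $\omega = \lambda \pm i0$, and in each case proceed in two steps: first establish interior smoothness on $\Omega$, then smoothness up to $\partial\Omega$.

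\emph{Interior smoothness.} Since $\mathcal I v$ is supported on $\partial\Omega$ and $E_\omega$ is a fundamental solution of $P(\omega)$, we have $P(\omega)(S_\omega v) = 0$ in $\mathcal D'(\Omega)$. When $\Im\omega\ne 0$, $P(\omega)$ is constant-coefficient elliptic, so $S_\omega v \in C^\infty(\Omega)$ follows from interior elliptic regularity. For $\omega = \lambda \pm i0$, I would instead work directly with the integral formula~\eqref{eq:S_explicit}: for $x$ in the interior of $\Omega$, the kernel $y \mapsto E_{\lambda\pm i0}(x-y)$ restricted to $\partial\Omega$ is smooth except at finitely many boundary points where $\ell^\pm(x,\lambda) = \ell^\pm(y,\lambda)$, and near each such point has only a locally integrable logarithmic singularity, so differentiation under the integral in $x$ is justified.

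\emph{Smoothness up to $\partial\Omega$, elliptic case.} I would compute the boundary trace $S_\omega v|_{\partial\Omega}$ and then apply elliptic boundary regularity. Since $A(z,\omega)$ vanishes only at $z=0$ when $\Im\omega\ne 0$, the restricted kernel $E_\omega(x-y)$ on $\partial\Omega\times\partial\Omega$ has only a logarithmic singularity on the diagonal, so $v \mapsto S_\omega v|_{\partial\Omega}$ is a classical order $-1$ pseudodifferential operator on $\partial\Omega\cong\mathbb S^1$, continuously mapping $C^\infty \to C^\infty$. Then $S_\omega v$ solves the Dirichlet problem $P(\omega)u=0$ in $\Omega$ with smooth boundary data, so standard elliptic boundary regularity gives $u \in C^\infty(\overline\Omega)$, with continuity in the Fr\'echet topology.

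\emph{Smoothness up to $\partial\Omega$, limiting case.} The delicate case is $\omega=\lambda\pm i0$, where $P(\lambda)$ is not elliptic. I would work directly from~\eqref{eq:S_explicit}. For $x \in \overline\Omega$, the singular support in $y$ of $E_{\lambda\pm i0}(x-y)$ restricted to $\partial\Omega$ consists of (at most) $y=x$ together with the finitely many boundary points satisfying $\ell^\pm(x,\lambda) = \ell^\pm(y,\lambda)$. Away from the glancing points $x^\pm_{\min}, x^\pm_{\max}$, the $\lambda$-simplicity assumption makes these intersections transverse, and a smooth change of boundary coordinate puts $A(x-y,\lambda)$ into the form $s\phi(s,x)$ with $\phi$ smooth and nonvanishing; the local contribution then reduces to $\int \log(s\pm i0)\,\tilde v(s,x)\,ds$, which depends smoothly on $x$ by the standard pairing of $\log(s\pm i0)$ against smooth test functions.

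\textbf{Main obstacle.} The genuine difficulty is the analysis when $x$ lies near a null line through a glancing point $y_0 \in \{x^\pm_{\min}, x^\pm_{\max}\}$, where $\ell^\pm(\cdot,\lambda)$ has a critical point at $y_0$ and $A(x-y,\lambda)$ may vanish to second order in the boundary parameter. Here nondegeneracy of the critical points (from $\lambda$-simplicity) should force a Morse-type normal form $A \sim -\tfrac{a}{2}(s-s(x))^2 + c(x)$ with $a\ne 0$ and smoothly varying $s(x), c(x)$, reducing the local integral to $\int \log\bigl((s-s(x))^2 - \tfrac{2c(x)}{a}\pm i0\bigr)\tilde v(s,x)\,ds$. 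Verifying that this remains smooth in $x$ as $c(x)$ passes through $0$ (the transition between transversal and tangent intersection) is the crux and requires a careful turning-point-type analysis. This step parallels the corresponding boundary regularity argument in~\cite[\S4]{DWZ}, whose treatment of the $\lambda$-simple setting can be adapted directly.
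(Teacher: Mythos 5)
The paper offers no proof of this lemma---it cites \cite[Lemmas 4.6--4.8]{DWZ} wholesale---so there is no in-paper argument to compare against; your outline is broadly consistent with how the result is actually proved there. The easy parts are fine: for $\Im\omega\neq 0$ the layer-potential/elliptic-regularity route works (the Dirichlet problem for $P(\omega)$ is Lopatinski-elliptic since $\sqrt{1-\omega^2}/\omega\notin\R$), and for $\lambda\pm i0$ interior regularity is routine because $\lambda$-simplicity forces every level line of $\ell^\pm$ through an interior point to meet $\partial\Omega$ transversally, so all zeros of $s\mapsto\ell^\pm(x-\mathbf x(s),\lambda)$ are simple. You also correctly locate the entire difficulty at the four glancing points. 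One small caveat on the interior step: ``differentiation under the integral is justified'' by local integrability is not quite right, since one $x$-derivative already produces the non-integrable kernel $\ell^\pm(x-y,\lambda)^{-1}$; what justifies it is the distributional $i0$-regularization inherited from $\omega=\lambda\pm i\epsilon$, together with simplicity of the zeros.

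The substantive gap is in your treatment of the crux. A Morse normal form $\tfrac{a}{2}(s-s(x))^2+c(x)$ for the real part with a detached ``$\pm i0$'' does not by itself define a family of distributions behaving smoothly in $x$: differentiating the pairing once produces $(\tfrac{a}{2}\sigma^2+c\pm i0)^{-1}$, and if the two coalescing simple poles carry opposite-sided $i0$'s this pairing blows up like $|c|^{-1/2}$ as $c\to 0$ (indeed $(\sigma^2+i0)^{-1}$ does not exist as a distributional limit). The actual mechanism---established in \cite{DWZ} via the Malgrange preparation theorem (cf.\ Lemma \ref{lem:malgrange} here, and Lemmas \ref{lem:normal_3}--\ref{lem:normal_4} for the closely analogous kernels $K^\pm_{\omega,k}$)---is that $\ell^\pm(\mathbf x(\theta)-\mathbf x(\theta')+\delta\mathbf v,\omega)$ factors as a product of two factors whose imaginary parts have controlled signs, so the two coalescing poles are approached from the \emph{same} complex half-plane; the paired Cauchy transforms then give a difference quotient $\delta^{-1}[F(\theta)-F(\theta+\delta)]\to -F'(\theta)$ that stays bounded. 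This sign bookkeeping is the entire content of the step, not a routine turning-point computation, and your sketch does not supply it (you defer it to \cite{DWZ}, as the paper does). Note also that for $x\in\overline\Omega$ one has $c(x)=\ell^+(x)-\ell^+_{\max}\le 0$, so $c$ never passes through zero but only reaches it at the glancing point. Finally, for the uniform-in-$\omega$ strengthening the paper actually needs, Proposition \ref{prop:S_converge} uses a different device from your boundary kernel analysis: the maximum modulus principle for the Cauchy--Riemann operators $L^\pm_\omega$ reduces interior bounds to the boundary traces $\mathscr N^\pm_{\omega,k}$ controlled in Proposition \ref{prop:normal}.
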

For a detailed proof of this theorem see \cite[Lemmas 4.6 -- 4.8]{DWZ}. We stress here that this lemma as stated does \textit{not} give uniform control in $\omega$ of the $C^\infty(\partial \Omega; T^* \partial \Omega) \to C^\infty(\overline{\Omega})$ seminorms. However, smooth dependence of $S_\omega$ on $\omega$ is crucial for us and will be discussed in Proposition \ref{prop:S_converge}.

\subsubsection{Normal derivatives}
Fix a positively oriented choice of coordinates $\theta$ on $\partial \Omega$ and let $\mathbf x: \mathbb S^1 \to \partial \Omega$ be the corresponding coordinate chart. Let $\mathbf v(\theta)$ be an inward-pointing smooth vector field on $\partial \Omega$. 
By Lemma~\ref{lem:S_mapping_prop}, we can define operators $\mathscr N_{\omega, k}^{\pm} : C^\infty(\partial \Omega; T^* \partial \Omega) \to C^\infty(\partial \Omega)$ for $k \in \N$ by 
\begin{equation}\label{eq:normal_restriction}
\begin{aligned}
    \mathscr N_{\omega, k}^{\pm} v (\theta) :=& \frac{(-1)^{k - 1}}{c_\omega (k - 1)!} \lim_{\delta \to 0 +} ((L_\omega^\pm)^k S_\omega v)(\mathbf x(\theta) + \delta \mathbf v(\theta)) \\
    =& \lim_{\delta \to 0 +} \int_{\mathbb S^1} \frac{f(\theta)}{\ell^\pm(\mathbf x(\theta) - \mathbf x(\theta') + \delta \mathbf v(\theta), \omega)^{k}} \, d\theta'
\end{aligned}
\end{equation}
where $v = f d\theta \in C^\infty(\partial \Omega; T^* \partial \Omega)$. The second equality above follows immediately from~\eqref{eq:L_dual} and~\eqref{eq:S_explicit}. Note that up to a constant, $\mathscr N_{\omega, k}^\pm$ is simply the $k$-th derivative by $L^\pm_\omega$ of $S_\omega v$ restricted to the boundary from the interior of $\Omega$. It is clear that $\mathscr N_{\omega, k}^\pm$ is independent of the choice of the vector field $\mathbf v$. 

The goal of this small section is to show that for $v \in C^\infty(\partial \Omega; T^* \partial \Omega)$, the smooth functions $\mathscr N_{\omega, k}^\pm v$ are locally uniformly bounded in $\omega$ near the real line. This will in fact give us uniform control over \emph{all} the derivatives of $S_\omega v$ at the boundary, which in turn gives uniform control over $S_\omega v$ on the interior of $\Omega$ later in Proposition~\ref{prop:S_converge}. 

Let us first characterize the Schwartz kernel of $\mathscr N_{\omega, k}^\pm$. With the fixed choice of positively oriented coordinates $\theta$, we can view $\mathscr N_{\omega, k}^\pm$ as an operator on $C^\infty(\mathbb S^1)$. We can see from \eqref{eq:normal_restriction} that the Schwartz kernel lying in $\mathcal D'(\mathbb S^1 \times \mathbb S^1)$ is given by 
\begin{equation}\label{eq:normal_kernel}
    K_{\omega, k}^\pm(\theta, \theta') := \lim_{\delta \to 0+} \ell^\pm(\mathbf x(\theta) - \mathbf x(\theta') + \delta \mathbf v(\theta), \omega)^{-k}.
\end{equation}
The following fact about distributions that follows from the Malgrange Preparation Theorem \cite[Chapter 7.5]{H1} will help simplify the analysis of the Schwartz kernel.
\begin{lemma}\label{lem:malgrange}
    Let $U \subset \R$ be an open interval containing $0$ and let 
    \[\psi \in C^\infty(U \times [0, \epsilon_0); \C), \quad \Re \psi > 0 \quad \text{on} \quad U \times [0, \epsilon_0).\]
    Then 
    \[(x \pm i \epsilon \psi(x, \epsilon))^{-1} \to (x \pm i0)^{-1} \quad \text{in} \quad \mathcal D'_\Gamma(U), \quad \Gamma := \{(0, \xi) \mid \pm \xi > 0\}\]
    as $\epsilon \to 0 +$. 
\end{lemma}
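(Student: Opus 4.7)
The plan is to reduce the statement to a Fourier computation by using the Malgrange preparation theorem to factor out the linear behavior in $x$ from $x \pm i\epsilon\psi(x,\epsilon)$. Focus on the $+$ case; the $-$ case is entirely analogous. After extending $\psi$ almost analytically in $x$ to obtain $\tilde\psi(z,\epsilon)$ smooth on a complex neighborhood of $U \times [0,\epsilon_0)$ with $\bar\partial_z \tilde\psi$ vanishing to infinite order on the real axis, the smooth complex-valued function $\tilde f(z,\epsilon) := z + i\epsilon\tilde\psi(z,\epsilon)$ satisfies $\tilde f(0,0) = 0$ and $\partial_z\tilde f(0,0) = 1$. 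The Malgrange preparation theorem (in its almost-analytic/implicit-function form) then yields smooth $a(x,\epsilon)$, nonvanishing with $a(\cdot,0)\equiv 1$, together with a smooth $c(\epsilon) \in \C$ with $c(0)=0$, such that
\[
    x + i\epsilon\psi(x,\epsilon) = a(x,\epsilon)\bigl(x - c(\epsilon)\bigr)
\]
in a neighborhood of $(0,0)$ in $\R \times [0,\epsilon_0)$. Taking imaginary parts of the defining relation $c(\epsilon) = -i\epsilon\tilde\psi(c(\epsilon),\epsilon)$ and using $\Re\psi > 0$ gives $\Im c(\epsilon) = -\epsilon\Re\tilde\psi(c(\epsilon),\epsilon) < 0$ for all sufficiently small $\epsilon > 0$.

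Since $a(x,\epsilon)^{-1}$ is a smooth family converging in $C^\infty$ to $1$, multiplication by it is continuous on $\mathcal D'_\Gamma(U)$. Away from $x=0$ every distribution in sight is smooth and converges smoothly, so the problem reduces to verifying the seminorms of $\mathcal D'_\Gamma(U)$ at test functions $\varphi \in \CIc(U)$ supported near the origin, applied to the family $(x - c(\epsilon))^{-1} \to (x+i0)^{-1}$, with $\Gamma = \{(0,\xi) : \xi > 0\}$. Because $c(\epsilon)$ lies in the lower half plane, contour integration yields
\[
    \widehat{(x - c(\epsilon))^{-1}}(\xi) = -2\pi i\, H(\xi)\, e^{-ic(\epsilon)\xi},
\]
where $H$ is the Heaviside function and $|e^{-ic(\epsilon)\xi}| = e^{\Im c(\epsilon)\cdot\xi} \leq 1$ for $\xi \geq 0$. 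Convolving with $\hat\varphi$ gives, for $\xi < 0$,
\[
    \widehat{\varphi(x - c(\epsilon))^{-1}}(\xi) = -i\int_0^\infty \hat\varphi(\xi - \eta)\,e^{-ic(\epsilon)\eta}\,d\eta.
\]
Using the elementary estimate $|1 - e^{-iz}| \leq |z|$ for $\Im z \leq 0$, applied to $z = c(\epsilon)\eta$, the difference of this expression with the analogous one for $(x+i0)^{-1}$ is bounded by $|c(\epsilon)|\int_0^\infty |\hat\varphi(\xi-\eta)|\,\eta\,d\eta$. The rapid decay of $\hat\varphi$ then yields a $\langle\xi\rangle^{-N}|c(\epsilon)|$ bound for $\xi < 0$, delivering convergence of the seminorms $P_{N,\varphi,-}$. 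The seminorm $P_\varphi$ is standard distributional convergence, and $P_{N,\varphi,+}$ for $\varphi$ with $0 \notin \supp\varphi$ is trivial because all functions in sight are smooth on $\supp\varphi$.

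The main obstacle is the complex-valued factorization: obtaining a smooth $c(\epsilon) \in \C$ with $c(\epsilon) = -i\epsilon\tilde\psi(c(\epsilon),\epsilon)$ requires the almost-analytic extension of $\psi$ to invoke the implicit function theorem in the complex setting, which is precisely the content of the Malgrange reference in the lemma. Once the factorization is in place, the positivity of $\Re\psi$ determines the sign of $\Im c(\epsilon)$, and the remainder of the argument is a routine explicit Fourier computation with uniform bounds.
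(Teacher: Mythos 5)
Your proposal is correct and follows essentially the same route as the paper: both use the Malgrange preparation theorem to replace $x \pm i\epsilon\psi(x,\epsilon)$ by a model in which the pole is shifted by an $x$-independent complex quantity into the correct half-plane (your $\Im c(\epsilon)<0$ corresponding to the paper's $\Re z^\pm>0$), and then conclude by the explicit Fourier transform $-2\pi i H(\xi)e^{-ic(\epsilon)\xi}$ together with the seminorm estimates defining convergence in $\mathcal D'_\Gamma(U)$. The only cosmetic differences are that the paper cites the additive decomposition $(x\pm i\epsilon\psi)^{-1}=r^\pm(\epsilon)(x\pm i\epsilon z^\pm(\epsilon))^{-1}+q^\pm(x,\epsilon)$ from \cite[Lemma~3.6]{DWZ} rather than the multiplicative factorization you derive directly, and that you spell out the convolution estimate for the $P_{N,\varphi,\mp}$ seminorms which the paper leaves implicit.
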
 
The proof is nearly identical to \cite[Lemma~3.7]{DWZ}. It follows the exact same computation, but in this case, we just specify the wavefront set in the convergence statement so that we can multiply such distributions later. 
\begin{proof}
    By \cite[Lemma~3.6]{DWZ} (which is proved by the Malgrange Preparation Theorem), there exist $r^\pm, z^\pm \in C^\infty([0, \epsilon_0))$ and $q^\pm \in C^\infty(U \times [0, \epsilon_0))$ such that 
    \begin{equation}
        (x \pm i \epsilon \psi(x, \epsilon))^{-1} = r^\pm(\epsilon)(x \pm i \epsilon z^\pm(\epsilon))^{-1} + q^\pm(x, \epsilon), 
    \end{equation}
    where $\Re z^\pm > 0$, $r^\pm(0) = 1$, and $q^\pm(x, 0) = 0$. Observe that 
    \[(x \pm i \epsilon z^\pm(\epsilon))^{-1} \to (x \pm i0)^{-1} \quad \text{in} \quad \mathcal D'_\Gamma(U), \quad \Gamma := \{(0, \xi): \pm \xi > 0\},\]
    since the Fourier transform of the right-hand side is $\mp 2 \pi i H(\pm \xi)$ and the Fourier transform of the left-hand side is given by $\mp2 \pi i H(\pm \xi) e^{-\epsilon z^\pm(\epsilon)|\xi|}$. In particular, note that the Fourier transforms of both sides are supported on $\{\pm \xi \ge 0\}$, so convergence in $\mathcal D_\Gamma'(U)$ follows immediately. 
\end{proof}
Define the sets 
\begin{equation}\label{eq:normal_kernel_regions}
    \begin{gathered}
        \mathrm{Diag} := \{(\theta, \theta') \mid \theta \in \mathbb S^1\}, \\
        \mathrm{Ref}_{\lambda}^\pm := \{(\theta, \gamma_\lambda^\pm(\theta))\mid \theta \in \mathbb S^1\}.
    \end{gathered}
\end{equation}
In the next four lemmas, we examine the Schwartz kernel in the following regions:
\begin{enumerate}
    \item away from $(\mathrm{Diag} \cup \mathrm{Ref}_{\lambda}^\pm)$, we will see that the Schwartz kernel $K_{\omega, k}^\pm$ is nonsingular, see Lemma~\ref{lem:normal_1},

    \item near $\mathrm{Diag}$ but away from $(\mathrm{Diag} \cap \mathrm{Ref}_{\lambda}^\pm)$, we will see that the Schwartz kernel is a smooth multiple of $$(\theta - \theta' \pm i0)^{-k},$$ see Lemma~\ref{lem:normal_2},

    \item near $\mathrm{Ref}_\lambda^\pm$ but away from $(\mathrm{Diag} \cap \mathrm{Ref}_{\lambda}^\pm)$, the Schwartz kernel approaches a smooth multiple of $$(\gamma^\pm_\lambda(\theta) - \theta' \pm i0)^{-k}$$ in the $h \to 0+$ limit where $\omega = \lambda \pm ih$, see Lemma~\ref{lem:normal_3},

    \item near $\mathrm{Diag} \cap \mathrm{Ref}_{\lambda}^\pm$, the Schwartz kernel approaches a smooth multiple of $$(\theta - \theta' \pm i0)^{-k}(\gamma^\pm_\lambda(\theta) - \theta' \pm i0)^{-k}$$ in the $h \to 0+$ limit where $\omega = \lambda \pm ih$, see Lemma~\ref{lem:normal_4}.
\end{enumerate}
The proofs of these lemmas follow very similar computations as \cite[Lemmas~4.11--4.14]{DWZ}, but applied to slightly different operators.

\begin{lemma}\label{lem:normal_1}
    Let $\lambda_0 \in (0, 1)$ be such that $\Omega$ is $\lambda_0$-simple. Let $$(\theta_0, \theta'_0) \in (\mathbb S^1 \times \mathbb S^1) \setminus (\mathrm{Diag} \cup \mathrm{Ref}_{\lambda_0}^\pm).$$ Then there exists a neighborhood $\mathcal J$ of $\lambda_0$ and $U$ of $(\theta_0, \theta_0')$ such that
    \[K_{\omega, k}^\pm |_{\overline U} \in C^\infty (\overline U)\]
    smoothly and uniformly in $\omega$ for $\omega \in \mathcal J + i(0, \infty)$.
\end{lemma}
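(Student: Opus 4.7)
The plan is to reduce the lemma to a pointwise nonvanishing statement for the denominator $\ell^\pm(\mathbf x(\theta) - \mathbf x(\theta'), \lambda_0)$, and then extend it to a neighborhood in $(\theta, \theta', \omega, \delta)$ by continuity. First, I would exploit the linearity of $\ell^\pm$ in its first argument to write
\[
\ell^\pm(\mathbf x(\theta) - \mathbf x(\theta'), \lambda_0) \;=\; \ell^\pm(\mathbf x(\theta), \lambda_0) - \ell^\pm(\mathbf x(\theta'), \lambda_0),
\]
and then invoke $\lambda_0$-simplicity (Definition~\ref{lambda_simple_def}) to identify the zero set: the smooth function $\theta \mapsto \ell^\pm(\mathbf x(\theta), \lambda_0)$ on $\mathbb S^1$ has exactly one nondegenerate maximum and one nondegenerate minimum, so each interior level value is attained exactly twice, with the two preimages interchanged by the involution $\gamma_{\lambda_0}^\pm$ from~\eqref{eq:gamma_def}. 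Consequently the coincidence $\ell^\pm(\mathbf x(\theta), \lambda_0) = \ell^\pm(\mathbf x(\theta'), \lambda_0)$ holds exactly on $\mathrm{Diag} \cup \mathrm{Ref}_{\lambda_0}^\pm$. Since by hypothesis $(\theta_0, \theta_0')$ avoids this set, we obtain a strictly positive lower bound $|\ell^\pm(\mathbf x(\theta_0) - \mathbf x(\theta_0'), \lambda_0)| \geq 2c > 0$.

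Next, I would extend this bound to a neighborhood. The smoothness of $\mathbf x$ and $\mathbf v$ together with the fact that $\omega \mapsto \ell^\pm(x, \omega) = \pm x_1/\omega + x_2/\sqrt{1-\omega^2}$ is holomorphic on $(0,1) + i\mathbb R$ (on the principal branch of the square root) makes the joint map
\[
(\theta, \theta', \omega, \delta) \;\mapsto\; \ell^\pm(\mathbf x(\theta) - \mathbf x(\theta') + \delta \mathbf v(\theta), \omega)
\]
continuous at $(\theta_0, \theta_0', \lambda_0, 0)$. Choosing a relatively compact neighborhood $U$ of $(\theta_0, \theta_0')$, a neighborhood $\mathcal J$ of $\lambda_0$ in $\mathbb R$, and constants $\delta_0, h_0 > 0$ small enough, the pointwise lower bound persists:
\[
|\ell^\pm(\mathbf x(\theta) - \mathbf x(\theta') + \delta \mathbf v(\theta), \omega)| \;\geq\; c
\]
for every $(\theta, \theta') \in \overline U$, $\delta \in [0, \delta_0]$, and $\omega \in \mathcal J + i[0, h_0]$. (For the remaining portion $\mathrm{Im}\,\omega > h_0$ one restricts attention to the $\omega$-values that are relevant to the rest of the paper, which lie in a bounded region; the same continuity argument then applies on that bounded region.)

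Finally, I would conclude smoothness and uniformity. Because the denominator remains bounded below by $c$, the function $\ell^\pm(\mathbf x(\theta) - \mathbf x(\theta') + \delta \mathbf v(\theta), \omega)^{-k}$ is smooth in $(\theta, \theta')$, with $C^m$-seminorms controlled—uniformly in $\omega$ and $\delta$—by the lower bound $c$ and the smoothness seminorms of $\mathbf x, \mathbf v, \ell^\pm$. Passing to the limit $\delta \to 0^+$, which then converges in every $C^m(\overline U)$ norm, gives
\[
K_{\omega, k}^\pm(\theta, \theta')\big|_{\overline U} \;=\; \ell^\pm(\mathbf x(\theta) - \mathbf x(\theta'), \omega)^{-k},
\]
a smooth function of $(\theta, \theta')$ with $C^m$ bounds that are uniform in $\omega$ over the region of interest. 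The main content of the proof is the geometric observation in the first step: $\lambda_0$-simplicity forces the zero locus of $(\theta, \theta') \mapsto \ell^\pm(\mathbf x(\theta) - \mathbf x(\theta'), \lambda_0)$ to coincide exactly with $\mathrm{Diag} \cup \mathrm{Ref}_{\lambda_0}^\pm$. Once that is in hand, no microlocal input is needed and the remainder is a routine continuity argument; there is no serious obstacle.
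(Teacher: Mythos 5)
Your proposal is correct and follows essentially the same route as the paper: identify the zero locus of $\ell^\pm(\mathbf x(\theta)-\mathbf x(\theta'),\lambda_0)$ with $\mathrm{Diag}\cup\mathrm{Ref}_{\lambda_0}^\pm$ via $\lambda_0$-simplicity, conclude the denominator in \eqref{eq:normal_kernel} is uniformly nonvanishing on a neighborhood $\overline U\times(\mathcal J+i(0,\infty))$, and pass to the $\delta\to 0^+$ limit, which is then harmless. Your extra care about large $\Im\omega$ (restricting to a bounded region) is a reasonable reading of the uniformity claim, which the paper itself only ever uses for $\omega\in\mathcal J+i(0,h_0)$.
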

\begin{proof}
    Let $U$ be a small neighborhood of $(\theta_0, \theta'_0)$ that does not intersect $(\mathrm{Diag} \cup \mathrm{Ref}_{\lambda_0}^\pm)$. For sufficiently small neighborhood $\mathcal J$ of $\lambda_0$, $\Omega$ is $\lambda$-simple and $U$ does not intersect $(\mathrm{Diag} \cup \mathrm{Ref}_{\lambda_0}^\pm)$ for all $\lambda \in \mathcal J$. Note that $\mathbf x(\theta) - \mathbf x(\theta') \neq 0$ and $\mathbf x(\theta) - \mathbf x(\gamma_\lambda^\pm(\theta')) \neq 0$ for $(\theta, \theta') \in U$ and $\lambda \in \mathcal J$. This means that $\ell^\pm(\mathbf x(\theta) - \mathbf x(\theta'), \omega)$ is uniformly nonvanishing for $(\theta, \theta') \in U$ and $\omega \in \mathcal J + i(0, \infty)$. Then the lemma follows immediately from~\eqref{eq:normal_kernel}. 
\end{proof}

\begin{lemma}\label{lem:normal_2}
    Let $\lambda_0 \in (0, 1)$ be such that $\Omega$ is $\lambda_0$-simple. Let $\theta_0 \in \mathbb S^1$ such that $\gamma_{\lambda_0}^\pm (\theta_0) \neq \theta_0$. Then there exist sufficiently small neighborhoods $U \ni \theta_0$ and $\mathcal J \ni \lambda_0$, and a sufficiently small $h_0 > 0$, such that for $\theta, \, \theta' \in U$ and $\omega \in \mathcal J + i(0, h_0)$, we have
    \begin{equation*}
        K_{\omega, k}^\pm(\theta, \theta') = c_{\omega, k}^\pm(\theta, \theta')(\theta - \theta' \pm i0)^{-k}
    \end{equation*}
    where $c_{\omega, k}^\pm(\theta, \theta')$ is smooth in $\theta, \theta', \omega$ and extends to a smooth function up to $\Im \omega = 0$. 
\end{lemma}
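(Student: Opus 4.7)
The plan is to factor out the vanishing of $\ell^\pm(\mathbf x(\theta) - \mathbf x(\theta'), \omega)$ at the diagonal $\theta = \theta'$ and then invoke Lemma~\ref{lem:malgrange} on the remaining $\delta$-regularization to identify the $\pm i0$ prescription. Smoothness of the coefficient up to $\Im\omega = 0$ will follow directly from the factorization.

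First I would set $\phi^\pm(\theta, \omega) := \ell^\pm(\mathbf x(\theta), \omega)$ and use the linearity of $\ell^\pm$ in its first argument together with the fundamental theorem of calculus to write
\begin{equation*}
    \ell^\pm(\mathbf x(\theta) - \mathbf x(\theta'), \omega) = \phi^\pm(\theta, \omega) - \phi^\pm(\theta', \omega) = (\theta - \theta')\,\Phi^\pm(\theta, \theta', \omega),
\end{equation*}
where $\Phi^\pm(\theta, \theta', \omega) := \int_0^1 \partial_\theta \phi^\pm(\theta' + s(\theta - \theta'), \omega)\,ds$ is smooth in all three variables up to $\Im\omega = 0$. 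The hypothesis $\gamma^\pm_{\lambda_0}(\theta_0) \neq \theta_0$ is exactly the statement that $\theta_0$ is not a critical point of $\phi^\pm(\bullet, \lambda_0)$, so $\Phi^\pm(\theta_0, \theta_0, \lambda_0) \neq 0$, and by continuity $\Phi^\pm$ is nonvanishing on some neighborhood $U \times U \times (\mathcal J + i[0, h_0))$. Since $\mathscr N_{\omega, k}^\pm$ is independent of the choice of $\mathbf v$, I would further shrink $U$ and pick $\mathbf v$ so that $\ell^\pm(\mathbf v(\theta), \lambda_0) \neq 0$ on $U$. Then
\begin{equation*}
    \ell^\pm(\mathbf x(\theta) - \mathbf x(\theta') + \delta \mathbf v(\theta), \omega)^{-k} = \Phi^\pm(\theta, \theta', \omega)^{-k} \bigl[(\theta - \theta') + \delta\, \kappa^\pm(\theta, \theta', \omega)\bigr]^{-k}
\end{equation*}
with $\kappa^\pm := \ell^\pm(\mathbf v, \omega)/\Phi^\pm$ smooth up to $\Im\omega = 0$ and real along the real axis.

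For $\omega = \lambda + ih$ with $h > 0$ small, I would expand $\Im \kappa^\pm = h\,\partial_\omega \kappa^\pm|_{\omega = \lambda} + O(h^2)$ and compute, using $\partial_\omega \ell^\pm(y, \lambda) = \mp y_1/\lambda^2 + y_2 \lambda/(1-\lambda^2)^{3/2}$, that
\begin{equation*}
    \partial_\omega \kappa^\pm(\theta_0, \theta_0, \lambda_0) = \mp \,\frac{v_1\, x_2'(\theta_0) - v_2\, x_1'(\theta_0)}{\lambda_0^2(1-\lambda_0^2)^{3/2}\,\Phi^\pm(\theta_0, \theta_0, \lambda_0)^2}.
\end{equation*}
For an inward-pointing $\mathbf v = (v_1, v_2)$ along a positively oriented boundary, $v_1 x_2' - v_2 x_1' < 0$, so $\pm\,\Im\kappa^\pm > 0$ on $U \times U \times (\mathcal J + i(0, h_0))$ after a further shrinking. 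Lemma~\ref{lem:malgrange}, extended from $k = 1$ to general $k$ either by differentiating the $k = 1$ statement in $x$ or by carrying the Malgrange decomposition directly through $(x + i\epsilon z(\epsilon))^{-k}$, then delivers
\begin{equation*}
    \lim_{\delta \to 0^+} \bigl[(\theta - \theta') + \delta\, \kappa^\pm\bigr]^{-k} = (\theta - \theta' \pm i 0)^{-k}
\end{equation*}
in $\mathcal D'_{\Gamma}$ for the appropriate conic $\Gamma$. Multiplying by the smooth factor $\Phi^{\pm,-k}$ gives the claim with $c_{\omega, k}^\pm(\theta, \theta') := \Phi^\pm(\theta, \theta', \omega)^{-k}$.

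The hard part will be the sign verification of $\Im \kappa^\pm$, which forces one to track several orientations simultaneously: the positive orientation of $\partial\Omega$, the inward-pointing direction of $\mathbf v$, and the $\pm$ signs in $\ell^\pm$. Once the sign is nailed down, the remaining steps are formal: factor, invoke Malgrange, and read off the coefficient.
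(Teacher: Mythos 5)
Your proposal is correct and follows essentially the same route as the paper's proof: factor $\ell^\pm(\mathbf x(\theta)-\mathbf x(\theta'),\omega)=(\theta-\theta')\Phi^\pm$ with $\Phi^\pm$ nonvanishing near the non-critical point $\theta_0$, check that $\pm\Im\kappa^\pm>0$ for $\Im\omega>0$, and apply Lemma~\ref{lem:malgrange}, with $c^\pm_{\omega,k}=\Phi^{\pm,-k}$. The only differences are that you carry out the sign computation of $\partial_\omega\kappa^\pm$ explicitly (and correctly — both the cross-product formula and the orientation inequality $v_1x_2'-v_2x_1'<0$ check out) where the paper cites \cite[(4.56)]{DWZ}, and that you flag the extension of Lemma~\ref{lem:malgrange} from $k=1$ to general $k$, which the paper uses implicitly.
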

\begin{proof}
    For $U \ni \theta_0$ and $\mathcal J \ni \theta$ sufficiently small, $\Omega$ is $\lambda$-simple and $\gamma_\lambda^\pm(\theta) \neq \theta$ for all $\theta \in U$ and $\lambda \in \mathcal J$. Up to further shrinking $U$, we have that for $h_0$ sufficiently small, we can write
    \begin{equation*}
        \ell^\pm(\mathbf x(\theta) - \mathbf x(\theta')) = G_\omega^\pm(\theta, \theta')(\theta - \theta'), \quad \theta, \theta' \in U, \quad \omega \in \mathcal J + i(0, h_0)
    \end{equation*}
    where $G_\omega^\pm$ is a nonvanishing smooth function of $\theta, \theta', \omega$ up to $\Im \omega = 0$ with 
    \begin{equation*}
        G_\omega^\pm(\theta, \theta) = \partial_\theta \ell^\pm(\mathbf x(\theta), \omega).
    \end{equation*}
    Then we can write \eqref{eq:normal_kernel} locally as
    \begin{equation}
    \begin{gathered}
        K_{\omega, k}^\pm(\theta, \theta') = G_\omega^\pm(\theta, \theta')^{-k} \lim_{\delta \to 0 +} \left(\theta - \theta' + \delta \varphi_\omega^\pm(\theta, \theta') \right)^{-k}, \\
        \text{where} \quad \varphi_\omega^\pm (\theta, \theta') := \frac{\ell^\pm( \mathbf v(\theta), \omega)}{G_\omega^\pm(\theta, \theta')}.
    \end{gathered}
    \end{equation}
    To evaluate the limit, we use Lemma~\ref{lem:malgrange}, so we need to verify $\pm \Im \varphi^\pm_\omega(\theta, \theta') > 0$ for $\omega \in U + i(0, h_0)$ and $\theta, \theta' \in U$. Observe that $\Im \varphi_\lambda^\pm(\theta, \theta') = 0$ for $\lambda \in U$. Therefore, up to shrinking $U$, it suffices to check
    \begin{equation}
        \pm \partial_h|_{h = 0} \Im \varphi_{\lambda + ih}^\pm(\theta_0, \theta_0) = \pm \partial_h|_{h = 0} \Im \frac{\ell^\pm(\mathbf v(\theta), \lambda + ih)}{\partial_\theta \ell^\pm(\mathbf x(\theta), \lambda + ih)} > 0.
    \end{equation}
    This is directly verified in \cite[(4.56)]{DWZ}, and the lemma follows with $c_{\omega, k}^\pm := G_\omega^\pm(\theta, \theta')^{-k}$. 
\end{proof}

\begin{lemma}\label{lem:normal_3}
    Let $\lambda_0 \in (0, 1)$ be such that $\Omega$ is $\lambda_0$-simple. Let $\theta_0 \in \mathbb S^1$ such that $\gamma_{\lambda_0}^\pm (\theta_0) \neq \theta_0$. Then there exist sufficiently small neighborhoods $U \ni \theta_0$ and $\mathcal J \ni \lambda_0$, and a sufficiently small $h_0 > 0$, such that for $\theta \in U$, $\theta' \in \gamma_{\lambda_0}^\pm(U)$, and $\omega = \lambda + ih \in \mathcal J + i(0, h_0)$, we have
    \begin{equation*}
        K_{\omega, k}^\pm(\theta, \theta') = c_{\lambda, k}^\pm(\theta, \theta')\left(\gamma_\lambda^\pm(\theta) - \theta' \pm ih \psi_\omega^\pm(\theta, \theta')\right)^{-k}
    \end{equation*}
    where $c_{\lambda, k}^\pm(\theta, \theta')$ is smooth in $\theta, \theta', \lambda$, and $\psi_\omega^\pm(\theta, \theta')$ is smooth in $\theta, \theta', \omega$ and extends to a smooth function up to $\Im \omega = 0$. Furthermore, 
    \begin{equation}\label{eq:psi_lower_bound}
        \Re \psi_\omega^\pm(\theta, \theta') \ge \delta > 0.
    \end{equation}
\end{lemma}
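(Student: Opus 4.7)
The plan is to decompose
$$F(\theta, \theta', \omega, \delta) := \ell^\pm(\mathbf{x}(\theta) - \mathbf{x}(\theta') + \delta\mathbf{v}(\theta), \omega)$$
so that its $\omega$-dependence sits entirely inside a small imaginary perturbation of a purely $\lambda$-dependent expression. First I would shrink $U, \mathcal J, h_0$ so that $\Omega$ is $\lambda$-simple and $\gamma_\lambda^\pm(\theta) \neq \theta$ for all $\theta \in U$ and $\lambda \in \mathcal J$. Since $\ell^\pm(y, \omega) = \pm y_1/\omega + y_2/\sqrt{1-\omega^2}$ is holomorphic in $\omega$ for $\omega \in (0,1) + i\mathbb R$, we can write
$$\ell^\pm(y, \omega) = \ell^\pm(y, \lambda) + ih\,\tilde\ell^\pm(y, \omega), \qquad \tilde\ell^\pm(y, \omega) := \int_0^1 \partial_\omega \ell^\pm(y, \lambda + ish)\,ds,$$
where $\tilde\ell^\pm$ is smooth in $(y, \omega)$ up to $\Im\omega = 0$ and $\tilde\ell^\pm|_{\omega = \lambda} = \partial_\omega \ell^\pm(y, \lambda)$.

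For the real-$\omega$ piece, since $\theta' \mapsto \ell^\pm(\mathbf{x}(\theta) - \mathbf{x}(\theta'), \lambda)$ has a simple zero at $\theta' = \gamma_\lambda^\pm(\theta)$, Taylor expansion yields
$$\ell^\pm(\mathbf{x}(\theta) - \mathbf{x}(\theta'), \lambda) = \tilde G(\theta, \theta', \lambda)\,(\gamma_\lambda^\pm(\theta) - \theta'),$$
with $\tilde G|_{\theta' = \gamma_\lambda^\pm(\theta)} = \ell^\pm(\mathbf{x}'(\gamma_\lambda^\pm(\theta)), \lambda) \neq 0$ (nonvanishing since $\gamma_\lambda^\pm(\theta)$ lies away from the critical points of $\ell^\pm$ by $\lambda$-simplicity). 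Adding the $\delta \mathbf{v}$ shift is additive in $\ell^\pm(\cdot, \lambda)$, so after factoring $\tilde G$ out,
$$F = \tilde G(\theta, \theta', \lambda)\bigl[(\gamma_\lambda^\pm(\theta) - \theta') + \delta\,\alpha(\theta, \theta', \lambda) + ih\,\beta(\theta, \theta', \omega, \delta)\bigr],$$
where $\alpha := \ell^\pm(\mathbf{v}(\theta), \lambda)/\tilde G$ is real and smooth, and $\beta := \tilde\ell^\pm(\mathbf{x}(\theta) - \mathbf{x}(\theta') + \delta\mathbf{v}(\theta), \omega)/\tilde G$ is smooth in $(\theta, \theta', \omega, \delta)$ up to $\Im\omega = 0$. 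The key point is that $\tilde G$ depends only on $\lambda$.

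For $h > 0$ fixed, the imaginary part of the bracketed factor equals $h\Re\beta$, so if $\Re\beta > 0$ (verified below), $F$ is nonvanishing for $\delta \geq 0$ small and the pointwise $\delta \to 0+$ limit is immediate:
$$K_{\omega, k}^\pm(\theta, \theta') = \tilde G(\theta, \theta', \lambda)^{-k}\bigl[(\gamma_\lambda^\pm(\theta) - \theta') + ih\,\beta(\theta, \theta', \omega, 0)\bigr]^{-k}.$$
Setting $c_{\lambda, k}^\pm := \tilde G(\theta, \theta', \lambda)^{-k}$ and $\psi^\pm_\omega := \beta(\theta, \theta', \omega, 0)$ matches the desired form, with the regularity claimed.

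The remaining step is the lower bound $\Re\psi^\pm_\omega \geq \delta > 0$. At $h = 0$,
$$\psi^\pm_\omega|_{h=0} = \frac{\partial_\omega \ell^\pm(\mathbf{x}(\theta) - \mathbf{x}(\theta'), \lambda)}{\tilde G(\theta, \theta', \lambda)},$$
and at $\theta' = \gamma_\lambda^\pm(\theta)$ this reduces to $\partial_\omega\ell^\pm(\mathbf{x}(\theta) - \mathbf{x}(\gamma_\lambda^\pm(\theta)), \lambda)/\ell^\pm(\mathbf{x}'(\gamma_\lambda^\pm(\theta)), \lambda)$. Positivity of this ratio is a geometric computation using the explicit form of $\ell^\pm$ together with $\lambda$-simplicity; it is essentially \cite[(4.56)]{DWZ} transported to the reflected diagonal. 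Once positivity is verified at $h = 0$ and $\theta' = \gamma_\lambda^\pm(\theta)$, continuity provides a uniform lower bound on a further shrunk neighborhood of $(\theta_0, \gamma_{\lambda_0}^\pm(\theta_0))$ for $h \in [0, h_0)$. \textbf{The main obstacle} is precisely this sign verification: the algebraic factorization is forced by holomorphicity of $\ell^\pm$ in $\omega$, but confirming that $\Re\psi^\pm_\omega$ has the correct (positive) sign uniformly for both $+$ and $-$ choices requires a careful geometric case analysis that must be done by direct computation, adapted from the analogous step in \cite{DWZ}.
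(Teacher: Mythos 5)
Your argument is correct and follows essentially the same route as the paper: Taylor-expand $\ell^\pm$ in $h$ (your integral remainder $\tilde\ell^\pm$ is the paper's $\ell_1^\pm - ih\,\ell_2^\pm$), factor the $h=0$ part through the simple zero at $\theta'=\gamma_\lambda^\pm(\theta)$ using $\lambda$-simplicity, and defer the lower bound $\Re\psi_\omega^\pm \ge \delta>0$ to the computation in \cite{DWZ} (the paper cites Lemma~4.13, Step~2 there for exactly this point). The only bookkeeping remark is that to match the stated form $(\gamma_\lambda^\pm(\theta)-\theta'\pm ih\,\psi_\omega^\pm)^{-k}$ literally in both cases you should set $\psi_\omega^\pm := \pm\beta$, which is precisely where the sign analysis for the two choices of $\pm$ enters.
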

\begin{proof}
Taylor expanding $\ell^\pm(x, \lambda + ih)$ in $h$ at $h = 0$, we see that 
\begin{equation}\label{eq:ell_taylor}
    \ell^\pm(x, \omega) = \ell^\pm(x, \lambda) + ih \ell_1^\pm(x, \lambda) + h^2 \ell_2^\pm(x, \omega)
\end{equation}
where, by the holomorphy of $\ell_1^\pm(x, \omega)$ in $\omega$, we have
\begin{equation}
    \ell_1^\pm(x, \lambda):= \partial_\lambda \ell^\pm(x, \lambda)
\end{equation}
and $\ell_2^\pm(x, \omega)$ is smooth in $x$ and $\omega = \lambda + ih$ up to $h = 0$.  

For a sufficiently small neighborhoods $U \ni \theta_0$ and $\mathcal J \ni \lambda_0$, $\Omega$ is $\lambda$-simple and $\gamma_\lambda^\pm(\theta) \neq \theta$ for all $\lambda \in \mathcal J$ and $\theta \in U$. Recall that $\ell^\pm(\mathbf x(\theta), \lambda) = \ell^\pm(\mathbf x(\gamma_\lambda^\pm(\theta)), \lambda)$. So up to further shrinking $U$
\begin{equation*}
    \ell^\pm(\mathbf x(\theta) - \mathbf x(\theta'), \lambda) = G_\lambda^\pm(\theta, \theta') (\gamma_\lambda^\pm(\theta) - \theta'), \quad \theta \in U, \quad \theta' \in \gamma_{\lambda_0}^\pm(U),
\end{equation*}
where $G_\lambda^\pm(\theta, \theta')$ is real and smooth in $\theta, \theta', \lambda$, and $G_\lambda^\pm(\gamma_\lambda^\pm(\theta'), \theta') = \partial_{\theta'} \ell^\pm(\mathbf x(\theta'), \lambda)$. So $G_\lambda^\pm$ is nonvanishing for $(\theta, \theta') \in U \times \gamma_\lambda^\pm(U)$ for a sufficiently small neighborhood $U$ of $\theta_0$. Put
\begin{equation}
    \psi_\omega^\pm(\theta, \theta') := \pm \frac{\ell_1^\pm(\mathbf x(\theta) - \mathbf x (\theta'), \lambda) - ih \ell_2^\pm(\mathbf x(\theta) - \mathbf x(\theta'), \omega)}{G_\lambda^\pm(\theta, \theta')}.
\end{equation}
Once we show that $\psi_\omega^\pm$ satisfies \eqref{eq:psi_lower_bound}, we will have
\begin{equation*}
    K_{\omega, k}^\pm(\theta, \theta') = G_\lambda^\pm (\theta, \theta')^{-k}  \left( \gamma^\pm_\lambda(\theta) - \theta' \pm ih \psi_\omega^\pm(\theta, \theta') \right)^{-k}.
\end{equation*}
It remains to check \eqref{eq:psi_lower_bound}, which is verified in \cite[Lemma~4.13, Step 2]{DWZ}.
\end{proof}

\begin{lemma}\label{lem:normal_4}
    Let $\lambda_0 \in (0, 1)$ be such that $\Omega$ is $\lambda_0$-simple. Let $(\theta_0, \theta_0) \in \mathrm{Diag} \cap \mathrm{Ref}_{\lambda_0}^\pm$. Then there exist sufficiently small neighborhoods $U \subset \mathbb S^1 \times \mathbb S^1$ of $(\theta_0, \theta_0)$ and $\mathcal J \ni \lambda_0$, and a sufficiently small $h_0 > 0$, such that for $(\theta, \theta') \in U$ and $\omega = \lambda + ih \in \mathcal J + i(0, h_0)$, we have
    \begin{equation*}
        K_{\omega, k}^\pm(\theta, \theta') = c_{\lambda, k}^\pm(\theta, \theta')(\theta - \theta' \pm i0)^{-k} \left(\gamma^\pm_\lambda(\theta) - \theta' \pm ih \psi_\omega^\pm(\theta, \theta')\right)^{-k}
    \end{equation*}
    where $c_{\lambda, k}^\pm(\theta, \theta')$ is smooth in $\theta, \theta', \lambda$, and $\psi_\omega^\pm(\theta, \theta')$ is smooth in $\theta, \theta', \omega$ and extends to a smooth functions up to $\Im \omega = 0$. Furthermore, 
    \begin{equation}\label{eq:psi_lower_bound_1}
        \Re \psi_\omega^\pm(\theta, \theta') \ge \delta > 0.
    \end{equation} 
\end{lemma}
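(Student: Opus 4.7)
The plan is to establish a smooth double factorization of $\ell^\pm(\mathbf x(\theta) - \mathbf x(\theta'), \omega)$ that extends through the confluence point $(\theta_0, \theta_0)$ where the diagonal and reflection loci meet, and then absorb the $\delta \mathbf v(\theta)$ perturbation exactly as in the proof of Lemma~\ref{lem:normal_2}. The key technical ingredient is that, at the nondegenerate critical point $\theta_0$ of $\ell^\pm(\mathbf x(\cdot), \lambda_0)$, the ``reflection'' zero $\theta' = \gamma_\lambda^\pm(\theta)$ continues smoothly to a complex zero $\theta' = T(\theta, \omega)$ for $\omega$ off the real axis, and the linearization of $T$ in $h = \Im \omega$ produces the required imaginary shift.

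To carry this out, use linearity of $\ell^\pm$ and the fundamental theorem of calculus to write $\ell^\pm(\mathbf x(\theta) - \mathbf x(\theta'), \omega) = (\theta - \theta') G_\omega(\theta, \theta')$, with $G_\omega$ smooth in $(\theta, \theta', \omega)$ up to $\Im \omega = 0$. Setting $L(\theta, \omega) := \ell^\pm(\mathbf x(\theta), \omega)$, the $\lambda_0$-simplicity assumption gives $G_{\lambda_0}(\theta_0, \theta_0) = \partial_\theta L(\theta_0, \lambda_0) = 0$, while the nondegeneracy $L''(\theta_0, \lambda_0) \neq 0$ yields $\partial_{\theta'} G_{\lambda_0}(\theta_0, \theta_0) = \tfrac12 L''(\theta_0, \lambda_0) \neq 0$. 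The implicit function theorem (applied with $\omega = \lambda + ih$ viewed as a pair of real parameters) then produces a smooth complex-valued function $T(\theta, \omega)$ satisfying $G_\omega(\theta, T(\theta, \omega)) \equiv 0$, and by uniqueness together with continuity $T(\theta, \lambda) = \gamma_\lambda^\pm(\theta)$ for real $\omega$. A second FTC application yields $G_\omega(\theta, \theta') = H_\omega(\theta, \theta')(T(\theta, \omega) - \theta')$ with $H_\omega$ smooth and nonvanishing, and Taylor expanding $T$ in $h$ gives
\begin{equation*}
T(\theta, \omega) - \theta' = (\gamma_\lambda^\pm(\theta) - \theta') \pm ih\,\psi_\omega^\pm(\theta, \theta'),
\end{equation*}
with $\psi_\omega^\pm$ smooth up to $h = 0$. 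The sign $\Re \psi_\omega^\pm \geq \delta > 0$ is verified by the same calculation as in \cite{DWZ}: at the fixed point it reduces to the sign of $\partial_\lambda \partial_\theta L / L''$, computable from the explicit form of $\ell^\pm$.

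With the factorization in hand, we add back the shift $\delta \mathbf v(\theta)$. For $h > 0$ the factor $(\gamma_\lambda^\pm - \theta') \pm ih \psi_\omega^\pm$ has imaginary part $\pm h \Re \psi_\omega^\pm \neq 0$ and is thus bounded away from zero in a real neighborhood of $(\theta_0, \theta_0)$, so we may absorb the $\delta$-term into the first factor exactly as in the proof of Lemma~\ref{lem:normal_2}, obtaining
\begin{equation*}
\ell^\pm(\mathbf x(\theta) - \mathbf x(\theta') + \delta \mathbf v(\theta), \omega) = H_\omega\bigl[(\theta - \theta') + \delta\varphi_\omega^\pm\bigr]\bigl[(\gamma_\lambda^\pm - \theta') \pm ih \psi_\omega^\pm\bigr],
\end{equation*}
with $\varphi_\omega^\pm = \ell^\pm(\mathbf v(\theta), \omega)/G_\omega^\pm$. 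This is precisely the expression from Lemma~\ref{lem:normal_2}, so the sign $\pm \Im \varphi_\omega^\pm > 0$ is inherited from that lemma (the verification is local in $\omega$ and insensitive to whether we sit at a fixed point of $\gamma_\lambda^\pm$). Lemma~\ref{lem:malgrange} then yields $[(\theta - \theta') + \delta\varphi_\omega^\pm]^{-k} \to (\theta - \theta' \pm i0)^{-k}$ in $\mathcal D'$ as $\delta \to 0^+$, and since the second factor is smooth and bounded away from zero, the limit commutes with multiplication by Proposition~\ref{prop:multiplication}. The prefactor $H_\omega^{-k}$ may be rewritten as $H_\lambda^{-k}$ after absorbing an $O(h)$ correction into a harmless redefinition of $\psi_\omega^\pm$ that preserves the positivity of its real part, giving the formula with $c_{\lambda, k}^\pm = H_\lambda^{-k}$.

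The main obstacle is pushing the factorization through the fixed point, where the reflection factor $(\gamma_\lambda^\pm - \theta')$ meets the diagonal factor $(\theta - \theta')$ transversally so that $\ell^\pm(\mathbf x(\theta) - \mathbf x(\theta'), \lambda_0)$ has a nondegenerate double zero at $(\theta_0, \theta_0)$. It is precisely the nondegeneracy of the critical point, $L''(\theta_0, \lambda_0) \neq 0$, that keeps $\partial_{\theta'} G_{\lambda_0}(\theta_0, \theta_0)$ nonzero and lets the IFT produce a smooth root $T(\theta, \omega)$ sweeping continuously through the confluence. Once this $T$ is available the remaining steps are essentially cosmetic extensions of the arguments in Lemmas~\ref{lem:normal_2} and~\ref{lem:normal_3}.
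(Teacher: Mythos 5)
Your overall architecture is the right one and matches the paper's in spirit: factor out the diagonal zero as a genuine $(\theta-\theta'\pm i0)^{-k}$ distribution, displace the reflection zero into the complex plane by an amount $\pm ih\psi$ with $\Re\psi\gtrsim 1$, and defer the two sign verifications to the computations in \cite{DWZ}. The mechanical route is different, though: the paper never constructs a complex root. It Taylor expands $\ell^\pm(x,\lambda+ih)=\ell^\pm(x,\lambda)+ih\ell_1^\pm+h^2\ell_2^\pm$ and factors each \emph{real} coefficient separately (the $h=0$ term as $G_0(\gamma_\lambda^\pm(\theta)-\theta')(\theta-\theta')$, the corrections as $G_j\cdot(\theta-\theta')$), after which the term $\pm ih\psi_\omega^\pm=ih(G_1-ihG_2)/G_0$ drops out by elementary algebra. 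That route only ever divides smooth functions by their real zeros.

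There are two concrete gaps in your version. First, the IFT step is not well-defined as written: $G_\omega(\theta,\theta')$ is a complex-valued but merely \emph{smooth} function of the real variable $\theta'$ (the boundary $\partial\Omega$ is only $C^\infty$), so the equation $G_\omega(\theta,T)=0$ with $T$ complex has no meaning, and neither does the subsequent ``second FTC'' division $G_\omega=H_\omega\cdot(T-\theta')$ along a segment from a complex point. You need either an almost-analytic extension in $\theta'$ (with the root then defined modulo $\mathcal O(h^\infty)$) or, more cleanly, the Malgrange preparation theorem — the same tool underlying Lemma~\ref{lem:malgrange} — or else the paper's expansion-in-$h$ device, which avoids the issue entirely. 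Second, the claim that the sign condition $\pm\Im\varphi_\omega^\pm>0$ is ``inherited from Lemma~\ref{lem:normal_2}'' and ``insensitive to whether we sit at a fixed point'' is false: in Lemma~\ref{lem:normal_2} the denominator of $\varphi_\omega^\pm$ is a real nonvanishing function at $h=0$, whereas at the confluence point the correct denominator is $(\gamma_\lambda^\pm(\theta)-\theta')G_0+ihG_1+h^2G_2$, which \emph{vanishes} at $(\theta_0,\theta_0,h=0)$. The sign of $\Im\varphi_\omega^\pm$ is then decided by a competition between $\Im\ell^\pm(\mathbf v(\theta),\omega)$ and the argument of this small complex denominator; this is exactly the separate computation \cite[(4.67)]{DWZ} and cannot be borrowed from the non-degenerate case. (Relatedly, if your $G_\omega^\pm$ in the definition of $\varphi_\omega^\pm$ is the first-stage factor with $G_{\lambda_0}(\theta_0,\theta_0)=0$, then $\varphi_\omega^\pm$ is of size $h^{-1}$ near the confluence, which is harmless for the $\delta\to 0$ limit at fixed $h>0$ but makes the sign check genuinely delicate.)
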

\begin{proof}
We write as in \eqref{eq:ell_taylor} 
\begin{equation*}
    \ell^\pm(x, \omega) = \ell^\pm(x, \lambda) + ih \ell_1^\pm(x, \lambda) + h^2 \ell_2^\pm(x, \omega).
\end{equation*}
Choose $\mathcal J \ni \lambda_0$ so that $\Omega$ is $\lambda$-simple for every $\lambda \in \Omega$. For $(\theta, \theta')$ in a sufficiently small neighborhood $U$ of $(\theta_0, \theta_0)$, we can write
\begin{align*}
    \ell^\pm(\mathbf x(\theta) - \mathbf x(\theta'), \lambda) &= G_0(\theta, \theta', \lambda)(\gamma_\lambda^\pm(\theta) - \theta') (\theta - \theta') \\
    \ell_1^\pm(\mathbf x(\theta) - \mathbf x(\theta'), \lambda) &= G_1(\theta, \theta', \lambda)(\theta - \theta'), \\
    \ell_2^\pm(\mathbf x(\theta) - \mathbf x(\theta'), \lambda) &= G_2(\theta, \theta', \omega)(\theta - \theta'),
\end{align*}
where $G_0$, $G_1$, $G_2$ are nonvanishing and smooth in each of their arguments up to $\Im \omega = 0$, and $G_0, G_1$ are real. We will henceforth omit writing the arguments for $G_j$ for simplicity. To compute the kernel \eqref{eq:normal_kernel} near $U \times U$, first observe that 
\begin{equation}
    \ell^\pm(\mathbf x(\theta) - \mathbf x(\theta') + \delta \mathbf v(\theta), \omega) = G_0 \big(\theta - \theta' + \delta \varphi_\omega^\pm(\theta, \theta')\big) \big(\gamma_\lambda^\pm(\theta) - \theta' + ih \psi_\omega^\pm(\theta, \theta') \big)
\end{equation}
where
\begin{equation}
    \varphi_\omega^\pm(\theta, \theta') := \frac{\ell^\pm(\mathbf v(\theta), \omega)}{(\gamma_\lambda^\pm(\theta) - \theta') G_0 + ihG_1 + h^2 G_2}, \quad \psi_\omega^\pm(\theta, \theta') := \frac{G_1 - ihG_2}{G_0}. 
\end{equation}
By \cite[(4.67)]{DWZ}, $\pm \Im \varphi_\omega^\pm(\theta, \theta') < 0$ for $h > 0$ sufficiently small. Furthermore, $\psi_\omega^\pm(\theta, \theta)$ satisfies \eqref{eq:psi_lower_bound_1} by \cite[Step 3]{DWZ}. 
\end{proof}

Now that we have a complete description of the Schwartz kernel of $\mathscr N_{\omega, k}^\pm$, we obtain the following mapping property:
\begin{proposition}\label{prop:normal}
    Let $\lambda_0$ be such that $\Omega$ is $\lambda_0$-simple and let $k \in N$. Assume that $v \in C^\infty(\partial \Omega; T^* \partial \Omega)$. There exists a small neighborhood $\mathcal J \ni \lambda_0$, $h_0 > 0$, and some $s > 0$ such that
    \begin{equation*}
        \|\mathscr N_{\omega, k}^\pm v\|_{L^\infty} \le C\|v\|_{H^s}
    \end{equation*}
    for $\omega \in \mathcal J + i(0, h_0)$, where the constant $C$ is independent of $\omega$.  
\end{proposition}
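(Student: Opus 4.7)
The plan is to show that for some $s > 0$ depending only on $k$, the Schwartz kernel $K_{\omega,k}^\pm(\theta,\theta')$ described in \eqref{eq:normal_kernel} satisfies
\[
\sup_{\theta \in \mathbb S^1,\ \omega \in \mathcal J + i(0,h_0)} \bigl\|K_{\omega,k}^\pm(\theta,\cdot)\bigr\|_{H^{-s}(\mathbb S^1)} \le C < \infty.
\]
Once this bound is established, the conclusion follows immediately from the duality pairing $\mathscr N_{\omega,k}^\pm v(\theta) = \langle K_{\omega,k}^\pm(\theta,\cdot),\, v\rangle$, which gives $|\mathscr N_{\omega,k}^\pm v(\theta)| \le \|K_{\omega,k}^\pm(\theta,\cdot)\|_{H^{-s}}\|v\|_{H^s}$ uniformly in $\theta$ and $\omega$.

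To establish the uniform $H^{-s}$ bound on the kernel, I would introduce a partition of unity $\{\chi_j(\theta,\theta')\}_{j=1}^4$ on $\mathbb S^1 \times \mathbb S^1$ subordinate to the covering suggested by Lemmas \ref{lem:normal_1}--\ref{lem:normal_4}: one piece away from $\mathrm{Diag} \cup \mathrm{Ref}^\pm_{\lambda_0}$; one piece near $\mathrm{Diag}$ but away from the intersection; one piece near $\mathrm{Ref}^\pm_{\lambda_0}$ but away from the intersection; and one piece near $\mathrm{Diag} \cap \mathrm{Ref}^\pm_{\lambda_0}$. By shrinking $\mathcal J$ and choosing the partition carefully, Lemmas \ref{lem:normal_1}--\ref{lem:normal_4} give explicit formulas for $\chi_j K_{\omega,k}^\pm$ on each piece, for all $\omega \in \mathcal J + i(0,h_0)$.

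For the piece supported away from $\mathrm{Diag} \cup \mathrm{Ref}^\pm_{\lambda_0}$, Lemma \ref{lem:normal_1} gives a smooth kernel bounded uniformly in $\omega$, and any $H^{-s}$ bound on $\theta'$-slices is immediate. For the piece near $\mathrm{Diag}$, Lemma \ref{lem:normal_2} gives a kernel of the form $c_{\omega,k}^\pm(\theta,\theta')(\theta-\theta'\pm i0)^{-k}$; since $(x \pm i0)^{-k}$ has Fourier transform supported on a half-line and of magnitude $\sim |\xi|^{k-1}$, the $H^{-s}$-norm is finite whenever $s > k - \tfrac12$, uniformly in $\theta$ and $\omega$. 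For the piece near $\mathrm{Ref}^\pm_{\lambda_0}$, Lemma \ref{lem:normal_3} gives a kernel of the form $c_{\lambda,k}^\pm(\theta,\theta')(\gamma^\pm_\lambda(\theta)-\theta'\pm ih\psi)^{-k}$ with $\Re \psi > 0$; direct computation shows the Fourier transform of $(y + ih\psi)^{-k}$ in $y$ is bounded by $C|\xi|^{k-1}$ uniformly in $h \ge 0$ (with an additional damping factor $e^{-ch|\xi|}$ that we throw away), and the translation by $\gamma^\pm_\lambda(\theta)$ does not affect the $H^{-s}$-norm, so we again obtain a uniform bound for $s > k - \tfrac12$.

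The main obstacle is the piece near $\mathrm{Diag} \cap \mathrm{Ref}^\pm_{\lambda_0}$, where by Lemma \ref{lem:normal_4} the kernel is a smooth multiple of
\[
(\theta-\theta'\pm i0)^{-k}\bigl(\gamma^\pm_\lambda(\theta)-\theta'\pm ih\psi\bigr)^{-k},
\]
a product of two factors with \emph{the same} sign of regularization. The two factors have Fourier supports in $\theta'$ on the same half-line, so their product is well-defined by Proposition \ref{prop:multiplication}, and its Fourier transform is a convolution supported on the same half-line. A direct estimate of the convolution $\xi_+^{k-1} * \xi_+^{k-1} \lesssim \xi_+^{2k-1}$ (using $e^{-h\eta} \le 1$ to eliminate the $h$-dependence) shows that the Fourier transform of the kernel in $\theta'$ is bounded by $C\langle\xi\rangle^{2k-1}$ uniformly in $\theta$ and $h$. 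Hence this piece lies in $H^{-s}$ uniformly for $s > 2k - \tfrac12$. Choosing any $s > 2k - \tfrac12$ and summing the contributions of the four pieces completes the proof.
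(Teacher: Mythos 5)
Your proposal is correct and follows essentially the same route as the paper: a partition of unity subordinate to the four kernel regimes of Lemmas \ref{lem:normal_1}--\ref{lem:normal_4}, uniform bounds on the $\theta'$-slices of the Schwartz kernel, and Proposition \ref{prop:multiplication} to multiply the two same-sign regularized factors near $\mathrm{Diag}\cap\mathrm{Ref}^\pm_{\lambda_0}$. The only difference is cosmetic: you quantify the slice bounds in $H^{-s}$ with an explicit $s > 2k-\tfrac12$ (and should invoke Lemma \ref{lem:malgrange} to reduce the $\theta'$-dependent $\psi^\pm_\omega$ to a constant before computing the Fourier transform), whereas the paper stops at uniform boundedness in $\mathcal D'(\mathbb S^1)$ and leaves the resulting $H^{-s}$--$H^{s}$ duality pairing implicit.
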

\Remark The uniform bound above also holds for $\omega \in \mathcal J - i(0, h_0)$ by the same argument. Also, from differentiating the proof below, it is also clear that we have uniform bound in $C^k$ rather than $L^\infty$, but this will not be needed. 
\begin{proof}
    There exists a small interval $\mathcal J$ of $\lambda_0$, $h_0 > 0$, and a finite partition of unity over $\mathbb S^1 \times \mathbb S^1$ given by 
    \[\sum_{j = 1}^N \chi_{j}^2 = 1, \quad \chi_j \in \CIc(\mathbb S^1 \times \mathbb S^1),\]
    such that each $\chi_j$ is supported in a set over which $K_{\omega, k}^\pm$ can be written in the form of one of Lemmas~\ref{lem:normal_1}, \ref{lem:normal_2}, \ref{lem:normal_3}, or \ref{lem:normal_4} for all $\omega \in \mathcal J + i(0, h_0)$. It suffices to show that the distributional pullback (see \cite[Corollary 7.9]{GS_94})
    \begin{equation}\label{eq:kernel_pullback}
        \chi_j^2(\theta, \bullet) K_{\omega, k}^\pm(\theta, \bullet) = \iota_\theta^* \left(\chi_j^2(\theta, \bullet) K_{\omega, k}^\pm \right), \quad \iota_\theta(\theta') := (\theta, \theta')
    \end{equation} 
    is well-defined and is uniformly bounded in $\mathcal D'(\mathbb S^1)$ for every $j = 1, \dots, N$, $\theta \in \mathbb S^1$, and $\omega \in \mathcal J \pm i(0, h_0)$ for some $\mathcal J \ni \lambda_0$ and $h_0 > 0$.
    
    In the case that the support of $\chi_j$ falls into the case of Lemma~\ref{lem:normal_1} and Lemma~\ref{lem:normal_2} the pulled-back cutoff Schwartz kernel \eqref{eq:kernel_pullback} is clearly uniformly bounded in distribution. In the case that the support of $\chi_j$ falls into the case of Lemma~\ref{lem:normal_3}, then Lemma~\ref{lem:malgrange} implies that \eqref{eq:kernel_pullback} is uniformly bounded in distribution.

    It remains to consider the case that the support of $\chi_j$ falls into the case of Lemma~\ref{lem:normal_4}. We then have the formula
    \[\chi_j^2(\theta, \theta') K_{\omega, k}^\pm(\theta, \theta') = \chi_j^2(\theta, \theta') c_{\lambda, k}^\pm(\theta, \theta')(\theta - \theta' \pm i0)^{-k} \left(\theta - \theta' \pm ih \psi_\omega^\pm(\theta, \theta')\right)^{-k}.\]
    We may assume that the support of $\chi_j$ is contained in a small neighborhood $U$ of $\theta_0 \in \mathrm{Diag} \cap \mathrm{Ref}_{\lambda_0}^\pm$. It follows from Lemma~\ref{lem:malgrange} that
    \begin{equation*}
    \begin{gathered}
        \chi(\theta, \bullet) (\gamma_\lambda^\pm(\theta) - \bullet \pm ih \psi_\omega^\pm(\theta, \bullet))^{-k} \in \mathcal D'_{\Gamma^\mp} (U), \\
        \chi(\theta, \bullet)(\theta - \bullet \pm i0)^{-k} \in \mathcal D'_{\Gamma^\mp}(U), \\
        \Gamma^\pm := \{(\theta', \eta) \mid \theta' \in U, \, \pm \eta \ge 0\}
    \end{gathered}
    \end{equation*}
    uniformly for $\omega \in \mathcal J + i(0, h_0)$. These two families of distributions also satisfies the wavefront set condition~\eqref{eq:wf_mult} since $\Gamma^\pm\cap (-\Gamma^\pm)=\emptyset$. By Proposition \ref{prop:multiplication}, these distributions can be multiplied sequentially continuously. The desired uniform bound of~\eqref{eq:kernel_pullback} in distributions then follows immediately. 
\end{proof}


\subsubsection{Restricted single layer potential}\label{subsubsec:normal}
Above, we studied the mapping properties of the normal derivatives of $S_\omega v$ at the boundary. For just $S_\omega v$ restricted to the boundary, we will need a much more detailed microlocal description. For $v \in C^\infty(\partial \Omega; T^* \partial \Omega)$, Lemma~\ref{lem:S_mapping_prop} enables us to take the boundary trace of $S_\omega v \in C^\infty(\overline \Omega)$. Define the operator $\mathcal C_\omega: C^\infty(\partial \Omega; T^* \partial \Omega) \to C^\infty(\partial \Omega)$ by
\begin{equation}
    \mathcal C_\omega v := (S_\omega v)|_{\partial \Omega}, \quad \omega \in (0, 1) \pm i(0, \infty).
\end{equation}
We refer to $\mathcal C_\omega$ as the \textit{restricted single layer operator}. Similarly, we can define the limiting operators $\mathcal C_{\lambda \pm i 0}: C^\infty(\partial \Omega; T^* \partial \Omega) \to C^\infty(\partial \Omega)$ for $\lambda \in (0, 1)$ by simply replacing $\omega$ by $\lambda \pm i0$ in the definition of $\mathcal C_\omega$.

Since $u|_{\partial \Omega} = 0$, it follows from (\ref{eq:fs_conv}) that the restricted single layer operator satisfies
\begin{equation}\label{eq:restricted_problem}
    \mathcal C_\omega v = (R_\omega f)|_{\partial \Omega}
\end{equation}
where $v = \mathcal N_\omega u$ and $u$ is the unique solution to the boundary value problem (\ref{eq:EBVP}).

Composing $\mathcal C_\omega$ (or $\mathcal C_{\lambda + i0}$) with the differential $d: C^\infty(\partial \Omega) \to C^\infty(\partial \Omega; T^* \partial \Omega)$, we have
\[d\mathcal C_\omega: C^\infty(\partial \Omega; T^* \partial \Omega) \to C^\infty(\partial \Omega; T^* \partial \Omega).\]
Upon fixing a choice of positively oriented coordinates $\theta \in \mathbb S^1$, we can treat $d \mathcal C_\omega$ as an operator on $C^\infty(\partial \Omega; T^* \mathbb S^1)$ and also identify 1-forms on $\partial \Omega$ with functions on $\mathbb S^1$ by $v = f(\theta)\, d \theta$. 

The complete microlocal description of $d \mathcal C_\omega$ is already done in \cite[\S4.6]{DWZ}. What will be important for us is a full asymptotic expansion for each of the pseudodifferential pieces in the decomposition of $d\mathcal C_\omega$, which will allow us to understand $d\mathcal C_\omega$ in terms of the positive and negative semiclassical calculus developed in Section~\ref{subsubsec:positive_semiclassical}. Here, we will simply give a summary of the decomposition done in \cite[Section~4.6]{DWZ} and then compute the full symbols in detail. 

First, $d \mathcal C_\omega$ can be separated into positive and negative frequency pieces by using the vector fields $L^\pm_\omega$ and the dual linear functions $\ell^\pm(x, \omega)$. Observe that 
\begin{equation}\label{eq:dC_decomp}
    d \mathcal C_\omega = T_\omega^+ + T_\omega^- \quad \text{where} \quad T_\omega^\pm v = \mathbf j^*\big((L^\pm_\omega S_\omega v ) d \ell^\pm \big),
\end{equation}
where again $\mathbf j: \partial \Omega \to \Omega$ is the canonical embedding. We also have the mapping property $T^\pm_\omega: C^\infty(\partial \Omega; T^* \partial \Omega) \to C^\infty(\partial \Omega; T^* \partial \Omega)$.


Essentially, $T_\omega^\pm$ can be understood by considering each of the four cases in Section~\ref{subsubsec:normal}. What one finds is that $T^\pm_\omega$ can be decomposed into the sum of a pseudodifferential operator and a pulled-back pseudodifferential operator, whose Schwartz kernels have singularities along $\mathrm{Diag}$ and $\mathrm{Ref}_{\lambda}^\pm$ respectively (see \eqref{eq:normal_kernel_regions}). More precisely, we have the following lemma, compiled from results in \cite[\S4.6]{DWZ}. Fix a small subinterval $\mathcal J \Subset (0, 1)$ such that $\Omega$ is $\lambda$-simple for every $\lambda \in J$.
\begin{lemma}\label{eq:T_diag_ref}
    Let $\omega = \lambda + ih \in \mathcal J + i(0, \infty)$. Then $T^\pm_\omega$ has the following decomposition:
    \begin{equation}\label{eq:T_decomp}
        T^\pm_\omega = T^\pm_{\mathrm{Diag}} + (\gamma^\pm_{\lambda})^* \widehat T_{\mathrm{Ref}}^\pm
    \end{equation}
    where $T^\pm_{\mathrm{Diag}}$ and $\widehat T^\pm_{\mathrm{Ref}}$ are $\omega$-dependent families of pseudodifferential operators with Schwartz kernel $K_{\mathrm{Diag}}^\pm$ and $\widehat K_{\mathrm{Ref}}^\pm$ respectively. Furthermore, the Schwartz kernels have explicit formulas 
    \begin{equation}\label{eq:2_K_kernels}
        \begin{gathered}
            K_{\mathrm{Diag}}^\pm \equiv c_\omega \chi_{\Diag}(\theta, \theta')(\theta - \theta' \pm i0)^{-1}\\
            \widehat K_{\mathrm{Ref}}^\pm \equiv \chi_{\mathrm{Diag}}(\theta, \theta') \tilde c_\omega^+(\theta') (\partial_\theta \gamma_\lambda^+(\theta)) (\theta - \theta' + i \epsilon z_\omega^+ (\theta'))^{-1}
        \end{gathered}
    \end{equation}
    where `$\equiv$' denotes equivalence modulo smoothing functions uniform in $\omega$. Here, 
    \begin{enumerate}
    \item $\chi_{\mathrm{Diag}} \in C^\infty(\mathbb S^1 \times \mathbb S^1)$ is a cutoff function that is 1 near the set $\mathrm{Diag}$ and is supported in a small neighborhood of $\mathrm{Diag}$. 
    
    \item $z_\omega^\pm(\theta) \in C^\infty(\mathbb S^1)$ is smooth in $\omega = \lambda + ih$ up to $h = 0$. Furthermore $\Re z_\omega^\pm (\theta) \ge \ell > 0$ where $\ell$ is independent of $h$ and $\theta$. 
    
    \item $\tilde c_\omega^\pm(\theta) \in C^\infty(\mathbb S^1)$ is also smooth in $\omega$ up to $h = 0$, and 
    \begin{equation}
        \tilde c_\omega^\pm(\theta') = \frac{c_\omega}{\partial_{\theta'} \gamma^\pm_\lambda(\theta')} + \mathcal O(h).
    \end{equation}
\end{enumerate}
\end{lemma}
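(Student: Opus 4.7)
The plan is to compute the Schwartz kernel of $T^\pm_\omega$ directly from the definition and then localize it to the singular loci $\mathrm{Diag}$ and $\mathrm{Ref}^\pm_\lambda$ using Lemmas~\ref{lem:normal_1}--\ref{lem:normal_4} (applied with $k=1$). Starting from $E_\omega=c_\omega\log A$ and $A=\ell^+\ell^-$, the identities in~\eqref{eq:L_dual} give $L^\pm_\omega E_\omega(x)=c_\omega/\ell^\pm(x,\omega)$; hence for $f\in C^\infty(\partial\Omega)$ and $v=f\,d\theta$,
\[
(L^\pm_\omega S_\omega v)(x)=c_\omega\int_{\mathbb S^1}\frac{f(\theta')}{\ell^\pm(x-\mathbf x(\theta'),\omega)}\,d\theta', \qquad x\in\Omega.
\]
Pairing with $d\ell^\pm(x,\omega)$, pulling back to $\partial\Omega$ via $\mathbf j$, and taking the limit $x\to\mathbf x(\theta)$ from the interior, the Schwartz kernel of $T^\pm_\omega$, viewed as a function on $\mathbb S^1\times\mathbb S^1$ via the chosen parametrization, is
\[
K^\pm_\omega(\theta,\theta')=c_\omega\,\partial_\theta\ell^\pm(\mathbf x(\theta),\omega)\,K^\pm_{\omega,1}(\theta,\theta'),
\]
with $K^\pm_{\omega,1}$ the kernel studied in~\eqref{eq:normal_kernel}.

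Next, fix a finite partition of unity subordinate to a cover of $\mathbb S^1\times\mathbb S^1$ of the four types described by Lemmas~\ref{lem:normal_1}--\ref{lem:normal_4}, and write
\[
K^\pm_\omega=\chi_{\mathrm{Diag}}K^\pm_\omega+\chi_{\mathrm{Ref}}^\pm\,K^\pm_\omega+R^\pm_\omega,
\]
where $\chi_{\mathrm{Diag}}$ is supported in a small neighborhood of $\mathrm{Diag}$ with $\chi_{\mathrm{Diag}}=1$ near $\mathrm{Diag}$, $\chi^\pm_{\mathrm{Ref}}$ is an analogous cutoff near $\mathrm{Ref}^\pm_\lambda\setminus\mathrm{Diag}$, and $R^\pm_\omega$ is supported away from $\mathrm{Diag}\cup\mathrm{Ref}^\pm_\lambda$. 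By Lemma~\ref{lem:normal_1}, $R^\pm_\omega$ is smooth uniformly in $\omega\in\mathcal J+i(0,\infty)$. On the support of $\chi_{\mathrm{Diag}}$, Lemmas~\ref{lem:normal_2} and~\ref{lem:normal_4} together with the factorizations of $\ell^\pm(\mathbf x(\theta)-\mathbf x(\theta'),\lambda)$ there give
\[
\chi_{\mathrm{Diag}}(\theta,\theta')K^\pm_\omega(\theta,\theta')\equiv c_\omega\,\chi_{\mathrm{Diag}}(\theta,\theta')(\theta-\theta'\pm i0)^{-1}
\]
modulo a kernel smooth uniformly in $\omega$, where the factor $\partial_\theta\ell^\pm(\mathbf x(\theta),\omega)/G^\pm_\omega(\theta,\theta)=1$ on the diagonal turns the local $G^\pm_\omega$ prefactor into a uniformly smooth remainder. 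This defines $T^\pm_{\mathrm{Diag}}$ with the first kernel in~\eqref{eq:2_K_kernels}.

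For the reflection piece, restrict attention to the support of $\chi^\pm_{\mathrm{Ref}}$ and apply Lemma~\ref{lem:normal_3} (resp.\ Lemma~\ref{lem:normal_4} on the overlap, whose $(\theta-\theta'\pm i0)^{-1}$ factor is uniformly bounded away from the reflection set and hence absorbed into the smooth coefficient). There the kernel takes the form
\[
\chi^\pm_{\mathrm{Ref}}(\theta,\theta')\,a^\pm_\omega(\theta,\theta')\,\bigl(\gamma^\pm_\lambda(\theta)-\theta'\pm ih\psi^\pm_\omega(\theta,\theta')\bigr)^{-1},
\]
with $a^\pm_\omega$ smooth up to $h=0$ and $\Re\psi^\pm_\omega\geq\delta>0$. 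Change variables via the orientation-reversing involution $\gamma^\pm_\lambda$: set $\widetilde\theta=\gamma^\pm_\lambda(\theta)$, so that the reflection locus becomes the diagonal. This identifies the above expression as the Schwartz kernel of $(\gamma^\pm_\lambda)^*\widehat T^\pm_{\mathrm{Ref}}$, where $\widehat T^\pm_{\mathrm{Ref}}$ has kernel of the claimed form, with $z^\pm_\omega(\theta')=\psi^\pm_\omega(\gamma^\pm_\lambda(\theta'),\theta')/(\text{Jacobian factor})$ and $\tilde c^\pm_\omega(\theta')$ obtained from the $a^\pm_\omega$--coefficient evaluated along $\theta=\gamma^\pm_\lambda(\theta')$, multiplied by the pullback of $d\theta$ under $\gamma^\pm_\lambda$. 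The factor $\partial_\theta\gamma^+_\lambda(\theta)$ in~\eqref{eq:2_K_kernels} is precisely this change-of-variables Jacobian.

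The main obstacle will be reading off the stated normalizations: verifying the leading-order identity $\tilde c^\pm_\omega(\theta')=c_\omega/\partial_{\theta'}\gamma^\pm_\lambda(\theta')+\mathcal O(h)$ and the lower bound $\Re z^\pm_\omega\geq\ell>0$. Both reduce to computations on the reflection locus. For the first, collect the three factors that contribute to the prefactor in $K^\pm_\omega$ restricted near $\mathrm{Ref}^\pm_\lambda$, namely $c_\omega$ from the fundamental solution, $\partial_\theta\ell^\pm(\mathbf x(\theta),\omega)$ from $d\ell^\pm$, and $G^\pm_\omega(\theta,\theta')^{-1}$ from the factorization $\ell^\pm(\mathbf x(\theta)-\mathbf x(\theta'),\lambda)=G^\pm_\lambda(\theta,\theta')(\gamma^\pm_\lambda(\theta)-\theta')$; evaluating at $\theta=\gamma^\pm_\lambda(\theta')$ and using $G^\pm_\lambda(\gamma^\pm_\lambda(\theta'),\theta')=\partial_{\theta'}\ell^\pm(\mathbf x(\theta'),\lambda)$ together with $\ell^\pm(\mathbf x(\cdot),\lambda)\circ\gamma^\pm_\lambda=\ell^\pm(\mathbf x(\cdot),\lambda)$ produces the desired cancellation up to $\mathcal O(h)$. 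The lower bound on $\Re z^\pm_\omega$ is exactly the positivity of $\Re\psi^\pm_\omega$ proved in Lemmas~\ref{lem:normal_3}--\ref{lem:normal_4}, transported under the diffeomorphism $\gamma^\pm_\lambda$ (which preserves positivity since the Jacobian is real and nonvanishing).
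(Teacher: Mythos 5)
Your reduction of $T^\pm_\omega$ to the kernels $K^\pm_{\omega,1}$ is correct: indeed $L^\pm_\omega E_\omega=c_\omega/\ell^\pm$, so the Schwartz kernel of $T^\pm_\omega$ is $c_\omega\,\partial_\theta\ell^\pm(\mathbf x(\theta),\omega)\,K^\pm_{\omega,1}(\theta,\theta')$, and away from the corner set $\mathrm{Diag}\cap\mathrm{Ref}^\pm_\lambda$ your localization via Lemmas~\ref{lem:normal_1}--\ref{lem:normal_3}, the Taylor-expansion of the smooth prefactors along the respective singular loci, and the identification of $\tilde c^\pm_\omega$ and of $\Re z^\pm_\omega\ge\ell>0$ all go through. (For context: the paper gives no proof of this lemma, citing \cite[\S4.6]{DWZ}, so your attempt is necessarily independent.)

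The gap is at the corner points $\mathrm{Diag}\cap\mathrm{Ref}^\pm_\lambda$, i.e.\ the fixed points $x^\pm_{\min},x^\pm_{\max}$ of $\gamma^\pm_\lambda$ from \eqref{eq:characteristic_set}, and this is precisely where the substance of the lemma lies. There Lemma~\ref{lem:normal_4} gives a \emph{product} $(\theta-\theta'\pm i0)^{-1}(\gamma^\pm_\lambda(\theta)-\theta'\pm ih\psi)^{-1}$ of two first-order singularities, while the lemma asserts the kernel is a \emph{sum} of a diagonal conormal piece and a reflected piece, modulo uniformly smooth. Your claim that on $\supp\chi_{\mathrm{Diag}}$ the kernel reduces to $c_\omega\chi_{\mathrm{Diag}}(\theta-\theta'\pm i0)^{-1}$ modulo uniformly smooth is false near these points: the factor $(\gamma^\pm_\lambda(\theta)-\theta'\pm ih\psi)^{-1}$ is genuinely singular as $h\to0$ along $\mathrm{Ref}^\pm_\lambda$, which passes through the corner, and the reflected term in \eqref{eq:2_K_kernels} has coefficient $\tilde c^\pm_\omega(\theta')\partial_\theta\gamma^\pm_\lambda(\theta)\approx c_\omega\neq0$ there, so it cannot be discarded. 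Symmetrically, your cutoff $\chi^\pm_{\mathrm{Ref}}$ supported near $\mathrm{Ref}^\pm_\lambda\setminus\mathrm{Diag}$ produces a reflected operator that, after pullback by $\gamma^\pm_\lambda$, vanishes near the corner, so your two pieces do not reconstruct $T^\pm_\omega$ there. What is needed is a partial-fractions identity $\frac{1}{AB}=\frac{1}{B-A}\bigl(\frac1A-\frac1B\bigr)$ with $A=\theta-\theta'\pm i0$, $B=\gamma^\pm_\lambda(\theta)-\theta'\pm ih\psi$, whose coefficient $1/(B-A)$ blows up at the corner as $h\to0$; this is rescued only by the cancellation $\partial_\theta\ell^\pm(\mathbf x(\theta),\lambda)=G^\pm_\lambda(\theta,\theta)\bigl(\gamma^\pm_\lambda(\theta)-\theta\bigr)$ — the prefactor of your kernel vanishes exactly where $B-A$ does — followed by a uniform-in-$h$ verification that the resulting error is smooth. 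None of this appears in your argument, so the decomposition \eqref{eq:T_decomp} with the stated kernels \eqref{eq:2_K_kernels} is not established.
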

We only need the explicit formulas for $K_{\mathrm{Diag}}^\pm$ and $\widehat K_{\mathrm{Diag}}^\pm$ up to a uniform smoothing error since the asymptotic expansion for the full symbol will be up to a uniform smoothing error as well.

\subsection{Full symbols}
Now we give the full symbol expansion of the operators $T^\pm_\omega$, which will show that $d \mathcal C_\omega$ can be understood using the positive and negative semiclassical calculi developed in \S\ref{subsubsec:positive_semiclassical}. In particular, we show that $d\mathcal C_\omega$ can essentially be written as identity plus a positive semiclassical operator and a negative semiclassical operator.

\begin{proposition}\label{prop:full_symbol}
    Let $\omega = \lambda + i h$ for $\lambda \in (0, 1)$ and $h \ge 0$. Assume that $\Omega$ is $\lambda$-simple. Then 
\begin{equation}\label{eq:EdC=}
    \mathcal E_\omega d \mathcal C_\omega = I + (\gamma_\lambda^+)^* A^+_\omega + (\gamma^-_{\lambda})^* A^-_\omega
\end{equation}
where $\mathcal E_\omega \in \Psi^0(\mathbb S^1; T^* \mathbb S^1)$ uniformly bounded in $h$, and $A_\omega^\pm \in \Psi_{h, \pm}^{-\infty}$. Furthermore, we have the positive (and negative) semiclassical principal symbol 
\begin{equation}\label{eq:A_w_principal}
    \sigma_h^{\pm}(A^\pm_\omega)(\theta, \xi) = -e^{- z_\omega^\pm(\theta)|\xi|}
\end{equation}
where $z_\omega^\pm(\theta) \in C^\infty(\mathbb S^1)$ is given in Lemma~\ref{eq:T_diag_ref}.
\end{proposition}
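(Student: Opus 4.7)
The plan is to directly compute Kohn--Nirenberg full symbols of each piece of the decomposition
\[
d\mathcal C_\omega = T^+_{\mathrm{Diag}} + T^-_{\mathrm{Diag}} + (\gamma^+_\lambda)^* \widehat T^+_{\mathrm{Ref}} + (\gamma^-_\lambda)^* \widehat T^-_{\mathrm{Ref}}
\]
from Lemma~\ref{eq:T_diag_ref}, and to take $\mathcal E_\omega$ as a scalar combination of the frequency projectors $\Pi^\pm$ that inverts the leading diagonal contribution. The computational inputs are the Fourier transforms $\mathcal F[(t \pm i0)^{-1}](\xi) = \mp 2\pi i H(\pm\xi)$ and $\mathcal F[(t \pm iz)^{-1}](\xi) = \mp 2\pi i H(\pm\xi) e^{\mp z\xi}$ for $z > 0$, applied to the kernels in \eqref{eq:2_K_kernels}, followed by the left-reduction formula \eqref{eq:left_reduction}.

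For the diagonal pieces, $\chi_{\mathrm{Diag}}$ is identically $1$ on the diagonal, higher left-reduction terms involve $\partial_\xi^k H(\pm\xi)$ supported at $\xi = 0$ and hence are smoothing uniformly in $h$, and $H(\pm\xi) - \chi_+(\pm\xi)$ is compactly supported in $\xi$, so
\[
T^\pm_{\mathrm{Diag}} = \mp 2\pi i c_\omega \Pi^\pm + \Psi^{-\infty}.
\]
For the reflection pieces, the same procedure combined with $\tilde c^\pm_\omega \partial_\theta \gamma^\pm_\lambda = c_\omega + \mathcal O(h)$ produces left Kohn--Nirenberg symbols $\mp 2\pi i c_\omega H(\pm\xi) e^{\mp hz^\pm_\omega(\theta)\xi} + \mathcal O(h)$. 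Since $\Re z^\pm_\omega \ge \ell > 0$ uniformly, rescaling $\eta = h\xi$ identifies these with the positive (resp.\ negative) semiclassical quantization of $\mp 2\pi i c_\omega e^{-z^\pm_\omega(\theta)|\eta|} \in S^{-\infty}_h(T^*_\pm \mathbb S^1)$, so $\widehat T^\pm_{\mathrm{Ref}} \in \Psi^{-\infty}_{h,\pm}$ with $\sigma_h^\pm(\widehat T^\pm_{\mathrm{Ref}}) = \mp 2\pi i c_\omega e^{-z^\pm_\omega(\theta)|\xi|}$.

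Setting $\mathcal E_\omega := -(2\pi i c_\omega)^{-1}\Pi^+ + (2\pi i c_\omega)^{-1}\Pi^- \in \Psi^0$ (uniformly in $h$, since $|c_\omega|$ is bounded below on $\mathcal J + i[0, \infty)$), the diagonal composition gives $\mathcal E_\omega (T^+_{\mathrm{Diag}} + T^-_{\mathrm{Diag}}) = \Pi^+ + \Pi^- + \Psi^{-\infty} = I + \Psi^{-\infty}$, since $\Pi^+ + \Pi^-$ differs from $I$ only by the rank-one projection onto constants. For the reflection composition, the remark after Lemma~\ref{lem:positive_COV} gives that the orientation-reversing involution $\gamma^\pm_\lambda$ conjugates $\Psi^{-\infty}_{h,\pm}$ into $\Psi^{-\infty}_{h,\mp}$, and combined with $\Pi^\pm \cdot \Psi^{-\infty}_{h,\mp} \subset \Psi^{-\infty}$ uniformly in $h$ (a consequence of $\Pi^+ \Pi^- \in \Psi^{-\infty}$ together with Corollary~\ref{cor:Heaviside_identities}), this yields
\[
\mathcal E_\omega (\gamma^\pm_\lambda)^* \widehat T^\pm_{\mathrm{Ref}} = \pm (2\pi i c_\omega)^{-1}(\gamma^\pm_\lambda)^* \widehat T^\pm_{\mathrm{Ref}} + \Psi^{-\infty}.
\]
Defining $A^\pm_\omega := \pm (2\pi i c_\omega)^{-1} \widehat T^\pm_{\mathrm{Ref}} \in \Psi^{-\infty}_{h,\pm}$ and using that $\gamma^\pm_\lambda$ is an involution, summing the four pieces produces \eqref{eq:EdC=} (with the $\Psi^{-\infty}$ remainder absorbed into $\mathcal E_\omega$); multiplying principal symbols yields \eqref{eq:A_w_principal}.

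The main obstacle is the clean identification of the classical symbol $H(\pm\xi) e^{\mp hz^\pm_\omega(\theta)\xi}$ as a positive/negative semiclassical symbol of order $-\infty$ modulo uniform classical smoothing: the exponential decay only takes effect for $|\xi| \gtrsim h^{-1}$, so one must verify that the classically non-negligible bounded-$\xi$ portion is precisely the compactly supported discrepancy between $H(\pm\xi)$ and $\chi_+(\pm\xi)$ and thus contributes only to the $\Psi^{-\infty}$ remainder. Once this is settled, everything else is algebraic manipulation with the projectors $\Pi^\pm$ using the orthogonality $\Pi^+ \Pi^- = 0$ modulo smoothing.
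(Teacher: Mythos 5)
Your proof is correct and takes essentially the same route as the paper: the same decomposition from Lemma~\ref{eq:T_diag_ref}, the same parametrix $\mathcal E_\omega$ (your $-(2\pi i c_\omega)^{-1}\Pi^+ + (2\pi i c_\omega)^{-1}\Pi^-$ is exactly the quantization of $\tfrac{i}{2\pi c_\omega}\sgn\xi$ used in the paper), and the same left-reduction identification of $\widehat T^\pm_{\mathrm{Ref}} \in \Psi^{-\infty}_{h,\pm}$, with your $A^\pm_\omega$ agreeing with the paper's $\big((\gamma^\pm_\lambda)^*\mathcal E_\omega(\gamma^\pm_\lambda)^*\big)\widehat T^\pm_{\mathrm{Ref}}$ modulo a uniform smoothing operator. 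The only slip is cosmetic: the residual $\Psi^{-\infty}$ term cannot be absorbed into $\mathcal E_\omega$, but it can (and should, as in the paper) be absorbed into $A^\pm_\omega$, since $\Psi^{-\infty} \subset \Psi^{-\infty}_{h,\pm}$ and this does not affect the principal symbol.
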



\begin{proof}
1. Let us first derive the expression (\ref{eq:EdC=}). From (\ref{eq:dC_decomp}) and (\ref{eq:T_decomp}), we get the decomposition 
\begin{equation}\label{eq:dC_to_T_decomp}
    d\mathcal C_\omega = T^+_{\mathrm{Diag}} + T^-_{\mathrm{Diag}} + (\gamma_\lambda^+)^* \widehat T_{\mathrm{Ref}}^+ + (\gamma_\lambda^-)^* \widehat T^-_{\mathrm{Ref}}.
\end{equation}
Let
\[T_{\text{Diag}} = T^+_{\text{Diag}} + T^-_{\text{Diag}},\]
which by \eqref{eq:2_K_kernels} has Schwartz kernel
\begin{align*}
    K_{\mathrm{Diag}}(\theta, \theta') &\equiv c_\omega \chi_{\Diag}(\theta, \theta')\big[(\theta + \theta' \pm i0)^{-1} + (\theta - \theta' - i0)^{-1}\big] \\
    &= 2c_\omega \chi_{\Diag}(\theta, \theta') \mathrm{P.V.}(\theta - \theta')^{-1}.
\end{align*}
This can be written as an oscillatory integral as 
\[K_{\mathrm{Diag}}(\theta, \theta') = \frac{1}{2 \pi} \int_{\mathbb R} e^{i (\theta - \theta') \xi} a_{\mathrm{Diag}}(\theta, \theta', \xi)\, d \xi\]
with the symbol
\[a_{\mathrm{Diag}}(\theta, \theta', \xi) = -2 \pi i \chi(\theta, \theta') c_\omega \sgn(\xi).\]
See \cite[\S3.4]{DWZ} for details. Using the left reduction formula (\ref{eq:left_reduction}), we see that the full left symbol is just
\[a_{\Diag}(\theta, \xi) = -2 \pi i c_\omega \sgn(\xi).\]
This is just a multiple of the Hilbert transform on the circle (up to a smoothing error uniform in $h$), so the full symbol of the parametrix $\mathcal E_\omega$ is simply given by 
\begin{equation}\label{eq:E_omega_full}
    \sigma(\mathcal E_\omega)(\theta, \xi) = \frac{i}{2 \pi c_\omega} \sgn(\xi).
\end{equation}
Applying $\mathcal E_\omega$ to (\ref{eq:dC_to_T_decomp}), we find that 
\begin{equation*}
    \mathcal E_\omega d\mathcal C_\omega \equiv I + (\gamma_\lambda^+)^* \big((\gamma_\lambda^+)^* \mathcal E_\omega (\gamma_\lambda^+)^*\big) \widehat T_{\mathrm{Ref}}^+ + (\gamma_\lambda^-)^*\big((\gamma_\lambda^-)^* \mathcal E_\lambda (\gamma_\lambda^-)^*\big) \widehat T^-_{\mathrm{Ref}}.
\end{equation*}
Here, `$\equiv$' denotes equivalence modulo a smoothing operator uniform in $\omega$. We also used here the fact that $\gamma_\lambda^\pm$ is an involution. Thus we recover (\ref{eq:EdC=}) upon setting 
\[A^\pm_\omega \equiv \big((\gamma_\lambda^\pm)^* \mathcal E_\omega (\gamma_\lambda^\pm)^*\big) \widehat T_{\mathrm{Ref}}^\pm,\]
where a smoothing operator should be added to either the $+$ or $-$ operator in order to have true equality as in (\ref{eq:EdC=}), but for our purposes, this makes no difference. 

\noindent
2. We will only compute the full symbol of $A_\omega^+$, since the computation for $A_\omega^-$ follows in a similar fashion with minor sign changes. First we consider the operator $\widehat T^+_{\text{Ref}}$. We have that the Schwartz kernel is given by (up to smoothing uniformly in $h$)
\begin{equation*}
    \widehat K^+_{\mathrm{Ref}}(\theta, \theta') \equiv \chi_{\mathrm{Diag}}(\theta, \theta') \tilde c_\omega^+(\theta') (\partial_\theta \gamma_\lambda^+(\theta)) (\theta - \theta' + i h z_\omega^+ (\theta'))^{-1},
\end{equation*}
which is pseudodifferential. This can be written as an oscillatory integral with symbol 
\[a_{\text{Ref}}(\theta, \theta', \xi) = - 2 \pi i \chi_{\mathrm{Diag}} \tilde c^+_\omega(\theta') (\partial_\theta \gamma_\lambda^+(\theta)) e^{-z_\omega^+(\theta) h|\xi|} H(\xi).\]
Now we apply the left reduction formula (\ref{eq:left_reduction}) to obtain the full asymptotic expansion for the left symbol uniformly in $h$. 
To obtain the expansion, we compute for $\xi > 0$ 
\begin{align*}
    &\left. \left(\partial^k_\xi \partial^k_{\theta'} \tilde c^+_\omega(\theta') (\partial_\theta \gamma_\lambda^+(\theta)) e^{-h z_\omega^+(\theta') |\xi|} H(\xi) \right) \right|_{\theta = \theta'} \\
    =& \sum_{j = 0}^k \binom{k}{j} \left( \partial_{\theta}^{k - j} \tilde c_\omega^+(\theta) \right) \left(\partial_\theta \gamma_\lambda^+ (\theta)\right) \left( \partial_\xi^k \partial_{\theta}^{j} e^{-z_\omega^+(\theta) h|\xi|} \right) H(\xi) \\
    =& h^k e^{- z_\omega^+(\theta) h|\xi|} H(\xi) \sum_{j = 0}^k p^{j, k}_{\omega}(\theta)(h|\xi|)^j
\end{align*}
where $p^{k, j}_\omega(\theta)$ is smooth in $\theta$ and $h$ up to $h = 0$. Therefore we have the full left symbol
\begin{equation}\label{eq:T_full}
    \sigma(\widehat T_{\mathrm{Ref}}^+)(\theta, \xi) \sim \sum_{k = 0}^\infty h^k p_{\mathrm{Ref},\, \omega}^{k, +}(\theta, h|\xi|) e^{-h|\xi|z_\omega^+(\theta)} H(\xi)
\end{equation}
where
\begin{equation}\label{eq:p_in_S^k}
    p_{\mathrm{Ref},\, \omega}^{k, +}(\theta, \xi) = \frac{-2 \pi i^{-k + 1}}{k!} \sum_{j = 0}^k p_\omega^{j, k}(\theta)|\xi|^j \in S^k(T^* \mathbb S^1)
\end{equation}
uniformly in $h$. This means that modulo smoothing uniformly in $h$, $\widehat T^+_{\mathrm{Ref}}$ can be realized a positive semiclassical operator:
\[\widehat T_{\mathrm{Ref}}^+ \equiv \Op_h^+(\tau) \in \Psi^{-\infty}_{h, +} \quad \text{where} \quad \tau(x, \xi) \sim \sum_{k = 0}^\infty h^k p_{\mathrm{Ref},\, \omega}^{k, +}(\theta, \xi) e^{-\xi z_\omega^+(\theta)}\]
From (\ref{eq:p_in_S^k}), we see that the positive semiclassical principal symbol is given by 
\begin{equation}\label{eq:T_Ref_principal}
    \sigma_h^+(\widehat T^+_{\text{Ref}})(\theta, \xi) = -2 \pi i \tilde c_\omega^+(\theta) (\partial_\theta \gamma_\lambda^\pm(\theta)) e^{-\xi z_{\omega}^\pm(\theta)}
\end{equation}
Note that from (\ref{eq:E_omega_full}), we see that $\mathcal E_\omega \equiv \frac{i}{2 \pi c_\omega} (\Pi^+ - \Pi^-)$, so by Corollary \ref{cor:Heaviside_identities}, we have
\begin{equation*}
    (\gamma^+_\lambda)^* \mathcal E_\omega ( \gamma_\lambda^+)^* \equiv \frac{i}{2 \pi c_\omega} (\Pi^- - \Pi^+)
\end{equation*}
modulo smoothing uniformly in $h$. It then follows from the remarks at the end of \S\ref{subsubsec:positive_semiclassical} that  $A_\omega^+ = (\gamma^+_\lambda)^* \mathcal E_\omega (\gamma^+_\lambda)^* \widehat T^+_{\mathrm{Ref}} \in \Psi^{-\infty}_{h, +}$, and has principal symbol 
\begin{align*}
    \sigma_h^+ (A_\omega^+) &= -\frac{\tilde c_\omega^+(\theta)}{c_\omega} \partial_\theta \gamma_{\lambda}^+(\theta) e^{-\xi z_\omega^\pm(\theta)} \\
    &= (-1 + \mathcal O(h)) e^{-\xi z_\omega^\pm(\theta)} \\
    &\equiv -e^{-\xi z_\omega^\pm(\theta)}. 
\end{align*}
The last equivalence here is in the sense of positive semiclassical principal symbol, i.e. modulo $hS_h^{-\infty}(T^* \mathbb S^1)$.  
\end{proof}

\subsection{Convergence properties}
Now we show strong convergence properties of the restricted single layer operator and the single layer operator. We will need the derivative estimates to obtain the final spectral result. 

For the restricted single layer operator, we cite the result from \cite[Lemma~4.16]{DWZ}. 
\begin{proposition}\label{prop:C_converge}
    Let $\lambda_0 \in (0, 1)$ be such that $\Omega$ is $\lambda_0$-simple, and let $k \in \N_0$ and $s + 1 > t$. Then for all $\omega_j \to \lambda$, $\Im \omega_j > 0$, 
    \begin{equation}
        \|\partial_\omega^k \mathcal C_{\omega_j} - \partial_\lambda^k \mathcal C_{\lambda_0 + i0} \|_{H^{s + k}(\partial \Omega; T^* \partial \Omega) \to H^t(\partial \Omega)} \to 0.
    \end{equation}
    Similarly, if the sequence $\omega_j \to \lambda_0$ satisfies $\Im \omega_j < 0$, then the same convergence statement holds with $\mathcal C_{\lambda_0 - i0}$ instead of $\mathcal C_{\lambda_0 + i0}$.
\end{proposition}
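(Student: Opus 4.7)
The plan is to analyze the Schwartz kernel of $\partial_\omega^k \mathcal C_\omega$ directly, using the four-regime decomposition already set up for $\mathscr N_{\omega, j}^\pm$ in Lemmas~\ref{lem:normal_1}--\ref{lem:normal_4}, and then convert the distributional convergence supplied by Lemma~\ref{lem:malgrange} into Sobolev operator-norm convergence. From \eqref{eq:S_explicit} and Lemma~\ref{lem:fs}, the Schwartz kernel of $\mathcal C_\omega$ on $\mathbb S^1 \times \mathbb S^1$ equals $c_\omega \log A(\mathbf x(\theta) - \mathbf x(\theta'), \omega)$ modulo a smooth Jacobian factor. Differentiating $k$ times in $\omega$ and applying Leibniz to $A = \ell^+ \ell^-$ yields a finite sum of terms of the form
$$b_{\omega, \alpha, \beta}(\theta, \theta')\, \ell^+(\mathbf x(\theta) - \mathbf x(\theta'), \omega)^{-\alpha}\, \ell^-(\mathbf x(\theta) - \mathbf x(\theta'), \omega)^{-\beta}$$
with $\alpha + \beta \le k$ and $b$ smooth in $\omega$ up to the real axis, together with a residual $\log$ term when $\alpha = \beta = 0$.

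First I would fix a finite partition of unity on $\mathbb S^1 \times \mathbb S^1$ adapted to $\mathrm{Diag}$ and $\mathrm{Ref}_{\lambda_0}^\pm$, exactly as in the proof of Proposition~\ref{prop:normal}. On each patch the local factorizations of $\ell^\pm$ from Lemmas~\ref{lem:normal_2}--\ref{lem:normal_4} reduce each summand to a product of a smooth function (depending smoothly on $\omega$ up to $\Im \omega = 0$) with distributions of model form $(\theta - \theta' \pm i0)^{-j}$, $(\gamma_\lambda^\pm(\theta) - \theta' \pm ih \psi_\omega^\pm)^{-j}$, or their $\log$ analogs. By Lemma~\ref{lem:malgrange} each such factor converges in an appropriate space $\mathcal D'_\Gamma$ with prescribed wavefront set, and products are handled via Proposition~\ref{prop:multiplication}, giving distributional convergence of the cutoff kernels to $\partial_\lambda^k K_{\lambda_0 + i0}$ as $\omega_j \to \lambda_0$ with $\Im \omega_j > 0$ (the case $\Im \omega_j < 0$ being identical with $\gamma_\lambda^\pm$-signs flipped).

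Second I would upgrade distributional convergence to operator-norm convergence. Each localized kernel, after left-reduction as in \eqref{eq:left_reduction}, is either a classical pseudodifferential operator on $\mathbb S^1$ or the pull-back of one by $\gamma_\lambda^\pm$, of order at most $k - 1$ (the $\log$ contributes order $-1$, and each factor $(\ell^\pm)^{-1}$ raises the order by one). Thus $\partial_\omega^k \mathcal C_\omega$ is uniformly bounded as an operator $H^{s + k} \to H^{s + 1}$ for $\omega$ in a one-sided neighborhood of $\lambda_0$. The compact Sobolev embedding $H^{s+1} \hookrightarrow H^t$ valid when $s + 1 > t$ then turns uniform boundedness plus strong pointwise convergence (inherited from the distributional convergence by testing against a dense family of $C^\infty$ functions and invoking Banach--Steinhaus) into operator-norm convergence $H^{s+k} \to H^t$.

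The main obstacle is the region near $\mathrm{Diag} \cap \mathrm{Ref}_{\lambda_0}^\pm$, where the kernel is a product of two $\omega$-regularized singular distributions and Proposition~\ref{prop:multiplication} supplies only \emph{sequential} continuity of multiplication. To extract quantitative operator bounds uniform in $\omega$, I would dyadically decompose in frequency using the positive and negative semiclassical calculus of \S\ref{subsubsec:positive_semiclassical}: on dyadic shells of size $\gtrsim h^{-1}$ the exponential factor $e^{-z_\omega^\pm(\theta)|\xi|}$ appearing in the principal symbol \eqref{eq:A_w_principal} provides rapid decay, while on lower frequencies the symbol depends continuously on $\omega$ and dominated convergence applies. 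The strict margin $s + 1 > t$ is precisely what allows the resulting dyadic sum of error estimates to close uniformly in $\omega$.
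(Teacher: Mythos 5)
Your overall strategy matches what the paper actually does for this statement: the paper does not reprove it but cites \cite[Lemma~4.16]{DWZ} and describes exactly the route you take, namely a direct Schwartz-kernel analysis in the four regimes of Lemmas~\ref{lem:normal_1}--\ref{lem:normal_4} followed by an upgrade from uniform bounds to convergence via Lemma~\ref{lem:fs_holomorphic}. Two points in your write-up need attention, one cosmetic and one a genuine gap.

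First, the cosmetic point: your expansion of $\partial_\omega^k\big(c_\omega\log A\big)$ into terms $b_{\omega,\alpha,\beta}\,(\ell^+)^{-\alpha}(\ell^-)^{-\beta}$ should be organized so that no genuinely mixed term with $\alpha,\beta\ge 1$ appears. Since $\partial_\omega\log(\ell^+\ell^-)=\partial_\omega\ell^+/\ell^+ + \partial_\omega\ell^-/\ell^-$, all higher $\omega$-derivatives split into a sum of terms each involving powers of only one of $\ell^\pm$. This matters: on $\mathrm{Diag}$ both $\ell^+$ and $\ell^-$ vanish, and a mixed term would be a product of $(\theta-\theta'+i0)^{-\alpha}$ with $(\theta-\theta'-i0)^{-\beta}$, whose wavefront sets violate condition~\eqref{eq:wf_mult}, so Proposition~\ref{prop:multiplication} could not be applied there.

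Second, the genuine gap: your final step asserts that uniform boundedness $H^{s+k}\to H^{s+1}$, strong (pointwise) convergence, and the compact embedding $H^{s+1}\hookrightarrow H^t$ together yield operator-norm convergence $H^{s+k}\to H^t$. That implication is false. Take $A_j x=\langle x,e_j\rangle\,y_0$ on $\ell^2$ with $y_0$ fixed in the stronger space: the $A_j$ are uniformly bounded into the stronger space, converge strongly to $0$, yet $\|A_j\|$ into the weaker space is constant. Compactness on the \emph{target} side cannot convert strong convergence into norm convergence. The correct mechanism uses compactness on the \emph{domain} side, and this is precisely where the hypothesis $s+1>t$ enters: choose $\delta>0$ with $t\le s+1-\delta$, so that the kernel bounds give uniform boundedness $H^{s+k-\delta}\to H^{t}$; the unit ball of $H^{s+k}$ is precompact in $H^{s+k-\delta}$, and strong convergence of a uniformly bounded family is uniform on precompact sets, which yields $\|\partial_\omega^k\mathcal C_{\omega_j}-\partial_\lambda^k\mathcal C_{\lambda_0+i0}\|_{H^{s+k}\to H^t}\to 0$. (Strong convergence on $H^{s+k-\delta}$ is itself obtained from the distributional convergence of the kernels plus the uniform bounds and a density argument, as you indicate.) With that substitution your argument closes; the dyadic decomposition in your last paragraph is not needed once the uniform $\mathcal D'_\Gamma$-seminorm bounds of Proposition~\ref{prop:normal} are in hand, since sequential continuity of the product together with uniformity of the factors already gives the required uniform kernel bounds.
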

The above lemma is proved in \cite{DWZ} by directly analyzing the Schwartz kernel of the operator $\partial_\omega^k \mathcal C_\omega$ to obtain uniform bounds, similar to how we obtained the normal derivative bounds in Proposition \ref{prop:normal}. Then using Lemma~\ref{lem:fs_holomorphic}, one can upgrade the uniform bounds to convergence statements. 

In addition to the convergence results from \cite{DWZ} for the restricted single layer operator, we also need nice convergence properties for the single layer operator, which maps one-forms on the boundary to functions on~$\overline \Omega$. The strategy to obtain regularity estimates on the interior of the domain is to use the fact that $L^\pm_\omega$ is a Cauchy-Riemann type operator, so we can apply the maximum modulus principle.  

\begin{proposition}\label{prop:S_converge}
    Let $\lambda_0 \in (0, 1)$ be such that $\Omega$ is $\lambda_0$-simple. Let $v \in C^\infty(\partial\Omega; T^* \partial \Omega)$. Then for all $\omega_j \to \lambda_0$, $\Im \omega_j > 0$, and $s \in \N_0$,
    \begin{equation}
        \partial_\omega^s S_{\omega_j} v \to \partial_\lambda^s S_{\lambda_0 + i0} v \quad \text{in $C^\infty(\overline \Omega)$}.
    \end{equation}
    Similarly, if the sequence $\omega_j \to \lambda_0$ satisfies $\Im \omega_j < 0$, then the same convergence statement holds with $S_{\lambda_0 - i0}$ instead of $S_{\lambda_0 + i0}$.
\end{proposition}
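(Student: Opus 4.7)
The plan is to proceed by induction on $s$, simultaneously showing that $\partial_\omega^s S_\omega v$ is uniformly bounded in $C^k(\overline\Omega)$ for every $k \in \mathbb N_0$ and converges in $C^\infty(\overline\Omega)$ as $\omega_j \to \lambda_0$ from above. The key mechanism is a maximum modulus principle: for $\Im\omega > 0$ the linear map $x \mapsto \ell^\mp(x, \omega)$ is an $\mathbb R$-linear isomorphism $\mathbb R^2 \to \mathbb C$, and from $L^\pm_\omega \ell^\pm = 1$, $L^\pm_\omega \ell^\mp = 0$ one checks that $L^\pm_\omega$ is (up to a nonzero scalar) the $\bar\partial$-operator in this complex coordinate. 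Hence any smooth $u$ on $\overline\Omega$ with $L^\pm_\omega u = 0$ in $\Omega$ is holomorphic in $\ell^\mp$ and satisfies
\[\|u\|_{L^\infty(\overline\Omega)} \leq \|u\|_{L^\infty(\partial\Omega)},\]
an estimate with no $\omega$-dependent constant, which is crucial for uniformity as $\Im\omega \to 0^+$.

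\textbf{Base case $s = 0$.} Since $\mathcal I v$ is supported on $\partial\Omega$, $P(\omega) S_\omega v = 0$ in $\Omega$, so $L^+_\omega L^-_\omega S_\omega v = 0$. For any $k \geq 1$, the function $(L^\pm_\omega)^k S_\omega v$ therefore lies in the kernel of $L^\mp_\omega$ on $\Omega$, and max modulus gives
\[\|(L^\pm_\omega)^k S_\omega v\|_{L^\infty(\overline\Omega)} \leq \|(L^\pm_\omega)^k S_\omega v\|_{L^\infty(\partial\Omega)}.\]
The right-hand side equals, up to a constant from Lemma \ref{lem:fs}, the normal derivative $\mathscr N_{\omega, k}^\pm v$ which is uniformly bounded by Proposition \ref{prop:normal}. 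Combining this with $\|S_\omega v|_{\partial\Omega}\|_{L^\infty} = \|\mathcal C_\omega v\|_{L^\infty} \lesssim 1$ from Proposition \ref{prop:C_converge} and the relations $\partial_{x_1} = \omega^{-1}(L^+_\omega - L^-_\omega)$, $\partial_{x_2} = (1-\omega^2)^{-1/2}(L^+_\omega + L^-_\omega)$ yields uniform $C^k(\overline\Omega)$ bounds on $S_\omega v$.

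\textbf{Inductive step.} Differentiating $P(\omega) S_\omega v = 0$ in $\omega$ and using $\partial_\omega P(\omega) = -2\omega \Delta$, $\partial_\omega^2 P(\omega) = -2\Delta$, and $\partial_\omega^j P(\omega) = 0$ for $j \geq 3$, we obtain
\[P(\omega) \partial_\omega^s S_\omega v = 2s\omega \Delta \partial_\omega^{s-1} S_\omega v + s(s-1)\, \Delta \partial_\omega^{s-2} S_\omega v \quad \text{in } \Omega.\]
By induction the right-hand side is uniformly bounded and $C^\infty(\overline\Omega)$-convergent, and Proposition \ref{prop:C_converge} gives $C^\infty(\partial\Omega)$-convergence of the boundary trace $\partial_\omega^s \mathcal C_\omega v$. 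Apply the same Cauchy--Riemann scheme to $L^\pm_\omega \partial_\omega^s S_\omega v$, which now satisfies the inhomogeneous equation $L^\mp_\omega (L^\pm_\omega \partial_\omega^s S_\omega v) = \tfrac{1}{4} P(\omega) \partial_\omega^s S_\omega v$ with known source; split into a particular solution plus a kernel element and control each piece. Together with the $s \geq 1$ generalization of Proposition \ref{prop:normal} that tracks $\omega$-differentiation, this yields uniform $C^k(\overline\Omega)$ bounds. Arzelà--Ascoli then extracts subsequential $C^\infty(\overline\Omega)$ limits, and testing against $\varphi \in \CIc(\Omega)$ via Lemma \ref{lem:fs_holomorphic} identifies each limit as $\partial_\lambda^s S_{\lambda_0 + i0} v$, promoting subsequential convergence to full convergence.

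\textbf{Main obstacle.} The principal technical difficulty in the inductive step is the uniform inversion of $L^\mp_\omega$ on a generic smooth source as $\Im\omega \to 0^+$. In the complex coordinate $\ell^\mp(\cdot, \omega)$ the Jacobian degenerates linearly in $h = \Im \omega$, so a naive Newton/Cauchy potential bound on the particular solution blows up like $h^{-1}$. Overcoming this requires either exploiting the specific structure of the source, which arises from $\Delta$ applied to lower-order $\partial_\omega^j S_\omega v$ and admits partial absorption via commutators of $L^\pm_\omega$ with $\Delta$, or first extracting distributional interior convergence directly from Lemma \ref{lem:fs_holomorphic} applied to $\varphi \in \CIc(\Omega)$ and upgrading to $C^\infty(\overline\Omega)$ using the $s = 0$ uniform bounds together with $C^\infty(\partial\Omega)$ boundary convergence.
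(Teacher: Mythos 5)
Your base case $s=0$ coincides with the paper's Steps 1--2: $L^\mp_\omega(L^\pm_\omega)^kS_\omega v=0$ in $\Omega$, the maximum modulus principle, and Proposition \ref{prop:normal} for the boundary values $\mathscr N^\pm_{\omega,k}v$. The inductive step, however, has a genuine gap, and it is exactly the ``main obstacle'' you name: after differentiating the PDE in $\omega$ you must solve $L^\mp_\omega w=g$ with bounds uniform as $\Im\omega\to0^+$, and since the ellipticity of $L^\mp_\omega$ as a Cauchy--Riemann operator degenerates linearly in $h=\Im\omega$, the Cauchy-potential particular solution is not uniformly controlled. Neither of your proposed fixes is carried out, and the second one does not close: to upgrade distributional interior convergence of $\partial_\omega^s S_{\omega_j}v$ to $C^\infty(\overline\Omega)$ convergence you need precompactness in $C^k(\overline\Omega)$, hence uniform bounds on $\partial_\omega^s S_\omega v$ up to the boundary --- which is the very statement being proved; the $s=0$ bounds and the convergence of the boundary trace $\partial_\omega^s\mathcal C_\omega v$ do not control the normal derivatives of the $s$-th $\omega$-derivative near $\partial\Omega$. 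Similarly, the ``$s\ge1$ generalization of Proposition \ref{prop:normal} that tracks $\omega$-differentiation'' that you invoke is not available without further work.

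The paper's inductive step avoids the inhomogeneous $\bar\partial$-problem entirely: instead of differentiating the PDE, it differentiates the explicit kernel, as in \eqref{eq:partial_S_explicit}. Applying $\partial_\omega^s$ to $c_\omega\,\ell^\pm(x-y,\omega)^{-k}$ produces a finite sum of kernels $P_{\omega,n}(x,y)\,\ell^\pm(x-y,\omega)^{-n}$ with $k\le n\le k+s$, so $\partial_\omega^s(L^\pm_\omega)^kS_\omega v$ is a sum of terms $\mathcal M_\omega\big((L^\pm_\omega)^nS_\omega\big)\mathcal M'_\omega v$ with uniformly smooth multipliers; these are single layer potentials of the same homogeneous type applied to modified densities, so the max-modulus estimate \eqref{eq:kth_sup_bound} and Proposition \ref{prop:normal} apply verbatim with $v$ replaced by $\mathcal M'_\omega v$. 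The commutator $[(L^\pm_\omega)^k,\partial_\omega^s]$ only involves $\partial_\omega^r$ with $r\le s-1$ and is absorbed by the induction hypothesis. This is the missing idea. Your final step (weak convergence from Lemma \ref{lem:fs_holomorphic} plus compactness to promote subsequential limits) does match the paper's Step 4.
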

\begin{proof}
    1. We show the $\omega_j > 0$ the lower half plane case follows similarly. We first show that $S_\omega v \in C^1(\overline \Omega)$ uniformly for $\omega \in \mathcal J + i(0, h_0)$ where $\mathcal J$ is some small neighborhood of $\lambda_0$ and $h_0 > 0$ is small. We know from Lemma~\ref{lem:S_mapping_prop} that $S_\omega v \in C^\infty(\overline \Omega)$, and from Proposition \ref{prop:C_converge}, $(S_\omega v)|_{\partial \Omega} \in C^\infty(\partial \Omega)$ uniformly. Therefore by the fundamental theorem of calculus, it suffices to show that 
    \begin{equation}\label{eq:sufficient_for_C^1}
        L^\pm_\omega S_\omega v \in L^\infty(\Omega) \quad \text{uniformly for $\omega \in \mathcal J + i(0, h_0)$}.
    \end{equation}
    Recall that $E_\omega$ is the fundamental solution, so 
    \[L_\omega^+ L_\omega^- E_\omega(x) = \frac{1}{4} P(\omega)E_\omega(x) = 0 \quad \text{for all $x \neq 0$}.\] 
    Fix coordinates $\theta$ on $\partial \Omega$ and write $v = f \, d\theta$. Then for $x \in \Omega$ (not on the boundary), we see from (\ref{eq:S_explicit}) that
    \[L^-_\omega L^+_\omega S_\omega v = L^+_\omega L^-_\omega S_\omega v = L^+_\omega L^-_\omega \int_{\partial \Omega} E_{\omega}(x - y) f(y) \, d \theta(y) = 0.\]
    Since $L^+_\omega$ and $L^-_\omega$ are Cauchy--Riemann type operators, it follows from the maximum modulus principle that 
    \begin{equation}\label{eq:first_max_mod}
    \begin{gathered}
        \sup_{x \in \Omega} |L^\pm_\omega S_\omega v(x)| \le c_\omega \sup_{x \in \partial \Omega} |\mathscr N_{\omega, 1}^\pm v(x)| \\
    \end{gathered}
    \end{equation}
    where $\mathscr N_{\omega, 1}^\pm$ is the restriction to boundary of $L^\pm_\omega S_\omega$ by extending from the interior (see \eqref{eq:normal_restriction}). By Proposition \ref{prop:normal}, the right-hand side of \eqref{eq:first_max_mod} is uniformly bounded for $\omega \in \mathcal J + i(0, h_0)$ for some $\mathcal J \ni \theta_0$ and $h_0 > 0$, which implies (\ref{eq:sufficient_for_C^1}).

    \noindent
    2. Now we prove an easy upgrade of Step 1 that shows $S_\omega v \in C^\infty(\overline \Omega)$ uniformly in $\omega$. Since $L_\omega^+$ and $L_\omega^-$ commute, it follows that
    \[L_\omega^\mp (L^{\pm}_\omega)^k S_\omega v = 0, \quad k \in \N\]
    Again by the maximum modulus principle, 
    \begin{equation}\label{eq:kth_sup_bound}
        \sup_{x \in \Omega} \left| (L^\pm_{\omega})^k S_\omega v \right| \le C \sup_{x \in \partial \Omega} \left| \mathscr N_{\omega, k}^\pm v \right|
    \end{equation}
    for some uniform constant $C$ for $\omega \in \mathcal J + i(0, h_0)$. By Proposition \ref{prop:normal}, it follows that the right-hand side is uniformly bounded for $\omega \in \mathcal J + i(0, h_0)$. Furthermore, $(L_\omega^+)^j (L_\omega^-)^{k - j} S_\omega v = 0$ for every $1 \le j \le k-1$, so we see that $S_\omega v \in C^k(\overline \Omega)$ uniformly bounded uniformly in~$\omega$ for every $k \in \N$.

    \noindent
    3. Using the $s = 0$ as the base case, we now induct to obtain uniform bounds for higher derivatives in $\omega$. Take $s \in \N$ and assume that $\partial_\omega^r S_\omega v$ is uniformly bounded in $C^\infty(\overline \Omega)$ for $\omega \in \mathcal J + i(0, h_0)$ for all $r \le s - 1$. Observe that 
    \[[(L^\pm_\omega)^k, \partial_\omega^s] = \sum_{r \le s - 1} \sum_{|\alpha| = k} \mu^\pm_{r, \alpha}(\omega) \partial_{x}^\alpha \partial_\omega^r\]
    for some $\mu^\pm_{r, \alpha} \in C^\infty(\mathcal J + i[0, h_0))$. Thus by the induction hypothesis, 
    \begin{equation*}
        [(L^\pm_\omega)^k, \partial_\omega^s] S_\omega v \in C^\infty(\overline \Omega) \quad \text{uniformly for $\omega \in \mathcal J + i[0, h_0)$}. 
    \end{equation*}
    Writing out $S_\omega$ explicitly using \eqref{eq:S_explicit}, we have that for $x \in \Omega$, 
    \begin{align}
        \partial_\omega^s (L^\pm_\omega)^k S_\omega v &= \partial_\omega^s \int_{\partial \Omega} c_\omega \frac{v(y)}{\ell^\pm(x - y, \omega)^k} \nonumber \\
        &= \sum_{n = k}^{k + s} \int_{\partial \Omega} \frac{P_{\omega, n}(x, y) v(y)}{\ell^\pm(x - y, \omega)^n} \label{eq:partial_S_explicit}
    \end{align}
    for some polynomials $P_{\omega, n}(x, y)$ whose coefficients depend smoothly on $\omega \in \mathcal J + i[0, h_0)$. Therefore, $\partial_\omega^s (L_\omega^\pm)^k S_\omega v$ is the sum of terms of the form 
    \[\mathcal M_\omega \big((L^\pm_\omega)^n S_\omega\big) \mathcal M'_\omega v, \quad k \le n \le k + s\]
    where $\mathcal M_\omega$ and $\mathcal M'_\omega$ are multiplication operators by uniformly smooth functions in $C^\infty(\overline \Omega)$ and $C^\infty(\partial \Omega)$ respectively for $\omega \in \mathcal J + i(0, h_0)$. Note that by Proposition \ref{prop:normal}, the right-hand side of \eqref{eq:kth_sup_bound} depends only finitely many derivatives of $v$, and hence the right-hand side is uniformly bounded in $\omega$ with $v$ replaced with $\mathcal M'_\omega v$. So it follows that $\partial_\omega^s (L^\pm_\omega)^k S_\omega v$ lies in $L^\infty(\overline \Omega)$ uniformly for $\omega \in \mathcal J + i(0, h_0)$.

    Now for $j \in 1, \dots, k - 1$, 
    \[[(L^+_\omega)^j (L^-_\omega)^{k - j}, \partial_\omega^s] = \sum_{r \le s - 1} \sum_{|\alpha| = k} \mu^\pm_{r, \alpha}(\omega) \partial_{x}^\alpha \partial_\omega^r\]
    for some (possibly different) $\mu^\pm_{r, \alpha} \in C^\infty(\mathcal J + i[0, h_0))$. Thus by the induction hypothesis, 
    \begin{equation*}
    [(L^+_\omega)^j (L^-_\omega)^{k - j}, \partial_\omega^s] S_\omega v \in C^\infty(\overline \Omega) \quad \text{uniformly for $\omega \in \mathcal J + i[0, h_0)$}. 
    \end{equation*}
    Since $(L^+_\omega)^j (L^-_\omega)^{k - j} S_\omega v = 0$ in $\Omega$, we see that $\partial^s_\omega S_\omega v \in C^k(\overline \Omega)$ uniformly in~$\omega$ for every $k$.

    \noindent
    4. Finally, we upgrade from uniform boundedness of $\partial_\omega^s S_\omega v$ in $C^\infty(\overline \Omega)$ to convergence. 
    Lemma~\ref{lem:fs_holomorphic}, we have weak convergence 
    \begin{equation}\label{eq:S_weak}
        \partial_\omega^s S_{\omega_j} v \to \partial_\lambda^s S_{\lambda_0 + i0} v  \quad \text{in} \quad \mathcal D'(\Omega).
    \end{equation}
    We have compact embedding $H^k(\overline \Omega) \to H^{k'}(\overline \Omega)$ for all $k > k'$, so $\{\partial_\omega^s S_{\omega_j} v\}$ is precompact in $H^k(\overline \Omega)$ for every $k$ (see, for instance, \cite[Appendix B]{H3}). Every convergent subsequence must converge to $\partial_\lambda^s S_{\lambda_0 + i0} v$ by (\ref{eq:S_weak}), so indeed we must have $\partial_\omega^s S_{\omega_j} v \to \partial_\lambda^s S_{\lambda_0 + i0} v$ in $H^k(\overline \Omega)$ for every $k$. 
\end{proof}

\section{High frequency analysis}\label{sec:high_freq_est}
Let us return to the elliptic boundary value problem (\ref{eq:EBVP}) given by 
\[P(\omega) u_\omega = f, \qquad u_\omega|_{\partial \Omega} = 0.\]
Recall from Lemma~\ref{lem:fs_app} that when $\Re \omega \neq 0$ and $f \in C_c^\infty(\Omega)$, there exists a unique solution $u_\omega \in C^\infty(\overline \Omega)$. We wish to control the $H^s$-norm of $u_\omega$ uniformly as $\omega$ approaches the real line for large $s$. As seen in the toy example in \S \ref{subsubsec:toy_square} as well as the result in \cite{DWZ}, the spectrum of $P$ is rather complicated, so controlling the $H^s$-norm of $u_\omega$ for $\omega$ close to the real line is delicate. The hope is that when the rotation number of the underlying chess billiard map is sufficiently irrational, we can obtain uniform high frequency estimates for spectral parameter $\omega$ near such forcing frequency.

In this section, we establish high frequency estimates for the boundary reduced problem in polynomial regions (see Figure \ref{fig:polyregion}) above or below a forcing frequency $\lambda$ whose associated chess billiard map has a Diophantine rotation number. The precise boundary reduced equation will be given by (\ref{eq:v=Bb+g}), and the high frequency estimate for the boundary reduced problem will be given by Proposition \ref{prop:high_freq_est}. Throughout this section, we fix a choice of the identification $\partial \Omega \simeq \mathbb S^1$ (See Remark at the end of \ref{subsec:prelim_reduct}.)

\subsection{Preliminary reductions}\label{subsec:prelim_reduct}
First we apply the identity (\ref{eq:EdC=}) to itself so that the chess billiard map appears. Define 
\begin{equation}\label{eq:curly_A}
    \mathcal A_\omega := (\gamma_\lambda^+)^* A_\omega^+ + (\gamma_\lambda^-)^* A_\omega^-.
\end{equation}
For now, let 
$$
\omega=\lambda+ih \in \mathcal J + i(0, \infty)
$$
where $\mathcal J \Subset (0, 1)$ is some small open subinterval such that $\Omega$ is $\lambda$-simple for all $\lambda \in U$. As before, we only study the upper half plane case for $\omega$ since the lower half plane case follows similarly with minor sign modifications. 
\begin{lemma}\label{lem:A^2}
There exists $B_\omega^\pm = B_{\lambda + ih}^\pm \in \Psi_{h, \pm}^{-\infty} (\mathbb S^1)$ locally uniformly in $\lambda$ such that we have the identity
\begin{equation}\label{eq:A^2}
    \mathcal A^2_\omega = B_\omega^+ b_\lambda^* + B_\omega^- b_\lambda^{-*} + \mathscr R_\omega,
\end{equation}
where $\mathscr R_\omega \in \Psi^{-\infty}$ locally uniformly for $\omega \in \mathcal J + i [0, \infty)$. Furthermore, we have the principal symbol
\begin{equation}\label{eq:beta^pm}
\sigma_h^\pm(B_\omega^\pm) = e^{-\tilde z_\omega^\pm(x) |\xi|} \quad \text{where} \quad \Re \tilde z_\omega^\pm \ge \ell > 0 \quad \text{uniformly in $\omega$}
\end{equation}
\end{lemma}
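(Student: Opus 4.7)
The plan is to expand $\mathcal A_\omega^2$ into the four binomial terms and organize them into three pieces: two ``diagonal'' terms that are smoothing uniformly in $h$, plus two cross terms that, after a conjugation, are recognized as $B^+b^*$ and $B^-b^{-*}$ respectively.

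First I would observe that for any orientation-reversing involution $\phi$, the formulas $\phi^*\Pi^\pm\phi^{-*}\equiv\Pi^\mp$ (mod $\Psi^{-\infty}$) and the change-of-variables lemma (Lemma~\ref{lem:positive_COV} applied to $\phi$, orientation-reversing) imply that conjugation by $\phi^*$ sends $\Psi^{-\infty}_{h,+}\to\Psi^{-\infty}_{h,-}$ and vice versa, while conjugation by $b^*$ (orientation preserving) preserves each of $\Psi^{-\infty}_{h,\pm}$. Consequently, the diagonal terms $(\gamma^\pm_\lambda)^*A^\pm_\omega(\gamma^\pm_\lambda)^*A^\pm_\omega = [(\gamma^\pm_\lambda)^*A^\pm_\omega(\gamma^\pm_\lambda)^*]\cdot A^\pm_\omega$ are products of a negative (resp.\ positive) semiclassical operator with a positive (resp.\ negative) one, which by the final remark of \S\ref{subsubsec:positive_semiclassical} lies in $\Psi^{-\infty}$ uniformly in $h$. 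These two terms will be absorbed into $\mathscr R_\omega$.

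For the cross term $(\gamma^+_\lambda)^*A^+_\omega(\gamma^-_\lambda)^*A^-_\omega$, I would use $A^+_\omega(\gamma^-_\lambda)^* = (\gamma^-_\lambda)^*\tilde A^+_\omega$ where $\tilde A^+_\omega:=(\gamma^-_\lambda)^*A^+_\omega(\gamma^-_\lambda)^*\in\Psi^{-\infty}_{h,-}$, so that the term becomes $(\gamma^+_\lambda)^*(\gamma^-_\lambda)^*\tilde A^+_\omega A^-_\omega = b^{-*}\tilde A^+_\omega A^-_\omega$. Here $\tilde A^+_\omega A^-_\omega\in\Psi^{-\infty}_{h,-}$. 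I then move $b^{-*}$ to the right by setting $B^-_\omega := b^{-*}\tilde A^+_\omega A^-_\omega b^*$, which remains in $\Psi^{-\infty}_{h,-}$ because it is conjugation by an orientation-preserving diffeomorphism. This gives this cross term the form $B^-_\omega b^{-*}$. A symmetric computation with $+$ and $-$ swapped handles the other cross term, producing $B^+_\omega b^*$ with $B^+_\omega \in \Psi^{-\infty}_{h,+}$. Combining everything yields \eqref{eq:A^2}; local uniformity in $\lambda$ follows from Lemma~\ref{lem:normal_1} and the smoothness in $\omega$ of $\gamma^\pm_\lambda$, $b$, and the operators $A^\pm_\omega$ up to $h=0$.

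For the principal symbol, I would compose and pull back: the principal symbol of $\bar A^-_\omega := (\gamma^+_\lambda)^*A^-_\omega(\gamma^+_\lambda)^*$ at $(\theta,\xi)\in T_+^*\mathbb S^1$ equals $\sigma^-_h(A^-_\omega)$ evaluated at the lifted symplectomorphism image, namely $-\exp\bigl(-z^-_\omega(\gamma^+_\lambda(\theta))|\xi|/|d\gamma^+_\lambda(\theta)|\bigr)$. Multiplying by $\sigma_h^+(A^+_\omega)(\theta,\xi) = -e^{-z^+_\omega(\theta)|\xi|}$ via the composition formula of Lemma~\ref{lem:positive_comp}, and then pulling back by the orientation-preserving $b$ via Lemma~\ref{lem:positive_COV}, gives $\sigma_h^+(B^+_\omega) = e^{-\tilde z^+_\omega(\theta)|\xi|}$ where
\[
\tilde z^+_\omega(\theta) = \frac{z^+_\omega(b(\theta)) + z^-_\omega(\gamma^+_\lambda(b(\theta)))/|d\gamma^+_\lambda(b(\theta))|}{|db(\theta)|}.
\]
Since $\Re z^\pm_\omega \ge \ell > 0$ uniformly in $\omega$ (by Lemma~\ref{eq:T_diag_ref}) and $|db|,|d\gamma^+_\lambda|$ are bounded uniformly for $\lambda$ in a small neighborhood, we obtain $\Re\tilde z^+_\omega\ge\ell'>0$ uniformly. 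The analogous computation gives $\sigma_h^-(B^-_\omega)$.

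The main obstacle is the careful bookkeeping of how the orientation-reversing $(\gamma^\pm_\lambda)^*$ shuffles positive and negative semiclassical classes, while the orientation-preserving $b^*$ preserves them; getting the exponential symbol correctly transported under the two successive conjugations (first by $\gamma^\mp_\lambda$, then by $b$) and verifying the uniform positivity of $\Re\tilde z^\pm_\omega$ is the delicate part, but it is purely a computation once the symbol calculus of \S\ref{subsubsec:positive_semiclassical} is in hand.
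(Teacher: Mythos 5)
Your proposal is correct and follows essentially the same route as the paper: the diagonal terms die because an orientation-reversing conjugation swaps $\Psi^{-\infty}_{h,+}$ and $\Psi^{-\infty}_{h,-}$, and the cross terms are rewritten as $B^\pm_\omega (b_\lambda^{\pm1})^*$ using $b_\lambda=\gamma^+_\lambda\circ\gamma^-_\lambda$, with the symbols computed via Lemmas~\ref{lem:positive_comp} and~\ref{lem:positive_COV}. Your expression for $\tilde z^+_\omega$ coincides with the paper's after applying $\gamma^+_\lambda\circ b_\lambda=\gamma^-_\lambda$ and the chain rule, so the two arguments differ only in bookkeeping.
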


\begin{proof}
The diffeomorphism $\gamma^\pm_\lambda$ is orientation reversing, so by Proposition \ref{prop:full_symbol}, we have $(\gamma_\lambda^\pm)^* A_\omega^\pm (\gamma_\lambda^\pm)^* A_\omega^\pm \in \Psi^{-\infty}$ uniformly in $\omega$. Then squaring $\mathcal A_\omega$, we find 
\begin{align}
    \mathcal A_\omega^2 &= (\gamma_\lambda^-)^* A_\omega^- (\gamma_\lambda^+)^* A_\omega^+ + (\gamma^+_\lambda)^* A^+_\omega (\gamma^-_\lambda)^* A^-_\omega + \Psi^{-\infty} \nonumber \\
    &= \left((\gamma_\lambda^-)^* A_\omega^-  (\gamma_\lambda^{-})^* \right) \left( b_\lambda^* A_\omega^+ b_\lambda^{-*} \right) b_\lambda^* \nonumber \\
    & \qquad + \left((\gamma_\lambda^+)^* A_\omega^+  (\gamma_\lambda^{+})^* \right) \left( b_\lambda^{-*} A_\omega^- b_\lambda^{*} \right) b_\lambda^{-*} + \Psi^{-\infty} \label{eq:A^2_presub}
\end{align}
where the remainder in (\ref{eq:A^2_presub}) lies in $\Psi^{-\infty}$ uniformly in $\omega$. By Lemma~\ref{lem:positive_comp} and Lemma~\ref{lem:positive_COV}, we see that 
\begin{equation}\label{eq:B^pm}
    B_\omega^\pm := \left((\gamma_\lambda^\mp)^* A_\omega^\mp  (\gamma_\lambda^{\mp})^* \right) \left( (b_\lambda^\pm)^* A_\omega^\pm (b_\lambda^\mp)^* \right) \in \Psi_{h, +}^{-\infty}.
\end{equation}
Furthermore, we have the principal symbol 
\begin{equation*}
    \sigma_h^{\pm}(B_\omega^\pm) = e^{-\tilde z_\omega^\pm(x) |\xi|}
\end{equation*}
where
\begin{equation}\label{eq:tilde_z}
    \tilde z_\omega^\pm(x) = \frac{z_\omega^\mp(\gamma_{\lambda}^\mp(x))}{|\partial_x \gamma^\mp_\lambda(x)|} + \frac{z_\omega^\pm(b_\lambda^{\pm     1}(x))}{\partial_x b_\lambda^{\pm 1}(x)}.
\end{equation}
Note that (\ref{eq:tilde_z}) gives
\[\Re \tilde z_\omega^\pm \ge \ell > 0,\]
so the lemma follows upon putting $\mathscr R_\omega$ as the uniform smoothing remainder in (\ref{eq:A^2_presub}). 
\end{proof}

Reducing the boundary value problem (\ref{eq:EBVP}) via the restricted single layer potential, we have (\ref{eq:restricted_problem}):
\[\mathcal C_\omega v_\omega = F_\omega, \qquad F_\omega := (R_\omega f)|_{\partial \Omega}\]
where $\mathcal C_\omega$ is the restricted single layer operator and $v_\omega := \mathcal N_\omega u_\omega$ is the `Neumann data.' Therefore, we can rewrite (\ref{eq:EdC=}) as
\begin{equation*}
    v_\omega = -\mathcal A_\omega v_\omega + \mathcal E_\omega dF_\omega.
\end{equation*}
Applying this identity to itself and using (\ref{eq:A^2}), 
we get the identity
\begin{equation}\label{eq:v=Bb+g}
    v_\omega = B_\omega^+ b_\lambda^* v_\omega + B_\omega^- b_\lambda^{-*} v_\omega + \mathscr R_\omega v_\omega +  f_\omega
\end{equation}
for $\omega = \lambda + ih \in \mathcal J + i(0, \infty)$ where
\[f_\omega := (I - \mathcal A_\omega) \mathcal E_\omega d F_\omega.\]
By Proposition \ref{prop:full_symbol}, both $A^\pm_\omega$ and $\mathcal E_\omega$ are pseudodifferential uniformly in $\omega$, 
so $f_\omega \in C^\infty(\mathbb S^1; T^* \mathbb S^1)$. This motivates the analysis of (\ref{eq:v=Bb+g}). The main goal of this section is to obtain high frequency estimates for $v_\omega$ given high frequency control of some $f_\omega$. The precise statement is given in Proposition~\ref{prop:high_freq_est}.

To simplify the analysis of (\ref{eq:v=Bb+g}), we split the identity into positive and negative frequency pieces. Define a pseudodifferential partition of unity by 
\begin{equation}
    \begin{gathered}
        I = \Pi^+ + \Pi^-, \quad  \Pi^\pm \in \Psi^0(\mathbb S^1; T^* \mathbb S^1), \\
        \WF(\Pi^\pm) \subset \{\pm \xi > 0\}, \quad \sigma(\Pi^\pm)(x, \xi) = H(\pm \xi).
    \end{gathered}
\end{equation}
Put
\begin{equation}\label{eq:v,f_projections}
    v_\omega^\pm := \Pi^\pm v_\omega, \quad f_\omega^\pm := \Pi^\pm f_\omega.
\end{equation}
Then applying $\Pi^\pm$ to (\ref{eq:v=Bb+g}), we find that
\begin{equation}\label{eq:split_freq}
    v_\omega^\pm = B^\pm_\omega (b_\lambda^{\pm 1})^* v_\omega^{\pm} + \mathscr R_\omega^{\pm} v_\omega + f_\omega ^\pm
\end{equation}
where we have the smoothing remainder
\[\mathscr R_\omega^\pm := \left( [\Pi^\pm, B_\omega^\pm] + B_\omega^\pm (\Pi^\pm - (b_\lambda^{\pm 1})^* \Pi^\pm (b_\lambda^\mp)^*) \right) (b_\lambda^{\pm 1})^* + \Pi^\pm B_\omega^\mp(b_\lambda^{\mp 1})^* + \Pi^\pm \mathscr R_\omega,\]
which lies in $\Psi^{-\infty}(\mathbb S^1; T^* \mathbb S^1)$ uniformly in $h$. 

\Remarks 1. We use the same notation for the projectors $\Pi^\pm$ as in \S\ref{subsubsec:positive_semiclassical} since they differ by at most $\Psi^{-\infty}$, which makes no difference. 

\noindent
2. For convenience, note that we now use $x$ instead of $\theta$ as the variable for $\partial \Omega \simeq \mathbb S^1$ since we are now exclusively working on the boundary reduced problem, so there is no ambiguity with the coordinates on $\Omega \subset \R^2$.

\subsection{An approximate Denjoy's theorem}
Fix $\lambda_0 \in (0, 1)$ such that $\mathbf r(\lambda)$ is Diophantine (see Definition \ref{def:diophantine}). We localize the analysis near $\lambda_0$, so we now put
$$
\omega = \lambda_0 + \epsilon + ih, \quad h > 0.
$$
The analysis for $\omega$ in the lower half plane is the same up to minor sign changes. We wish to first understand the operator $B^\pm_\omega (b^{\pm 1}_{\lambda_0 + \epsilon})^*$. First we conjugate by an $\epsilon$-dependent pullback operator so that the chess billard map becomes $\epsilon^\infty$-close to a pure rotation. More precisely, we have the following lemma:

\begin{lemma}\label{lem:approx_conj}
For all sufficiently small $\epsilon$, there exists a family of orientation-preserving diffeomorphisms $\varphi_\epsilon: \mathbb{S}^1 \to \mathbb{S}^1$ depending smoothly on $\epsilon$ so that
\[(\varphi_\epsilon^{-1} \circ b_{\lambda_0 + \epsilon} \circ \varphi_\epsilon)(x) = x + \alpha(\epsilon) + r_\epsilon(x),\]
where $\alpha$ is a real-valued smooth function on a neighborhood of $0$ with $\alpha(0) = \mathbf r(\lambda_0)$, and the remainder $r_\epsilon: \R \to \R$ is a smooth periodic function that satisfies $r_\epsilon(x) = \mathcal O(\epsilon^\infty)_{C^\infty}.$
\end{lemma}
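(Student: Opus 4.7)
The plan is to reduce the problem to a sequence of cohomological equations of the form~\eqref{eq:basic_cohomological} and then Borel sum the resulting formal power series in $\epsilon$. The key observation is that we only require an $\mathcal O(\epsilon^\infty)$ error rather than an exact conjugation, so no KAM-style convergence argument is needed: at every order in $\epsilon$, the small-divisor problem involves only the \emph{fixed} Diophantine shift $\alpha_0 := \mathbf r(\lambda_0)$, even though $\mathbf r(\lambda_0+\epsilon)$ or the intermediate $\alpha^{(N)}(\epsilon)$ may fail to be Diophantine for $\epsilon \neq 0$.

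First I would apply Proposition~\ref{prop:denjoy_moser} to the unperturbed map to produce a smooth diffeomorphism $\psi : \mathbb S^1 \to \mathbb S^1$ with $\psi \circ b_{\lambda_0} \circ \psi^{-1} = \rho_{\alpha_0}$. Since $b_\lambda$ depends smoothly on $\lambda$, the family $\tilde b_\epsilon := \psi \circ b_{\lambda_0+\epsilon} \circ \psi^{-1}$ consists of orientation-preserving circle diffeomorphisms depending smoothly on $\epsilon$ with $\tilde b_0 = \rho_{\alpha_0}$. It then suffices to construct a smooth family $\tilde\varphi_\epsilon(x) = x + \eta(x,\epsilon)$ and a smooth function $\alpha(\epsilon)$ with $\alpha(0)=\alpha_0$ satisfying $\tilde\varphi_\epsilon^{-1} \circ \tilde b_\epsilon \circ \tilde\varphi_\epsilon(x) = x + \alpha(\epsilon) + \mathcal O(\epsilon^\infty)_{C^\infty}$; the diffeomorphism required by the lemma is then $\varphi_\epsilon := \psi^{-1} \circ \tilde\varphi_\epsilon$.

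The construction of $(\eta,\alpha)$ is inductive. Suppose that for some $N$ we have $\eta_1,\dots,\eta_N \in C^\infty(\mathbb S^1)$ and constants $\alpha_1,\dots,\alpha_N$ so that $\varphi^{(N)}_\epsilon := \mathrm{id} + \sum_{k=1}^N \epsilon^k \eta_k$ and $\alpha^{(N)}(\epsilon) := \alpha_0 + \sum_{k=1}^N \alpha_k \epsilon^k$ satisfy
\[ (\varphi^{(N)}_\epsilon)^{-1} \circ \tilde b_\epsilon \circ \varphi^{(N)}_\epsilon(x) = x + \alpha^{(N)}(\epsilon) + \epsilon^{N+1} R_N(x,\epsilon) \]
for some $R_N \in C^\infty(\mathbb S^1 \times (-\epsilon_0,\epsilon_0))$. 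Conjugating further by $\mathrm{id} + \epsilon^{N+1}\eta_{N+1}$ and expanding to leading order in $\epsilon^{N+1}$, one checks that the error at order $\epsilon^{N+1}$ is killed provided
\[ \eta_{N+1}(x + \alpha_0) - \eta_{N+1}(x) = R_N(x,0) - \alpha_{N+1}. \]
Setting $\alpha_{N+1} := \int_{\mathbb S^1} R_N(x,0)\,dx$ makes the right-hand side mean zero, after which Lemma~\ref{lem:cohom} (with the Diophantine shift $\alpha_0$) produces a unique mean-zero smooth solution $\eta_{N+1}$. Crucially, the shift in this cohomological equation is always $\alpha_0$ regardless of $N$, which is the heart of the argument.

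Having produced the sequences $\{\eta_k\}_{k\ge 1} \subset C^\infty(\mathbb S^1)$ and $\{\alpha_k\}_{k\ge 1} \subset \R$, I would apply Borel's lemma to realize $\eta \in C^\infty(\mathbb S^1 \times (-\epsilon_0,\epsilon_0))$ and $\alpha \in C^\infty((-\epsilon_0,\epsilon_0))$ whose Taylor series in $\epsilon$ at $0$ are $\sum_{k\ge 1} \epsilon^k \eta_k$ and $\alpha_0 + \sum_{k\ge 1}\epsilon^k \alpha_k$. For $\epsilon_0$ small, $\tilde\varphi_\epsilon(x) := x + \eta(x,\epsilon)$ is an orientation-preserving circle diffeomorphism, and by construction the iteration above shows $(\tilde\varphi_\epsilon^{-1} \circ \tilde b_\epsilon \circ \tilde\varphi_\epsilon)(x) - x - \alpha(\epsilon) = \mathcal O(\epsilon^\infty)_{C^\infty}$. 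The main obstacle I would expect is purely bookkeeping rather than analytic: because the Diophantine shift stays locked at $\alpha_0$ and we never demand convergence, each step reduces to the already-proved Lemma~\ref{lem:cohom}, and the delicate behavior of $\mathbf r(\lambda_0+\epsilon)$ is sidestepped entirely.
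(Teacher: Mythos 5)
Your proposal is correct and follows essentially the same route as the paper: conjugate $b_{\lambda_0}$ to the rotation via Herman--Yoccoz, then iteratively solve cohomological equations with the fixed Diophantine shift $\mathbf r(\lambda_0)$ to remove the $x$-dependence order by order in $\epsilon$, and Borel-sum. The only cosmetic difference is that the paper composes a sequence of near-identity conjugations $\psi_n = \mathrm{id} + \epsilon^n p_n$ and Borel-sums the composition, whereas you accumulate the corrections into a single $\mathrm{id} + \sum_k \epsilon^k \eta_k$ and Borel-sum the coefficients; these are equivalent.
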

\Remark We can lift smooth circle diffeomorphisms to smooth maps on $\R$. In both the lemma and the proof, we use the same notation for both since there will be no ambiguity, and it avoids writing $(\mathrm{mod}\,  1)$ everywhere.
\begin{proof}
We build $\varphi_\epsilon$ asymptotically. By Proposition \ref{prop:denjoy_moser}, since $\mathbf r(\lambda_0)$ is Diophantine, there exists a diffeomorphism $\psi_0: \mathbb{S}^1 \to \mathbb{S}^1$ such that 
\[\psi_0 \circ b_{\lambda_0} \circ \psi_0^{-1} = \rho_{\mathbf r(\lambda_0)}, \qquad \rho_{\mathbf r(\lambda_0)}(x) = x + \mathbf r(\lambda_0).\] 
There exists a family of periodic functions $\beta^{(1)}_\epsilon: \R \to \R$ depending smoothly on $\epsilon$ such that
\[b_{\lambda_0 + \epsilon}^{(1)}(x) := (\psi_0 \circ b_{\lambda_0 + \epsilon} \circ \psi_0^{-1})(x) = x + \mathbf r(\lambda_0) + \epsilon \beta^{(1)}_\epsilon(x).\]
From the above, we see that conjugating $b_{\lambda_0 + \epsilon}$ by $\psi_0$ yields a pure rotation plus an order $\epsilon$ error. In order to improve the error, we introduce periodic function $p_1: \R \to \R$ that solves the cohomological equation 
\begin{equation}\label{eq:p_1}
    p_1(x + \mathbf{r}(\lambda_0)) - p_1(x) + \beta^{(1)}_{\epsilon = 0}(x) = \alpha_1 \quad \text{where } \alpha_1 = \int_0^1 \beta^{(1)}_{\epsilon = 0}(x)\, dx,
\end{equation}
in the sense of Lemma~\ref{lem:cohom}, which indeed has a smooth periodic solution since $\mathbf{r}(\lambda_0)$ is Diophantine. Consider the ($\epsilon$-dependent family of) circle diffeomorphisms $\psi_1(x) = x + \epsilon p_1(x)$. Then observe that by (\ref{eq:p_1}) and Taylor's theorem, there exist periodic functions $\beta^{(2)}_\epsilon: \R \to \R$ such that 
\begin{align*}
    \big(\psi_1 \circ b_{\lambda_0 + \epsilon}^{(1)} \big)(x) &= x + \mathbf r(\lambda_0) + \epsilon \beta_\epsilon^{(1)} (x) + \epsilon p_1\big(x + \mathbf r(\lambda_0) + \epsilon \beta_\epsilon^{(1)}(x) \big) \\
    &= \psi_1(x) + \mathbf r(\lambda_0) + \epsilon \alpha_1 + \epsilon^2 \beta_\epsilon^{(2)} (\psi_1(x)).
\end{align*}
Therefore, we have the improvement
\[b_{\lambda_0 + \epsilon}^{(2)} := \big(\psi_1 \circ b_{\lambda_0 + \epsilon}^{(1)} \circ \psi_1^{-1} \big)(x) = x + \mathbf r(\lambda_0) + \epsilon \alpha_1 + \epsilon^2 \beta_\epsilon^{(2)} (x).\]
Now we can iteratively construct 1-periodic functions $p_n: \R \to \R$ and $\alpha_n \in \R$ that solve the cohomological equation 
\[p_n(x + \mathbf{r}(\lambda_0)) - p_n(x) + \beta_{\epsilon = 0}^{(n)}(x) = \alpha_n \quad \text{where } \alpha_n = \int_0^1 \beta^{(n)}_{\epsilon = 0}(x)\, dx\]
to produce diffeomorphisms 
\[\psi_{n}(x) = x + \epsilon^n p_n(x).\]
Here, $\beta_\epsilon^{(n)}: \R \to \R$ are periodic functions chosen so that 
\begin{equation}\label{eq:denjoy_induction}
    b_{\lambda_0 + \epsilon}^{(n)} := \big(\psi_{n - 1} \circ b_{\lambda_0 + \epsilon}^{(n - 1)} \circ \psi_{n - 1}^{-1}\big)(x) = x + \mathbf{r}(\lambda_0) + \sum_{k = 1}^{n - 1} \epsilon \alpha_k + \epsilon^n \beta_\epsilon^{(n)} (x),
\end{equation}
For all sufficiently small $\epsilon$, we can define an $\epsilon$-dependent family of diffeomorphisms $\psi_\epsilon: \mathbb S^1 \to \mathbb S^1$ that satisfies
\[\psi_\epsilon(x) - (\psi_n \circ \psi_{n - 1} \circ \cdots \circ \psi_{0})(x) = \mathcal O(\epsilon^{n + 1})_{C^\infty}\]
for every $n \in \N$. The existence of $\psi_\epsilon$ is guaranteed by Borel's lemma. Again summing asymptotically in $\epsilon$, there exists $\alpha(\epsilon) \sim \mathbf{r} (\lambda_0) + \sum_{k = 1}^\infty \epsilon^{k} \alpha_k$. It then follows from the inductive step (\ref{eq:denjoy_induction}) that 
\[(\psi_\epsilon \circ b_{\lambda_0 + \epsilon} \circ \psi_\epsilon^{-1})(x) = x + \alpha(\epsilon) + \mathcal O(\epsilon^\infty)_{C^\infty}.\]
The lemma then follows upon setting $\varphi_\epsilon = \psi_\epsilon^{-1}$.
\end{proof}

\subsection{Construction of the microlocal weight}
Let $\varphi_\epsilon$ be as in Lemma~\ref{lem:approx_conj}. Conjugating the operator $B_\omega^\pm$
from Lemma~\ref{lem:A^2} by $\varphi_\epsilon^*$ is a change of variables for each $\epsilon$, so 
\[\widetilde B_\omega^\pm := \varphi_\epsilon^* B_\omega^\pm \varphi_\epsilon^{-*}\]
remains pseudodifferential. By the change of variables formula (Proposition \ref{prop:classical_change}) and Lemma~\ref{lem:A^2}, $\widetilde B_\omega^\pm$ is a positive (or negative) semiclassical operator with principal symbol
\begin{equation}\label{eq:tilde_B_principal}
    \qquad \sigma_h^\pm( \widetilde B_\omega^\pm) = e^{-|\xi| \zeta_\omega^\pm(x)} \quad \text{where} \quad \zeta_\omega^\pm(x) = \frac{\tilde z^\pm_\omega(\varphi_\epsilon(x))}{\partial_x \varphi_\epsilon(x)}.
\end{equation}
Note that 
\begin{equation}
\Re \zeta_\omega^\pm(x) \ge \ell > 0
\end{equation}
uniformly in $\omega$ for a possibly different $\ell$ than in (\ref{eq:beta^pm}).

By (\ref{eq:split_freq}), it suffices to understand the operators
\begin{equation}\label{eq:tilde_b_epsilon}
    \widetilde B^\pm_\omega (\tilde b_\epsilon^{\pm 1})^* \quad \text{where } \tilde b_\epsilon(x) := (\varphi_\epsilon^{-1} \circ b_{\lambda_0 + \epsilon} \circ \varphi_\epsilon)(x) = x + \alpha(\epsilon) + r_\epsilon(x).
\end{equation}

We split $\widetilde B_\omega^\pm$ into a low frequency and a high frequency piece and treat them separately. More specifically, fix a cutoff
\begin{equation}\label{eq:cutoffs}
    \begin{gathered}
        \chi \in \CIc([0, \infty);[0, 1]), \quad  \\
        \chi(\xi) = 1 \quad \text{for $x \in [0, 1]$}, \quad \supp \chi \subset [0, 2).
    \end{gathered}
\end{equation} 
Define the low frequency and high frequency pieces of $\widetilde B_\omega^\pm$ as follows:
\begin{gather}
    B_L^\pm := \Op^\pm_h(\chi(\pm \xi)) \widetilde B_\omega^\pm  \label{eq:low_freq},\\
    \qquad B_H^\pm := \Op_h(1 - \chi(\pm \xi)) \widetilde B_\omega^\pm. \label{eq:high_freq}
\end{gather}
Note that $B_H^\pm$ is a semiclassical operator, and we see that modulo smoothing uniformly in $\omega$,
\begin{equation}
\begin{gathered}
    \widetilde B_\omega^\pm \equiv B_L^\pm + B_H^\pm, \\
    \sigma_h^\pm(B_L^\pm) = \chi(\pm \xi) e^{-|\xi| \zeta_\omega^\pm(x)}, \qquad \sigma_h(B_H^\pm) = \big(1 - \chi(\pm \xi) \big) e^{-|\xi| \zeta_\omega^\pm(x)},
\end{gathered}
\end{equation}
where the principal symbol follows from (\ref{eq:tilde_B_principal}). We stress that $B_L^\pm$ and $B_H^\pm$ depend on $\omega = \lambda_0 + \epsilon + ih$. The implicit dependence causes no ambiguity since they lie in $\Psi_{h, \pm}^{-\infty}$ and $\Psi_h^{-\infty}$, respectively, uniformly in $\epsilon$ for all sufficiently small $\epsilon$. The crucial property of the high frequency piece is that since $\Re \zeta^\pm_\omega(x) \ge \ell > 0$, $B_H^\pm$ is a contraction on $L^2$ for all sufficiently small $h$. To deal with the low frequency piece $B_L^\pm$, we conjugate away most of the dependence on $x$ using a scheme that resembles Moser averaging to obtain a Fourier multiplier plus a manageable error.

To carry out the conjugation, we first need the following lemma, which approximately solves a ``warped'' cohomological equation. We consider 
\[\omega = \lambda_0 + \epsilon + ih, \quad h > 0\]
for $\epsilon$ in a sufficiently small neighborhood of $0$ so that Lemma~\ref{lem:approx_conj} holds. 
\begin{lemma}\label{lem:moser_conjugator}
    Let $\alpha(\epsilon)$ and $r_\epsilon(x)$ be as constructed in Lemma~\ref{lem:approx_conj}. Consider an $\omega$-dependent family of symbols $a_\omega \in S^{-\infty}(T^* \mathbb{S}^1)$ uniformly in $\omega$. Then there exist $\omega$-dependent symbols $g_\omega \in S^{-\infty}(T^* \mathbb{S}^1)$ uniformly and $\omega$ such that
    \begin{equation}\label{eq:warped_cohom_eq}
        a_\omega(x, \xi) + g_\omega(x + \alpha(\epsilon) + r_\epsilon(x), \xi) - g_\omega(x, \xi) = \kappa_\omega(\xi) + \mathcal O(\epsilon^\infty)_{S^{-\infty}(T^* \mathbb S^1)},
    \end{equation} 
    where
    \begin{equation}
        \kappa_\omega(\xi) = \int_0^1 a_\omega(x, \xi)\, dx.
    \end{equation}
\end{lemma}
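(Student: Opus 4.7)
The plan is to work in Fourier series on $\mathbb S^1$, reducing the warped cohomological equation mode by mode to scalar multiplier equations indexed by $k \in \mathbb Z$, and using the Diophantine property of $\alpha_0 := \alpha(0) = \mathbf r(\lambda_0)$ to invert at low frequencies while pushing high frequencies into the $\mathcal O(\epsilon^\infty)_{S^{-\infty}}$ remainder via the rapid decay of $a_\omega$.

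First, since $\tilde b_\epsilon(x) - \rho_{\alpha(\epsilon)}(x) = r_\epsilon(x) = \mathcal O(\epsilon^\infty)_{C^\infty}$ by Lemma~\ref{lem:approx_conj}, Taylor expansion gives
\begin{equation*}
    g_\omega(\tilde b_\epsilon(x), \xi) = g_\omega(\rho_{\alpha(\epsilon)}(x), \xi) + \mathcal O(\epsilon^\infty)_{S^{-\infty}}
\end{equation*}
for any candidate $g_\omega$ uniformly in $S^{-\infty}$. Hence it suffices to construct $g_\omega$ solving \eqref{eq:warped_cohom_eq} with $\tilde b_\epsilon$ replaced by the pure rotation $\rho_{\alpha(\epsilon)}$. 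Passing to Fourier series $\hat a_\omega(k,\xi) := \int_0^1 a_\omega(x,\xi) e^{-2\pi i k x}\, dx$, this reduces for $k \neq 0$ to the scalar equation
\begin{equation*}
    \hat g_\omega(k,\xi) \bigl(e^{2\pi i k \alpha(\epsilon)} - 1\bigr) = -\hat a_\omega(k,\xi),
\end{equation*}
while the $k = 0$ mode is automatic since $\hat a_\omega(0,\xi) = \kappa_\omega(\xi)$; I would set $\hat g_\omega(0,\xi) = 0$.

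Since $\alpha_0$ is Diophantine with some exponent $\beta > 0$, we have $|1 - e^{2\pi i k \alpha_0}| \gtrsim |k|^{-(1+\beta)}$ for all $k \neq 0$. Combined with $\alpha(\epsilon) - \alpha_0 = \mathcal O(\epsilon)$ and the elementary bound $|\sin(\pi k \alpha(\epsilon)) - \sin(\pi k \alpha_0)| \leq \pi |k||\alpha(\epsilon) - \alpha_0|$, the same lower bound persists (with a smaller constant) for $\alpha(\epsilon)$ on the range $|k| \lesssim \epsilon^{-1/(2+\beta)}$. Fix $\chi \in \CIc(\mathbb R; [0,1])$ with $\chi \equiv 1$ on $[-1/2, 1/2]$ and $\supp \chi \subset [-1,1]$, and define
\begin{equation*}
    \hat g_\omega(k,\xi) := -\chi\bigl(c_0 \epsilon^{1/(2+\beta)} k\bigr) \frac{\hat a_\omega(k,\xi)}{e^{2\pi i k \alpha(\epsilon)} - 1}, \qquad k \neq 0,
\end{equation*}
where $c_0 > 0$ is chosen so that the Diophantine bound holds on $\supp \chi$. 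On the support of the cutoff we obtain $|\hat g_\omega(k,\xi)| \lesssim |k|^{1+\beta} |\hat a_\omega(k,\xi)|$, and the analogous bound with $\partial_\xi^\beta$-derivatives, all rapidly decreasing in $k$ and $\xi$ uniformly in $\omega$; summing the Fourier series yields $g_\omega \in S^{-\infty}$ uniformly in $\omega$. The discarded complement $(1-\chi)\hat a_\omega(k,\xi)$ is supported on $|k| \gtrsim \epsilon^{-1/(2+\beta)}$, where $|\hat a_\omega(k,\xi)| \lesssim_N |k|^{-N}\langle\xi\rangle^{-N}$ uniformly in $\omega$, yielding an $\mathcal O(\epsilon^\infty)_{S^{-\infty}}$ contribution.

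The main technical point to check is uniformity in $\omega$. This follows because $\alpha(\epsilon)$ depends only on $\Re\omega - \lambda_0$, not on $h = \Im\omega$, so the cutoff construction, Diophantine constants, and resulting symbol estimates are all uniform in $h$; the uniform $S^{-\infty}$ hypothesis on $a_\omega$ then transfers directly to $g_\omega$. Combining the two error sources --- the $\mathcal O(\epsilon^\infty)$ reduction from $\tilde b_\epsilon$ to $\rho_{\alpha(\epsilon)}$ and the high-frequency truncation --- gives the stated remainder in \eqref{eq:warped_cohom_eq}.
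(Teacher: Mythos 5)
Your proof is correct, but it takes a genuinely different route from the paper. The paper never touches $\alpha(\epsilon)$ for $\epsilon\neq 0$ directly: it Taylor-expands everything at $\epsilon=0$, solves an exact cohomological equation for the rotation $\rho_{\alpha(0)}$ at each order (checking that each successive right-hand side has zero average because it is a combination of $x$-derivatives of periodic functions), and Borel-sums $g_\omega\sim\sum_k\epsilon^k g^{(k)}_\omega$. You instead invert the multiplier $e^{2\pi i k\alpha(\epsilon)}-1$ directly, observing that the Diophantine lower bound $\gtrsim|k|^{-(1+\beta)}$ for $\alpha(0)$ survives the $\mathcal O(\epsilon)$ perturbation of the rotation number precisely on the range $|k|^{2+\beta}\lesssim\epsilon^{-1}$, and discarding the complementary frequencies into the remainder via the rapid decay of $\hat a_\omega(k,\xi)$. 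This is more elementary (no induction, no Borel summation) and has the virtue of making explicit the frequency scale $|k|\sim\epsilon^{-1/(2+\beta)}$ at which the perturbed rotation stops being effectively Diophantine --- the same scale that reappears later in the proof of Proposition~\ref{prop:high_freq_est}, where the damping takes over. What the paper's construction buys in exchange is a $g_\omega$ given by an asymptotic series in $\epsilon$ (hence smooth in $\epsilon$ in the asymptotic sense), whereas your $g_\omega$ has an $\epsilon$-dependent frequency cutoff and is only uniformly bounded in $S^{-\infty}$; since the statement and its downstream use in Lemma~\ref{lem:low_freq_conj} only require uniform symbol bounds, this costs nothing. Two cosmetic remarks: you should make explicit that the reduction from $\tilde b_\epsilon$ to $\rho_{\alpha(\epsilon)}$ is applied \emph{after} the uniform $S^{-\infty}$ bounds on $g_\omega$ are established (as your ordering in fact does), and you reuse the letter $\beta$ both for the Diophantine exponent and for a $\xi$-derivative index, which should be disambiguated.
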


\Remarks 1. From the change of variables formula in Proposition \ref{prop:classical_change}, the symbol $g_\omega(x, \xi)$ should become
\[g_\omega(x + \alpha(\epsilon) + r_\epsilon(x), \xi/(1 + \partial_x r_\epsilon(x)))\]
if we were to perform a change of variables by $x \mapsto x + \alpha(\epsilon) + r_\epsilon(x)$. However $r_\epsilon(x) = \mathcal O(\epsilon^\infty)_{C^\infty}$, so 
\[g_\omega(x + \alpha(\epsilon) + r_\epsilon(x), \xi/(1 + \partial_x r_\epsilon(x))) - g_\omega(x + \alpha(\epsilon) + r_\epsilon(x), \xi)= \mathcal O(\epsilon^\infty)_{S^{-\infty}(T^* \mathbb S^1)},\]
which can get absorbed into the error in \eqref{eq:warped_cohom_eq} anyways. 

\noindent
2. Note that the error term $\mathcal O(\epsilon^\infty)_{S^{-\infty}(T^* \mathbb S^1)}$ possibly depends on $x$, in contrast to $\kappa_\omega$, which crucially does not depend on $x$.

\noindent 
3. We emphasize before the proof that $\omega = \lambda_0 + \epsilon + ih$. However, it helps in the proof to distinguish between $\epsilon$ and $\omega$. In particular, $g_\omega$ will be constructed asymptotically in powers of $\epsilon$ and will be in $S^{-\infty}(T^* \mathbb{S}^1)$ uniformly in $\omega$, so there is no harm in setting $\epsilon = \Re \omega - \lambda_0$ at the end. The strategy is simply to Taylor expand in $\epsilon$ and solve a cohomological equation for each coefficient. 

\begin{proof}
Since $\alpha (0)$ is Diophantine, there exists $g_\omega^{(0)} \in S^{-\infty}(T^* \mathbb{S})$ that solves the cohomological equation
\[a_\omega(x, \xi) + g^{(0)}_\omega(x + \alpha(0), \xi) - g^{(0)}_\omega(x, \xi) = \kappa_\omega(\xi) \]
in the sense of Lemma~\ref{lem:cohom}. Now, we can inductively define $g^{(n)}_\omega \in S^{-\infty}(T^* \mathbb{S}^1)$ as a solution to the homological equation 
\begin{equation}\label{eq:inductive_cohom}
    g^{(n)}_\omega(x + \alpha(0), \xi) - g_\omega^{(n)}(x, \xi) = f^{(n)}_\omega(x, \xi)
\end{equation}  
where 
\[f^{(n)}_\omega (x, \xi) = -\sum_{k = 0}^{n - 1} \frac{1}{(n - k)!} \frac{\partial^{n - k}}{\partial \epsilon^{n - k}} g^{(k)}_\omega (x + \alpha(\epsilon), \xi) \Big|_{\epsilon = 0}.\]
Note that since (the lift of) $g_\omega^{(k)}(\bullet, \xi)$ is periodic, and $\partial_\epsilon^{n - k} g^{(k)}_\omega (x + \alpha(\epsilon), \xi)\big|_{\epsilon = 0}$ is a linear combination of $\partial_x^{k'}  g^{(k)}_\omega (x + \alpha( 0), \xi)$ for $k' = 1,\dots, n - k$, it follows that 
\[\int_0^1 f^{(n)}_\omega(x, \xi)\, dx = 0.\]
This means that (\ref{eq:inductive_cohom}) indeed has a solution. By induction, we see that 
\begin{multline*}
    a_\omega(x, \xi) + \left(\sum_{k = 0}^n \epsilon^k g_\omega^{(k)}(x + \alpha(\epsilon) + r_\epsilon(x), \xi) \right) - \left( \sum_{k = 0}^n \epsilon^k g_\omega^{(k)}(x, \xi) \right) \\
    = \kappa_\omega(\xi) + \mathcal O(\epsilon^{n+1})_{S^{-\infty}(T^* \mathbb S^1)}.
\end{multline*}
By Borel's lemma, we can find $g_\omega \sim \sum_{k = 0}^\infty \epsilon^k g^{(k)}_\omega$ which satisfies the claims of the lemma.
\end{proof}

Recall that we put $\omega = \lambda_0 + \epsilon +ih$. Note that both Lemma~\ref{lem:approx_conj} and Lemma~\ref{lem:moser_conjugator} produces errors are small in $\epsilon$. On the other hand, the pseudodifferential parts behave like semiclassical operators with parameter $h$, which will produce errors that are small in $h$. Thus we need simultaneous smallness in both $\epsilon$ and $h$, so we cannot in general expect to obtain uniform control of the resolvent in a region arbitrarily close to the real line near $\lambda_0$. Nevertheless, we can still get quite close since the errors in both Lemma~\ref{lem:approx_conj} and Lemma~\ref{lem:moser_conjugator} are order $\epsilon^\infty$. We consider the degree $d \ge 1$ polynomial region 
\begin{equation}\label{eq:poly_region}
    \Lambda^\pm_{\lambda_0, d} := \{\lambda_0 + \epsilon \pm ih \mid h \ge |\epsilon|^d,\, |\epsilon| \le \epsilon_0\}
\end{equation}
where $\epsilon_0$ is small. See Figure \ref{fig:polyregion}. 

\begin{figure}
    \centering
    \includegraphics[scale = 0.175]{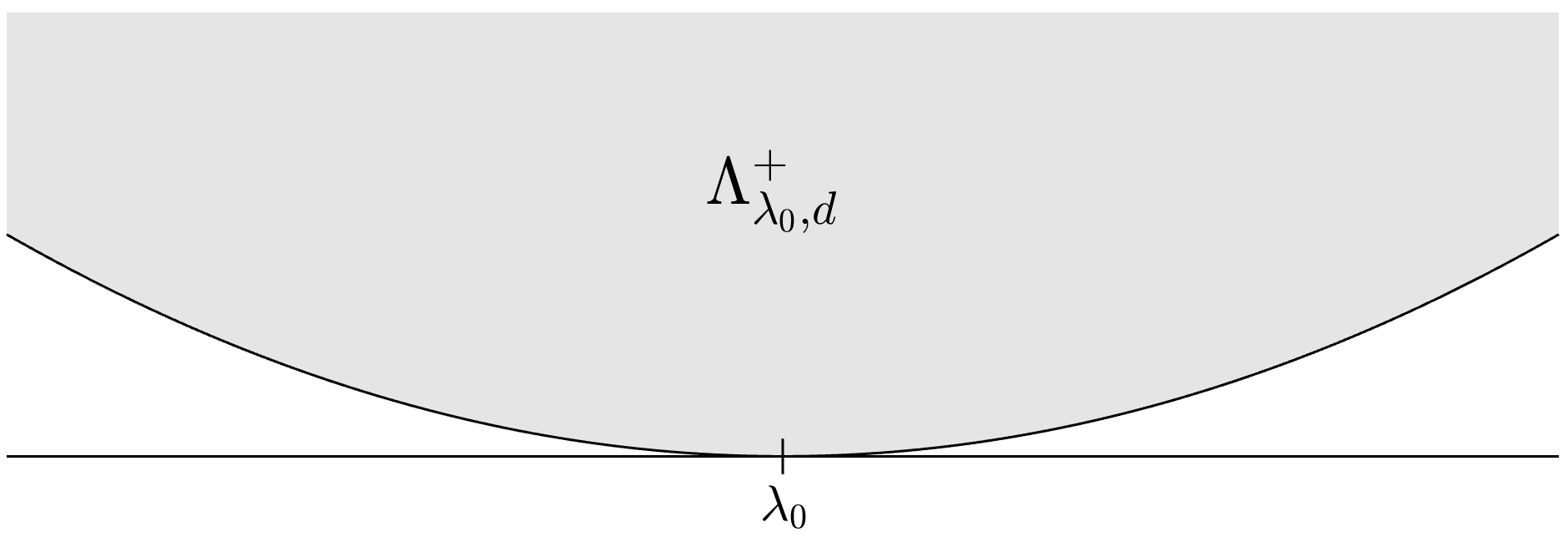}
    \caption{The polynomial region in the complex plane above $\lambda_0$ in which we will have uniform control of the resolvent. }
    \label{fig:polyregion}
\end{figure}

Again, the upper and lower half plane cases are completely symmetric, and we remain focused on the upper half plane for convenience. In the region $\Lambda = \Lambda^+_{\lambda_0, d}$, we now introduce a microlocal weight which conjugates $B_L^+$ to what is essentially a Fourier multiplier. Symbolically, this means that we conjugate away the dependence on $x$ in the full symbol.

\begin{lemma}\label{lem:low_freq_conj}
    For all $\omega = \lambda_0 + \epsilon + ih \in \Lambda$, there exists elliptic operator $Q \in \Psi^0_{h, +}( \mathbb S^1)$ with parametrix $\widetilde Q$ such that
    \begin{equation}\label{eq:low_freq_conj}
        \widetilde Q (B_L^+ \tilde b_\epsilon^*) Q = \left[M + K_L + \mathcal R_L \right] \tilde b_\epsilon^*
    \end{equation}
    where $\mathcal R_L \in \Psi^{-\infty}$ uniformly in $\omega$ and $K_L \in h \Psi^{-\infty}_h$ with 
    \[\WF_h(K_L) \subset \{\xi \ge 1\}.\]
    Furthermore, $M \in \Psi^{-\infty}_{h, +} (\mathbb S^1)$ is a Fourier multiplier given by
    \begin{equation}\label{eq:M_symbol}
    \begin{gathered}
        Mu(x) = \sum_{k \in \Z_{> 0}} e^{2 \pi i x k} m(2 \pi h k) \hat u(k), \qquad \hat u(k):= \int_0^1 e^{-2 \pi i x k} u(x) \, dx, \\
        m(\eta) := \chi(\eta) \big(e^{-\eta \kappa_0} + \mathcal O(h)_{C^\infty([0, 2])} \big).
    \end{gathered}
    \end{equation}
    where $\kappa_0 \in \C$ is $\omega$ dependent with $\Re \kappa_0 > 0$ uniformly for $\omega$
\end{lemma}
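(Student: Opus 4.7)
I seek $Q = \Op_h^+(e^{g_\omega}) \in \Psi^0_{h, +}(\mathbb S^1)$ with $g_\omega \in S^{-\infty}$ so that $\tilde Q B_L^+ Q'' \equiv M + K_L + \mathcal R_L$, where $Q'' := \tilde b_\epsilon^* Q \tilde b_\epsilon^{-*}$; then $\tilde b_\epsilon^* Q = Q'' \tilde b_\epsilon^*$ gives $\tilde Q (B_L^+ \tilde b_\epsilon^*) Q = (\tilde Q B_L^+ Q'')\tilde b_\epsilon^*$, yielding~\eqref{eq:low_freq_conj}. Since $\partial \tilde b_\epsilon = 1 + \mathcal O(\epsilon^\infty)_{C^\infty}$, Lemma~\ref{lem:positive_COV} gives $\sigma_h^+(Q'') = \exp(g_\omega \circ \tilde b_\epsilon)$ modulo $\mathcal O(h)_{S^{-\infty}} + \mathcal O(\epsilon^\infty)_{S^{-\infty}}$, and the composition formula (Lemma~\ref{lem:positive_comp}) reduces the requirement that $\sigma_h^+(\tilde Q B_L^+ Q'')$ be $x$-independent on $\supp\chi$ to a warped cohomological equation for $g_\omega$ along $\tilde b_\epsilon$, solvable by Lemma~\ref{lem:moser_conjugator} because $\alpha(0) = \mathbf r(\lambda_0)$ is Diophantine.

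\textbf{Leading construction.}
Choose $\tilde\chi \in \CIc([0, \infty); [0, 1])$ with $\tilde\chi \equiv 1$ on $\supp\chi$, and apply Lemma~\ref{lem:moser_conjugator} to $a_\omega(x, \xi) := -\xi\tilde\chi(\xi)\zeta_\omega^+(x) \in S^{-\infty}$ to obtain $g^{(0)}_\omega \in S^{-\infty}(T^*\mathbb S^1)$ satisfying
\begin{equation*}
-\xi\tilde\chi(\xi)\zeta_\omega^+(x) + g^{(0)}_\omega(\tilde b_\epsilon(x), \xi) - g^{(0)}_\omega(x, \xi) = -\xi\tilde\chi(\xi)\kappa_0 + \mathcal O(\epsilon^\infty)_{S^{-\infty}},
\end{equation*}
with $\kappa_0 := \int_0^1 \zeta_\omega^+(x)\, dx$, which satisfies $\Re\kappa_0 \geq \ell > 0$ since $\Re \zeta_\omega^+ \geq \ell > 0$. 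Setting $q_0 := e^{g^{(0)}_\omega}$, we get $q_0 - 1 \in S^{-\infty}$ uniformly and $|q_0| \geq \delta > 0$, so $Q_0 := \Op_h^+(q_0)$ is elliptic in $\Psi^0_{h, +}$ with parametrix $\tilde Q_0$ by Lemma~\ref{lem:parametrix}. Using $\chi\tilde\chi = \chi$, the principal symbol of $\tilde Q_0 B_L^+ Q_0''$ becomes $\chi(\xi) e^{-\xi\kappa_0} + \mathcal O(\epsilon^\infty)$, matching the leading part of $m$.

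\textbf{Iterative refinement.}
The subleading symbol of $\tilde Q_0 B_L^+ Q_0''$ generically still depends on $x$ even on $\{\chi \equiv 1\}$, coming from derivatives of $\zeta_\omega^+(x)$ in the composition formula. Fix a further cutoff $\chi_1 \in \CIc$ equal to $1$ on $\{\xi \leq 1\}$ and supported in a neighborhood on which $\chi$ (hence $m$) remains bounded below. Inductively construct $g^{(k)}_\omega \in S^{-\infty}$ for $k \geq 1$: at step $k$, let $h^k s_k(x, \xi)$ denote the residual order-$h^k$ symbol error after using $g^{(0)}_\omega, \ldots, g^{(k - 1)}_\omega$, let $\bar s_k(\xi) := \int_0^1 s_k(x, \xi)\, dx$, and apply Lemma~\ref{lem:moser_conjugator} to the zero-$x$-mean symbol $\chi_1(\xi)(s_k - \bar s_k)/m(\xi) \in S^{-\infty}$ to obtain $g^{(k)}_\omega$ whose warped coboundary cancels the $x$-dependent error on $\supp\chi_1$. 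Borel-sum $g_\omega \sim \sum_k h^k g^{(k)}_\omega$ and set $Q := \Op_h^+(e^{g_\omega})$ with parametrix $\tilde Q$. The full symbol of $\tilde Q B_L^+ Q''$ then decomposes as $m(\xi) + r(x, \xi) + \mathcal O(\epsilon^\infty)$: the $x$-independent contributions at every order in $h$ assemble into $m(\eta) = \chi(\eta)(e^{-\eta\kappa_0} + \mathcal O(h)_{C^\infty([0, 2])})$, giving $M$; the residue $r \in hS^{-\infty}$ has $\xi$-support in $\{\chi_1 < 1\} \subset \{\xi \geq 1\}$ by construction, giving $K_L$; uniform smoothing remainders from positive semiclassical compositions together with $\mathcal O(\epsilon^\infty)$ errors (which are $\mathcal O(h^\infty)$ in $\Lambda$ since $|\epsilon| \leq h^{1/d}$) form $\mathcal R_L \in \Psi^{-\infty}$ uniformly in $\omega$.

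\textbf{Main obstacle.}
The main technical task is the iterative refinement: at each order one must verify that $\chi_1(s_k - \bar s_k)/m$ lies in $S^{-\infty}$ uniformly in $\omega$ (so Lemma~\ref{lem:moser_conjugator} applies), and that the Borel summation of $g_\omega$ stays inside $S^{-\infty}$, which is possible because the derivative losses in Lemma~\ref{lem:cohom} are finite and controlled by the Diophantine exponent of $\alpha(0)$. Maintaining $\WF_h(K_L) \subset \{\xi \geq 1\}$ reduces to choosing $\chi_1 \equiv 1$ on $\{\xi \leq 1\}$ at every iterative step, so no $x$-dependence leaks into low frequencies.
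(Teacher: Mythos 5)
Your proposal follows essentially the same route as the paper's proof: conjugate $B_L^+\tilde b_\epsilon^*$ by an elliptic exponential weight $Q$ whose symbol is built order by order in $h$ from the warped cohomological equation of Lemma~\ref{lem:moser_conjugator} (with $\kappa_0=\int_0^1\zeta_\omega^+(x)\,dx$ at leading order), collect the $x$-independent contributions into the Fourier multiplier $M$, push the remaining $x$-dependent residue to frequencies $\xi\ge 1$ so that it lands in $h\Psi_h^{-\infty}$ as $K_L$, and absorb the $\mathcal O(\epsilon^\infty)$ errors as $\mathcal O(h^\infty)$ on $\Lambda$. The one formula that needs adjusting is in your iterative step: the order-$h^k$ residual must be normalized by the full elliptic symbol of the leading conjugation (the paper solves the cohomological equation for $e^{-g}a_k$ with multiplier correction $m_k=\int_0^1 e^{-g}a_k\,dx$), not by $m(\xi)$ alone, as the order-$h$ principal symbol computation forces; this is a bookkeeping correction that does not change the structure of the argument.
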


\Remarks 1. It is important to recall the differences in the semiclassical quantization of smoothing symbols (see~\eqref{eq:psi-inf}) and the nonsemiclassical quantization of symbols that are uniformly smoothing in $h$. See \eqref{eq:zero_infinity} for details. We stress this distinction since we will be working with elements in $\Psi_h^{-\infty}$ and $\Psi^{-\infty}$ simultaneously in the proof below, and elements of $\Psi_h^{-\infty}$ are not in $\Psi^{-\infty}$ uniformly in $h$. 

\noindent
2. $Q$ is elliptic as an operator in the positive semiclassical calculus as defined in (\ref{eq:elliptic_def}). The parametrix $\widetilde Q$ as constructed in Lemma~\ref{lem:parametrix} satisfies $\widetilde Q Q = \Pi^+ + \Psi^{-\infty}$. 

\noindent
3. Since $K_L \in h \Psi_h^{-\infty}$ and the principal symbol of $h^{-1} K_L$ is supported away from the zero section, we see from Lemma~\ref{lem:contraction} that $K_L$ must be a contraction for small $h$. This means that $I - K_L \tilde b^*_\epsilon$ is invertible, so it is really a negligible error term for the sake of proving a high frequency estimate.

\begin{proof}
    1. Fix a cutoff function $\tilde \chi \in \CIc([0, \infty); \R)$ so that $\tilde \chi(\xi) = 1$ for all $\xi \in \supp \chi$, where $\chi$ is the cutoff fixed in \eqref{eq:cutoffs}. 
 By Lemma~\ref{lem:moser_conjugator}, there exists $g \in S_h^{-\infty} (T^*_{+} \mathbb{S}^1)$ and $\kappa_0 \in \C$ that solves the approximate cohomological equation
    \begin{equation}\label{eq:first_it}
        -\xi \tilde \chi(\xi) \zeta_\omega^+ (x) + g(\tilde b_\epsilon(x), \xi) - g(x, \xi) = - \xi \tilde \chi(\xi) \kappa_0 + \mathcal O(h^\infty)_{S^{-\infty}(T^*_{+} \mathbb S^1)}.
    \end{equation}
    Note that in the region $\Lambda$, the $\mathcal O(\epsilon^\infty)$ remainder in Lemma~\ref{lem:moser_conjugator} can be considered as an $\mathcal O(h^\infty)$ remainder, and we suppressed the $\omega$-dependence of $g$ and $\kappa_0$ in the notation. To be precise, $g \in S_h^{-\infty} (T_+^* \mathbb{S}^1)$ uniformly in $\omega$, and 
    \begin{equation}
        \kappa_0 = \int_0^1 \zeta_\omega^+(x) \, dx.
    \end{equation}
    Since $\Re \zeta_\omega^+ \ge \ell > 0$, uniformly in $\omega$, we see that $\Re \kappa_0 \ge \ell > 0$ uniformly in $\omega$ as well.

    Define
    \begin{equation}\label{eq:Q_0}
        q_0(x, \epsilon) := e^{g(x, \xi)}, \qquad Q_0 := \Op^+_h(q_0).
    \end{equation}
    Note that $q_0 \in S_h^0(T_+^* \mathbb S^1)$ is an elliptic symbol. Also define 
    \[m_0(x, \xi) := \chi(\xi)e^{-\xi \kappa_0}, \qquad M_0 := \Op_h^+(m_0).\]
    Note that $M_0$ is a Fourier multiplier. Now observe that from (\ref{eq:first_it}) and the change of variables formula, we have the principal symbol
    \begin{align*}
        \sigma_h^+ \big(B_L^+ (\tilde b^*_\epsilon Q_0 \tilde b^{-*}_\epsilon) - Q_0 M_0\big) (x, \xi) &= \chi(\xi) e^{-\xi \zeta^+_\omega(x)} e^{g(\tilde b_\epsilon(x), \xi)} - \chi(\xi) e^{g(x, \xi)} e^{-\xi \kappa_0} \\
        &= \chi(\xi) e^{g(x, \xi) - \xi \kappa_0} \\
        & \qquad \cdot \big[\exp \big(-\xi \tilde \chi(\xi) \zeta_\omega^+(x) + g(\tilde b_\epsilon(x), \xi) \\
        &\qquad \hspace{2cm} - g(x, \xi) + \xi \tilde \chi(\xi) \kappa_0 \big) - 1 \big] \\
        &= 0
    \end{align*}
    where the equalities are in the sense of principal symbols. Therefore
    \begin{equation*}
        B_L^+ (\tilde b^*_\epsilon Q_0 \tilde b^{-*}_\epsilon) - Q_0 M_0 \in h \Psi_{h, +}^{-\infty}(T^* \mathbb S^1) + \Psi^{-\infty}(T^* \mathbb S^1).
    \end{equation*} 
    Then there exists $a_1 \in S_h^{-\infty}(T^*_+ \mathbb S^1)$ such that 
    \begin{equation}\label{first_it_remains}
        B_L^+ (\tilde b^*_\epsilon Q_0 \tilde b^{-*}_\epsilon) - Q_0 M_0 + h \Op^+_h(a_1) \in h^2 \Psi_{h, +}^{-\infty}(T^* \mathbb S^1) + \Psi^{-\infty}(T^* \mathbb S^1).
    \end{equation}
    
    \noindent
    2. Now we add corrections to the operator $Q_0$ and the multiplier $M_0$. Again by Lemma~\ref{lem:moser_conjugator}, there exists $g_1 \in S_h^{-\infty}(T^*_+ \mathbb S^1)$ that satisfies
    \begin{equation}\label{eq:second_it}
        e^{-g(x, \xi)} a_1(x, \xi) + g_1(\tilde b_\epsilon(x), \xi) - g_1(x, \xi) = m_1(\xi) + \mathcal O(h^\infty)_{S^{-\infty}(T^*_+ \mathbb S^1)}
    \end{equation}
    where 
    \[m_1(\xi) := \int_0^1 e^{-g(x, \xi)} a_1(x, \xi)\, dx\]
    is $x$-independent and belongs in $S_h^{-\infty}(T^*_+ \mathbb S^1)$. Put
    \begin{equation*}
        M_1 := \Op_h^+(m_1)
    \end{equation*}
    and
    \begin{equation*}
        q_1(x, \xi):= g_1(x, \xi) e^{\xi \tilde \chi(\xi) \kappa_0 + g(x, \xi)}, \qquad Q_1 := \Op_h^+(q_1).
    \end{equation*}
    Observe that by (\ref{eq:first_it}), 
    \begin{align}
        \sigma_h^+ (B_L^+ \tilde b_\epsilon Q_1 \tilde b_\epsilon^{-*}) &= \chi(\xi) e^{-\xi \zeta_\omega(x)} q_1(\tilde b_\epsilon(x), \xi) \nonumber \\
        &= \chi(\xi) e^{-\xi \zeta_\omega(x) + \xi \kappa_0 + g(\tilde b_\epsilon(x), \xi)} g_1(\tilde b_\epsilon(x), \xi) \nonumber \\
        &= e^{g(x, \xi)} g_1(\tilde b_\epsilon(x), \xi) + (1 - \chi(\xi)) e^{-\xi \zeta_\omega(x)} q_1(\tilde b_\epsilon(x), \xi). \label{eq:BbQb_symbol}
    \end{align}
    Here, the equalities are in the sense of principal symbols. Furthermore, we have the principal symbols 
    \begin{equation}\label{eq:Q0M1_symbol}
        \sigma_h^+(Q_0 M_1) = e^{g(x, \xi)} m_1(\xi),
    \end{equation}
    and
    \begin{equation}\label{eq:Q1M0_symbol}
        \sigma_h^+(Q_1 M_0) = \chi(\xi) e^{-\xi \kappa_0 + \xi \tilde \chi(\xi) \kappa_0 + g(x, \xi)} g_1(x, \xi) = \chi(\xi) e^{g(x, \xi)} g_1(x, \xi).  
    \end{equation}
    Combining (\ref{eq:BbQb_symbol}), (\ref{eq:Q0M1_symbol}), and (\ref{eq:Q1M0_symbol}) using the approximate cohomological equation (\ref{eq:second_it}), we see that 
    \begin{multline}\label{eq:remainder_semiclassical}
        a_1(x, \xi) + \sigma_h^+\left(B_L^+ \tilde b_\epsilon^* Q_1 \tilde b_\epsilon^{-*} - Q_0 M_1 - Q_1 M_0\right) \\
        = (1 - \chi(\xi)) \left(e^{-\xi \zeta_\omega(x)} q_1(\tilde b_\epsilon(x), \xi) - e^{g(x, \xi)} g(x, \xi) \right)
    \end{multline}
    where the equality is modulo $h S_h^{-\infty}(T^*_+ \mathbb S^1)$, which does not affect the principal symbol. In particular, the crucial property of (\ref{eq:remainder_semiclassical}) is that the right hand side is clearly is supported away from $0$, and it is a standard semiclassical symbol. Hence it follows that
    \begin{multline}
        B_L^+\tilde b_\epsilon^*(Q_0 + h Q_1) \tilde b_\epsilon^{-*} - (Q_0 + h Q_1)(M_0 + h M_1) \\
        \in h^2 \Psi^{-\infty}_{h, +} (T_+^* \mathbb S^1) + h\Psi^{-\infty}_h(T^* \mathbb S^1) + \Psi^{-\infty}(T^* \mathbb S^1).
    \end{multline}

    \noindent
    3. Now we can inductively construct $Q_k \in \Psi^{-\infty}_{h, +}$ and Fourier multipliers $M_k \in \Psi^{-\infty}_{h, +}$ so that
    \begin{multline}\label{eq:moser_induct}
        B_L^+\tilde b_\epsilon^* \left( \sum_{k = 0}^N h^k Q_k \right) \tilde b_\epsilon^{-*} - \left( \sum_{k = 0}^N h^k Q_k \right) \left( \sum_{k = 0}^N h^k M_k \right) \\
        \in h^{N + 1} \Psi^{-\infty}_{h, +} (T_+^* \mathbb S^1) + h\Psi^{-\infty}_h(T^* \mathbb S^1) + \Psi^{-\infty}(T^* \mathbb S^1)
    \end{multline}
    for every $N \in \N$. Indeed, if (\ref{eq:moser_induct}) holds for some $N \in \N$, then there exists $a_{N + 1} \in S^{-\infty} (T^*_+ \mathbb S^1)$ such that 
    \begin{multline}
        B_L^+\tilde b_\epsilon^* \left( \sum_{k = 0}^N h^k Q_k \right) \tilde b_\epsilon^{-*} - \left( \sum_{k = 0}^N h^k Q_k \right) \left( \sum_{k = 0}^N h^k M_k \right) + h^{N + 1} \Op^+_h(a_{N + 1}) \\
        \in h^{N + 2} \Psi^{-\infty}_{h, +} (T_+^* \mathbb S^1) + h\Psi^{-\infty}_h(T^* \mathbb S^1) + \Psi^{-\infty}(T^* \mathbb S^1).
    \end{multline}
    By Lemma~\ref{lem:moser_conjugator}, there exists $g_{N + 1} \in S_h^{-\infty}(T^*_+ \mathbb S^1)$ that satisfies
    \begin{multline}\label{eq:cohom_mess}
        e^{-g(x, \xi)} a_{N + 1}(x, \xi) + g_{N + 1}(x + \alpha(\epsilon) + r_\epsilon(x), \xi) \\
        - g_{N + 1}(x, \xi) = m_{N + 1}(\xi) + \mathcal O(h^\infty)_{S^{-\infty}(T^*_+ \mathbb S^1)},
    \end{multline}
    where 
    \[m_{N + 1} (\xi):= \int_0^1 e^{-g(x, \xi)} a_{N + 1}(x, \xi) \, dx.\]
    Put 
    \begin{gather*}
        M_{N + 1} = \Op_h^+(m_{N + 1}), \\
        q_{N + 1}(x, \xi):= g_{N + 1}(x, \xi) e^{\xi \tilde \chi(\xi) \kappa_0 + g(x, \xi)}, \qquad Q_{N + 1} = \Op_h^+(q_{N + 1}).
    \end{gather*}
    Then with the same computation as (\ref{eq:remainder_semiclassical}) using the cohomological equation (\ref{eq:cohom_mess}) instead of (\ref{eq:second_it}), we complete the induction and recover (\ref{eq:moser_induct}). Put
    \[Q \sim \sum_{k = 0}^\infty h^k Q_k \in \Psi^0_{h, +}, \qquad \widetilde M \sim \sum_{k = 0}^\infty h^k M_k \in \Psi^{-\infty}_{h, +}.\]
    The sum here is again in the asympotitic sense, and existence of $Q$ and $\widetilde M$ are guaranteed by Borel's lemma. Futhermore, we are free to choose $\widetilde M$ so that the full symbol depends only on $\xi$ since $M_k$ depends only on $\xi$ for every $k \in \N_0$, and we see that the principal symbol of $\widetilde M$ is given by
    \begin{equation}
        \sigma^+_h(\widetilde M) = \chi(\xi) e^{-\xi \kappa_0}.
    \end{equation}
    Define
    \begin{equation}
        M = \Op_h^+(\chi(\xi) e^{-\xi \kappa_0}) + \Op_h^+(\chi(\xi)) \left(\widetilde M - \Op_h^+(\chi(\xi) e^{-\xi \kappa_0}) \right). 
    \end{equation}
    $M$ simply truncates the subprincipal parts of $\widetilde M$, and $M - \widetilde M \in h \Psi^{-\infty}_h$ is a uniform contraction for all sufficiently small $h$. Furthermore, $M$ is still a Fourier multiplier. 
    
    Note that $Q$ is elliptic in the positive semiclassical calculus, so there exists $\widetilde K_L \in h\Psi^{-\infty}_h(T^* \mathbb S^1)$ and $\tilde {\mathcal R}_L \in \Psi^{-\infty}(T^* \mathbb S^1)$ such that 
    \[B_L^+\tilde b_\epsilon^* Q \tilde b_\epsilon^{-*} - Q M = \widetilde K_L + \widetilde {\mathcal R}_L.\]
    $Q$ is elliptic in the positive semiclassical calculus, so we have a parametrix $\widetilde Q \in \Psi^0_{h, +}$. Then setting 
    \[K_L := \widetilde Q \widetilde K_L \in h\Psi^{-\infty}_h(T^* \mathbb S^1), \quad \mathcal R_L := \widetilde Q \widetilde {\mathcal R}_L \in \Psi^{-\infty}(T^* \mathbb S^1),\]
    we have (\ref{eq:low_freq_conj}) as desired. 
\end{proof}

The microlocal weight $Q$ from above has the positive semiclassical principal symbol $e^{g(x, \xi)}$ where $g$ is compactly supported. For large frequencies $h \xi \gtrsim 1$, this weight looks like the identity to the leading order. Therefore, morally speaking, conjugating the high frequency piece $B_H^+$ by $Q$ has negligible effect. In the following lemma, we show that the high frequency $B_H^+$ remains uniformly contracting upon conjugating by $Q$. 
\begin{lemma}\label{lem:high_freq_conj}
    Let $Q$ and $\widetilde Q$ be the operators constructed in Lemma~\ref{lem:low_freq_conj}. Then
    \[\widetilde Q B_H^+ \tilde b_\epsilon^* Q = K \tilde b_\epsilon^*\]
    where $K \in \Psi_h^{-\infty}(\mathbb S^1)$ and
    \begin{equation*}
        \WF_h(K) \subset \{\xi \ge 1\}.
    \end{equation*}
    Furthermore, there exists $0 < c < 1$ such that the semiclassical principal symbol satisfies
    \begin{equation}\label{eq:K_bound}
        0 \le \sigma_h(K)(x, \xi) \le c.
    \end{equation}
    In particular, 
    \[\|K\|_{H^s \to H^s} \le c < 1\]
    for all sufficiently small $h$. 
\end{lemma}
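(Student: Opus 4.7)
The natural choice is to set $K := \widetilde Q B_H^+ \tilde b_\epsilon^* Q \tilde b_\epsilon^{-*}$, which by definition gives $\widetilde Q B_H^+ \tilde b_\epsilon^* Q = K \tilde b_\epsilon^*$, and then verify the three claims in order. First I would check that $K \in \Psi_h^{-\infty}(\mathbb S^1)$ with $\WF_h(K) \subset \{\xi \ge 1\}$. The factor $B_H^+ = \Op_h(1-\chi(\xi))\widetilde B_\omega^+$ lies in $\Psi_h^{-\infty}$ with symbol cut off by $1-\chi(\xi)$, hence supported in $\{\xi \ge 1\}$. The outer factors $\widetilde Q \in \Psi_{h,+}^0$ and $\tilde b_\epsilon^* Q \tilde b_\epsilon^{-*} \in \Psi_{h,+}^0$ (the latter by Lemma~\ref{lem:positive_COV}, since $\tilde b_\epsilon$ is orientation preserving) act on this strictly-positive-frequency region as standard semiclassical operators modulo $\Psi^{-\infty}$, so composition preserves both the class and the wavefront set.

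Next I would compute the semiclassical principal symbol. Combining $\sigma_h^+(\widetilde Q) = e^{-g(x,\xi)}$ (from $\widetilde Q Q \equiv \Pi^+$), the change-of-variables formula applied to $\tilde b_\epsilon^* Q \tilde b_\epsilon^{-*}$ together with $\tilde b_\epsilon(x) = x + \alpha(\epsilon) + r_\epsilon(x)$, $r_\epsilon = \mathcal O(\epsilon^\infty)_{C^\infty}$, and $\sigma_h(B_H^+)(x,\xi) = (1-\chi(\xi))e^{-\xi\zeta_\omega^+(x)}$, the composition formulas give on $\{\xi \ge 1\}$:
\begin{equation*}
    \sigma_h(K)(x,\xi) \equiv (1-\chi(\xi))\, e^{-\xi\zeta_\omega^+(x) + g(\tilde b_\epsilon(x),\xi) - g(x,\xi)} \mod hS_h^{-\infty} + \mathcal O(\epsilon^\infty).
\end{equation*}

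The hard part will be showing $|\sigma_h(K)| \le c$ for some $c < 1$. The key is to invoke the approximate cohomological equation satisfied by $g$ in the construction of Lemma~\ref{lem:low_freq_conj}, namely
\begin{equation*}
    g(\tilde b_\epsilon(x),\xi) - g(x,\xi) = \xi \tilde\chi(\xi)\bigl(\zeta_\omega^+(x) - \kappa_0\bigr) + \mathcal O(h^\infty)
\end{equation*}
on $\supp \tilde\chi$. Substituting, the exponent in $\sigma_h(K)$ collapses to $-\xi(1-\tilde\chi(\xi))\zeta_\omega^+(x) - \xi\tilde\chi(\xi)\kappa_0 + \mathcal O(h^\infty)$. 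Since $\tilde\chi(\xi) \in [0,1]$ and both $\Re \zeta_\omega^+(x) \ge \ell$ and $\Re \kappa_0 \ge \ell$ uniformly in $\omega$, the real part of this exponent is bounded above by $-\xi\ell$ (a convex combination of quantities $\ge \ell$), so $|\sigma_h(K)(x,\xi)| \le e^{-\xi\ell} \le e^{-\ell}$ on $\supp \tilde\chi \cap \{\xi \ge 1\}$. Off $\supp\tilde\chi$, the data $a_\omega$ and average $\kappa_\omega$ of the cohomological equation vanish identically, and the zero-mean normalization of the solution forces $g(x,\xi) = \mathcal O(\epsilon^\infty)_{S^{-\infty}}$ there; thus $|\sigma_h(K)| \le e^{-\xi\ell}(1 + \mathcal O(\epsilon^\infty))$ is again $< 1$. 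Combining, $|\sigma_h(K)| \le c$ uniformly in $\omega \in \Lambda$ for some $c<1$, provided $h$ and $\epsilon$ are small enough.

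The Sobolev bound $\|K\|_{H^s \to H^s} \le c$ then follows from Lemma~\ref{lem:contraction}, whose support hypothesis is satisfied because $\sigma_h(K)$ lives in $\{\xi \ge 1\}$ away from the zero section. The subtle point that I want to flag is the transition region of $\tilde\chi$ (where $\tilde\chi(\xi) \in (0,1)$ and $g$ itself need not be small): it is handled not by making $g$ small but by the convex combination $(1-\tilde\chi)\Re\zeta_\omega^+ + \tilde\chi \Re \kappa_0 \ge \ell$, which is what lets the cohomological cancellation yield a uniform contraction bound across the entire range $\xi \ge 1$.
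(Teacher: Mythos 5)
Your proposal is correct and follows essentially the same route as the paper: the same definition $K=\widetilde Q B_H^+(\tilde b_\epsilon^* Q\tilde b_\epsilon^{-*})$, the same substitution of the approximate cohomological equation for $g$ to collapse the exponent to $-(1-\tilde\chi(\xi))\xi\zeta_\omega^+(x)-\xi\tilde\chi(\xi)\kappa_0$, and the same convex-combination bound using $\Re\zeta_\omega^+,\Re\kappa_0\ge\ell$ on $\{\xi\ge1\}$. The only differences are cosmetic: the paper inserts an explicit microlocal cutoff $A$ supported in $\{\xi\ge1/2\}$ to justify that the outer $\Psi^0_{h,+}$ factors behave as standard semiclassical operators (which you assert more informally), and your separate discussion of the region off $\supp\tilde\chi$ is unnecessary since the collapsed exponent formula already reduces to $-\xi\zeta_\omega^+(x)$ there.
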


\begin{proof}
    Note that $K$ is simply given by 
    \[K = \widetilde Q B_H^+ (\tilde b_\epsilon^* Q \tilde b_\epsilon^{-*}).\]
    Since $\WF_h(B_H^+) \subset \{\xi \ge 1\}$, it is then clear that $\WF_h(K) \subset \{\xi \ge 1\}$. 

    Fix a microlocal cutoff $A \in \Psi_h^1(T^* \mathbb S^1)$ such that 
    \[\WF_h(A) \subset \{\xi \ge 1/2\}, \qquad \WF_h(I - A) \subset \{ \xi < 1\}.\] 
    Then we can rewrite
    \[K = (\widetilde Q A) B_H^+ (A \tilde b_\epsilon^* Q \tilde b_\epsilon^{-*}).\]
    The advantage here is that $\widetilde QA$ and $(A \tilde b_\epsilon^* Q \tilde b_\epsilon^{-*})$ are regular semiclassical operators, and it follows tha $K \in \Psi_h^{-\infty}(T^* \mathbb S^1)$ from the semiclassical calculus.
    
    Finally, we can compute the principal symbol using the symbol formulas~\eqref{eq:high_freq} and~\eqref{eq:Q_0} as well as the cohomological equation (\ref{eq:first_it}) to find that 
    \begin{align*}
        \sigma_h(K)(x, \xi) &= (1 - \chi(\xi)) e^{-g(x, \xi)} e^{-\xi \zeta^+_\omega(x)} e^{g(x + \alpha(\xi) + r_\epsilon(x), \xi)} \\
        &= (1 - \chi(\xi)) \exp\left(-(1 - \tilde \chi(\xi)) \xi \zeta_\omega^+(x) - \xi \tilde \chi(\xi) \kappa_0 \right).
    \end{align*}
    Again, we have the slight abuse of notation where the equalities above are really equivalence in the quotient $S^{-\infty}_h/hS^{-\infty}_h$. Recall that $\chi,\, \tilde \chi \in \CIc([0, \infty); [0, 1])$. Since $\Re \zeta_\omega^+ \ge \ell > 0$ and $\Re \kappa_0 \ge \ell > 0$, we conclude that $\sigma_0(K) < C < 1$. 
\end{proof}

What we have accomplished in Lemma~\ref{lem:low_freq_conj} and Lemma~\ref{lem:high_freq_conj} is that we have conjugated the operator $\widetilde B_\omega^{\pm} (\tilde b_\epsilon^{\pm1})^*$ so that the low frequency part behaves like a Fourier multiplier and the high frequency part is a contraction. For a high frequency estimate, we wish to invert the operator $I - \widetilde B_\omega^{\pm}$. The high frequency part is no problem since $I - K_{L}$ is invertible by Neumann series. For the low frequency piece, we use the fact that the rotation $(\tilde b_\epsilon^{\pm1})^*$ is Diophantine. 

\subsection{The high frequency estimate}
Now we finally prove the high frequency estimate for the boundary problem~\eqref{eq:v=Bb+g}. We preface the actual estimate with a simple model case that captures the key features of~\eqref{eq:v=Bb+g} that lead to the high frequency estimates. 

\subsubsection{The model case} \label{sec:model_case} 
We have split~\eqref{eq:v=Bb+g} into positive and negative frequencies by~\eqref{eq:split_freq}. We will see that~\eqref{eq:split_freq} is basically a cohomological equation with damping. In particular, let us suppose for the moment that $\widetilde B_\omega^+$ is simply a Fourier multiplier $\widetilde B^+ = \Op_h^+(e^{-\kappa_0\xi})$ and assume that $\kappa_0 > 0$ is real and $\omega$-independent. Also suppose that the conjugated chess billiard map $\tilde b_\epsilon$ is a pure rotation (without the error $r_{\epsilon}(x)$). Consider $\alpha \in (0, 1)$ Diophantine with constants $c, \beta > 0$, i.e. 
\[\left| \frac{p}{q} - \alpha \right| \ge \frac{c}{q^{2 + \beta}}\]
for all $p \in \Z$ and $q \in \N$. Let $\rho_\alpha: \mathbb S^1 \to \mathbb S^1$ by $\rho_\alpha(x) = x + \alpha$ is a rotation by $\alpha$. We wish to obtain high frequency estimates for $v$ in terms of $f$ satisfying a simplified version of~\eqref{eq:split_freq}:
\begin{equation}\label{eq:model_case}
    f = (I - \widetilde B^+ \rho_\alpha^*) v, \quad \hat v(k) = 0 \quad \text{for all} \quad k \le 0.
\end{equation}
Taking the Fourier series of both sides,
\[\hat f(k) = (1 - e^{2 \pi i k \alpha - 2 \pi hk \kappa_0}) \hat v(k), \quad k > 0.\]
Observe that
\[\left|\frac{1}{1 - e^{2 \pi i k \alpha - 2 \pi h k \kappa_0}} \right| \le 2 c^{-1} k^{1 + \beta}\]
for all $h \ge 0$. The key takeaway here is that the extra damping only improves the Diophantine estimate. Therefore
\[\|v\|_{H^s} \lesssim \|f\|_{H^{s + 1 + \beta}}\]
uniformly as $h \to 0$. 

Now we wish to pull high frequency estimates out of~\eqref{eq:v=Bb+g}, which is split into positive and negative frequencies by~\eqref{eq:split_freq}. The strategy is to conjugate (\ref{eq:v=Bb+g}) by the elliptic operator $Q$ constructed in Lemma~\ref{lem:moser_conjugator} so that it almost looks like the model case (\ref{eq:model_case}). However, there are two burdensome technical difficulties in the actual problem that differ from the model case:

\begin{enumerate}
    \item Only a low frequency part is close to a Fourier multiplier, but this will be enough since the high frequency part will be a contraction, which can be inverted using Neumann series. 

    \item The rotation is nearly Diophantine rather than precisely Diophantine, but this is tempered by the damping from the pseudodifferential piece $B_L^+$.
\end{enumerate}
Finally, we mention an additional minor inconvenience: $\kappa_0$ has an imaginary part. For high frequencies, damping dominates so this does not matter. For sufficiently small frequencies, we will show that the rotation dominates the imaginary part of $\kappa_0$ as $h \to 0$.

\subsubsection{High-low frequency splitting} 
Again we focus on the positive frequency case since the negative frequencies are treated similarly. We have already split into high and low frequency pieces once with $B_L^+$ and $B_H^+$ in (\ref{eq:low_freq}) and (\ref{eq:high_freq}). The purpose was so that we can conjugate the low frequency piece to a Fourier multiplier plus a contraction. The contraction term is entirely technical as it appeared as subprinciple contributions supported away from the zero section. Nevertheless, we really need to isolate the Fourier multiplier, and to do so, we need some more cutoffs.

Define a pseudodifferential partition of unity as follows. Fix cutoffs  
\begin{gather*}
    \chi_1, \chi_2 \in C^\infty([0, \infty); [0, 1]),\\
    \chi_1 + \chi_2 = 1, \quad \supp \chi_1 \subset [0, 2/3), \quad \supp \chi_2 \subset (1/2, \infty).
\end{gather*}
Then we can directly define the Fourier multipliers
\begin{equation}
    \Pi_1 = \Op_h^+(\chi_1(\xi)), \quad \Pi_2 = \Op_h(\chi_2(\xi)), \quad \Pi_1 + \Pi_2 = \Pi^+.
\end{equation}
Recall that we fixed $\lambda_0$ so that $\Omega$ is $\lambda_0$-simple and $\mathbf r(\lambda_0)$ is Diophantine. Again, we put $\omega = \lambda_0 + \epsilon + ih$ as the spectral parameter. Let $Q$ be the microlocal weight constructed in Lemma~\ref{lem:low_freq_conj} and $\widetilde Q$ its elliptic parametrix, and let $\varphi_\epsilon$ be the change of variables constructed in Lemma~\ref{lem:approx_conj}. Assume that $v_\omega$ and $f_\omega$ satisfy (\ref{eq:v=Bb+g}). Consider the positive frequency projections $v_\omega^+ = \Pi^+ v_\omega$ and $f_\omega^+ = \Pi^+ f_\omega$ as in (\ref{eq:v,f_projections}). In particular, these projections satisfy (\ref{eq:split_freq}) with $\lambda = \lambda_0 + \epsilon$. Put     
\begin{equation}\label{eq:tilde_v_def}
    \tilde v_\omega^+ = \widetilde Q \varphi_\epsilon^* v_\omega^+, \qquad \tilde f_\omega^+ = \widetilde Q \varphi_\epsilon^* f_\omega^+.
\end{equation}
\begin{lemma}\label{lem:more_high_low_split}
    Let $\omega = \lambda_0 + \epsilon + ih \in \Lambda^+_{\lambda_0, d} \setminus \{\lambda_0\}$. Projecting onto low and high frequencies, $v_\omega^+$ and $f_\omega^+$ must satisfy
    \begin{gather}
        \Pi_1 \tilde v_\omega^+ = M \rho^*_{\alpha(\epsilon)} \Pi_1 \tilde v_\omega^+ + \Pi_1 \tilde f_\omega^+ + \mathcal R_1 v_\omega, \label{eq:low_freq_proj} \\
        \Pi_2 \tilde v_\omega^+ = \widetilde K \rho_{\alpha(\epsilon)}^* \Pi_2 \tilde v_\omega^+ + \Pi_2 \tilde f_\omega^+ + \mathcal R_2 v_\omega. \label{eq:high_freq_proj}
    \end{gather}
    Here $M$ is the Fourier multiplier constructed in Lemma~\ref{lem:low_freq_conj}, and $\widetilde K \in \Psi^{-\infty}_h(\mathbb S^1)$ is uniformly a contraction for all sufficiently small $h$, i.e.
    \[\|K\|_{H^s \to H^s} \le c < 1\]
    where $c$ is independent of $\omega \in \Lambda$ for all sufficiently small $h$. Furthermore, $\mathcal R_1, \, \mathcal R_2 \in \Psi^{-\infty}$ uniformly for $\omega \in \Lambda^+_{\lambda_0, d} \setminus \{\lambda_0\}$. 
\end{lemma}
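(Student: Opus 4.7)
The plan is to derive (\ref{eq:low_freq_proj}) and (\ref{eq:high_freq_proj}) by conjugating the positive-frequency identity (\ref{eq:split_freq}) with $\widetilde Q \varphi_\epsilon^*$ and then projecting using the semiclassical cutoffs $\Pi_1, \Pi_2$. The key ingredients are the two conjugation formulas from Lemmas~\ref{lem:low_freq_conj} and~\ref{lem:high_freq_conj}, the parametrix relation $Q\widetilde Q = \Pi^+ \bmod \Psi^{-\infty}$, and the fact that the orientation-preserving diffeomorphism $\varphi_\epsilon$ satisfies $[\Pi^+, \varphi_\epsilon^*] \in \Psi^{-\infty}$ uniformly by Corollary~\ref{cor:Heaviside_identities}.

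First I would apply $\widetilde Q \varphi_\epsilon^*$ to (\ref{eq:split_freq}) for the $+$ case. Using $\varphi_\epsilon^* B_\omega^+ b_{\lambda_0 + \epsilon}^* \varphi_\epsilon^{-*} = \widetilde B_\omega^+ \tilde b_\epsilon^*$, substituting $\varphi_\epsilon^* v_\omega^+ = Q \tilde v_\omega^+ + (\text{smoothing})\, v_\omega$, and splitting $\widetilde B_\omega^+ = B_L^+ + B_H^+$, the two conjugation identities produce
\[
\tilde v_\omega^+ = (M + K_L + K)\, \tilde b_\epsilon^*\, \tilde v_\omega^+ + \mathcal R\, v_\omega + \tilde f_\omega^+
\]
for some $\mathcal R \in \Psi^{-\infty}$ uniformly in $\omega \in \Lambda$. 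Next I would replace $\tilde b_\epsilon^*$ by $\rho_{\alpha(\epsilon)}^*$: by Lemma~\ref{lem:approx_conj}, $\tilde b_\epsilon(x) = x + \alpha(\epsilon) + r_\epsilon(x)$ with $r_\epsilon = \mathcal O(\epsilon^\infty)_{C^\infty}$, so a finite Taylor expansion writes $\tilde b_\epsilon^* - \rho_{\alpha(\epsilon)}^*$ as a sum of differential operators with $\mathcal O(\epsilon^\infty)$ coefficients plus a controlled remainder. Since $M$, $K_L$, $K$ all have symbols with rapid frequency decay in the semiclassical sense, and since $h \ge |\epsilon|^d$ in $\Lambda$, the composition $(M + K_L + K)(\tilde b_\epsilon^* - \rho_{\alpha(\epsilon)}^*)$ is absorbed into the $\mathcal R\, v_\omega$ smoothing error uniformly in $\omega$.

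Finally, I apply $\Pi_1$ and $\Pi_2$ separately. For (\ref{eq:low_freq_proj}), since $M$, $\Pi_1$, and $\rho_{\alpha(\epsilon)}^*$ are all Fourier multipliers on $\mathbb S^1$ they mutually commute, giving $\Pi_1 M \rho_{\alpha(\epsilon)}^* = M \rho_{\alpha(\epsilon)}^* \Pi_1$, while $\Pi_1 K$ and $\Pi_1 K_L$ have principal symbols supported in $\supp \chi_1 \cap \{\xi \ge 1\} = \emptyset$, so iterating the semiclassical composition formula places them in $h^\infty \Psi_h^{-\infty} \subset \Psi^{-\infty}$ uniformly. For (\ref{eq:high_freq_proj}), set $\widetilde K := \Pi_2(M + K + K_L)$ and use $\tilde v_\omega^+ = \Pi_1 \tilde v_\omega^+ + \Pi_2 \tilde v_\omega^+ \bmod \Psi^{-\infty}$ to absorb the $\Pi_1$ cross term into $\mathcal R_2 v_\omega$. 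The contraction bound then follows from Lemma~\ref{lem:contraction}: $\sigma_h(\widetilde K)$ is supported in $\supp\chi_2 \subset \{\xi \ge 1/2\}$ (away from the zero section) and, using $\Re \kappa_0, \Re \zeta_\omega^+ \ge \ell > 0$ uniformly in $\omega$, is bounded in modulus by $\chi_2(\xi) e^{-\xi \ell} \le e^{-\ell/2} < 1$.

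The main obstacle is the replacement $\tilde b_\epsilon^* \rightsquigarrow \rho_{\alpha(\epsilon)}^*$, since $\tilde b_\epsilon^* - \rho_{\alpha(\epsilon)}^*$ loses derivatives and is not itself a smoothing operator. The whole reason for working in the polynomial region $\Lambda = \{h \ge |\epsilon|^d\}$ is precisely to trade the $\mathcal O(\epsilon^\infty)$ smallness of $r_\epsilon$ against the polynomial $h^{-1}$ losses incurred by composition with differential operators of arbitrary finite order; the bound $|\epsilon| \le h^{1/d}$ makes this trade-off close with large margin.
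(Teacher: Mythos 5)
Your overall route is the same as the paper's: conjugate \eqref{eq:split_freq} by $\widetilde Q\varphi_\epsilon^*$, use Lemmas~\ref{lem:low_freq_conj} and~\ref{lem:high_freq_conj} to get $\tilde v_\omega^+=(M+K_L+K)\tilde b_\epsilon^*\tilde v_\omega^++\mathcal R v_\omega+\tilde f_\omega^+$, trade $\tilde b_\epsilon^*$ for $\rho_{\alpha(\epsilon)}^*$ using $r_\epsilon=\mathcal O(\epsilon^\infty)$ together with $h\ge|\epsilon|^d$ (this is the paper's $J_\epsilon=\tilde b_\epsilon^*-\rho_{\alpha(\epsilon)}^*$ with $\|MJ_\epsilon\|_{H^s\to H^k}=\mathcal O(h^\infty)$), and then project. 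Your treatment of the low-frequency equation is correct: $\Pi_1$, $M$, $\rho_{\alpha(\epsilon)}^*$ commute as Fourier multipliers, and $\Pi_1(K+K_L)\in h^\infty\Psi_h^{-\infty}\subset\Psi^{-\infty}$ by disjointness of wavefront sets.

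The genuine gap is in the high-frequency equation. You set $\widetilde K:=\Pi_2(M+K+K_L)$ and then insert $\tilde v_\omega^+=\Pi_1\tilde v_\omega^++\Pi_2\tilde v_\omega^+$ (mod smoothing), claiming the $\Pi_1$ cross term goes into $\mathcal R_2\in\Psi^{-\infty}$. For the $K$ and $K_L$ parts this is fine, but the $M$ part of the cross term is $\Pi_2 M\rho_{\alpha(\epsilon)}^*\Pi_1=M\rho_{\alpha(\epsilon)}^*\Op_h(\chi_1\chi_2)$, and $\chi_1\chi_2$ is \emph{not} identically zero: the cutoffs overlap on $(1/2,2/3)$, and there $\sigma_h(M)\sim e^{-\xi\kappa_0}$ is bounded below. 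This operator acts with magnitude $\approx e^{-\Re\kappa_0/2}$ on frequencies $k\sim h^{-1}$, so it lies in $\Psi_h^{-\infty}$ but is not uniformly smoothing (its $H^{-N}\to H^s$ norm blows up like $h^{-(s+N)}$), and it cannot be absorbed into $\mathcal R_2$. The same defect shows up directly in your $\widetilde K$: $\widetilde K\rho^*\Pi_2=M\rho^*\Op_h(\chi_2^2)+\dots$, and $\chi_2^2\neq\chi_2$ on the overlap. The fix is to never create this cross term: since $M$ is a Fourier multiplier, $\Pi_2M\rho_{\alpha(\epsilon)}^*=M\rho_{\alpha(\epsilon)}^*\Pi_2$ \emph{exactly}, with no remainder, and one then replaces $M$ by $M\Op_h(\chi_3(\xi))$ where $\chi_3\equiv1$ near $\supp\chi_2$ and $\supp\chi_3$ is bounded away from $\xi=0$ (so that $M\Op_h(\chi_3)\Pi_2=M\Pi_2$ while $M\Op_h(\chi_3)$ has symbol supported away from the zero section and bounded by $e^{-c\ell}<1$, making Lemma~\ref{lem:contraction} applicable). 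This is exactly the paper's choice $\widetilde K=K+K_L+M\Op_h(\chi_3)$; only commutators $[\Pi_2,K+K_L]\in\Psi^{-\infty}$ appear as remainders. The rest of your argument, including the contraction bound from $\Re\kappa_0,\Re\zeta_\omega^+\ge\ell>0$, then goes through.
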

\begin{proof}
    1. We begin by applying $\widetilde Q \varphi_\epsilon^*$ to both sides of (\ref{eq:split_freq}), we have
    \begin{align}
        \tilde v^+_\omega &= \widetilde Q \varphi_\epsilon^* \left(B_\omega^+ b_{\lambda_0 + \epsilon}^* v_\omega + \mathscr R^+_\omega v_\omega^+ + f_\omega^+\right) \nonumber \\
        &= (M + K_L + K) \tilde b^*_\epsilon \tilde v_\omega^+ + \mathcal R^+_\omega v_\omega + \tilde f^+_\omega. \label{eq:M+K+R}
    \end{align}
    Here, $M$, $K$, and $K_L$ are the operators constructed in Lemma~\ref{lem:low_freq_conj} and Lemma~\ref{lem:high_freq_conj}, and we have the smoothing operator 
    \[\mathcal R_\omega^+ :=  \widetilde Q \widetilde B_\omega^+ \tilde b_\epsilon^* (I - \widetilde Q Q) \varphi_\epsilon^* \Pi^+ + \widetilde Q \varphi_\epsilon^* \mathscr R^+_\omega + \mathcal R_L \tilde b_\epsilon^* \widetilde Q \varphi_\epsilon^*,\]
    which lies in $\Psi^{-\infty}$ uniformly in $\omega$ since $I - \widetilde Q Q$, $\mathscr R_\omega^+$, and $\mathcal R_L$ all lie in $\Psi^{-\infty}$ uniformly in $\omega$.

    \noindent
    2. Recall that $\tilde b_\epsilon(x) = x + \alpha(\epsilon) + r_\epsilon(x)$ where $r_\epsilon(x) = \mathcal O(\epsilon^\infty)_{C^\infty(\mathbb S^1)}$, and since we are working in the region $\Lambda^+_{\lambda_0, d}$, we have $r_\epsilon(x) = \mathcal O(h^\infty)$. Put
    \[J_\epsilon = \tilde b_\epsilon^* - \rho_{\alpha(\epsilon)}^*.\]
    For any 1-form $v(x)dx \in C^\infty(\mathbb S^1; T^* \mathbb S^1)$, we have
    \begin{align*}
        (J_\epsilon v)(x) &= v(x + \alpha(\epsilon) + r_\epsilon(x)) (1 + \partial_x r_\epsilon(x)) - v(x + \alpha(\epsilon)) \\
        &= v(x + \alpha(\epsilon) + r_\epsilon(x)) \partial_x r_\epsilon(x) + \int_0^1 r_\epsilon(x) v'(x + \alpha(\epsilon) + t r_\epsilon(x)) \, dt. 
    \end{align*}
    Then we have the mapping property
    \[\|J_\epsilon\|_{H^s \to H^{s - 1}}  = \mathcal O(h^\infty).\]
    Since $M, K \in \Psi_h^{-\infty}$ uniformly in $h$, we also have the mapping property
    \begin{equation}\label{eq:jiggle_smoothing}
    \|MJ_\epsilon\|_{H^s \to H^k} = \mathcal O(h^\infty)
    \end{equation}
    for all $s, k \in \R$. 

    \noindent
    3. From Lemmas \ref{lem:low_freq_conj} and \ref{lem:high_freq_conj} we see that $\WF_h(K + K_L) \subset \{\xi \ge 1\}$. Therefore, $\Pi_1$ is microlocally $0$ near $\WF_h(K + K_L)$, and $\Pi_2$ is microlocally the identity near $\WF_h(K)$, so
    \begin{equation}\label{eq:Pi_on_K}
        \Pi_1 (K + K_L) \in \Psi^{-\infty}, \qquad [\Pi_2, K] \in \Psi^{-\infty}.
    \end{equation}
    uniformly in $\omega$. Since $\Pi_1$, $\Pi_2$, and $M$ are all Fourier multipliers,
    \begin{equation}\label{eq:Pi_on_M}
        [\Pi_1, M] = [\Pi_2, M] = 0. 
    \end{equation}
    Finally, define 
    \begin{equation}\label{eq:tilde_K}
        \widetilde K = K + K_L + M\Op_h(\chi_3(\xi))
    \end{equation}
    where $\chi_3 \in C^\infty((0, \infty), [0, 1])$ and equals to $1$ near $\supp \chi_2$. Note that $\widetilde K \in \Psi^{-\infty}_h$, and it follows from (\ref{eq:M_symbol}), (\ref{eq:K_bound}), and the fact that $K_L \in h \Psi^{-\infty}_h$ that there exists a constant $0 < C < 1$ such that 
    \begin{equation*}
        0 \le \sigma_h(\widetilde K) \le C,
    \end{equation*}
    which implies that $\widetilde K$ is a contraction for all sufficiently small $h$. Applying $\Pi_1$ to (\ref{eq:M+K+R}) and using properties (\ref{eq:Pi_on_K}) and (\ref{eq:Pi_on_M}), we find
    \begin{align}
        \Pi_1 \tilde v^+_\omega &= \Pi_1 M (\rho_{\alpha(\epsilon)}^* + J_\epsilon) \tilde v_{\omega}^+ + \Pi_1 (K + K_L) \tilde b_\epsilon^* \tilde v_\omega^+ + \Pi_1 \mathcal R^+_\omega v_\omega + \Pi_1 \tilde f^+_\omega \nonumber\\
        &= M \rho^*_{\alpha(\epsilon)} \Pi_1 \tilde v_\omega^+ + \Pi_1 \tilde f^+_\omega + \big[\Pi_1 (K + K_L) \tilde b_\epsilon^* \tilde v_\omega^+ + \Pi_1 M J_\epsilon \tilde v_\omega^+ + \Pi_1 \mathcal R_\omega^+ v_\omega \big] \nonumber \\
        &= M \rho^*_{\alpha(\epsilon)} \Pi_1 \tilde v_\omega^+ + \Pi_1 \tilde f_\omega^+ + \mathcal R_1 v_\omega, \nonumber
    \end{align}
    where 
    \[\mathcal R_1 = \Pi_1 (K + K_L) \tilde b_\epsilon^* \widetilde Q \varphi_\epsilon^* \Pi^+ + \Pi_1 M J_\epsilon \widetilde Q \varphi_\epsilon^* \Pi^+ + \Pi_1 \mathcal R_\omega^+. \]
    The exact formula for the remainder is unimportant, but using (\ref{eq:jiggle_smoothing}) and (\ref{eq:Pi_on_K}), the key property we see from the exact formula is that $\mathcal R_1 \in \Psi^{-\infty}$ uniformly in $h$. 

    Similarly, to obtain the high frequency piece, we apply $\Pi_2$ to (\ref{eq:M+K+R}) and again use properties (\ref{eq:Pi_on_K}) and (\ref{eq:Pi_on_M}) to find
    \begin{align}
        \Pi_2 \tilde v_\omega^+ &= \Pi_2 (M + K + K_L)(\rho_{\alpha(\epsilon)}^* + J_\epsilon) \tilde v_\omega^+ + \Pi_2 \mathcal R_\omega^+ v_\omega + \Pi_2 \tilde f_\omega^+ \nonumber \\
        &= \widetilde K \rho_{\alpha(\epsilon)}^* \Pi_2 \tilde v_\omega^+ + \Pi_2 \tilde f_\omega^+ + \mathcal R_2 v_\omega \nonumber
    \end{align}
    where
    \[\mathcal R_2 = [\Pi_2, K + K_L] \tilde b_\epsilon^* \widetilde Q \varphi_\epsilon^* \Pi^+ + \Pi_2 M J_\epsilon \widetilde Q \varphi_\epsilon^* \Pi^+ + \Pi_2 \mathcal R_\omega^+.\]
    Again, the exact formula is unimportant, but analyzing each term using (\ref{eq:jiggle_smoothing}) and (\ref{eq:Pi_on_K}), we see that $\mathcal R_2 \in \Psi^{-\infty}$ uniformly in $h$. 
\end{proof}

\subsubsection{The main estimate}
Now we are ready to assemble all the pieces together for the high frequency estimate. This differs from the model case in \S\ref{sec:model_case} primarily in that the rotation is $\epsilon$-close to a Diophantine rotation, so the damping becomes crucial. 
\begin{proposition}\label{prop:high_freq_est}
Let $\lambda_0$ be such that $\mathbf r(\lambda_0)$ is Diophantine with the constant $\beta$ given in Definition \ref{def:diophantine}, and let $\Lambda = \Lambda^+_{\lambda_0, d}$ be the polynomial region above $\lambda_0$ as defined in (\ref{eq:poly_region}). Suppose $f_\omega \in C^\infty(\mathbb S^1; T^* \mathbb S^1)$ and $v_\omega \in \mathcal C^\infty(\mathbb S^1; T^* \mathbb S^1)$ satisfy (\ref{eq:v=Bb+g}), then for all $s, N \in \R$ and $\omega\in\Lambda \setminus \{\lambda_0\}$,
\begin{equation}\label{eq:high_freq_est_1}
    \|v_\omega\|_{H^s} \le C \left(\|f_\omega\|_{H^{s + d(2 + \beta)}} + \|v_\omega\|_{H^{-N}} \right)
\end{equation}
for some constant $C = C_{\lambda_0, s, N, d}$. 

In particular, if $v_\omega = \mathcal N_\omega u_\omega$ where $u_\omega$ is the solution to (\ref{eq:EBVP}), then for every $s \in \R$, there exists $s' \in \R$ such that the estimate
\begin{equation}\label{eq:high_freq_est_2}
    \|v_\omega\|_{H^s} \lesssim \|\mathcal C_\omega v_\omega\|_{H^{s'}} + \|v_\omega\|_{H^{-N}}
\end{equation}
holds for $\omega \in \Lambda$, where the hidden constant is independent of $\omega \in \Lambda$.
\end{proposition}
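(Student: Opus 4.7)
The plan is to reduce everything to $v_\omega^+ = \Pi^+ v_\omega$ by symmetry and then to exploit Lemma~\ref{lem:more_high_low_split}: after conjugation by $\widetilde Q \varphi_\epsilon^*$, the transformed quantity $\tilde v_\omega^+$ is governed separately on low and high frequencies, with the low-frequency piece solving an equation driven by the Fourier multiplier $M$ and the high-frequency piece solving one with a uniformly contracting operator $\widetilde K$.

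For the high-frequency piece, since $\widetilde K$ is a uniform contraction on every Sobolev space for small $h$, I would invert $I - \widetilde K \rho_{\alpha(\epsilon)}^*$ by a Neumann series to obtain $\|\Pi_2 \tilde v_\omega^+\|_{H^s} \lesssim \|\Pi_2 \tilde f_\omega^+\|_{H^s} + \|v_\omega\|_{H^{-N}}$ with no derivative loss, absorbing $\mathcal R_2 v_\omega$ into the $H^{-N}$ norm since $\mathcal R_2 \in \Psi^{-\infty}$. For the low-frequency piece I would take the Fourier series of $\Pi_1 \tilde v_\omega^+ = M \rho_{\alpha(\epsilon)}^* \Pi_1 \tilde v_\omega^+ + \Pi_1 \tilde f_\omega^+ + \mathcal R_1 v_\omega$ and reduce the problem to the scalar lower bound on $|1 - m(2\pi h k) e^{2\pi i k \alpha(\epsilon)}|$ for $k > 0$; since $m$ is compactly supported only the range $k \lesssim h^{-1}$ is relevant.

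The main obstacle is the small-denominator estimate in this low-frequency regime. Writing $m(2\pi hk) = e^{-2\pi h k \kappa_0} + O(h)$ with $\Re \kappa_0 > 0$, the denominator is bounded below by the maximum of the damping term $1 - e^{-2\pi h k \Re \kappa_0}$ and the rotation term $|\sin(\pi k (\alpha(\epsilon) - h \Im \kappa_0))|$. Because we are in the polynomial region $\Lambda$, $|\epsilon| \leq h^{1/d}$, so for $d \geq 1$ both the mismatch $\alpha(\epsilon) - \mathbf r(\lambda_0) = O(\epsilon)$ and the shift $h \Im \kappa_0$ are $O(h^{1/d})$. The Diophantine bound $\mathrm{dist}(k \mathbf r(\lambda_0), \mathbb Z) \geq c k^{-(1+\beta)}$ survives these perturbations precisely when $k \leq C h^{-1/(d(2+\beta))}$, in which regime the denominator is $\gtrsim k^{-(1+\beta)}$. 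Beyond this threshold the damping takes over: $1 - e^{-2\pi h k \Re \kappa_0} \gtrsim hk \gtrsim k^{-(d(2+\beta)-1)}$. Taking the worse of the two losses, and leaving one extra derivative to absorb the $O(h)$ subprincipal correction in $m$, yields the claimed loss of $d(2+\beta)$ derivatives.

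Putting these pieces together, undoing the conjugation (using that $Q$, $\widetilde Q$, and $\varphi_\epsilon^*$ are uniformly bounded on Sobolev spaces of all orders), and adding the analogous estimate for $v_\omega^-$ gives~(\ref{eq:high_freq_est_1}). For~(\ref{eq:high_freq_est_2}), when $v_\omega = \mathcal N_\omega u_\omega$ with $u_\omega$ solving~(\ref{eq:EBVP}), I would apply $(I - \mathcal A_\omega)\mathcal E_\omega d$ to the identity $\mathcal C_\omega v_\omega = F$ and combine with $\mathcal E_\omega d \mathcal C_\omega = I + \mathcal A_\omega$ from Proposition~\ref{prop:full_symbol} and Lemma~\ref{lem:A^2} to see that $v_\omega$ satisfies~(\ref{eq:v=Bb+g}) with $f_\omega = (I - \mathcal A_\omega) \mathcal E_\omega d \mathcal C_\omega v_\omega$. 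Since $\mathcal E_\omega \in \Psi^0$, $\mathcal A_\omega$ is bounded between all Sobolev spaces, and $d$ is first-order, there exists $s'$ such that $\|f_\omega\|_{H^{s + d(2+\beta)}} \lesssim \|\mathcal C_\omega v_\omega\|_{H^{s'}}$, and the first estimate completes the proof.
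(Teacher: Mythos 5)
Your proposal follows essentially the same route as the paper's proof: split into positive/negative frequencies, conjugate by $\widetilde Q\varphi_\epsilon^*$ to reach the low/high frequency system of Lemma~\ref{lem:more_high_low_split}, invert the high-frequency piece by Neumann series, and handle the low-frequency Fourier multiplier by the same two-regime small-denominator analysis with the threshold $k^{d(2+\beta)}\sim h^{-1}$ (Diophantine rotation below it, damping $hk\gtrsim k^{-(d(2+\beta)-1)}$ above it), then unwind the conjugation and derive \eqref{eq:high_freq_est_2} from $f_\omega=(I-\mathcal A_\omega)\mathcal E_\omega d\mathcal C_\omega v_\omega$. The argument is correct and matches the paper's in all essential respects.
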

\begin{proof}
    1. The estimate \eqref{eq:high_freq_est_2} follows immediately from \eqref{eq:high_freq_est_1} by unwinding the definitions. Indeed, note that if $u_\omega$ is the solution to \ref{eq:EBVP}, then $v_\omega = \mathcal N_\omega u_\omega$ satisfies (\ref{eq:v=Bb+g}) for 
    \[f_\omega = (I - \mathcal A_\omega) \mathcal E_\omega d\mathcal C_\omega v_\omega.\]
    Recall that $A_\omega$ and $\mathcal E_\omega$ are in $\Psi^{0}(\mathbb S^1; T^* \mathbb S^1)$ uniformly in $\omega$, so \eqref{eq:high_freq_est_2} follows by applying \eqref{eq:high_freq_est_1} to such $v_\omega$ and $f_\omega$.

    Now to prove \eqref{eq:high_freq_est_1} itself, we see from (\ref{eq:split_freq}) that positive and negative frequencies can be treated separately, and by symmetry of the positive and negative frequencies, it suffices to prove
    \begin{equation}\label{split_est}
        \|v_\omega^+\|_{H^s} \le  C \left(\|f_\omega\|_{H^{s + d(2 + \beta)}} + \|v_\omega\|_{H^{-N}} \right).
    \end{equation}
    For this, it suffices to estimate $\tilde v_\omega^+ = \tilde Q \varphi_\epsilon^* v_\omega^+$, where $\varphi_\epsilon$ is the the diffeomorphism constructed in Lemma~\ref{lem:approx_conj} and $Q$ is the elliptic microlocal weight constructed in Lemma~\ref{lem:low_freq_conj}   Following Lemma~\ref{lem:more_high_low_split}, we proceed by estimating $\Pi_1 \tilde v_\omega^+$ and $\Pi_2 \tilde v_\omega^+$ separately.

    \noindent
    2. In Steps 2 -- 5, we estimate the low frequency piece, $\Pi_1 \tilde v_\omega^+$, using the equation (\ref{eq:low_freq_proj}). To reduce a lot of the bulk of notation, we put $v :=\Pi_1\tilde v_\omega^+ \in C^\infty(\mathbb S^1)$. Note that it suffices to obtain estimates for $v$ in terms of $f$ given by
    \begin{equation}\label{eq:simple_high_freq}
        (I - M \rho^*_{\alpha(\epsilon)}) v = f
    \end{equation}
    where the Fourier coefficients of $v$ satisfies
    \[\hat v(k) = 0 \quad \text{for all} \quad k \le 0.\]
    By Lemma~\ref{lem:low_freq_conj}, $M \rho_{\alpha(\epsilon)}^*$ is simply a Fourier multiplier, so taking the Fourier series on both sides of (\ref{eq:simple_high_freq}) and using the principal symbol formula in Lemma~\ref{lem:low_freq_conj}, we see that 
    \begin{equation}\label{eq:multiplier_setup}
         \left( 1 - m(2 \pi hk) e^{2 \pi i k \alpha(\epsilon)} \right) \hat v(k) = \hat f(k),
    \end{equation}
    where we recall that
    \[m(\eta) = \chi(\eta) \left[\exp(-\eta \kappa_0) + \mathcal O(h)_{C^\infty([0, 2]_{\eta})} \right]\]
    where $\Re \kappa_0 \ge \ell > 0$ uniformly in $\omega$. We remark that $m$ and $\kappa_0$ both depend implicitly on $\omega = \lambda_0 + \epsilon + ih$, and that the remainder $\mathcal O(h)_{C^\infty([0, 2]_{\eta})}$ is uniform in $\epsilon$ and need only be defined on $[0, 2]$ since $\supp \chi \subset [0, 2]$. Then we can rewrite (\ref{eq:multiplier_setup}) as
    \begin{equation}
    \begin{gathered}
        \big[1 - \chi(2 \pi hk) \exp(\tau_\omega(k)) \big] \hat v(k) = \hat f(k), \\
        \text{where} \quad \tau_\omega(k) := 2 \pi i k \alpha(\epsilon) - 2 \pi hk \kappa_0 + \mathcal O(h)_{C^\infty([0, 2]_{h \xi})}.
    \end{gathered}
    \end{equation}
    The imaginary part of $\tau_\omega$ creates rotation and the real part of $\tau_\omega$ creates damping, so we need to estimate 
    \begin{equation}\label{eq:re_im}
        \begin{gathered}
        \Im \tau_{\omega}(k) = 2 \pi k [\alpha(\epsilon) - h \Im \kappa_0] + \mathcal O(h)_{C^\infty([0, 2]_{hk})}, \\
        \Re \tau_{\omega}(k) = -2 \pi hk \Re \kappa_0 + \mathcal O(h)_{C^\infty([0, 2]_{h\xi})}.
        \end{gathered}
    \end{equation}

    \noindent
    3. We first consider the imaginary part of $\tau_\omega$. Recall from Lemma~\ref{lem:approx_conj} that $\alpha$ is smooth near $0$ and $\omega \in \Lambda \setminus \{\lambda_0\}$, so 
    \begin{equation}\label{eq:im_tau}
        \Im \tau_\omega (k) = 2 \pi k [\alpha(0) - h \Im \kappa_0] + k \cdot \mathcal O(h^{1/d}) + \mathcal O (h)_{C^\infty([0, 2]_{h\xi})}.
    \end{equation}
    Furthermore, $\alpha(0) = \mathbf r(\lambda_0)$ is Diophantine, i.e. there exists constants $c, \beta > 0$ such that 
    \[\left|\alpha(0) - \frac{p}{q} \right| \ge \frac{c}{q^{2 + \beta}}\]
    for all $p \in \Z$ and $q \in \N$. Then it follows from (\ref{eq:im_tau}) and the Diophantine condition that there exists a small constant $c_1 > 0$ such that
    \begin{equation}\label{eq:im_estimate}
        \left|1 - e^{i \Im \tau_\omega(k)} \right| \ge \frac{c_1}{2 k^{1 + \beta}} \quad \text{for all $0 < k^{d(2 + \beta)} \le c_1 h^{-1}$ and $k > 0$.}
    \end{equation}
    
    \noindent
    4. Now we consider the real part of $\tau_\omega$. We consider the frequencies not covered by Step~3 in (\ref{eq:im_estimate}). Let $k^{d(2 + \beta)} \ge c_1 h^{-1}$, $k \in \Z_{> 0}$. Then taking  sufficiently small $h$, and thus $k$ will be sufficiently big, $2 \pi h k \Re \kappa_0$ will dominate in (\ref{eq:re_im}) for $\Re \tau_\omega(k)$:
    \begin{align}
        -\Re \tau_\omega(k) &= 2 \pi h k \Re \kappa_0 + \mathcal O(h)_{C^\infty([0, 2]_{h k})} \nonumber \\
        &\ge \pi h k \Re \kappa_0 \nonumber \\
        &\ge \frac{\pi c_1 \Re \kappa_0 }{k^{d(2 + \beta) - 1}}. \label{eq:re_estimate}
    \end{align}

    \noindent
    5. Now we combine the estimates in Steps 3 and~4 to obtain estimates for the low frequency piece, $\Pi_1 \tilde v_\omega^+$, which satisfies (\ref{eq:low_freq_proj}). In particular, it follows from (\ref{eq:im_estimate}) and (\ref{eq:re_estimate}) that for all sufficiently small $h$, $\omega \in \Lambda \setminus \{\lambda_0\}$, and $k > 0$
    \begin{align*}
        \left|1 - \chi(2 \pi h k) e^{\tau_\omega(k)} \right| &\ge \max \left\{ \left|1 - e^{\Re \tau_\omega(k)} \right|, \tfrac{1}{2} \left| 1 - e^{i\Im \tau_\omega (k)} \right| \right\} \\
        &\ge \frac{c_2}{k^{d(2 + \beta)}}
    \end{align*}
    for some small constant $c_2 > 0$. For $v$ and $f$ satisfying (\ref{eq:simple_high_freq}), we have
    \[\|v\|_{H^s} \lesssim \|f\|_{H^{s + d(2 + \beta)}}.\]
    Applying this estimate to (\ref{eq:low_freq_proj}) immediately yields
    \begin{equation}\label{eq:low_piece_est}
        \|\Pi_1 \tilde v_\omega^+\|_{H^s} \lesssim \|\tilde f_\omega^+\|_{H^{s + d(2 + \beta)}} + \|v_\omega\|_{H^{-N}}.
    \end{equation}

    \noindent
    6. Finally, it only remains to look at the high frequency piece. From (\ref{eq:tilde_K}) and the discussion right below it, $\widetilde K$ is uniformly a contraction for all sufficiently small $h$, and $\rho^*_{\alpha(\epsilon)}$ is an isometry since it is just a rotation. Therefore $(I - \widetilde K \rho^*_{\alpha(\epsilon)})^{-1}$ is uniformly bounded by Neumann series, so (\ref{eq:high_freq_proj}) then implies that 
    \begin{equation}\label{eq:high_piece_est}
        \|\Pi_2 \tilde v_\omega^+\|_{H^s} \lesssim \|\tilde f_\omega^+\|_{H^s} + \|v_\omega\|_{H^{-N}}.
    \end{equation}
    Combining (\ref{eq:low_piece_est}) and (\ref{eq:high_piece_est}) and unwinding the definitions from (\ref{eq:tilde_v_def}), we see that 
    \begin{equation}\label{eq:tilde_estimate}
        \|\widetilde Q \varphi_\epsilon^* v^+_\omega\|_{H^s} \lesssim \| \widetilde Q \varphi_\epsilon^* f_\omega^+\|_{H^{s + d(2 + \beta)}} + \|v_\omega\|_{H^{-N}}
    \end{equation}
    Recall that $\widetilde Q$ is an order $0$ elliptic operator in the positive semiclassical calculus and $\varphi^*_\epsilon$ is $H^{s} \to H^{s}$ bounded with a bounded inverse, so (\ref{eq:tilde_estimate}) recovers (\ref{split_est}) as desired. 
\end{proof}

\section{Limiting absorption principle}\label{sec:LAP}
The main purpose of this section is to study the limiting behavior solutions to the equation 
\[P(\omega) u_\omega = f, \qquad u_\omega|_{\partial \Omega} = 0\]
as $\omega$ approaches a $\lambda_0$ for which $\Omega$ is $\lambda_0$-simple (Definition \ref{lambda_simple_def}) and $\mathbf r(\lambda_0)$ is Diophantine (Definition \ref{def:diophantine}). Here, $P(\omega)$ is the operator defined in (\ref{eq:P(omega)}) and $f \in \CIc(\Omega)$. In particular, we consider $\omega$ in a polynomial region above or below $\lambda_0$ (see Figure \ref{fig:polyregion}). In these regions, we establish in Proposition \ref{prop:u_LAP} below that $u_\omega$ depend smoothly on $\omega$ with respect to all $C^\infty(\overline \Omega)$ seminorms.

To establish a limiting absorption principle, we first need a uniqueness result for the limiting problem. This eliminates the possibility of having an embedded spectrum for $P$ in the case that $\mathbf r(\lambda_0)$ is Diophantine. 

\subsection{Uniqueness for the limiting problem}
We first prove an interior uniqueness statement, appearing as early as in the work of John~\cite[Section~3]{John_41}.
\begin{lemma}\label{lem:interior_uniqueness}
    Fix $\lambda_0 \in (0, 1)$ such that $\Omega$ is $\lambda_0$-simple and $\mathbf r(\lambda_0)$ is Diophantine. If $u \in C^\infty(\overline \Omega)$ is a solution to
    \[P(\lambda_0) u = 0, \quad u|_{\partial \Omega} = 0,\]
    then $u = 0$. 
\end{lemma}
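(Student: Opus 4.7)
The plan is to exploit the factorization $P(\lambda_0) = 4 L^+ L^-$ from~\eqref{eq:L_factor} to put $u$ into d'Alembert form, then use the boundary condition together with the $\lambda_0$-simplicity and ergodicity of the chess billiard map to force $u \equiv 0$.

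First I would observe that since $L^+$ and $L^-$ have constant coefficients, they commute, so $L^+(L^- u) = 0$ and $L^-(L^+ u) = 0$ in $\Omega$. Recall from~\eqref{eq:L_dual} that $L^\pm \ell^\mp = 0$ and $L^\pm \ell^\pm = 1$. Since $L^+$ is a nonvanishing smooth vector field on $\overline\Omega$ whose flow lines are the level sets of $\ell^-$, any smooth solution of $L^+ w = 0$ on $\Omega$ has the form $w = h(\ell^-)$ for a smooth function $h$ on the range of $\ell^-$. Applying this to $w = L^- u$ and integrating along $L^-$ (which produces a function purely of $\ell^-$ up to the kernel of $L^-$), and symmetrically to $L^+ u$, a standard bookkeeping argument yields smooth real-valued $F, G$ on the ranges of $\ell^+|_{\overline\Omega}$ and $\ell^-|_{\overline\Omega}$ respectively such that
\begin{equation*}
    u(x) = F(\ell^+(x)) + G(\ell^-(x)), \qquad x \in \overline\Omega.
\end{equation*}

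Next I would invoke the boundary condition. For every $x \in \partial\Omega$ we have $F(\ell^+(x)) + G(\ell^-(x)) = 0$. By Definition~\ref{lambda_simple_def} and~\eqref{eq:gamma_def}, the involution $\gamma^+$ preserves $\ell^+$, so evaluating the identity at $\gamma^+(x)$ and subtracting gives $G(\ell^-(x)) = G(\ell^-(\gamma^+(x)))$ on $\partial\Omega$. The function $\Phi := G \circ \ell^-|_{\partial\Omega}$ is manifestly $\gamma^-$-invariant (since $\gamma^-$ fixes $\ell^-$), and by the above it is also $\gamma^+$-invariant. Hence $b^* \Phi = \Phi$ on $\partial\Omega$.

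Here comes the crucial input from the dynamics. Because $\mathbf r(\lambda_0)$ is Diophantine, Corollary~\ref{cor:ergodic} shows that $b$ is ergodic with respect to Lebesgue measure on $\partial\Omega \simeq \mathbb S^1$, so $\Phi$ is constant almost everywhere; by smoothness it is constant everywhere on $\partial\Omega$. Since $\ell^-$ is a linear function and $\overline\Omega$ is compact, the ranges of $\ell^-$ on $\overline\Omega$ and on $\partial\Omega$ coincide (and form the compact interval $[\ell^-(x^-_{\min}), \ell^-(x^-_{\max})]$), so $G$ must be constant on the range relevant for $u$. The same argument applied with $\gamma^-$ in place of $\gamma^+$ shows that $F$ is also constant on the relevant range. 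Therefore $u$ is constant on $\overline\Omega$, and the boundary condition forces this constant to be $0$.

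The only real subtlety I anticipate is the rigorous derivation of the d'Alembert-type decomposition $u = F(\ell^+) + G(\ell^-)$ with smooth $F$ and $G$ defined on the images of $\ell^\pm|_{\overline\Omega}$ (the simple connectedness of $\Omega$ enters here to ensure single-valued antiderivatives along characteristics). Once that is in hand, the remainder of the argument is a short application of $\lambda_0$-simplicity plus Corollary~\ref{cor:ergodic}.
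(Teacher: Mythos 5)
Your proposal is correct and follows essentially the same route as the paper's proof: decompose $u = F(\ell^+) + G(\ell^-)$ via the factorization $P(\lambda_0) = 4L^+L^-$, use the Dirichlet condition and the involutions $\gamma^\pm$ to produce a $b$-invariant function on $\partial\Omega$, and conclude by the ergodicity of $b$ from Corollary~\ref{cor:ergodic}. The subtlety you flag (smoothness of $F,G$ on the closed ranges of $\ell^\pm$, which relies on $\lambda_0$-simplicity making each characteristic segment in $\Omega$ connected) is treated with the same brevity in the paper.
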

\begin{proof}
    Observe that (\ref{eq:L_factor}) and (\ref{eq:L_dual}) imply that
    \begin{equation}\label{eq:u_decomp}
        u = u_+(\ell^+(x)) - u_-(\ell^-(x))
    \end{equation}
    for some $u_+ \in C^\infty([\ell^+_{\min}, \ell^+_{\max}])$ and $u_- \in C^\infty([\ell^-_{\min}, \ell^-_{\max}])$, where 
    \[\ell^\pm_{\min} = \min\{\ell^\pm(x) \mid x \in \partial \Omega\} \quad \text{and} \quad \ell^\pm_{\max} = \max\{\ell^\pm(x) \mid x \in \partial \Omega\}.\]
    Define the restriction to boundary 
    \[w_\pm := u_\pm(\ell^\pm(x))|_{\partial \Omega} \in C^\infty (\partial \Omega).\]
    By construction, $w_\pm$ are invariant under the involution $(\gamma^\pm)^* w_\pm = w_\pm$. By assumption $u|_{\partial \Omega} = 0$, so we can define $w := w_+ = w_-$, which is then invariant under the chess billiard map:
    \[b^* w = w.\]
    Since $\mathbf r(\lambda_0)$ is Diophantine, $b$ is ergodic by Corollary \ref{cor:ergodic}, so $w$ must be a constant, and thus $u = 0$.
\end{proof}

Now for the boundary uniqueness, we need an additional support condition. This support condition appears since we do not know if the limiting single layer potential operator $S_{\lambda_0 \pm i0}$ is injective acting on smooth $1$-forms. Nevertheless, the following uniqueness lemma suffices. 
\begin{lemma}\label{lem:bd_uniqueness}
    Fix $\lambda_0 \in (0, 1)$ such that $\Omega$ is $\lambda_0$-simple and $\mathbf r(\lambda_0)$ is Diophantine. If $v \in C^\infty(\partial \Omega; T^* \partial \Omega)$ satisfies
    \begin{equation}\label{eq:uniqueness_assumptions}
        \mathcal C_{\lambda_0 + i0} v = 0 \quad \text{and} \quad \supp(R_{\lambda_0 + i0} \mathcal I v) \subset \overline \Omega,
    \end{equation}
    then $v = 0$.
\end{lemma}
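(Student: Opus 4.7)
\medskip

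\noindent\textbf{Proof proposal.} The plan is to reduce the boundary statement to the interior uniqueness result in Lemma~\ref{lem:interior_uniqueness}. Set
\[
u := R_{\lambda_0 + i0}\,\mathcal I v \in \mathcal D'(\mathbb R^2),
\]
so that by the definition of $S_{\lambda_0+i0}$ one has $u|_\Omega = S_{\lambda_0+i0}v$ and, in the distributional sense on all of $\mathbb R^2$,
\[
P(\lambda_0)\,u = (P(\lambda_0)E_{\lambda_0+i0}) \ast \mathcal I v = \mathcal I v,
\]
using that $E_{\lambda_0+i0}$ is a fundamental solution (Lemma~\ref{lem:fs}). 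Since $\mathcal I v$ is supported on $\partial\Omega$, this means $P(\lambda_0)(u|_\Omega) = 0$ in $\Omega$.

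Next, I would combine the two hypotheses in \eqref{eq:uniqueness_assumptions} to conclude that $u$ vanishes on $\partial\Omega$. By Lemma~\ref{lem:S_mapping_prop} the restriction $u|_\Omega = S_{\lambda_0+i0}v$ extends to a function in $C^\infty(\overline\Omega)$, and its boundary trace is exactly $\mathcal C_{\lambda_0+i0}v = 0$ by assumption. Thus $u|_\Omega \in C^\infty(\overline\Omega)$ solves the homogeneous Dirichlet problem
\[
P(\lambda_0)(u|_\Omega) = 0, \qquad (u|_\Omega)\big|_{\partial\Omega} = 0,
\]
so Lemma~\ref{lem:interior_uniqueness}, which crucially uses the Diophantine (hence ergodic) property of $\mathbf r(\lambda_0)$, forces $u|_\Omega = 0$. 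Combined with the support assumption $\supp u \subset \overline\Omega$, we get $u \equiv 0$ on $\mathbb R^2$.

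Finally, from $P(\lambda_0)u = \mathcal I v$ and $u = 0$ we conclude $\mathcal I v = 0$ in $\mathcal D'(\mathbb R^2)$; since $\mathcal I$ tensors a $1$-form on $\partial\Omega$ with the boundary delta, this is equivalent to $v = 0$.

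The only place where care is required is step two, namely verifying that the boundary trace of $u$ is genuinely $\mathcal C_{\lambda_0+i0}v$ rather than something that sees a jump across $\partial\Omega$. This is why the support condition in \eqref{eq:uniqueness_assumptions} is imposed: it guarantees that the exterior trace of $u$ also vanishes, so the interior trace (which by Lemma~\ref{lem:S_mapping_prop} is well-defined and equals $\mathcal C_{\lambda_0+i0}v$) is really the Dirichlet datum of a smooth function on $\overline\Omega$. Once this is in hand, the proof is essentially a one-line reduction to the interior uniqueness lemma, with no further analytic obstacle.
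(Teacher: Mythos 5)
Your first step is exactly the paper's: set $U:=R_{\lambda_0+i0}\mathcal Iv$, note $P(\lambda_0)U=\mathcal Iv$, use Lemma~\ref{lem:S_mapping_prop} to see $U|_\Omega=S_{\lambda_0+i0}v\in C^\infty(\overline\Omega)$ with boundary trace $\mathcal C_{\lambda_0+i0}v=0$, and invoke the interior uniqueness Lemma~\ref{lem:interior_uniqueness} to get $U|_\Omega=0$. The gap is in the next sentence: from $U|_\Omega=0$ and $\supp U\subset\overline\Omega$ you conclude ``$u\equiv 0$ on $\mathbb R^2$.'' For a general distribution this is false: vanishing on the open sets $\Omega$ and $\mathbb R^2\setminus\overline\Omega$ only gives $\supp U\subset\partial\Omega$, and $\partial\Omega$ supports plenty of nonzero distributions (surface deltas and their normal derivatives). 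This is not a pedantic worry here — it is exactly the phenomenon one expects from layer potentials, whose normal derivatives jump across the interface, so the ``singular part on $\partial\Omega$'' is the whole content of the remaining step. The issue you do flag (whether the interior trace is really $\mathcal C_{\lambda_0+i0}v$) is handled by the definitions and Lemma~\ref{lem:S_mapping_prop} and is not where the difficulty lies.

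The paper closes this gap with a second step: knowing only $\supp U\subset\partial\Omega$, it writes $U$ locally as $\sum_{k\le K}u_k(y_2)\otimes\delta^{(k)}(y_1)$ using the structure theorem for distributions supported on a hypersurface (\cite[Theorem 2.3.5]{H1}), in coordinates where $\partial\Omega=\{y_1=0\}$ away from the critical set $\mathscr C$. Since $\Omega$ is $\lambda_0$-simple, $\partial\Omega\setminus\mathscr C$ is non-characteristic for $P(\lambda_0)$, so $P(\lambda_0)U$ contains a nonzero multiple of $u_K\otimes\delta^{(K+2)}(y_1)$; comparing with $P(\lambda_0)U=\mathcal Iv=f\otimes\delta(y_1)$ forces $u_K=0$, and iterating gives $U=0$ near $\partial\Omega\setminus\mathscr C$, hence $v=0$ by smoothness. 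Alternatively, your argument could be repaired more cheaply by observing that $U=E_{\lambda_0+i0}*\mathcal Iv$ is a locally integrable function (a convolution of the $L^1_{\mathrm{loc}}$ logarithmic kernel with a finite measure on $\partial\Omega$), so vanishing off the null set $\partial\Omega$ does force $U=0$ in $\mathcal D'$ and then $\mathcal Iv=P(\lambda_0)U=0$. But some such argument must be supplied; as written, the step is a genuine gap.
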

\begin{proof}
    1. Put $U := R_{\lambda_0 + i0} \mathcal Iv$. Recall that $E_{\lambda_0 + i0} \in \mathcal D'(\R^2)$ is a fundamental solution, which satisfies $P(\lambda_0) E_{\lambda_0 + i0} =\delta_0$. Since $\mathcal I v$ is compactly supported, we have
    \begin{equation}\label{eq:PU=Iv}
        P(\lambda_0) U = \mathcal Iv.
    \end{equation}
    Let $u = U|_{\Omega} = S_{\lambda_0 + i0} v$. Then
    \begin{equation*}
        P(\lambda_0)u = 0.
    \end{equation*}
    By Lemma~\ref{lem:S_mapping_prop}, $u$ extends to the boundary to $u \in C^\infty(\overline \Omega)$. Furthermore, $u|_{\partial \Omega} = \mathcal C_{\lambda_0 + i0} v = 0$, so by Lemma~\ref{lem:interior_uniqueness}, we have $u = 0$. Then by the support assumption (\ref{eq:uniqueness_assumptions}), we see that 
    \begin{equation}\label{eq:U_support}
        \supp U \subset \partial \Omega.
    \end{equation}

    \noindent
    2. Now we deduce $v = 0$ from (\ref{eq:U_support}). We use the same method here as in the second step of \cite[Lemma 7.2]{DWZ}. Denote
    \[\mathscr C := \{x^+_{\mathrm{max}}, x^-_{\mathrm{max}}, x^+_{\mathrm{min}}, x^-_{\mathrm{min}}\}\]
    where $x^\pm_{\mathrm{min}}, x^\pm_{\mathrm{max}}$ are defined in \eqref{eq:characteristic_set}. Since $\Omega$ is $\lambda_0$-simple, we see that the characteristic set of the differential operator $P(\lambda_0)$ (i.e. the zero set of principal symbol of $P(\lambda_0)$) is disjoint from the conormal bundle of $\partial \Omega \setminus \mathscr C$. Therefore, for every $x_0 \in \partial \Omega \setminus \mathscr C$, there exists a neighborhood $V \subset \R^2$ of $x_0$ such that there exists coordinates $(y_1, y_2)$ on $V$ satisfying 
    \begin{equation}
        \partial \Omega \cap V = \{y_1 = 0, y_2 \in \mathcal J\},  \quad P(\lambda_0)|_V = \sum_{|\alpha| \le 2} a_{\alpha} \partial_y^\alpha, \quad a_{2, 0} \neq 0
    \end{equation}
    for some interval $\mathcal J \subset \R$. By \cite[Theorem 2.3.5]{H1}, \eqref{eq:U_support} implies that
    \[U|_V = \sum_{k \le K} u_k(y_2) \otimes \delta^{(k)} (y_1)\]
    for some $u_k \in \mathcal D'(\mathcal J)$. Therefore,
    \[\mathcal I v|_{V} = P(\lambda_0) U|_V = a_{2, 0}(y) u_K(y_2) \otimes \delta^{(K + 2)}(y_1) + \sum_{k \le K + 1} \tilde u_k(y_2) \otimes \delta^{(k)}(y_1)\]
    for some $\tilde u_k \in \mathcal D'(\mathcal J)$. On the other hand, we know that $\mathcal I v |_V = f(y_2) \otimes \delta(y_1)$ for some $f \in C^\infty(\mathcal J)$. Since $a_{2, 0} \neq 0$, we deduce that $u_{K}(y_2) = 0$. Iterating, we see that $U|_V = 0$. This means that $v|_{\partial \Omega \setminus \mathscr C} = 0$, and since $v \in C^\infty(\partial \Omega; T^* \partial \Omega)$, we conclude that $v = 0$.
\end{proof}

\subsection{Boundary limiting absorption}
Equipped with both a high frequency estimate and uniqueness for the limiting problem, we can now prove a convergence statement for the boundary data.

Fix $f \in C_c^\infty(\Omega)$ and $\lambda_0 \in (0, 1)$ so that $\Omega$ is $\lambda_0$-simple and the rotation number $\mathbf{r}(\lambda_0)$ is Diophantine. Let us focus on the polynomial region $\Lambda = \Lambda^+_{\lambda_0, d}$ above $\lambda_0$ as defined in (\ref{eq:poly_region}) for some $d \ge 2$. Recall from Lemma~\ref{lem:fs_app} that the elliptic boundary value problem (\ref{eq:EBVP}) given by
\begin{equation}\label{eq:BVP_reprise}
    P(\omega)u_\omega = f, \quad u_\omega|_{\partial \Omega} = 0, \quad \omega \in \Lambda \setminus \{\lambda_0\}
\end{equation}
has unique solution $u_\omega \in C^\infty(\overline \Omega)$ that depends holomorphically on $\omega$. We wish to establish a limiting absorption principle so that we can understand the evolution problem. We first look at the behavior as $\omega \to \lambda_0$ of the Neumann data.

\begin{proposition}\label{prop:v_LAP}
    Consider a sequence $\omega_j \to \lambda_0$, $\omega_j \in \Lambda\setminus \{\lambda_0\}$ for all $j$. Let $\lambda_0$ be such that $\Omega$ is $\lambda_0$-simple and $\mathbf r(\lambda_0)$ is Diophantine. Let $u_{\omega_j} \in C^\infty(\overline \Omega)$ be the solutions to (\ref{eq:BVP_reprise}) and 
    \[v_{\omega_j} := \mathcal N_{\omega_j} u_{\omega_j} \in C^\infty (\partial \Omega; T^* \partial \Omega)\]
    be the corresponding Neumann data as defined in (\ref{eq:neumann_data}). Then 
    \[v_{\omega_j} \to v_{\lambda_0 + i0} \quad \text{in} \quad C^\infty(\partial \Omega, T^* \partial \Omega)\]
    where $v_{\lambda_0 + i0}$ is the unique smooth solution to 
\[\mathcal C_{\lambda_0 + i0} v_{\lambda_0 + i0} = (R_{\lambda_0 + i0} f)|_{\partial \Omega}, \quad \supp(R_{\lambda_0 + i0} (f - \mathcal I v_{\lambda_0 + i0})) \subset \overline \Omega.\]
\end{proposition}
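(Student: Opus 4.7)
The proof is a standard limiting absorption argument combining the high frequency estimate (Proposition~\ref{prop:high_freq_est}), the convergence of the restricted single layer operator (Proposition~\ref{prop:C_converge}), and the boundary uniqueness lemma (Lemma~\ref{lem:bd_uniqueness}). The three pieces are: uniform $C^\infty$-bounds for $v_{\omega_j}$, extraction of a convergent subsequence, and identification of the limit via uniqueness.

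First I would establish uniform $C^\infty$-bounds on $v_{\omega_j}$. Since $f \in \CIc(\Omega)$ has support at positive distance from $\partial\Omega$, the explicit formula of Lemma~\ref{lem:fs} together with Lemma~\ref{lem:fs_holomorphic} shows that $(R_{\omega_j} f)|_{\partial \Omega}$ is uniformly bounded in $C^\infty(\partial\Omega)$. By~\eqref{eq:high_freq_est_2}, it then suffices to control $\|v_{\omega_j}\|_{H^{-N}}$ for some large $N$. Suppose for contradiction that $M_j := \|v_{\omega_j}\|_{H^{-N}} \to \infty$ along a subsequence. Set $\tilde v_j := v_{\omega_j}/M_j$, so that $\|\tilde v_j\|_{H^{-N}} = 1$ and
\[
\mathcal C_{\omega_j} \tilde v_j \;=\; M_j^{-1} (R_{\omega_j} f)|_{\partial\Omega} \;\longrightarrow\; 0 \quad \text{in } C^\infty(\partial\Omega).
\]
Proposition~\ref{prop:high_freq_est} then yields $\|\tilde v_j\|_{H^s} \le C_s$ uniformly in $j$ for every $s$. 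Rellich compactness plus a diagonal extraction produces a $C^\infty$-convergent subsequence $\tilde v_j \to \tilde v_\infty$ with $\|\tilde v_\infty\|_{H^{-N}} = 1$, and Proposition~\ref{prop:C_converge} gives $\mathcal C_{\lambda_0 + i0} \tilde v_\infty = 0$.

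To apply Lemma~\ref{lem:bd_uniqueness}, I must verify $\supp(R_{\lambda_0 + i0} \mathcal I \tilde v_\infty) \subset \overline\Omega$. By~\eqref{eq:fs_conv}, $R_{\omega_j}(f - \mathcal I v_{\omega_j}) = \indic_\Omega u_{\omega_j}$ is supported in $\overline\Omega$, so after dividing by $M_j$,
\[
R_{\omega_j} \mathcal I \tilde v_j \;-\; M_j^{-1} R_{\omega_j} f \;=\; -\indic_\Omega u_{\omega_j}/M_j
\]
is supported in $\overline\Omega$. Testing against any $\varphi \in \CIc(\R^2 \setminus \overline\Omega)$ and passing to the limit using Lemma~\ref{lem:fs_holomorphic} (to commute the limit with the distributional pairing $\omega \mapsto (E_\omega, \cdot)$) combined with the $C^\infty$-convergence $\tilde v_j \to \tilde v_\infty$ shows $\supp(R_{\lambda_0 + i0} \mathcal I \tilde v_\infty) \subset \overline\Omega$. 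Lemma~\ref{lem:bd_uniqueness} then forces $\tilde v_\infty = 0$, contradicting $\|\tilde v_\infty\|_{H^{-N}} = 1$.

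With uniform $C^\infty$-bounds on the original sequence $v_{\omega_j}$ established, another Rellich extraction produces a $C^\infty$-convergent subsequence with some limit $v_{\lambda_0 + i0}$. Proposition~\ref{prop:C_converge} gives $\mathcal C_{\lambda_0 + i0} v_{\lambda_0 + i0} = (R_{\lambda_0 + i0} f)|_{\partial\Omega}$, and the support condition $\supp(R_{\lambda_0 + i0}(f - \mathcal I v_{\lambda_0 + i0})) \subset \overline\Omega$ follows from the same test-function argument applied directly to $R_{\omega_j}(f - \mathcal I v_{\omega_j}) = \indic_\Omega u_{\omega_j}$. Uniqueness of such a limit (Lemma~\ref{lem:bd_uniqueness} applied to the difference of two candidates) implies every subsequential limit agrees, so the full sequence converges in $C^\infty$. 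The main technical point is tracking the support condition through the limit, which relies on the smooth dependence of $E_\omega$ on $\omega$ up to the real axis; everything else is essentially bookkeeping with Rellich compactness and the cited propositions.
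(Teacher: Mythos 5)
Your proposal is correct and follows essentially the same route as the paper: a normalization-and-contradiction argument using the high frequency estimate, Rellich compactness, and the boundary uniqueness lemma to get uniform bounds, followed by subsequential extraction and uniqueness to identify the limit. The only cosmetic difference is that you normalize by the $H^{-N}$ norm while the paper normalizes by the $H^{s}$ norm and upgrades $H^{-s}$ convergence to $H^{s}$ convergence via the high frequency estimate; both are valid.
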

\begin{proof}
    1. Recall that by restricting the identities in Lemma~\ref{lem:fs_app} to the boundary, we have the identities
    \begin{equation}\label{eq:Cv=Rf}
    \begin{gathered}
        \mathcal C_\omega v_\omega = (R_\omega f)|_{\partial \Omega}, \\
        \indic_\Omega u_\omega = R_\omega(f - \mathcal I v_\omega)
    \end{gathered}
    \end{equation}
    where $R_\omega f= E_\omega * f \in C^\infty(\R^2)$ where $E_\omega$ is the fundamental solution. By Lemma~\ref{lem:fs}, $E_\omega \to E_{\lambda_0 + i0}$ in distributions, so
    \begin{equation}\label{eq:Cv_to_Rf}
        \mathcal C_{\omega_j} v_{\omega_j} \to (R_{\lambda_0 + i0} f)|_{\partial \Omega} \quad \text{in } C^\infty (\partial \Omega).
    \end{equation}

    \noindent
    2. We first prove that $v_{\omega_j}$ must be bounded in $H^s(\partial \Omega; T^* \partial \Omega)$ for any $s > 0$. Assume for the sake of contradiction that $v_{\omega_j}$ is unbounded in $H^s(\partial \Omega; T^* \partial \Omega)$, i.e. upon passing to a subsequence, we may assume that $\|v_{\omega_j} \|_{H^s} \to \infty$. Define the rescaling 
    \begin{equation*}
        v_j = \frac{v_{\omega_j}}{\|v_{\omega_j}\|_{H^s}} \quad \text{and} \quad u_j = \frac{u_{\omega_j}}{\|v_{\omega_j}\|_{H^s}}.
    \end{equation*}
    We have compact embedding $H^s(\partial \Omega ; T^* \partial \Omega) \hookrightarrow H^{-s}(\partial \Omega; T^* \partial \Omega)$, so upon passing to a subsequence, we may assume that there exists $v_0 \in H^{-s}(\partial \Omega; T^* \partial \Omega)$ so that 
    \begin{equation*}
        v_j \to v_0 \quad \text{in} \quad H^{-s}(\partial \Omega; T^* \partial \Omega).
    \end{equation*}
    By (\ref{eq:Cv_to_Rf}), $\mathcal C_{\omega_j} v_j$ is bounded in $H^{s'}$ for every $s'$, so by the high frequency estimate Corollary \ref{prop:high_freq_est}, $v_j$ is bounded in $H^{s + 1}$. Therefore we have the upgraded convergence
    \begin{equation*}
        v_j \to v_0 \quad \text{in} \quad H^{s}(\partial \Omega; T^* \partial \Omega),
    \end{equation*}
    which means $\|v_0\|_{H^s} = 1$. On the other hand, we see from (\ref{eq:Cv_to_Rf}) that 
    \begin{equation*}
        \mathcal C_{\omega_j} v_j \to 0 \quad \text{in} \quad C^\infty(\partial \Omega).
    \end{equation*}
    It follows from Proposition \ref{prop:C_converge} that $\mathcal C_{\omega_j} v_j \to \mathcal C_{\lambda_0 + i0} v_0$ in (at the very least) $\mathcal D'(\partial \Omega)$. Therefore
    \[\mathcal C_{\lambda_0 + i0} v_0 = 0.\]
    Furthermore, from (\ref{eq:Cv=Rf}), we see that 
    \[R_{\lambda_0 + i0} \mathcal I v_0 = -\lim_{j \to \infty} \indic_\Omega u_j \implies \supp(R_{\lambda_0 + i0} \mathcal I v_0) \subset \overline \Omega\]
    where the limit is taken in $\mathcal D'(\R^2)$. Therefore by Lemma~\ref{lem:bd_uniqueness}, we must have $v_0 = 0$, contradicting $\|v_0\|_{H^s} = 1$. Therefore, we have $v_{\omega_j}$ is uniformly bounded in $H^s(\partial \Omega; T^* \partial \Omega)$ for every $s > 0$. 

    \noindent
    3. Finally, we prove convergence. By step 2 and the compactness of the embedding $H^{s'}(\partial \Omega; T^* \partial \Omega) \hookrightarrow H^{s} (\partial \Omega; T^* \partial \Omega)$ for any $s' > s$, we see that $v_{\omega_j}$ is precompact in $H^s(\partial \Omega; T^* \partial \Omega)$ for any $s \in \R$. Therefore, if $v$ is an $H^s$ limit point of $v_{\omega_j}$, we must have
    \[v \in C^\infty(\partial \Omega; T^* \partial \Omega).\] 
    Then by (\ref{eq:Cv=Rf}) and (\ref{eq:Cv_to_Rf}), $v$ must satisfy 
    \[\mathcal C_{\lambda_0 + i0} v = (R_{\lambda_0 + i0} f)|_{\partial \Omega}, \qquad \supp(R_{\lambda_0 + i0}(f - \mathcal I v)) \subset \overline \Omega.\]
    Therefore by the uniqueness property in Lemma~\ref{lem:bd_uniqueness}, the $H^s$ limit of every subsequence of $v_{\omega_j}$ must be the same, so $v_{\omega_j} \to v_{\lambda_0 + i0}$ in $H^s$ for every $s > 0$.
\end{proof}

What we ultimately need to obtain the spectral estimates is to be able to Taylor expand near $\lambda_0$. For this, we first need smooth dependence of the Neumann data on $\omega$ in the region $\Lambda$.  
\begin{proposition}\label{prop:bd_deriv_limit}
    Let $v_{\omega} := \mathcal N_\omega u_\omega$ where $u_\omega$ are the solutions to~\eqref{eq:BVP_reprise}. Let $\omega_j \in \Lambda \setminus \{\lambda_0\}$ with $\omega_j \to \lambda_0$. Then there exists $v_{\lambda_0 + i0}^{k} \in C^\infty(\partial \Omega, T^* \partial \Omega)$ such that
    \[\partial_{\omega}^k v_{\omega_j} \to v_{\lambda_0 + i0}^{(k)} \quad \text{in} \quad C^\infty(\partial\Omega; T^* \partial \Omega).\]
\end{proposition}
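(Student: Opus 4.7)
The plan is to induct on $k$, with the base case $k = 0$ being exactly Proposition \ref{prop:v_LAP}. For the inductive step, assuming convergence of $\partial_\omega^{k'} v_{\omega_j}$ in $C^\infty$ for all $k' < k$, I would differentiate the identity \eqref{eq:v=Bb+g} in $\omega$ a total of $k$ times. Writing $\mathcal B_\omega := B_\omega^+ b_\lambda^* + B_\omega^- b_\lambda^{-*} + \mathscr R_\omega$, this yields
\begin{equation*}
    (I - \mathcal B_\omega) \partial_\omega^k v_\omega = \partial_\omega^k f_\omega + \sum_{j = 1}^k \binom{k}{j} (\partial_\omega^j \mathcal B_\omega) \partial_\omega^{k - j} v_\omega,
\end{equation*}
which has exactly the form of \eqref{eq:v=Bb+g} with $\partial_\omega^k v_\omega$ in place of $v_\omega$. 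The smoothness of the construction of $B_\omega^\pm$ and $\mathscr R_\omega$ in $\omega$ up to the real axis (coming from Proposition \ref{prop:full_symbol} and Lemma \ref{lem:fs_holomorphic}) ensures that $\partial_\omega^j \mathcal B_\omega$ remains of the same operator class uniformly for $\omega \in \Lambda$.

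Applying Proposition \ref{prop:high_freq_est} to this differentiated equation gives
\begin{equation*}
    \|\partial_\omega^k v_\omega\|_{H^s} \le C\bigl( \|\text{RHS}\|_{H^{s + d(2 + \beta)}} + \|\partial_\omega^k v_\omega\|_{H^{-N}} \bigr),
\end{equation*}
and the inductive hypothesis together with the smooth dependence of $\omega \mapsto f_\omega$ bounds the first term on the right uniformly in $\omega \in \Lambda \setminus \{\lambda_0\}$. To close the estimate, I would remove the $\|\partial_\omega^k v_\omega\|_{H^{-N}}$ term by the rescaling-to-contradiction argument of Step 2 in the proof of Proposition \ref{prop:v_LAP}: if $\|\partial_\omega^k v_{\omega_j}\|_{H^{-N}}$ were unbounded along some sequence, the normalized sequence would produce (after passing to a subsequence) an $H^s$ limit $w_0$ with $\|w_0\|_{H^s} = 1$ satisfying $\mathcal C_{\lambda_0 + i0} w_0 = 0$, which by Lemma \ref{lem:bd_uniqueness} must vanish.

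Once uniform $C^\infty$ bounds on $\partial_\omega^k v_\omega$ are in hand, precompactness of $\{\partial_\omega^k v_{\omega_j}\}$ in every $H^s$ follows from compact Sobolev embeddings. Any limit point $v^{(k)}_{\lambda_0 + i0}$ then satisfies the identity obtained by passing to the limit in $\mathcal C_\omega \partial_\omega^k v_\omega = g_{\omega, k}$, where $g_{\omega, k}$ is the $C^\infty$-convergent source gathered from differentiating $\mathcal C_\omega v_\omega = (R_\omega f)|_{\partial \Omega}$ and applying the inductive hypothesis together with Proposition \ref{prop:C_converge}. Uniqueness of the limit follows by applying Lemma \ref{lem:bd_uniqueness} to the difference of any two such limit points.

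The main obstacle I expect is verifying the support hypothesis in Lemma \ref{lem:bd_uniqueness} for the rescaled limit $w_0$ in the contradiction step, since this is more delicate than in the base case. Concretely, one must differentiate $\indic_\Omega u_\omega = R_\omega(f - \mathcal I v_\omega)$ in $\omega$ and use that $\partial_\omega^k u_\omega \in C^\infty(\overline \Omega)$ (by Lemma \ref{lem:fs_app} and holomorphy of $\omega \mapsto u_\omega$) to conclude that the fully Leibniz-expanded right-hand side has support in $\overline \Omega$. Tracking normalization factors and lower-order $\omega$-derivative contributions through this expansion, and verifying that the terms surviving the rescaling limit combine to yield $\supp(R_{\lambda_0 + i0} \mathcal I w_0) \subset \overline \Omega$, is where the real bookkeeping lives; the convergence results in Proposition \ref{prop:S_converge} and Lemma \ref{lem:fs_holomorphic} are the key tools for controlling these derivative terms.
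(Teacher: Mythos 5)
Your overall architecture matches the paper's: induct on $k$, show uniform boundedness by a rescaling-to-contradiction argument closed with the high frequency estimate and the uniqueness Lemma~\ref{lem:bd_uniqueness}, then get convergence from precompactness plus uniqueness of limit points, with the support hypothesis supplied by the $\omega$-differentiated identity $\indic_\Omega u_\omega = R_\omega(f - \mathcal I v_\omega)$. That part is sound and is essentially what the paper does.

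The gap is in \emph{which} identity you differentiate. You differentiate \eqref{eq:v=Bb+g} and assert that $\partial_\omega^j \mathcal B_\omega$ "remains of the same operator class uniformly for $\omega \in \Lambda$," citing Proposition~\ref{prop:full_symbol}. That proposition only gives uniform control of $B_\omega^\pm$ and $\mathscr R_\omega$ themselves, not of their $\omega$-derivatives. Two concrete problems arise. First, the semiclassical parameter in the construction of $B_\omega^\pm$ is $h = \Im\omega$, so $\partial_\omega$ acting on a symbol like $e^{-\tilde z_\omega^\pm(x) h\xi}$ produces factors of $\xi$; each derivative costs an order in the calculus, and one would have to track this carefully rather than claim the class is preserved. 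Second, and more seriously, the operators $B_\omega^\pm$, $\mathscr R_\omega$, and the $f_\omega$ in \eqref{eq:v=Bb+g} are only pinned down modulo remainders "in $\Psi^{-\infty}$ uniformly in $\omega$"; nothing in the construction controls the $\omega$-derivatives of those remainders, so the differentiated identity is not justified as written. The paper sidesteps all of this: it differentiates the concrete layer-potential identity $\mathcal C_\omega v_\omega = (R_\omega f)|_{\partial\Omega}$ (obtaining \eqref{eq:Cdv}), where $\partial_\omega^{k-\ell}\mathcal C_\omega$ is controlled by Proposition~\ref{prop:C_converge}, and then treats $\partial_\omega^k v_\omega$ as a \emph{new unknown} satisfying $\mathcal C_\omega(\partial_\omega^k v_\omega) = G_{\omega,k}$ with $G_{\omega,k}$ bounded in $C^\infty$ by the induction hypothesis; the \emph{undifferentiated} high frequency estimate \eqref{eq:high_freq_est_2} then applies directly, since its proof only needs $\mathcal C_\omega w$ to be controlled for the function $w$ at hand. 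If you reroute your inductive step through \eqref{eq:Cdv} and Proposition~\ref{prop:C_converge} in this way, the rest of your argument goes through unchanged.
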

In fact, it will be clear from the proof below that $v_{\lambda_0 + i0}^{(k)}$ is the unique solution to
\begin{equation*}
\begin{gathered}
\mathcal C_{\lambda_0+i0} v_{\lambda_0+i0}^{(k)}=(\partial_\lambda^k R_{\lambda_0+i0}f)|_{\partial\Omega}-\sum_{\ell=0}^{k-1}
\binom k\ell (\partial_\lambda^{k-\ell}\mathcal C_{\lambda_0+i0})v_{\lambda_0+i0}^{(\ell)},\\
\supp\bigg(R_{\lambda_0+i0}\mathcal Iv^{(k)}_{\lambda_0+i0}-\partial^k_{\lambda}R_{\lambda_0+i0}f
+\sum_{\ell=0}^{k-1}\binom{k}{\ell}(\partial^{k-\ell}_{\lambda}R_{\lambda_0+i0})\mathcal I v^{(\ell)}_{\lambda_0+i0}\bigg)\subset\overline\Omega.
\end{gathered}
\end{equation*}

\begin{proof}
    1. The idea of the proof is similar to Proposition \ref{prop:v_LAP}. We first show that $\partial^k_\omega v_{\omega_j}$ is uniformly bounded in $C^\infty(\partial \Omega; T^* \partial \Omega)$ for $\omega \in \Lambda \setminus \{\lambda_0\}$. Differentiating the identity (\ref{eq:Cv=Rf}) $k$ times in $\omega$, we find 
    \begin{gather}
        \mathcal C_\omega \partial^k_\omega v_\omega = \partial_\omega^k F_\omega - \sum_{\ell = 0}^{k - 1} \binom{k}{\ell} (\partial_\omega^{k - \ell} \mathcal C_\omega)(\partial_\omega^\ell v_\omega), \label{eq:Cdv}\\
        R_\omega \mathcal I \partial_\omega^k v_\omega = \partial_\omega^k R_\omega f - \indic_\Omega \partial_\omega^k u_\omega - \sum_{\ell = 0}^{k - 1} \binom{k}{\ell} (\partial_\omega^{k - \ell} R_\omega) \mathcal I (\partial_\omega^\ell v_\omega), \label{eq:RId^kv}
    \end{gather}
    where we put $F_\omega:= (R_\omega f)|_{\partial \Omega}$. By Lemma~\ref{lem:fs_holomorphic}, we see that
    \begin{equation}\label{eq:dF_bound}
        \partial_\omega^k F_\omega \quad \text{is uniformly bounded in $C^\infty(\partial \Omega)$ for $\omega \in \Lambda$}.
    \end{equation}
    We proceed by induction on $k$. Assume that for all $\ell < k$, $\partial^\ell_\omega v_\omega$ is uniformly bounded for $\omega \in \Lambda\setminus \{\lambda_0\}$. Now assume by way of contradiction that $\|\partial_\omega^k v_{\omega_j}\|_{H^s} \to \infty$ for some $s > 0$. Define the rescalings
    \begin{equation}\label{eq:rescalings2}
        v_j = \frac{\partial_\omega^k v_{\omega_j}}{\|\partial_\omega^k v_{\omega_j}\|_{H^s}} \quad \text{and} \quad u_j = \frac{\partial_\omega^k u_{\omega_j}}{\|\partial_\omega^k v_{\omega_j}\|_{H^s}}.
    \end{equation}
    Since $H^s(\partial \Omega; T^* \partial \Omega) \hookrightarrow H^{-s}(\partial \Omega; T^* \partial \Omega)$ compactly, we may assume that $v_j \to v_0$ in $H^{-s}$ upon passing to a subsequence. The right hand side of (\ref{eq:Cdv}) is uniformly bounded in $C^\infty(\partial \Omega)$ by (\ref{eq:dF_bound}) and the induction hypothesis together with Proposition~\ref{prop:C_converge}. This means that 
    \begin{equation}
        \mathcal C_{\omega_j} v_j \to 0 \quad \text{in} \quad C^\infty(\partial \Omega).
    \end{equation}
    Then it follows from the high frequency estimate Corollary \ref{prop:high_freq_est} that 
    \begin{equation}\label{eq:vj_to_v0}
        v_j \to v_0 \quad \text{in} \quad H^s(\partial \Omega; T^* \partial \Omega).
    \end{equation}
    By Proposition \ref{prop:C_converge}, we see that $\mathcal C_{\omega_j} v_j \to \mathcal C_{\lambda_0 + i0} v_0$ in $\mathcal D'(\partial \Omega)$. Therefore
    \[\mathcal C_{\lambda_0 + i0} v_0 = 0.\]
    Furthermore, it follows from the induction hypothesis that 
    \[R_{\lambda_0 + i0} \mathcal I v_0 = -\lim_{j \to \infty} \indic_\omega u_j \implies \supp (R_{\lambda_0 + i0} \mathcal I v_0) \subset \overline \Omega.\]
    By uniqueness in Lemma~\ref{lem:bd_uniqueness}, $v_0 = 0$, contradicting (\ref{eq:rescalings2}) and (\ref{eq:vj_to_v0}). 

    \noindent
    2. Now we prove convergence. This is nearly identical to Step 3 of Proposition \ref{prop:v_LAP} except with the addition of an inductive step. Assume that $\partial_\omega^\ell v_{\omega_j} \to v_{\lambda_0 + i0}^{(\ell)}$ for all $\ell < k$. By Step 1 and Sobolev embeddings, we see that $\partial^k_\omega v_{\omega_j}$ is precompact in $H^s(\partial \Omega; T^* \partial \Omega)$ for any $s \in \R$. Every convergent subsequence in $H^s$ must converge to an element of $C^\infty$ by the same argument as in Proposition \ref{prop:v_LAP}, i.e. we have $\partial^k_\omega v_{\omega_{j_n}} \xrightarrow{H^s} v \in C^\infty(\partial \Omega; T^* \partial \Omega)$. Suppose another subsequence of $v_{\omega_j}$ converges in $H^s$ to $\tilde v \in C^\infty(\partial \Omega; T^* \partial \Omega)$. Then by the induction hypothesis and (\ref{eq:RId^kv}), 
    \[R_{\lambda_0 + i0} \mathcal I (v - \tilde v) = \indic_\Omega (v - \tilde v) \implies \supp(R_{\lambda_0 + i0} \mathcal I (v - \tilde v)) \subset \overline \Omega. \]
    Therefore by Lemma~\ref{lem:bd_uniqueness}, $\partial_\omega^k v_{\omega_j}$ converges in $H^s$ for every $s$. Since the limit is identical for any choice of $\omega_j \to \lambda_0$, $\omega_j \in \Lambda \setminus \{\lambda_0\}$, so we can denote the limit by put $\partial_\omega^k v_{\omega_j} \to v_{\lambda_0 + i0}^{(k)}$ in $C^\infty(\partial \Omega; T^* \partial \Omega)$.
\end{proof}

\Remark It is clear that Propositions \ref{prop:v_LAP} and \ref{prop:bd_deriv_limit} hold with all the $+$'s replaced with $-$'s. This corresponds to the limits from polynomial regions of the lower half plane as opposed to the upper half plane. We will see below that this makes no difference since we have uniqueness of the limiting problem.

\subsection{Interior limiting absorption} 
Using (\ref{eq:boundary_to_interior}), we can now deduce the the limiting behavior of the $P(\omega)^{-1}$ using what we have proved about the limiting behavior of $v_\omega$. 

\begin{proposition}\label{prop:u_LAP}
    Assume that $\Omega$ is $\lambda_0$-simple and $\mathbf r(\lambda_0)$ is Diophantine. Let $\Lambda^{\pm}_{\lambda_0, d}$ be the polynomial region defined in (\ref{eq:poly_region}) and let $f \in \CIc(\Omega)$. Then there exists a unique solution $u_{\lambda_0} \in C^\infty(\overline\Omega)$ to
    \begin{equation}\label{eq:BVP_real}
        P(\lambda_0) u_{\lambda_0} = f, \qquad u_{\lambda_0}|_{\partial\Omega} = 0. 
    \end{equation}
    Furthermore, there exists $u_{\lambda_0}^{(k)} \in C^\infty(\overline \Omega)$ such that
    for any sequence $\omega_j \to \lambda_0$ with $\omega_j \in (\Lambda^+_{\lambda_0, d} \cup \Lambda^-_{\lambda_0, d}) \setminus\{\lambda_0\}$ for all $j$, 
    \begin{equation}
        \partial_\omega^k u_{\omega_j} \to u_{\lambda_0}^{(k)} \quad \text{in} \quad \text{$C^\infty(\overline \Omega)$ for all $k \in \N_0$,}
    \end{equation}
    where $u_\omega$ are the unique solutions to the boundary value problem (\ref{eq:BVP_reprise}). 
\end{proposition}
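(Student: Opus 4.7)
The plan is to construct $u_{\lambda_0}^{(k)}$ directly as a $C^\infty(\overline\Omega)$-limit via the boundary reduction formula~\eqref{eq:boundary_to_interior}. Differentiating $k$ times in $\omega$ gives
\[
    \partial_\omega^k u_\omega = (\partial_\omega^k R_\omega f)|_\Omega - \sum_{\ell = 0}^k \binom{k}{\ell} \bigl(\partial_\omega^{k-\ell} S_\omega\bigr)\bigl(\partial_\omega^\ell v_\omega\bigr),
\]
and I will check that each term on the right converges in $C^\infty(\overline\Omega)$ as $\omega_j \to \lambda_0$ in $\Lambda^+_{\lambda_0, d} \setminus \{\lambda_0\}$.

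For the first term, observe that $\partial_x^\alpha R_\omega f = R_\omega(\partial^\alpha f)$ by integration by parts (using that $f\in\CIc(\Omega)$), and for each $x \in \overline\Omega$ we have $R_\omega f(x) = \langle E_\omega, f(x - \cdot)\rangle$, which is smooth in $\omega$ up to $\Im \omega = 0$ by Lemma~\ref{lem:fs_holomorphic}. Because $\{(\partial^\alpha f)(x - \cdot)\}_{x \in \overline\Omega}$ is a smoothly parametrized family in $\CIc(\R^2)$, this upgrades to joint smoothness of $(x, \omega) \mapsto \partial_\omega^k R_\omega f(x)$ up to the real axis, so $(\partial_\omega^k R_{\omega_j} f)|_\Omega \to (\partial_\lambda^k R_{\lambda_0 + i0} f)|_\Omega$ in $C^\infty(\overline\Omega)$. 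For the sum, Proposition~\ref{prop:bd_deriv_limit} supplies $\partial_\omega^\ell v_{\omega_j} \to v^{(\ell)}_{\lambda_0 + i0}$ in $C^\infty(\partial\Omega; T^*\partial\Omega)$, while the maximum-modulus estimates in the proof of Proposition~\ref{prop:S_converge} give uniform bounds on $\partial_\omega^{k - \ell} S_\omega$ as operators from fixed Sobolev norms on $\partial\Omega$ to $C^m(\overline\Omega)$ for $\omega$ in $\Lambda^+_{\lambda_0, d}$ near $\lambda_0$; combining this uniform bound with the fixed-$v$ convergence from Proposition~\ref{prop:S_converge} applied to $v_{\lambda_0 + i0}^{(\ell)}$ yields $(\partial_\omega^{k-\ell} S_{\omega_j})(\partial_\omega^\ell v_{\omega_j}) \to (\partial_\lambda^{k-\ell} S_{\lambda_0 + i0})(v_{\lambda_0 + i0}^{(\ell)})$ in $C^\infty(\overline\Omega)$ by a triangle inequality.

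This defines $u^{(k)}_{\lambda_0 + i0}$, and the analogous argument for lower-half-plane sequences yields $u^{(k)}_{\lambda_0 - i0}$. To prove they agree I induct on $k$. At $k = 0$, both limits satisfy $P(\lambda_0) u = f$ and $u|_{\partial\Omega} = 0$ since both conditions pass to the $C^\infty(\overline\Omega)$-limit; Lemma~\ref{lem:interior_uniqueness} identifies them and gives the unique $u_{\lambda_0}$ of the statement. For $k \ge 1$, differentiating $P(\omega) u_\omega = f$ in $\omega$ and using that $P(\omega)$ is quadratic in $\omega$ yields
\[
    P(\omega) \partial_\omega^k u_\omega = 2k\omega\, \Delta \partial_\omega^{k-1} u_\omega + k(k-1)\, \Delta \partial_\omega^{k-2} u_\omega,
\]
whose right-hand side, by the inductive hypothesis, has the same $C^\infty(\overline\Omega)$-limit from above and from below; with vanishing boundary trace, Lemma~\ref{lem:interior_uniqueness} again forces equality. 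The main obstacle throughout is the first term: although $E_{\lambda_0 + i0}$ develops characteristic singular support, $R_\omega f$ remains smooth in $(x, \omega)$ because convolving a distribution with a $\CIc$ function automatically produces a $C^\infty$ function smooth in any parameter through which the distribution depends smoothly—this is what allows the boundary reduction to be passed to the limit cleanly and feeds the remaining terms into the uniform mapping properties of $S_\omega$ established earlier.
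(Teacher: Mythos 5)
Your proof is correct and follows essentially the same route as the paper: differentiate the boundary-reduction formula~\eqref{eq:boundary_to_interior} $k$ times, pass to the limit term by term using Propositions~\ref{prop:bd_deriv_limit} and~\ref{prop:S_converge} and Lemma~\ref{lem:fs_holomorphic}, then identify the limits from the two half-planes by induction on $k$ via the differentiated equation $P(\omega)\partial_\omega^k u_\omega = 2k\omega\,\Delta\partial_\omega^{k-1}u_\omega + k(k-1)\Delta\partial_\omega^{k-2}u_\omega$ (which matches the paper's~\eqref{eq:P_omega_derivatives}) and Lemma~\ref{lem:interior_uniqueness}. Your explicit triangle-inequality treatment of $(\partial_\omega^{k-\ell}S_{\omega_j})(\partial_\omega^\ell v_{\omega_j})$, combining the uniform mapping bounds from the proof of Proposition~\ref{prop:S_converge} with the fixed-$v$ convergence, is a worthwhile filling-in of a step the paper handles by citation only.
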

\Remark An important feature of Proposition~\ref{prop:u_LAP} is that the limits
$u^{(k)}_{\lambda_0}$ are the same when approaching $\lambda_0$ from above and from below in the complex plane, that is $u^{(k)}_{\lambda_0+i0}=u^{(k)}_{\lambda_0-i0}$. This is in contrast with the
usual limiting absorption principle in scattering theory. See \cite{DZ_resonances} for examples, particularly the remarks after \cite[Definition 2.3]{DZ_resonances} for the simplest case of one-dimensional potential scattering). It also contrasts the limiting absorption principles in the non-ergodic internal waves situations studied in \cite[\S3.3]{Dyatlov_Zworski_19} and \cite[\S7]{DWZ}.
\begin{proof}
    Differentiating (\ref{eq:boundary_to_interior}) $k$ times in $\omega$, we find
    \begin{equation}\label{eq:u=Rf-Sv_derivative}
        \partial_\omega^k u_\omega = \partial_\omega^k(R_\omega f)|_\Omega - \sum_{\ell = 0}^k \binom{k}{\ell}(\partial_\omega^{k - \ell} S_\omega)(\partial_\omega^\ell v_\omega)
    \end{equation}
    where
    \[v_\omega := \mathcal N_\omega u_\omega.\]
    Assume that $\omega_j \in \Lambda^\pm_{\lambda_0, d} \setminus\{\lambda_0\}$ for all $j$. By Proposition~\ref{prop:bd_deriv_limit}, Proposition~\ref{prop:S_converge}, and Lemma~\ref{lem:fs_holomorphic}, 
    the right-hand side of (\ref{eq:u=Rf-Sv_derivative}) converges in $C^\infty(\overline \Omega)$. When $k = 0$, we see that 
    \[u_\omega \to (R_{\lambda_0 + i0} f)|_{\Omega} - S_{\lambda_0 \pm i0} v_{\lambda_0 \pm i0} =: u_{\lambda_0 \pm i0 } \quad \text{in} \quad C^\infty(\overline \Omega),\]
    so $u_{\lambda_0 \pm i0}$ solves the boundary value problem (\ref{eq:BVP_real}). By Lemma~\ref{lem:interior_uniqueness}, we can then define
    \[u_{\lambda_0} := u_{\lambda_0 + i0} = u_{\lambda_0 - i0}.\]
    Now for $k \ge 1$, it follows from (\ref{eq:u=Rf-Sv_derivative}) that there exists $u_{\lambda_0 \pm i0}^{(k)} \in C^\infty(\overline \Omega)$ such that
    \begin{equation*}
        \partial_\omega^k u_{\omega_j} \to u_{\lambda_0 \pm i0}^{(k)} \quad \text{in $C^\infty (\overline \Omega)$ if $\omega_j \in \Lambda^\pm_{\lambda_0, d} \setminus\{\lambda_0\}$ for all $j$}.
    \end{equation*}
    Thus it remains to check that $ u_{\lambda_0 + i0}^{(k)} =  u_{\lambda_0 - i0}^{(k)}$. We proceed by induction on $k$ and assume that $ u_{\lambda_0 \pm i0}^{(\ell)} =  u_{\lambda_0 \pm i0}^{(\ell)}$ for all $\ell < k$. Differentiating (\ref{eq:BVP_reprise}) $k$ times in $\omega$, $k \ge 1$, we see that
    \begin{equation}\label{eq:P_omega_derivatives}
    \begin{gathered}
        P(\omega_j) (\partial_\omega^k u_{\omega_j}) = \sum_{\ell = 1}^{k} \binom{k}{\ell} \left[\partial_\omega^\ell a_1(\omega_j) \partial_{x_1}^2 - \partial_\omega^{\ell} a_2(\omega_j) \partial_{x_2}^2 \right] (\partial_\omega^{k - \ell} u_{\omega_j}), \\
        a_1(\omega) := \omega^2, \quad a_2(\omega) := 1 - \omega^2.
    \end{gathered}
    \end{equation}
    The differential operators on the right-hand side are constant-coefficient with coefficients $\partial_\omega^\ell a_1(\omega), \partial_\omega^\ell a_2(\omega)$ that converge as $\omega_j \to \lambda_0$. Also, $\partial_\omega^{k - \ell} u_{\omega_j}$ converges in $C^\infty(\overline \Omega)$. Then by the induction hypothesis, the right hand side of (\ref{eq:P_omega_derivatives}) converges to a unique limit in $C^\infty(\Omega)$ independent of if the sequence $\omega_j$ is in $\Lambda^+_{\lambda_0, d} \setminus\{\lambda_0\}$ or $\Lambda^-_{\lambda_0, d} \setminus\{\lambda_0\}$. Therefore, Lemma~\ref{lem:interior_uniqueness} gives that the limit $u_{\lambda_0 \pm i0}^{(k)}$ is independent of the sign. Set
    \[u_{\lambda_0}^{(k)} := u_{\lambda_0 + i0}^{(k)} = u_{\lambda_0 - i0}^{(k)},\]
    and the proposition follows. 
\end{proof}

\section{Evolution problem}\label{sec:evolution}
Finally, we relate all the analysis of the limiting properties of $P(\omega)$ back to the evolution problem. Throughout this section, fix a $\lambda_0 \in (0, 1)$ so that $\Omega$ is $\lambda_0$-simple (Definition \ref{lambda_simple_def}), and the rotation number of the associated chess billiard map $\mathbf r(\lambda_0)$ is Diophantine (Definition \ref{def:diophantine}). Recall that analyzing the internal waves equation (\ref{eq:internal_waves}) is equivalent to the evolution problem (\ref{eq:evolution_problem}):
\begin{equation}\label{eq:evolution_problem_reprise}
    (\partial_t^2 + P)w = f \cos \lambda_0 t, \quad w|_{t = 0} = \partial_t w_{t = 0} = 0, \quad f \in \CIc(\Omega; \R),
\end{equation}
where $P = \partial_{x_2}^2 \Delta_\Omega^{-1} :H^{-1}(\Omega) \to H^{-1}(\Omega)$.

\subsection{Spectral measure near \texorpdfstring{$\lambda_0^2$}{}}
The idea is that we will be recovering the spectral measure estimates near $\lambda_0$ from the Helffer--Sj\"ostrand formula. We begin by reviewing some necessary preliminaries following \cite[\S2.2]{davies_95}.

Consider $g \in \CIc(\R)$. We can construct an \textit{almost analytic extension} as follows: fix a cutoff function 
\begin{equation*}
    \tau \in \CIc(\R; [0, 1]), \quad \supp \tau \subset (-2, 2), \quad \tau \equiv 1 \quad \text{on} \quad [-1, 1].
\end{equation*}
Put
\begin{equation}
    \psi(x, y) := \tau(y/\langle x \rangle).
\end{equation}
Then we can extend $g$ to a function $\tilde g \in \CIc(\C)$ by 
\begin{equation}\label{eq:aa_def}
    \tilde g(z) := \psi(x, y) \sum_{r = 0}^1 \partial_{x}^r g(x) (iy)^r.
\end{equation}
The function $\tilde g$ is almost analytic in the sense that if we differentiate in $\bar z$, we see that 
\begin{equation}\label{eq:aa_bound}
    \begin{aligned}
        \partial_{\bar z} \tilde g(z) :=& \frac{1}{2} \big( \partial_x g(z) + i \partial_y g(z) \big) \\
        =& \frac{1}{2} (\partial_x \psi(x, y) + i \partial_y \psi(x, y)) \sum_{r = 0}^1 \partial_x^r g(x) (i y)^r \\
        &\qquad + \frac{1}{2} \partial_x^2 g(x) (iy) \psi(x, y). 
    \end{aligned}
\end{equation}
for any $N \in \N$, and it follows that 
\[|\partial_{\bar z} \tilde g(z)| = \mathcal O(|y|).\]
For any self-adjoint operator $H$ over a Hilbert space $\mathscr H$, the Helffer--Sj\"ostrand formula gives 
\begin{equation}\label{eq:HS_formula}
    g(H) = -\frac{1}{\pi} \int_{\C} \frac{\partial \tilde g(z)}{\partial \bar z} (z - H)^{-1} \, dm(z).
\end{equation}
where $dm(z) := dx dy$ denotes the Lebesgue measure on $\C$. The formula~\eqref{eq:HS_formula} is independent of the choice of cutoff $\psi$. We remark that since $\|(z - H)^{-1}\|_{\mathscr H \to \mathscr H} \le |\Im z|^{-1}$, the convergence of the integral is then guaranteed by~(\ref{eq:aa_bound}). The integral is also independent of the choice of $\varphi$, and in fact the functional calculus of $H$ can be recovered using the formula (see \cite{davies_95} for details). Now we are ready to give a proof of Theorem \ref{thm:spectral}.

\begin{proof}[Proof of Theorem \ref{thm:spectral}] 1. Fix a cutoff 
\begin{equation*}
    \chi \in \CIc(\R; [0, 1]), \quad \supp \chi \subset (-1, 1), \quad \chi \equiv 1 \quad \text{near $0$}.
\end{equation*}
Define the rescaling
\begin{equation*}
    \chi_\epsilon(x) = \chi \left(\frac{x - \lambda_0^2}{\epsilon} \right).
\end{equation*}
Using~\eqref{eq:aa_def}, we can construct the almost analytic extension $\tilde \chi_\epsilon \in \CIc(\C)$ of $\chi_\epsilon$. It follows from (\ref{eq:aa_bound}) that for all sufficiently small $\epsilon$, $\tilde \chi_\epsilon$ satisfies the bound
\begin{equation}\label{eq:chi_aa_bound}
    \left|\frac{\partial \tilde \chi_\epsilon}{\partial \bar z} \right| \le C\left( \frac{|\Im z|}{\epsilon^{2}} \right).
\end{equation}\label{eq:thm1_est}
Since $\mu_{f, f}$ is a positive measure, it suffices to show that 
\begin{equation}
    \int_\R \chi_\epsilon \, d\mu_{f, f} \le C \epsilon^N
\end{equation}
for all $N \in \N$.

\begin{figure}
    \centering
    \includegraphics[scale = .175]{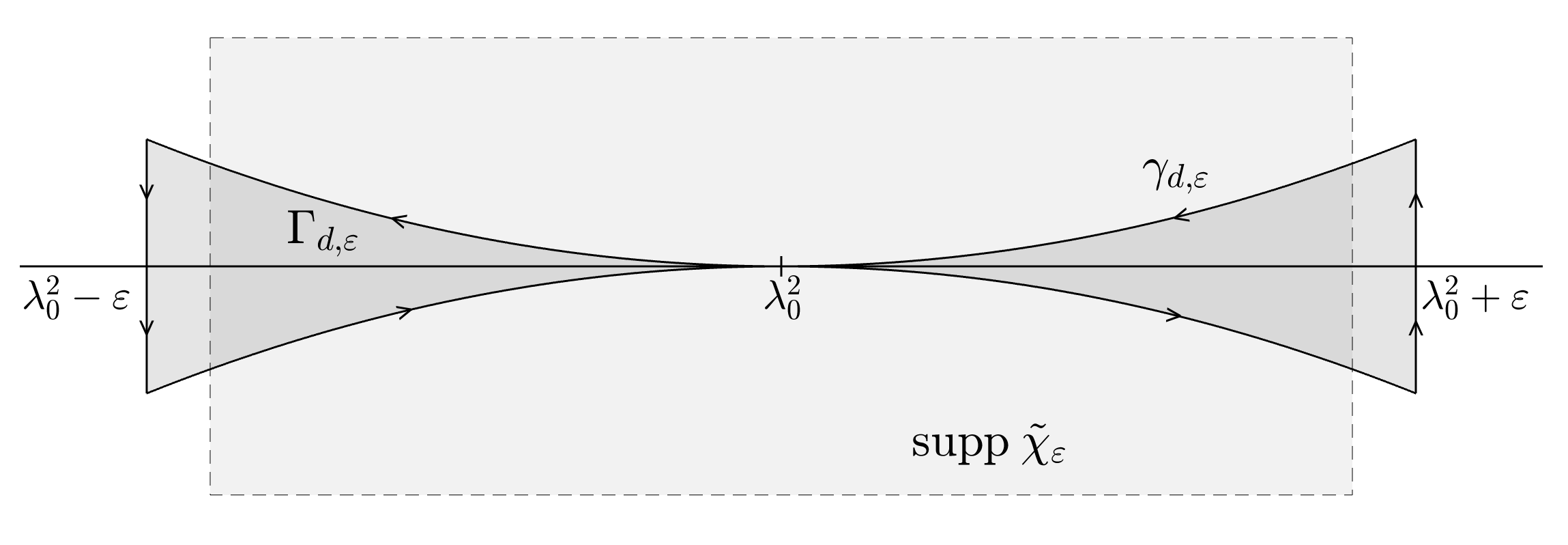}
    \caption{The contour taken in the proof of Theorem \ref{thm:spectral} relative to the support of $\tilde \chi_\epsilon$.}
    \label{fig:contour}
\end{figure}

\noindent
2. For an even $d > 0$, define the region 
\begin{equation}
    \Gamma_{d, \epsilon} := \{\lambda_0^2 + s + ih \mid |h|< s^d, \, |s| < \epsilon\},
\end{equation}
and let $\gamma_{d, \epsilon} := \partial \Gamma_{d, \epsilon}$ denote the positively oriented smooth contour that encloses $\Gamma_{d, \epsilon}$. Recall from (\ref{eq:resolvent_relation}) that
\[(P - z)\Delta_\Omega = P(\omega), \quad z := \omega^2.\]
Let $u_\omega$ denote the unique solution to~\eqref{eq:EBVP}. Therefore, with $f \in \CIc(\Omega)$, it follows from Proposition \ref{prop:u_LAP} that
\[z \mapsto \langle (z - P)^{-1}f, f\rangle_{H^{-1}(\Omega)} = \langle \Delta u_\omega, f \rangle_{H^{-1}(\Omega)}, \quad z \in \supp \tilde \chi_\epsilon \setminus \overline \Gamma_{d, \epsilon}\]
extends to a smooth function up to the boundary $\{\lambda_0^2 + s \pm i s^d \mid |s| < \epsilon\}$. By the Helffer--Sj\"ostrand formula (\ref{eq:HS_formula}), we then have
\begin{align}
    \langle \chi_\epsilon(P)f, f \rangle &= - \frac{1}{\pi} \left(\int_{\Gamma_{d, \epsilon}} + \int_{\C \setminus \Gamma_{d, \epsilon}} \right) \frac{\partial \tilde \chi_\epsilon(z)}{\partial \bar z} \langle (z - P)^{-1}f, f \rangle \, dm(z) \nonumber \\
    &= - \frac{1}{\pi} \int_{\Gamma_{d, \epsilon}} \frac{\partial \tilde \chi_\epsilon(z)}{\partial \bar z} \langle (z - P)^{-1}f, f \rangle \, dm(z) \nonumber \\
    & \qquad \qquad + \frac{1}{2 \pi i} \oint_{\gamma_{d, \epsilon}} \tilde \chi_\epsilon(z) \langle (z - P)^{-1}f, f \rangle \, dz. \label{eq:HS_modified}
\end{align}
The second equality above is due to the the fact that $\tilde \chi_\epsilon$ is compactly supported, so we can just apply Green's theorem in the region $\C \setminus \Gamma_{d, \epsilon}$. 

\noindent
3. We first bound the the first term on the right hand side of \eqref{eq:HS_modified}. It follows from~\eqref{eq:chi_aa_bound} and the spectral bound $\|(z - P)^{-1}\|_{H^{-1}(\Omega)\to H^{-1}(\Omega)} \le |\Im z|^{-1}$ that 
\begin{align}
    \left|\int_{\Gamma_{d, \epsilon}} \frac{\partial \tilde \chi_\epsilon(z)}{\partial \bar z} \langle (z - P)^{-1}f, f \rangle \, dm(z) \right| &\le C_d \int_{-\epsilon}^\epsilon \int_{-|s|^d}^{|s|^d} \left(\frac{|y|}{\epsilon^{2}} \right) \frac{1}{|y|} \, dy ds \nonumber \\
    &\le C_d \epsilon^{d - 1} \label{eq:outside}
\end{align}
where $C_d$ may change from line to line, but remains independent of $\epsilon$.

\noindent
4. It just remains to bound the second term of (\ref{eq:HS_modified}). Define the functions 
\begin{equation}
    I^\pm(s) := (1 \pm id \cdot x^{d - 1}) \big\langle (\lambda_0^2 + s \pm is^d - P)^{-1} f, f \big\rangle_{H^{-1}(\Omega)}
\end{equation}
for $s \in \R \setminus \{0\}$. Then it follows from Proposition \ref{prop:u_LAP} that $I^\pm$ extends to a smooth function on all of $\R$ with
\begin{equation}\label{eq:I_cancellation}
    \partial_s^k I^+(0) = \partial_s^k I^-(0) \quad \text{for} \quad k \le d - 2.
\end{equation}
Note that $\tilde \chi_\epsilon(z)$ vanishes on the two vertical segments of $\gamma_{d, \epsilon}$, therefore
\begin{multline}\label{eq:contour_integral}
    \oint_{\gamma_{d, \epsilon}} \tilde \chi_\epsilon(z) \big \langle (z - P)^{-1}f, f \big \rangle_{H^{-1}(\Omega)} \, dz \\
    = \int_{-\epsilon}^\epsilon \tilde \chi_\epsilon(\lambda_0^2 + s - is^d) I^-_\epsilon(s) - \tilde \chi_\epsilon(\lambda_0^2 + s + is^d) I^{+}_\epsilon(s)\, ds.
\end{multline}
Expanding the integrand using~\eqref{eq:aa_def}, we see that for sufficiently small $\epsilon$, 
\begin{multline}\label{eq:contour_integrand}
    \tilde \chi_\epsilon(\lambda_0^2 + s - is^d) I^-_\epsilon(s) - \tilde \chi_\epsilon(\lambda_0^2 + s + is^d) I^{+}_\epsilon(s) \\
    = \chi_\epsilon(\lambda_0^2 + s) \big(I^-_\epsilon(s) - I^+_\epsilon(s) \big) - is^d \chi'_\epsilon(\lambda^2 + s) \big(I^-_\epsilon(s) + I^+_\epsilon(s) \big).
\end{multline}
Combining~\eqref{eq:contour_integral} and~\eqref{eq:contour_integrand}, we find
\begin{equation}\label{eq:inside}
    \left|\oint_{\gamma_{d, \epsilon}} \tilde \chi_\epsilon(z) \big \langle (z - P)^{-1}f, f \big \rangle_{H^{-1}(\Omega)} \, dz \right| \le C_d \int_{-\epsilon}^\epsilon |s|^{d - 2} + \epsilon^{-1} |s|^d\, ds \le C_d\epsilon^{d - 1}.
\end{equation}
Since $d$ can be taken to be arbitrarily large, the estimates~\eqref{eq:outside} and~\eqref{eq:inside} immediately implies~\eqref{eq:thm1_est}.
\end{proof}

\subsection{Boundedness in energy}
Recall that we can solve the evolution problem (\ref{eq:evolution_problem_reprise}) using the functional calculus of $P$ to find the solution
\begin{equation}
    w(t) = \Re\big(e^{i \lambda_0 t} \mathbf W_{t, \lambda_0}(P) f \big )
\end{equation}
where
\begin{equation}
    \mathbf W_{t, \lambda_0}(z) = \sum_{\pm} \frac{1 - e^{-it(\lambda_0 \pm \sqrt{z})}}{2 \sqrt{z}(\sqrt{z} \pm \lambda_0)}.
\end{equation}
We wish to show that $w(t)$ remains uniformly bounded in $H^{-1}(\Omega)$. First, let us divide $\mathbf W_{t, \lambda_0}$ into more manageable pieces. Let $\delta > 0$ be a small parameter to be chosen later. Define
\begin{equation}
    w_k(t) = \indic_{\{\delta^{k + 1} \le |\lambda_0^2 - z| < \delta^k\}}(P) \mathbf W_{t, \lambda_0}(P) f, \qquad k \in \N_0. 
\end{equation}
The spectrum of $P$ is supported in $[0, 1]$, and also by Theorem \ref{thm:spectral}, $\mu_{f, f}(\{\lambda_0^2\}) = 0$. Hence 
\begin{equation*}
    w(t) = \Re \left( e^{i \lambda_0 t} \sum_{k = 0}^\infty w_k(t) \right)
\end{equation*}
where the sum converges in $H^{-1}(\Omega)$.

\begin{proof}[Proof of Theorem \ref{thm:evolution}]
    It suffices to prove that $\|w(t)\|_{H^{-1}} < C$ for all $t \ge 0$ since the solution $u(t)$ to (\ref{eq:internal_waves}) is given by $u(t) = \Delta_\Omega^{-1} w(t)$. We first bound $\Re(e^{i \lambda_0 t} w_0(t))$. We compute 
    \[\left|\Re\big(e^{i \lambda_0 t} \mathbf W_{t, \lambda_0}(z) \indic_{\{|\lambda_0^2 - z| \ge \delta\}} \big) \right| = \left|\frac{(\cos (t \sqrt{z}) - \cos(t \lambda_0))}{\lambda_0^2 - z} \indic_{\{|\lambda_0^2 - z| \ge \delta\}}\right| \le \frac{2}{\delta}. \]
    Then it follows from the functional calculus of $P$ on $H^{-1}(\Omega)$ that 
    \begin{equation}
        \Re\big(e^{i \lambda_0 t} w_0(t)\big) \le \frac{2}{\delta} \|f\|_{H^{-1}(\Omega)}.
    \end{equation}

    Now we bound the rest of $w(t)$. Note that we can fix a sufficiently small $\delta$ so that
    \begin{equation*}
        \sup_{\delta^{k + 1} \le |\lambda_0^2 - z| \le \delta^k} |\mathbf W_{t, \lambda_0}(z)| < \frac{2}{\delta^{k + 1}}.
    \end{equation*}
    Then by Theorem \ref{thm:spectral}, we see that with $f \in \CIc(\Omega)$, 
    \begin{align*}
        \|w_k(t)\|_{H^{-1}(\Omega)}^2 &= \big\langle \indic_{\{\delta^{k + 1} \le |\lambda_0^2 - z| < \delta^k\}}(P) \mathbf W_{t, \lambda_0}(P)^* \mathbf W_{t, \lambda_0}(P) f, f \big\rangle \\
        &= \int_{\delta^{k + 1} \le |\lambda_0^2 - z| \le \delta^k} |\mathbf W_{t, \lambda_0}(z)|^2 \, d\mu_{f, f}(z) \\
        &\le C_N \delta^{Nk - (k + 1)},
    \end{align*}
    for any $N \in \N$ for some constant $C$ independent of $t$ and $k$. It really suffices to take $N = 3$ so that the sum over $k$ converges, and we see that $\|w(t)\|_{H^{-1}(\Omega)} \leq C$ as desired for a possibly different~$C$. 
\end{proof}

\medskip\noindent\textbf{Acknowledgements.}
The author would like to thank Semyon Dyatlov for the countless insightful discussions about the project and for all the detailed comments on the early draft. The author would also like to thank Maciej Zworski for his generosity in lending the author code for numerical simulations of the internal waves PDE, Leo Maas for insightful discussions about the physics, and Richard Melrose, whose suggestions greatly simplified the positive semiclassical calculus in \S\ref{sec:microlocal}. Partial support from Semyon Dyatlov's NSF CAREER grant DMS-1749858 is acknowledged.

\bibliographystyle{alpha}
\bibliography{internal_waves.bib}

\end{document}